\definecolor{columbiablue}{rgb}{0.61, 0.87, 1.0}
\definecolor{sandstone}{HTML}{786D5F}
\definecolor{beaublue}{rgb}{0.74, 0.83, 0.9}
\definecolor{cherryblossompink}{rgb}{1.0, 0.72, 0.77}
\newcommand{\mc}[1]{{\mathcal #1}}
\newcommand{\tclock}[5]{
\begin{pgflowlevelscope}{\pgftransformscale{#4}}
\begin{scope}[shift={(#1,#2)}]
\shadedraw [inner color=#3!7!gray, outer color=#3!90!black, 
    line width=0.2pt] (0,0) circle (0.5cm);
\foreach \x in {6,12,...,360} {\draw[line width=0.2pt] (\x:0.40cm) -- (\x:0.45cm);}
\foreach \y in {30,60,...,360} {\draw[line width=0.2pt] (\y:0.35cm) -- (\y:0.45cm);}
{\pgfsetarrowsstart{to}
\draw[line width=0.4pt] (0:0.29cm) -- (0.02,0);
\draw[line width=0.4pt]  (90:0.32cm)--(0,0.02);}
\filldraw[fill=black] (-0.055,0.55) rectangle (0.055,0.6);
\filldraw[fill=black] (-0.015,0.51) rectangle (0.015,0.55);
\draw [line width=0.2pt](0,0.61) circle (0.11cm);
\draw [line width=0.2pt](0,0) circle (0.5cm);
\draw [line width=0.2pt](0,0) circle (0.02cm);
\draw [red,thick,domain=30:45] plot ({#5*0.6*cos(\x)}, {#5*0.6*sin(\x)});
\draw [red,thick,domain=20:55] plot ({#5*0.65*cos(\x)}, {#5*0.65*sin(\x)});
\draw [red,thick,domain=10:65] plot ({#5*0.7*cos(\x)}, {#5*0.7*sin(\x)});
\draw [red,thick,domain=135:150] plot ({#5*0.6*cos(\x)}, {#5*0.6*sin(\x)});
\draw [red,thick,domain=125:160] plot ({#5*0.65*cos(\x)}, {#5*0.65*sin(\x)});
\draw [red,thick,domain=170:115] plot ({#5*0.7*cos(\x)}, {#5*0.7*sin(\x)});
\end{scope}
\end{pgflowlevelscope}
}
\begin{document}
\mainmatter              
\title{Hydrodynamics  for symmetric exclusion in contact with reservoirs}
\titlerunning{Hydrodynamics for symmetric exclusion}  
%
\author{Patr\'icia Gon\c calves}
\authorrunning{Patr\'icia Gon\c calves} 

\institute{Center for Mathematical Analysis,  Geometry and Dynamical Systems,
Instituto Superior T\'ecnico, Universidade de Lisboa,
Av. Rovisco Pais, 1049-001 Lisboa, Portugal and   Institut  Henri
Poincar\'e, UMS 839 (CNRS/UPMC), 11 rue Pierre et Marie Curie, 75231 Paris Cedex 05, France,\\
\email{patricia.goncalves@math.tecnico.ulisboa.pt},\\ WWW home page:
\texttt{http://patriciamath.wixsite.com/patricia}
}
\maketitle              

\begin{abstract}
We consider the symmetric exclusion process with jumps  given by a symmetric, translation invariant, transition probability $p(\cdot)$. The process is  put  in contact with stochastic reservoirs whose strength is tuned by  a parameter $\theta\in\bbbr$. Depending on the value of the parameter $\theta$ and the range of the transition probability $p(\cdot)$ we obtain the hydrodynamical behavior of the system. The type of hydrodynamic equation  depends on whether  the underlying  probability $p(\cdot)$ has finite or infinite variance and the type of boundary condition depends on the strength of the stochastic reservoirs, that is, it depends on the value of $\theta$. More precisely, when $p(\cdot)$ has finite  variance we obtain either a reaction or reaction-diffusion equation with Dirichlet boundary conditions or  the heat equation with different types of boundary conditions (of Dirichlet, Robin or Neumann type). When $p(\cdot)$ has infinite variance we obtain a fractional reaction-diffusion equation given by the regional fractional laplacian with Dirichlet boundary conditions but for a particular strength of the reservoirs. 
\keywords{symmetric exclusion, stochastic reservoirs, heat equation, regional fractional laplacian, reaction-diffusion, boundary conditions.}
\end{abstract}
\section{Introduction}
These notes have been written based on material of the articles \cite{Adriana}, \cite{BGJO} and \cite{BGJO2}  which was presented on a mini-course that the author gave while visiting Institut Henri Poincar\'e in Paris in May 2017 for the trimester "Stochastic dynamics out of equilibrium" that held from the 3rd of April to the 7th of July. The slides  and the videos of the mini-course can be seen in\\
 \url{https://indico.math.cnrs.fr/event/844/page/5}.\\

The content of the notes is to explain how to derive partial differential equations with different types of boundary conditions from varied  underlying  microscopic stochastic dynamics  \cite{Spohn,KL}. In the next coming sections  we consider a macroscopic space, namely,  the interval $[0,1]$ and 
 we discretize it according to a scaling parameter $N$ giving  rise to $N$ intervals of size $\frac 1N$. To each $q\in[0,1]$ belonging to the interval $[\frac iN, \frac{i+1}{N})$ we associate to it the point $\tfrac i N$ and in the discrete set of points $\{1,..., i,...,N-1\}$ we will define  a microscopic dynamics of exclusion type which is Markovian. 
 The discrete set of points $\{1,..., i,...,N-1\}$ will be called the bulk and to it we will add two extra points $x=0$ and $x=N$ which will act as reservoirs. 
 The exclusion dynamics \cite{Spitzer} ensures that there is at most one particle per site in the bulk and the Markovian dynamics comes from the fact that each particle waits  for rings of  random clocks exponentially distributed and independent, after which the particle jumps from a site $x$ in the bulk to another site $y$ in the bulk according to a probability transition rate $p:\bbbz\times \bbbz\to [0,1]$, or the particle leaves the system through one of the reservoirs. The reservoirs will be regulated by a parameter which has the role of slowing or fasting the boundary dynamics. More precisely, particles can be injected in the bulk from the site $x=0$ (resp. $x=N$) to the site $y$ at rate $\alpha \kappa N^{-\theta}p(y)$ (resp. $\beta \kappa N^{-\theta}p(N-y)$) and can be removed from the bulk  at the site $y$ to the site $x=0$ (resp. $x=N$) at rate $(1-\alpha) \kappa N^{-\theta}p(y)$ (resp. $(1-\beta) \kappa N^{-\theta}p(N-y)$). Above,  $\alpha, \beta\in[0,1]$, $\theta\in\bbbr$ and $\kappa>0$. \\
 
 The goal in these notes is to derive the partial differential equations which describe the space-time evolution of the density of particles in the system. The type of these  equations will depend on the finiteness of the variance of the underlying transition probability $p(\cdot)$  and the type of boundary conditions  will depend on the strength of the boundary dynamics, namely, the range of the parameter $\theta$. We note that in \cite{FGN,FGS,Jara1,Jara2} similar models have been considered evolving on the full line, that is, without the presence of stochastic reservoirs.\\
 
 The goal is to analyse which type of equation and which type of  boundary conditions we can get and what is their dependence on the strength of the reservoirs. For that purpose,   we split these notes into two main sections to distinguish the case in which jumps are nearest-neighbor or not. Therefore in Section  \ref{chap1}, we consider the dynamics described above but with $p:\bbbz\times\bbbz\to[0,1]$ which satisfies $p(x,y)=p(y-x)=0$ if $|x-y|>1$, $p(0)=0$ so that $p(1)=p(-1)=\frac 1 2$.  This means that in the bulk particles can jump to one of  their nearest-neighbors and particles can be injected/removed in the bulk/from the bulk through the sites $x=1$ or $x=N-1$. For these models we will derive the heat equation with three different types of boundary conditions: non-homogenenous Dirichlet boundary conditions when the reservoirs are fast (which corresponds to $\theta<1$) and Neumann boundary conditions when the reservoirs are slow (which corresponds to $\theta>1$). Linking the aforementioned two types of boundary conditions, for a particular strength of the boundary dynamics (which corresponds to $\theta=1$), we will derive  the heat equation with a type of linear Robin boundary conditions.\\
 
    In Section \ref{chap2},  
we will consider the dynamics described above, but allowing long jumps given by a transition probability  $p:\bbbz\times \bbbz\to[0,1]$ such that $p(x,y)=p(y-x)$, which is symmetric, namely $p(y-x)=p(x-y)$, and we will distinguish two cases: the first one where $p(\cdot)$ has finite variance and then the case where $p(\cdot)$ has infinite variance. In the first case,  we will obtain an extension of the results of the model with only nearest-neighbor jumps, that is we will derive the heat equation with the three types of boundary conditions mentioned above but for a certain choice of the transition probability two new regimes appear when the reservoirs are fast, namely, a reaction-diffusion equation and a reaction equation, both endowed with non-homogeneous Dirichlet boundary conditions. In the case where $p(\cdot)$ has infinite variance and  for a particular strength of the reservoirs (which corresponds to $\theta=0$), we will derive a collection of fractional reaction-diffusion equations with non-homogeneous Dirichlet boundary conditions.  For the interested reader we note that when $p(\cdot)$ has infinite variance and  when the strength of the reservoirs is slow (which corresponds to $\theta>0$), we cannot say anything about the equation nor its boundary conditions. In \cite{BGJO} a similar model has been studied and some conjectures have been presented in the case where the reservoirs are slow. We believe that the same conjecture should be true for this model, but we leave this for a future problem to look at.  We also note that  it would  be very interesting to consider other types of boundary dynamics or even more general type of bulk dynamics than the exclusion  in order to obtain  other partial differential equations with various boundary conditions.\\

These notes are organized as follows: in Section \ref{chap1} we derive  the hydrodynamic limit for the  symmetric  exclusion in contact with stochastic reservoirs but only allowing jumps to nearest-neighbors and in Section \ref{chap2} we derive the hydrodynamics in the case where the system exhibits long jumps. 

More precisely,  in Sections \ref{sec:model}, \ref{sec:illustration} and \ref{sec:generator} we present the dynamics  of the model; in Section \ref{sec:stat_mea} we present its stationary measures; in Section \ref{sec:empirical_prof} we analyse the empirical profile and the two point correlation function; in Section \ref{sec:hyd_eq_ssep} we present the hydrodynamic equations and the notion of their weak solutions; in Section \ref{sec:HL} we state the hydrodynamic limit; in Section \ref{sec:heuri_ssep} we give an heuristic argument to deduce the weak formulation of the solutions by means of auxiliary martingales associated to the process; in Section  \ref{sec:tightness} we prove tightness of the process of empirical measures; in Section \ref{limit point} we characterize the limit point of the tight  sequence  and in Section \ref{sec:hydrostatic} we prove the hydrostatic limit, which is the hydrodynamic limit starting from the invariant measure of the system.

In Section \ref{chap2} we analyse the hydrodynamics for the symmetric exclusion with long jumps given by a transition  probability  which is symmetric. In Section \ref{sec:model_LJ} we describe the model; in Section \ref{sec:fin_var} we present the case in which the underlying transition  probability has finite variance and in Section \ref{sec:inf_var} we analyse the case in which the transition probability has infinite variance. 

In the Appendix we present some of the technical results that are needed along the proofs regarding the derivation of the weak solution of the corresponding hydrodynamic equations.

\section{Symmetric simple  exclusion  in contact with reservoirs}\label{chap1}

\subsection{The model}
\label{sec:model}

In this section we describe the collection of models that we are going to consider in these notes. First we start by fixing the notation which fits all the models and then we particularize our choice of the parameters in such a way that we treat each model, with its special features, separately. For that purpose, let $N$ be a scaling parameter, which will be taken to infinity later on and  denote by  $\Lambda_N=\{1,..., N-1\}$ the discrete set of points to which we call the bulk. 

 The exclusion process in contact with stochastic reservoirs is a Markov process,  denoted by $\{\eta_t:\,t\geq{0}\}$, which has state space $\Omega_N:=\{0,1\}^{\Lambda_N}$. 
 The configurations of the state space $\Omega_N$  are denoted by $\eta$, so that for $x\in\Lambda_N$,  $\eta(x)=0$ means that the site $x$ is vacant while $\eta(x)=1$ means that the site $x$ is occupied. For an illustration of the dynamics let us first take $N=5$ so that the bulk is the discrete  set of points $\{1,2,3,4\}$:
 
 \begin{center}
\begin{tikzpicture}[scale=0.85]
\draw [line width=1] (-1,10.5) -- (2,10.5) ; 
\foreach \x in  {-1,0,1,2} 
\draw[shift={(\x,10.5)},color=black, opacity=1] (0pt,4pt) -- (0pt,-4pt) node[below] {};
\draw[] (-1,10.3) node[below] {1};
\draw[] (0,10.3) node[below] {2};
\draw[] (1,10.3)  node[below] {3};
\draw[] (2,10.3)  node[below] {4};
\end{tikzpicture} 
\end{center}

Now, to describe a possible initial configuration we can do the following. Toss a coin, if we get head we put a particle at the  site $1$ and if we get a tail we leave it empty. Repeat this for each site of the discrete set $\Lambda_5$ and  suppose that we get at the end to the configuration $\eta_0=(0,1,0,0)$ which can be represented as:

\begin{center}
\begin{tikzpicture}[scale=0.85]
\draw [line width=1] (-1,10.5) -- (2,10.5) ; 
\foreach \x in  {-1,0,1,2} 
\draw[shift={(\x,10.5)},color=black, opacity=1] (0pt,4pt) -- (0pt,-4pt) node[below] {};
\draw[] (-1,10.3) node[below] {1};
\draw[] (0,10.3) node[below] {2};
\draw[] (1,10.3)  node[below] {3};
\draw[] (2,10.3)  node[below] {4};
\shade[shading=ball, ball color=black!30!] (-0,10.82) circle (.3);
\end{tikzpicture} 
\end{center}

Now, we start to particularize our choice for the dynamics. We are going to add one reservoir at each end point of the bulk. This means that in our construction, we add the points $x=0$ and $x=N$ to the bulk.  Going back to the picture above, this means that we have now 
the set $\{0,1,2,3,4,5\}$ where particles can be placed, but the sites $x=0$ and $x=5$ will act as reservoirs.

 \begin{center}
\begin{tikzpicture}[scale=0.85]
\draw [line width=1] (-2,10.5) -- (3,10.5) ; 
\foreach \x in  {-2,-1,0,1,2,3} 
\draw[shift={(\x,10.5)},color=black, opacity=1] (0pt,4pt) -- (0pt,-4pt) node[below] {};
\draw[] (-2,10.3) node[below] {0};
\draw[] (-1,10.3) node[below] {1};
\draw[] (0,10.3) node[below] {2};
\draw[] (1,10.3)  node[below] {3};
\draw[] (2,10.3)  node[below] {4};
\draw[] (3,10.3)  node[below] {5};
\end{tikzpicture} 
\end{center}

Note that the bulk stays unchanged, the role of the boundary points $\{0,N\}$  is to allow particles to get in and out of the bulk. So, for example, in the initial configuration given above, now we have the sites $x=0$ and $x=N$ occupied, representing the fact that in $x=0$ and $x=N$ there are particles that can enter to the bulk and that can be removed from the bulk.

\quad 
 \begin{center}
\begin{tikzpicture}[scale=0.85]
\draw [line width=1] (-2,10.5) -- (3,10.5) ; 
\foreach \x in  {-2,-1,0,1,2,3} 
\draw[shift={(\x,10.5)},color=black, opacity=1] (0pt,4pt) -- (0pt,-4pt) node[below] {};
\draw[] (-2,10.3) node[below] {0};
\draw[] (-1,10.3) node[below] {1};
\draw[] (0,10.3) node[below] {2};
\draw[] (1,10.3)  node[below] {3};
\draw[] (2,10.3)  node[below] {4};
\draw[] (3,10.3)  node[below] {5};
\shade[shading=ball, ball color=columbiablue] (-2,10.82) circle (.3);
\shade[shading=ball, ball color=black!30!] (-0,10.82) circle (.3);
\shade[shading=ball, ball color=columbiablue] (3,10.82) circle (.3);
\end{tikzpicture} 
\end{center}

Now we describe the time between jumps. For that purpose, for each pair of sites $(x,y)$ we associate a Poisson process of intensity $p(x,y)=p(y-x)$.  The Poisson processes associated to different bonds are independent. Note that the bonds in the bulk are not oriented. In the first dynamics that we are describing, we consider $p(y-x)=0$ if $|x-y|>1$, $p(1)=p(-1)=\frac 12$ so that jumps can only occur to a nearest-neighbor position and for that reason the exclusion process coins the name \emph{simple exclusion process}.  At the boundary points we associate two Poisson processes to each bond containing a boundary point. More precisely, to the bond $\{0,1\}$  (resp. $\{1,0\}$) we associate a Poisson process of intensity  $\alpha\kappa N^{-\theta}$ (resp. $(1-\alpha)\kappa N^{-\theta}$) and to the bond  $\{N-1,N\}$  (resp. $\{N,N-1\}$) we associate a Poisson process of intensity  $(1-\beta)\kappa N^{-\theta}$ (resp. $\beta \kappa N^{-\theta}$). Above we fix the parameters $\alpha, \beta\in[0,1]$, $\theta\in\bbbr$ and $\kappa>0$.  The role of the parameter $\theta$ is to regulate the slowness/fastness of the reservoirs. If $\theta>0$ and $\theta$ increases then the reservoirs are slower and if $\theta<0$ and $\theta$  decreases then the reservoirs are faster. 

We remark that another interpretation of the previous dynamics  at the boundary could be given as follows. Particles  can either be created or annihilated  at the sites $x=1$ and $x=N-1$ according to the following rates:
\begin{itemize}
\item[-] at site $x=1$: \hspace{3.9cm}- at site $x=N-1$:
\begin{itemize}
\item creation rate $\alpha\kappa N^{-\theta}$,  \hspace{2.5cm}$\bullet$ creation rate $\beta\kappa N^{-\theta}$,
\item annihilation rate $(1-\alpha)\kappa N^{-\theta}$, \hspace{1cm}$\bullet$ annihilation rate $(1-\beta)\kappa N^{-\theta}$.
\end{itemize}
\end{itemize}

  Note that in any case, the exclusion rule has to be respected. At most one particle is allowed at each site of the bulk (recall that the state space is $\{0,1\}^{\Lambda_N}$) so that particles can only be created (resp. removed) at the sites $x=1$ or $x=N-1$ if the corresponding  site is empty (resp. occupied), otherwise nothing happens. Before we proceed let us see an  illustration of a possible realization of the Poisson processes as given in the figure below. 
  
\begin{wrapfigure}{l}{0.52\textwidth}\begin{center}
\begin{tikzpicture}[scale=0.85]
\draw [line width=1] (-2,10.5) -- (3,10.5) ; 
\foreach \x in  {-2,-1,0,1,2,3} 
\draw[shift={(\x,10.5)},color=black, opacity=1] (0pt,4pt) -- (0pt,-4pt) node[below] {};
\draw[] (-2.8,10.5) node[] {$\eta_{0}$};

\shade[shading=ball, ball color=columbiablue] (-2,10.82) circle (.3);
\shade[shading=ball, ball color=black!30!] (-0,10.82) circle (.3);
\shade[shading=ball, ball color=columbiablue] (3,10.82) circle (.3);
\draw[<-|] (-3.5,5) -- (-3.5,10.5) node [left] {0};

\draw (-3.5,9.35) node[] {$-$};		
\draw (-3.5,9.27) node[] {$-$};		
\draw (-3.5,8.09) node[] {$-$};		
\draw (-3.5,6.45) node[] {$-$};		

\draw (-3.5,9.25)  node[] {$-$}; 	
\draw (-3.5,7.45)  node[] {$-$};	
\draw (-3.5,7.13)  node[] {$-$};	
\draw (-3.5,6.65)  node[] {$-$};	

\draw (-3.5,10.25)  node[] {$-$};	
\draw (-3.5,10.18) 	node[] {$-$};	
\draw (-3.5,8.65)  	node[] {$-$};	
\draw (-3.5,6.35) 	node[] {$-$};	

\draw (-3.5,10.48)  node[] {$-$};	
\draw (-3.5,9.68) 	node[] {$-$};	
\draw (-3.5,9.24)  	node[] {$-$};	
\draw (-3.5,6.07)  	node[] {$-$};	

\draw (-3.5,9.92)  	node[] {$-$};	
\draw (-3.5,8.96) 	node[] {$-$};	
\draw (-3.5,8.8585) node[] {$-$};	
\draw (-3.5,6.1985) node[] {$-$};	

\draw (-3.5,4.5) node[above] {Time};	
\draw[<-] (-1.5,5) -- (-1.5,10.5) node [left] {};
\draw (-1.5,9.35) node[] {$\times$};		
\draw (-1.5,9.27) node[] {$\times$};		
\draw (-1.5,8.09) node[] {$\times$};		
\draw (-1.5,6.45) node[] {$\times$};		
\draw[<-] (-0.5,5) -- (-0.5,10.5) node [left] {};
\draw (-0.5,9.92)  	node[] {$\times$};	
\draw (-0.5,8.96) 	node[] {$\times$};	
\draw (-0.5,8.8585) node[] {$\times$};	
\draw (-0.5,6.1985) node[] {$\times$};	
\draw[<-] (0.5,5) -- (0.5,10.5) node [left] {};
\draw (0.5,10.48) 	node[] {$\times$};	
\draw (0.5,9.68) 	node[] {$\times$};	
\draw (0.5,9.24) 	node[] {$\times$};	
\draw (0.5,6.07) 	node[] {$\times$};	
\draw[<-] (1.5,5) -- (1.5,10.5) node [left] {};
\draw (1.5,9.25)  node[] {$\times$}; 	
\draw (1.5,7.45)  node[] {$\times$};	
\draw (1.5,7.13)  node[] {$\times$};	
\draw (1.5,6.65)  node[] {$\times$};	
\draw[<-] (2.5,5) -- (2.5,10.5) node [left] {};
\draw (2.5,10.25)  node[] {$\times$};	
\draw (2.5,10.18) 	node[] {$\times$};	
\draw (2.5,8.65)  	node[] {$\times$};	
\draw (2.5,6.45) 	node[] {$\times$};	

\end{tikzpicture}
\end{center}
\end{wrapfigure}

In the figure at the left hand side we represent by "$\times$" each mark of a possible realization of  the Poisson processes associated to the bonds. At the left hand side we put an arrow going down which is  representing the evolution of  time  and each sign "$-$" means that a clock has rung according to some Poisson clock, so that at the corresponding time, a jump from a particle  might have occurred. 

We note that in this figure  we did not distinguish the marks of the Poisson processes associated to the oriented bonds at the boundary because we believe that it is simpler to analyse the dynamics at the boundary by allowing particles to get in or get out according to the Poisson marks but also taking into account  the exclusion rule. 

In order to give an example,  let us see now all the configurations that we obtain starting the dynamics from the configuration $\eta_0=(0,1,0,0)$ represented above and the realization of the Poisson processes given in the previous figure. 

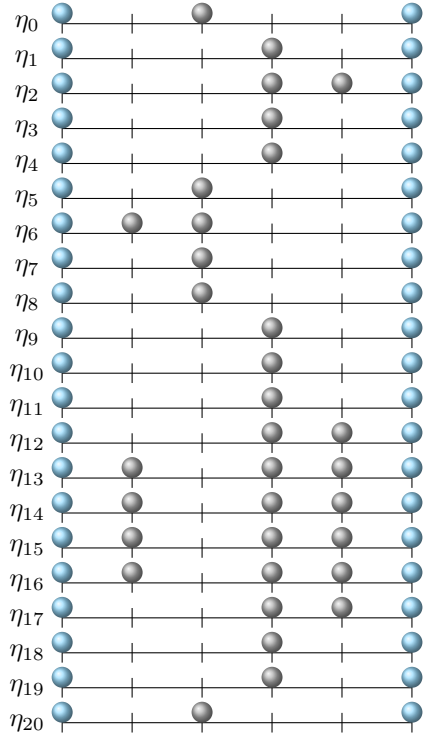
\begin{wrapfigure}{l}{0.45\textwidth}
\begin{center}
\begin{tikzpicture}[scale=0.93]
\draw (5,10.4) -- (10,10.4) ; 
\foreach \x in  {5,6,7,8,9,10} 
\draw[shift={(\x,10.4)},color=black, opacity=1] (0pt,4pt) -- (0pt,-4pt) node[below] {};
\draw[] (4.5,10.4) node[] {$\eta_{0}$};

\shade[shading=ball, ball color=columbiablue] (5,10.55) circle (.15);
\shade[shading=ball, ball color=black!30!] (7,10.55) circle (.15);
\shade[shading=ball, ball color=columbiablue] (10,10.55) circle (.15);
\draw (5,9.9) -- (10,9.9) ; 
\foreach \x in  {5,6,7,8,9,10} 
\draw[shift={(\x,9.9)},color=black, opacity=1] (0pt,4pt) -- (0pt,-4pt) node[below] {};
\draw[] (4.5,9.9) node[] {$\eta_{1}$};

\shade[shading=ball, ball color=columbiablue] (5,10.05) circle (.15);
\shade[shading=ball, ball color=black!30!] (8,10.05) circle (.15);
\shade[shading=ball, ball color=columbiablue] (10,10.05) circle (.15);
\draw (5,9.4) -- (10,9.4) ; 
\foreach \x in  {5,6,7,8,9,10} 
\draw[shift={(\x,9.4)},color=black, opacity=1] (0pt,4pt) -- (0pt,-4pt) node[below] {};
\draw[] (4.5,9.4) node[] {$\eta_{2}$};

\shade[shading=ball, ball color=columbiablue] (5,9.55) circle (.15);
\shade[shading=ball, ball color=black!30!] (8,9.55) circle (.15);
\shade[shading=ball, ball color=black!30!] (9,9.55) circle (.15);
\shade[shading=ball, ball color=columbiablue] (10,9.55) circle (.15);
\draw (5,8.9) -- (10,8.9) ; 
\foreach \x in  {5,6,7,8,9,10} 
\draw[shift={(\x,8.9)},color=black, opacity=1] (0pt,4pt) -- (0pt,-4pt) node[below] {};
\draw[] (4.5,8.9) node[] {$\eta_{3}$};

\shade[shading=ball, ball color=columbiablue] (5,9.05) circle (.15);
\shade[shading=ball, ball color=black!30!] (8,9.05) circle (.15);
\shade[shading=ball, ball color=columbiablue] (10,9.05) circle (.15);
\draw (5,8.4) -- (10,8.4) ; 
\foreach \x in  {5,6,7,8,9,10} 
\draw[shift={(\x,8.4)},color=black, opacity=1] (0pt,4pt) -- (0pt,-4pt) node[below] {};
\draw[] (4.5,8.4) node[] {$\eta_{4}$};
\shade[shading=ball, ball color=columbiablue] (5,8.55) circle (.15);
\shade[shading=ball, ball color=black!30!] (8,8.55) circle (.15);
\shade[shading=ball, ball color=columbiablue] (10,8.55) circle (.15);
\draw (5,7.9) -- (10,7.9) ; 
\foreach \x in  {5,6,7,8,9,10} 
\draw[shift={(\x,7.9)},color=black, opacity=1] (0pt,4pt) -- (0pt,-4pt) node[below] {};
\draw[] (4.5,7.9) node[] {$\eta_{5}$};
\shade[shading=ball, ball color=columbiablue] (5,8.05) circle (.15);
\shade[shading=ball, ball color=black!30!] (7,8.05) circle (.15);
\shade[shading=ball, ball color=columbiablue] (10,8.05) circle (.15);
\draw (5,7.4) -- (10,7.4) ; 
\foreach \x in  {5,6,7,8,9,10} 
\draw[shift={(\x,7.4)},color=black, opacity=1] (0pt,4pt) -- (0pt,-4pt) node[below] {};
\draw[] (4.5,7.4) node[] {$\eta_{6}$};

\shade[shading=ball, ball color=columbiablue] (5,7.55) circle (.15);
\shade[shading=ball, ball color=black!30!] (6,7.55) circle (.15);
\shade[shading=ball, ball color=black!30!] (7,7.55) circle (.15);
\shade[shading=ball, ball color=columbiablue] (10,7.55) circle (.15);
\draw (5,6.9) -- (10,6.9) ; 
\foreach \x in  {5,6,7,8,9,10} 
\draw[shift={(\x,6.9)},color=black, opacity=1] (0pt,4pt) -- (0pt,-4pt) node[below] {};
\draw[] (4.5,6.9) node[] {$\eta_{7}$};

\shade[shading=ball, ball color=columbiablue] (5,7.05) circle (.15);
\shade[shading=ball, ball color=black!30!] (7,7.05) circle (.15);
\shade[shading=ball, ball color=columbiablue] (10,7.05) circle (.15);
\draw (5,6.4) -- (10,6.4) ; 
\foreach \x in  {5,6,7,8,9,10} 
\draw[shift={(\x,6.4)},color=black, opacity=1] (0pt,4pt) -- (0pt,-4pt) node[below] {};
\draw[] (4.5,6.4) node[] {$\eta_{8}$};

\shade[shading=ball, ball color=columbiablue] (5,6.55) circle (.15);
\shade[shading=ball, ball color=black!30!] (7,6.55) circle (.15);
\shade[shading=ball, ball color=columbiablue] (10,6.55) circle (.15);
\draw (5,5.9) -- (10,5.9) ; 
\foreach \x in  {5,6,7,8,9,10} 
\draw[shift={(\x,5.9)},color=black, opacity=1] (0pt,4pt) -- (0pt,-4pt) node[below] {};
\draw[] (4.5,5.9) node[] {$\eta_{9}$};

\shade[shading=ball, ball color=columbiablue] (5,6.05) circle (.15);
\shade[shading=ball, ball color=black!30!] (8,6.05) circle (.15);
\shade[shading=ball, ball color=columbiablue] (10,6.05) circle (.15);
\draw (5,5.4) -- (10,5.4) ; 
\foreach \x in  {5,6,7,8,9,10} 
\draw[shift={(\x,5.4)},color=black, opacity=1] (0pt,4pt) -- (0pt,-4pt) node[below] {};
\draw[] (4.5,5.4) node[] {$\eta_{10}$};

\shade[shading=ball, ball color=columbiablue] (5,5.55) circle (.15);
\shade[shading=ball, ball color=black!30!] (8,5.55) circle (.15);
\shade[shading=ball, ball color=columbiablue] (10,5.55) circle (.15);
\draw (5,4.9) -- (10,4.9) ; 
\foreach \x in  {5,6,7,8,9,10} 
\draw[shift={(\x,4.9)},color=black, opacity=1] (0pt,4pt) -- (0pt,-4pt) node[below] {};
\draw[] (4.5,4.9) node[] {$\eta_{11}$};

\shade[shading=ball, ball color=columbiablue] (5,5.05) circle (.15);
\shade[shading=ball, ball color=black!30!] (8,5.05) circle (.15);
\shade[shading=ball, ball color=columbiablue] (10,5.05) circle (.15);
\draw (5,4.4) -- (10,4.4) ; 
\foreach \x in  {5,6,7,8,9,10} 
\draw[shift={(\x,4.4)},color=black, opacity=1] (0pt,4pt) -- (0pt,-4pt) node[below] {};
\draw[] (4.5,4.4) node[] {$\eta_{12}$};

\shade[shading=ball, ball color=columbiablue] (5,4.55) circle (.15);
\shade[shading=ball, ball color=black!30!] (8,4.55) circle (.15);
\shade[shading=ball, ball color=black!30!] (9,4.55) circle (.15);
\shade[shading=ball, ball color=columbiablue] (10,4.55) circle (.15);
\draw (5,3.9) -- (10,3.9) ; 
\foreach \x in  {5,6,7,8,9,10} 
\draw[shift={(\x,3.9)},color=black, opacity=1] (0pt,4pt) -- (0pt,-4pt) node[below] {};
\draw[] (4.5,3.9) node[] {$\eta_{13}$};

\shade[shading=ball, ball color=columbiablue] (5,4.05) circle (.15);
\shade[shading=ball, ball color=black!30!] (6,4.05) circle (.15);
\shade[shading=ball, ball color=black!30!] (8,4.05) circle (.15);
\shade[shading=ball, ball color=black!30!] (9,4.05) circle (.15);
\shade[shading=ball, ball color=columbiablue] (10,4.05) circle (.15);
\draw (5,3.4) -- (10,3.4) ; 
\foreach \x in  {5,6,7,8,9,10} 
\draw[shift={(\x,3.4)},color=black, opacity=1] (0pt,4pt) -- (0pt,-4pt) node[below] {};
\draw[] (4.5,3.4) node[] {$\eta_{14}$};

\shade[shading=ball, ball color=columbiablue] (5,3.55) circle (.15);
\shade[shading=ball, ball color=black!30!] (6,3.55) circle (.15);
\shade[shading=ball, ball color=black!30!] (8,3.55) circle (.15);
\shade[shading=ball, ball color=black!30!] (9,3.55) circle (.15);
\shade[shading=ball, ball color=columbiablue] (10,3.55) circle (.15);
\draw (5,2.9) -- (10,2.9) ; 
\foreach \x in  {5,6,7,8,9,10} 
\draw[shift={(\x,2.9)},color=black, opacity=1] (0pt,4pt) -- (0pt,-4pt) node[below] {};
\draw[] (4.5,2.9) node[] {$\eta_{15}$};

\shade[shading=ball, ball color=columbiablue] (5,3.05) circle (.15);
\shade[shading=ball, ball color=black!30!] (6,3.05) circle (.15);
\shade[shading=ball, ball color=black!30!] (8,3.05) circle (.15);
\shade[shading=ball, ball color=black!30!] (9,3.05) circle (.15);
\shade[shading=ball, ball color=columbiablue] (10,3.05) circle (.15);
\draw (5,2.4) -- (10,2.4) ; 
\foreach \x in  {5,6,7,8,9,10} 
\draw[shift={(\x,2.4)},color=black, opacity=1] (0pt,4pt) -- (0pt,-4pt) node[below] {};
\draw[] (4.5,2.4) node[] {$\eta_{16}$};

\shade[shading=ball, ball color=columbiablue] (5,2.55) circle (.15);
\shade[shading=ball, ball color=black!30!] (6,2.55) circle (.15);
\shade[shading=ball, ball color=black!30!] (8,2.55) circle (.15);
\shade[shading=ball, ball color=black!30!] (9,2.55) circle (.15);
\shade[shading=ball, ball color=columbiablue] (10,2.55) circle (.15);
\draw (5,1.9) -- (10,1.9) ; 
\foreach \x in  {5,6,7,8,9,10} 
\draw[shift={(\x,1.9)},color=black, opacity=1] (0pt,4pt) -- (0pt,-4pt) node[below] {};
\draw[] (4.5,1.9) node[] {$\eta_{17}$};

\shade[shading=ball, ball color=columbiablue] (5,2.05) circle (.15);
\shade[shading=ball, ball color=black!30!] (8,2.05) circle (.15);
\shade[shading=ball, ball color=black!30!] (9,2.05) circle (.15);
\shade[shading=ball, ball color=columbiablue] (10,2.05) circle (.15);
\draw (5,1.4) -- (10,1.4) ; 
\foreach \x in  {5,6,7,8,9,10} 
\draw[shift={(\x,1.4)},color=black, opacity=1] (0pt,4pt) -- (0pt,-4pt) node[below] {};
\draw[] (4.5,1.4) node[] {$\eta_{18}$};

\shade[shading=ball, ball color=columbiablue] (5,1.55) circle (.15);
\shade[shading=ball, ball color=black!30!] (8,1.55) circle (.15);
;\shade[shading=ball, ball color=columbiablue] (10,1.55) circle (.15);
\draw (5,0.9) -- (10,0.9) ; 
\foreach \x in  {5,6,7,8,9,10} 
\draw[shift={(\x,0.9)},color=black, opacity=1] (0pt,4pt) -- (0pt,-4pt) node[below] {};
\draw[] (4.5,0.9) node[] {$\eta_{19}$};

\shade[shading=ball, ball color=columbiablue] (5,1.05) circle (.15);
\shade[shading=ball, ball color=black!30!] (8,1.05) circle (.15);
\shade[shading=ball, ball color=columbiablue] (10,1.05) circle (.15);
\draw (5,0.4) -- (10,0.4) ; 
\foreach \x in  {5,6,7,8,9,10} 
\draw[shift={(\x,0.4)},color=black, opacity=1] (0pt,4pt) -- (0pt,-4pt) node[below] {};
\draw[] (4.5,0.4) node[] {$\eta_{20}$};

\shade[shading=ball, ball color=columbiablue] (5,0.55) circle (.15);
\shade[shading=ball, ball color=black!30!] (7,0.55) circle (.15);
\shade[shading=ball, ball color=columbiablue] (10,0.55) circle (.15);

\end{tikzpicture}
\caption{Possible configurations starting from $(0,1,0,0)$}
\end{center}
\end{wrapfigure}

\bigskip

By abuse of notation, in the figure at the left hand side,  we numbered the configurations that we obtained by the number of the marks of the Poisson processes (which in the example are equal to  $20$) just to make the presentation simple. We note that the configurations are indexed by time $t$ which is continuous and not discrete. 
Note that the difference between $\eta_0=(0,1,0,0)$ and $\eta_1=(0,0,1,0)$ is only at two sites (this is always  the case when we compare two configurations which differ by a jump of a particle in the bulk, \emph{a jump in the bulk  affects the occupation  variables at two sites}) and $\eta_1$ is obtained from $\eta_0$ by shifting the particle at the site $2$ in $\eta_0$ to the site $3$. This is a consequence of the fact that the first mark of the Poisson process that occurs is associated to  the bond $\{2,3\}$ and that in $\eta_0$ there is a particle at the site $2$.  The next mark we see  is associated  to the bond $\{4,5\}$ and since in $\eta_1=(0,0,1,0)$ there is no particle at the site $x=4$, a particle is injected in the bulk at the site $4$, giving rise to $\eta_2=(0,0,1,1)$ and so on. Note that the boundary dynamics only changes the configuration at one site.

We also note that the ring of a clock does not imply that the configuration of the system has changed. In the example above $\eta_3=\eta_{4}=(0,0,1,0)$ since the corresponding Poisson mark is associated to the bond $\{1,2\}$ and since both sites $x=1$ and $x=2$ are empty, nothing happens and particles wait a new ring of a clock. 

The first dynamics that we are going to consider in these notes, and which is described in this section is completely characterized by now, but we  note that in Section \ref{chap2} we are going to  generalize the previous dynamics by allowing particles to give long jumps according to some probability transition rate $p:\bbbz\times \bbbz\to[0,1]$ such that $p(x,y)=p(y-x)$ and  which is symmetric, that is $p(y-x)=p(x-y)$. In the latter dynamics, there is only one reservoir at each end point of the bulk but particles can be injected from them to any site of the bulk or they can be removed from any site of the bulk to one of the reservoirs.  We will distinguish two cases: when $p(\cdot)$ has finite variance and when $p(\cdot)$ has infinite variance.

\subsection{Illustration of the dynamics}
\label{sec:illustration}

In this section we draw some pictures to illustrate more easily the dynamics that we defined in the previous subsection. The particles at the bulk are coloured in gray  and the particles at the  two reservoirs are coloured in blue. We also added the clocks only at the bonds where there are particles but we note that the clocks are present in all bonds of the form $\{x,x+1\}$. Whenever there is a ring of a clock we see some red lines on top of the corresponding clock and the jump rates are indicated above the corresponding jumps which are represented by arrows. 

In the first picture below, we take $N=11$ and the initial configuration is $\eta_0=(0,0,1,0,0,1,0,0,1,0)$.  Note that this initial  configuration changes only if one of the clocks associated to  bonds containing the sites $x=3,6,9$ rings (which makes the corresponding particle to displace one position to the left or right of it) or if the clocks at the boundary sites $x=0$ (resp. $x=11$) ring   (which makes a particle  get into the system  at the site $x=1$ (resp. $x=10$).

 \begin{center}
 \begin{tikzpicture}[thick, scale=0.85][h!]
 \draw[latex-] (-6.5,0) -- (5.5,0) ;
\draw[-latex] (-6.5,0) -- (5.5,0) ;
\foreach \x in  {-6,-5,-4,-3,-2,-1,0,1,2,3,4,5}
\draw[shift={(\x,0)},color=black] (0pt,0pt) -- (0pt,-3pt) node[below] 
{};
\node[ball color=black!30!,shape=circle,minimum size=0.7cm] (A) at (0,0.4) {};
       \node[shape=circle,minimum size=0.6cm] (N) at (-5,1.2) {};
       \node[shape=circle,minimum size=0.6cm] (P) at (-5,2.0) {};
    \node[ball color=black!30!,shape=circle,minimum size=0.7cm] (C) at (-3,0.4) {};
   \node[ball color=black!30!,shape=circle,minimum size=0.7cm] (E) at (3,0.4) {};
    \node[shape=circle,minimum size=0.7cm] (E) at (3,0.4) {};
         \node[ball color=columbiablue,shape=circle,minimum size=0.7cm] (L) at (-6,0.4) {};
          \node[shape=circle,minimum size=0.7cm] (Q) at (-6,2.0) {};
           \node[ball color=columbiablue,shape=circle,minimum size=0.7cm] (M) at (-6,1.2) {};
           \node[ball color=columbiablue,shape=circle,minimum size=0.7cm] (R) at (5,0.4) {};
          \node[shape=circle,minimum size=0.7cm] (S) at (5,1.2) {};
          \node[ball color=columbiablue,shape=circle,minimum size=0.7cm] (R) at (5,0.4) {};
          \node[ball color=columbiablue,shape=circle,minimum size=0.7cm] (S) at (5,1.2) {};
              \node[shape=circle,minimum size=0.7cm] (T) at (5,2.0) {};
               \node[shape=circle,minimum size=0.7cm] (U) at (4,3.6) {};
                 \node[shape=circle,minimum size=0.7cm] (V) at (5,3.0) {};
                  \node[shape=circle,minimum size=0.7cm] (W) at (4,3.0) {};
\node[ball color=columbiablue,shape=circle,minimum size=0.7cm] (T) at (5,2.0) {};
              \node[shape=circle,minimum size=0.7cm] (U) at (4,2.0) {};
              \path [->] (T) edge[bend right =60] node[above] {${\textcolor{red}{\kappa}}\beta/N^\theta$} (U);
              
                \path [->] (W) edge[bend left=60] node[above] {${\textcolor{red}{\kappa}}(1-\beta)/N^\theta$} (V);
              
               \path [->] (P) edge[bend right=60] node[above] {${\textcolor{red}{\kappa}}(1-\alpha)/N^\theta$} (Q);
               \path [->] (M) edge[bend left=60] node[above] {${\textcolor{red}{\kappa}}\alpha/N^\theta$} (N);
              
    \node[shape=circle,minimum size=0.6cm] (K) at (-1,0.4) {};
        \node[shape=circle,minimum size=0.6cm] (G) at (1,0.4) {};
  \path [->] (A) edge[bend right=60,draw=black] node[above] {$\frac 12$} (K);
    \path [->] (A) edge[bend left=60] node[above] {$\frac 12$} (G);
\tclock{0.6}{-0.8}{columbiablue}{0.8}{1}
\tclock{-0.6}{-0.8}{columbiablue}{0.8}{0}
\tclock{4.4}{-0.8}{columbiablue}{0.8}{0}
\tclock{-4.3}{-0.8}{columbiablue}{0.8}{0}
\tclock{-6.8}{-0.8}{columbiablue}{0.8}{0}
\tclock{5.6}{-0.8}{columbiablue}{0.8}{0}
\tclock{3.2}{-0.8}{columbiablue}{0.8}{0}
\tclock{-3.1}{-0.8}{columbiablue}{0.8}{0}
 \end{tikzpicture}
 \end{center}
 Suppose that the first clock to ring is associated to the bond $\{6,7\}$. Since there is a particle at the site $x=6$ it jumps to the site $x=7$ with rate $1/2$. See the figure below.

  \begin{center}
 \begin{tikzpicture}[thick, scale=0.85][h!]
 \draw[latex-] (-6.5,0) -- (5.5,0) ;
\draw[-latex] (-6.5,0) -- (5.5,0) ;
\foreach \x in  {-6,-5,-4,-3,-2,-1,0,1,2,3,4,5}
\draw[shift={(\x,0)},color=black] (0pt,0pt) -- (0pt,-3pt) node[below] 
{};
\node[ball color=black!30!,shape=circle,minimum size=0.7cm] (A) at (1,0.4) {};
       \node[shape=circle,minimum size=0.6cm] (N) at (-5,1.2) {};
       \node[shape=circle,minimum size=0.6cm] (P) at (-5,2.0) {};
    \node[ball color=black!30!,shape=circle,minimum size=0.7cm] (C) at (-3,0.4) {};
   \node[ball color=black!30!,shape=circle,minimum size=0.7cm] (E) at (3,0.4) {};
    \node[shape=circle,minimum size=0.7cm] (E) at (3,0.4) {};
         \node[ball color=columbiablue,shape=circle,minimum size=0.7cm] (L) at (-6,0.4) {};
          \node[shape=circle,minimum size=0.7cm] (Q) at (-6,2.0) {};
           \node[ball color=columbiablue,shape=circle,minimum size=0.7cm] (M) at (-6,1.2) {};
           \node[ball color=columbiablue,shape=circle,minimum size=0.7cm] (R) at (5,0.4) {};
          \node[shape=circle,minimum size=0.7cm] (S) at (5,1.2) {};
          \node[ball color=columbiablue,shape=circle,minimum size=0.7cm] (R) at (5,0.4) {};
          \node[ball color=columbiablue,shape=circle,minimum size=0.7cm] (S) at (5,1.2) {};
              \node[shape=circle,minimum size=0.7cm] (T) at (5,2.0) {};
               \node[shape=circle,minimum size=0.7cm] (U) at (4,3.6) {};
                 \node[shape=circle,minimum size=0.7cm] (V) at (5,3.0) {};
                  \node[shape=circle,minimum size=0.7cm] (W) at (4,3.0) {};
\node[ball color=columbiablue,shape=circle,minimum size=0.7cm] (T) at (5,2.0) {};
              \node[shape=circle,minimum size=0.7cm] (U) at (4,2.0) {};
              \path [->] (T) edge[bend right =60] node[above] {${\textcolor{red}{\kappa}}\beta/N^\theta$} (U);
              
                \path [->] (W) edge[bend left=60] node[above] {${\textcolor{red}{\kappa}}(1-\beta)/N^\theta$} (V);
              
               \path [->] (P) edge[bend right=60] node[above] {${\textcolor{red}{\kappa}}(1-\alpha)/N^\theta$} (Q);
               \path [->] (M) edge[bend left=60] node[above] {${\textcolor{red}{\kappa}}\alpha/N^\theta$} (N);
              
    \node[shape=circle,minimum size=0.6cm] (K) at (-1,0.4) {};
        \node[shape=circle,minimum size=0.6cm] (G) at (1,0.4) {};
\tclock{0.6}{-0.8}{columbiablue}{0.8}{0}
\tclock{1.9}{-0.8}{columbiablue}{0.8}{0}
\tclock{4.4}{-0.8}{columbiablue}{0.8}{0}
\tclock{-4.3}{-0.8}{columbiablue}{0.8}{0}
\tclock{-6.8}{-0.8}{columbiablue}{0.8}{0}
\tclock{5.6}{-0.8}{columbiablue}{0.8}{0}
\tclock{3.2}{-0.8}{columbiablue}{0.8}{0}
\tclock{-3.1}{-0.8}{columbiablue}{0.8}{0}
 \end{tikzpicture}
 \end{center}

Now let us suppose that the next clock to ring is associated to the oriented bond $\{0,1\}$.

  \begin{center}
 \begin{tikzpicture}[thick, scale=0.85][h!]
 \draw[latex-] (-6.5,0) -- (5.5,0) ;
\draw[-latex] (-6.5,0) -- (5.5,0) ;
\foreach \x in  {-6,-5,-4,-3,-2,-1,0,1,2,3,4,5}
\draw[shift={(\x,0)},color=black] (0pt,0pt) -- (0pt,-3pt) node[below] 
{};
\node[ball color=black!30!,shape=circle,minimum size=0.7cm] (A) at (1,0.4) {};
       \node[shape=circle,minimum size=0.6cm] (N) at (-5,1.2) {};
       \node[shape=circle,minimum size=0.6cm] (P) at (-5,2.0) {};
    \node[ball color=black!30!,shape=circle,minimum size=0.7cm] (C) at (-3,0.4) {};
   \node[ball color=black!30!,shape=circle,minimum size=0.7cm] (E) at (3,0.4) {};
    \node[shape=circle,minimum size=0.7cm] (E) at (3,0.4) {};
         \node[ball color=columbiablue,shape=circle,minimum size=0.7cm] (L) at (-6,0.4) {};
          \node[shape=circle,minimum size=0.7cm] (Q) at (-6,2.0) {};
           \node[ball color=columbiablue,shape=circle,minimum size=0.7cm] (M) at (-6,1.2) {};
           \node[ball color=columbiablue,shape=circle,minimum size=0.7cm] (R) at (5,0.4) {};
          \node[shape=circle,minimum size=0.7cm] (S) at (5,1.2) {};
          \node[ball color=columbiablue,shape=circle,minimum size=0.7cm] (R) at (5,0.4) {};
          \node[ball color=columbiablue,shape=circle,minimum size=0.7cm] (S) at (5,1.2) {};
              \node[shape=circle,minimum size=0.7cm] (T) at (5,2.0) {};
               \node[shape=circle,minimum size=0.7cm] (U) at (4,3.6) {};
                 \node[shape=circle,minimum size=0.7cm] (V) at (5,3.0) {};
                  \node[shape=circle,minimum size=0.7cm] (W) at (4,3.0) {};
\node[ball color=columbiablue,shape=circle,minimum size=0.7cm] (T) at (5,2.0) {};
              \node[shape=circle,minimum size=0.7cm] (U) at (4,2.0) {};
              \path [->] (T) edge[bend right =60] node[above] {${\textcolor{red}{\kappa}}\beta/N^\theta$} (U);
              
                \path [->] (W) edge[bend left=60] node[above] {${\textcolor{red}{\kappa}}(1-\beta)/N^\theta$} (V);
              
               \path [->] (P) edge[bend right=60] node[above] {${\textcolor{red}{\kappa}}(1-\alpha)/N^\theta$} (Q);
               \path [->] (M) edge[bend left=60] node[above] {${\textcolor{red}{\kappa}}\alpha/N^\theta$} (N);
              
    \node[shape=circle,minimum size=0.6cm] (K) at (-1,0.4) {};
        \node[shape=circle,minimum size=0.6cm] (G) at (1,0.4) {};
\tclock{0.6}{-0.8}{columbiablue}{0.8}{0}
\tclock{1.9}{-0.8}{columbiablue}{0.8}{0}
\tclock{4.4}{-0.8}{columbiablue}{0.8}{0}
\tclock{-4.3}{-0.8}{columbiablue}{0.8}{0}
\tclock{-6.8}{-0.8}{columbiablue}{0.8}{1}
\tclock{5.6}{-0.8}{columbiablue}{0.8}{0}
\tclock{3.2}{-0.8}{columbiablue}{0.8}{0}
\tclock{-3.1}{-0.8}{columbiablue}{0.8}{0}
 \end{tikzpicture}
 \end{center}

 Since there is no particle at the site $x=1$, a particle is injected into the system at the site $x=1$ with rate $\alpha\kappa N^{-\theta}$. See the figure below.

  \begin{center}
 \begin{tikzpicture}[thick, scale=0.85][h!]
 \draw[latex-] (-6.5,0) -- (5.5,0) ;
\draw[-latex] (-6.5,0) -- (5.5,0) ;
\foreach \x in  {-6,-5,-4,-3,-2,-1,0,1,2,3,4,5}
\draw[shift={(\x,0)},color=black] (0pt,0pt) -- (0pt,-3pt) node[below] 
{};
\node[ball color=black!30!,shape=circle,minimum size=0.7cm] (A) at (1,0.4) {};
       \node[shape=circle,minimum size=0.6cm] (N) at (-5,1.2) {};
       \node[shape=circle,minimum size=0.6cm] (P) at (-5,2.0) {};
    \node[ball color=black!30!,shape=circle,minimum size=0.7cm] (C) at (-3,0.4) {};
   \node[ball color=black!30!,shape=circle,minimum size=0.7cm] (E) at (3,0.4) {};
    \node[shape=circle,minimum size=0.7cm] (E) at (3,0.4) {};
         \node[ball color=columbiablue,shape=circle,minimum size=0.7cm] (L) at (-6,0.4) {};
          \node[shape=circle,minimum size=0.7cm] (Q) at (-6,2.0) {};
           \node[ball color=columbiablue,shape=circle,minimum size=0.7cm] (M) at (-6,1.2) {};
           \node[ball color=columbiablue,shape=circle,minimum size=0.7cm] (R) at (5,0.4) {};
          \node[shape=circle,minimum size=0.7cm] (S) at (5,1.2) {};
              \node[ball color=black!30!,shape=circle,minimum size=0.7cm] (AA) at (-5,0.4) {};
          \node[ball color=columbiablue,shape=circle,minimum size=0.7cm] (R) at (5,0.4) {};
          \node[ball color=columbiablue,shape=circle,minimum size=0.7cm] (S) at (5,1.2) {};
              \node[shape=circle,minimum size=0.7cm] (T) at (5,2.0) {};
               \node[shape=circle,minimum size=0.7cm] (U) at (4,3.6) {};
                 \node[shape=circle,minimum size=0.7cm] (V) at (5,3.0) {};
                  \node[shape=circle,minimum size=0.7cm] (W) at (4,3.0) {};
\node[ball color=columbiablue,shape=circle,minimum size=0.7cm] (T) at (5,2.0) {};
              \node[shape=circle,minimum size=0.7cm] (U) at (4,2.0) {};
              \path [->] (T) edge[bend right =60] node[above] {${\textcolor{red}{\kappa}}\beta/N^\theta$} (U);
              
                \path [->] (W) edge[bend left=60] node[above] {${\textcolor{red}{\kappa}}(1-\beta)/N^\theta$} (V);
              
               \path [->] (P) edge[bend right=60] node[above] {${\textcolor{red}{\kappa}}(1-\alpha)/N^\theta$} (Q);
               \path [->] (M) edge[bend left=60] node[above] {${\textcolor{red}{\kappa}}\alpha/N^\theta$} (N);
              
    \node[shape=circle,minimum size=0.6cm] (K) at (-1,0.4) {};
        \node[shape=circle,minimum size=0.6cm] (G) at (1,0.4) {};
\tclock{0.6}{-0.8}{columbiablue}{0.8}{0}
\tclock{1.9}{-0.8}{columbiablue}{0.8}{0}
\tclock{4.4}{-0.8}{columbiablue}{0.8}{0}
\tclock{-4.3}{-0.8}{columbiablue}{0.8}{0}
\tclock{-6.8}{-0.8}{columbiablue}{0.8}{0}
\tclock{5.6}{-0.8}{columbiablue}{0.8}{0}
\tclock{3.2}{-0.8}{columbiablue}{0.8}{0}
\tclock{-3.1}{-0.8}{columbiablue}{0.8}{0}
\tclock{-5.6}{-0.8}{columbiablue}{0.8}{0}
 \end{tikzpicture}
 \end{center}

Finally let us suppose that the next clock to ring is associated to the oriented bond $\{N,N-1\}$.  
   \begin{center}
 \begin{tikzpicture}[thick, scale=0.85][h!]
 \draw[latex-] (-6.5,0) -- (5.5,0) ;
\draw[-latex] (-6.5,0) -- (5.5,0) ;
\foreach \x in  {-6,-5,-4,-3,-2,-1,0,1,2,3,4,5}
\draw[shift={(\x,0)},color=black] (0pt,0pt) -- (0pt,-3pt) node[below] 
{};
\node[ball color=black!30!,shape=circle,minimum size=0.7cm] (A) at (1,0.4) {};
       \node[shape=circle,minimum size=0.6cm] (N) at (-5,1.2) {};
       \node[shape=circle,minimum size=0.6cm] (P) at (-5,2.0) {};
    \node[ball color=black!30!,shape=circle,minimum size=0.7cm] (C) at (-3,0.4) {};
   \node[ball color=black!30!,shape=circle,minimum size=0.7cm] (E) at (3,0.4) {};
    \node[shape=circle,minimum size=0.7cm] (E) at (3,0.4) {};
         \node[ball color=columbiablue,shape=circle,minimum size=0.7cm] (L) at (-6,0.4) {};
          \node[shape=circle,minimum size=0.7cm] (Q) at (-6,2.0) {};
           \node[ball color=columbiablue,shape=circle,minimum size=0.7cm] (M) at (-6,1.2) {};
           \node[ball color=columbiablue,shape=circle,minimum size=0.7cm] (R) at (5,0.4) {};
          \node[shape=circle,minimum size=0.7cm] (S) at (5,1.2) {};
              \node[ball color=black!30!,shape=circle,minimum size=0.7cm] (AA) at (-5,0.4) {};
          \node[ball color=columbiablue,shape=circle,minimum size=0.7cm] (R) at (5,0.4) {};
          \node[ball color=columbiablue,shape=circle,minimum size=0.7cm] (S) at (5,1.2) {};
              \node[shape=circle,minimum size=0.7cm] (T) at (5,2.0) {};
               \node[shape=circle,minimum size=0.7cm] (U) at (4,3.6) {};
                 \node[shape=circle,minimum size=0.7cm] (V) at (5,3.0) {};
                  \node[shape=circle,minimum size=0.7cm] (W) at (4,3.0) {};
\node[ball color=columbiablue,shape=circle,minimum size=0.7cm] (T) at (5,2.0) {};
              \node[shape=circle,minimum size=0.7cm] (U) at (4,2.0) {};
              \path [->] (T) edge[bend right =60] node[above] {${\textcolor{red}{\kappa}}\beta/N^\theta$} (U);
              
                \path [->] (W) edge[bend left=60] node[above] {${\textcolor{red}{\kappa}}(1-\beta)/N^\theta$} (V);
              
               \path [->] (P) edge[bend right=60] node[above] {${\textcolor{red}{\kappa}}(1-\alpha)/N^\theta$} (Q);
               \path [->] (M) edge[bend left=60] node[above] {${\textcolor{red}{\kappa}}\alpha/N^\theta$} (N);
              
    \node[shape=circle,minimum size=0.6cm] (K) at (-1,0.4) {};
        \node[shape=circle,minimum size=0.6cm] (G) at (1,0.4) {};
\tclock{0.6}{-0.8}{columbiablue}{0.8}{0}
\tclock{1.9}{-0.8}{columbiablue}{0.8}{0}
\tclock{4.4}{-0.8}{columbiablue}{0.8}{0}
\tclock{-4.3}{-0.8}{columbiablue}{0.8}{0}
\tclock{-6.8}{-0.8}{columbiablue}{0.8}{0}
\tclock{5.6}{-0.8}{columbiablue}{0.8}{1}
\tclock{3.2}{-0.8}{columbiablue}{0.8}{0}
\tclock{-3.1}{-0.8}{columbiablue}{0.8}{0}
\tclock{-5.6}{-0.8}{columbiablue}{0.8}{0}
 \end{tikzpicture}
 \end{center}
 
 \quad 
 
Since there is no particle at the site $x=N-1$, a particle is injected into the system at the site $x=N-1$ with rate $\beta\kappa N^{-\theta}$. See the figure below.
    \begin{center}
 \begin{tikzpicture}[thick, scale=0.85][h!]
 \draw[latex-] (-6.5,0) -- (5.5,0) ;
\draw[-latex] (-6.5,0) -- (5.5,0) ;
\foreach \x in  {-6,-5,-4,-3,-2,-1,0,1,2,3,4,5}
\draw[shift={(\x,0)},color=black] (0pt,0pt) -- (0pt,-3pt) node[below] 
{};
\node[ball color=black!30!,shape=circle,minimum size=0.7cm] (A) at (1,0.4) {};
\node[ball color=black!30!,shape=circle,minimum size=0.7cm] (AAA) at (4,0.4) {};
       \node[shape=circle,minimum size=0.6cm] (N) at (-5,1.2) {};
       \node[shape=circle,minimum size=0.6cm] (P) at (-5,2.0) {};
    \node[ball color=black!30!,shape=circle,minimum size=0.7cm] (C) at (-3,0.4) {};
   \node[ball color=black!30!,shape=circle,minimum size=0.7cm] (E) at (3,0.4) {};
    \node[shape=circle,minimum size=0.7cm] (E) at (3,0.4) {};
         \node[ball color=columbiablue,shape=circle,minimum size=0.7cm] (L) at (-6,0.4) {};
          \node[shape=circle,minimum size=0.7cm] (Q) at (-6,2.0) {};
           \node[ball color=columbiablue,shape=circle,minimum size=0.7cm] (M) at (-6,1.2) {};
           \node[ball color=columbiablue,shape=circle,minimum size=0.7cm] (R) at (5,0.4) {};
          \node[shape=circle,minimum size=0.7cm] (S) at (5,1.2) {};
              \node[ball color=black!30!,shape=circle,minimum size=0.7cm] (AA) at (-5,0.4) {};
          \node[ball color=columbiablue,shape=circle,minimum size=0.7cm] (R) at (5,0.4) {};
          \node[ball color=columbiablue,shape=circle,minimum size=0.7cm] (S) at (5,1.2) {};
              \node[shape=circle,minimum size=0.7cm] (T) at (5,2.0) {};
               \node[shape=circle,minimum size=0.7cm] (U) at (4,3.6) {};
                 \node[shape=circle,minimum size=0.7cm] (V) at (5,3.0) {};
                  \node[shape=circle,minimum size=0.7cm] (W) at (4,3.0) {};
\node[ball color=columbiablue,shape=circle,minimum size=0.7cm] (T) at (5,2.0) {};
              \node[shape=circle,minimum size=0.7cm] (U) at (4,2.0) {};
              \path [->] (T) edge[bend right =60] node[above] {${\textcolor{red}{\kappa}}\beta/N^\theta$} (U);
              
                \path [->] (W) edge[bend left=60] node[above] {${\textcolor{red}{\kappa}}(1-\beta)/N^\theta$} (V);
              
               \path [->] (P) edge[bend right=60] node[above] {${\textcolor{red}{\kappa}}(1-\alpha)/N^\theta$} (Q);
               \path [->] (M) edge[bend left=60] node[above] {${\textcolor{red}{\kappa}}\alpha/N^\theta$} (N);
              
    \node[shape=circle,minimum size=0.6cm] (K) at (-1,0.4) {};
        \node[shape=circle,minimum size=0.6cm] (G) at (1,0.4) {};
\tclock{0.6}{-0.8}{columbiablue}{0.8}{0}
\tclock{1.9}{-0.8}{columbiablue}{0.8}{0}
\tclock{4.4}{-0.8}{columbiablue}{0.8}{0}
\tclock{-4.3}{-0.8}{columbiablue}{0.8}{0}
\tclock{-6.8}{-0.8}{columbiablue}{0.8}{0}
\tclock{5.6}{-0.8}{columbiablue}{0.8}{0}
\tclock{3.2}{-0.8}{columbiablue}{0.8}{0}
\tclock{-3.1}{-0.8}{columbiablue}{0.8}{0}
\tclock{-5.6}{-0.8}{columbiablue}{0.8}{0}
 \end{tikzpicture}
 \end{center}
\quad

  We note that the bulk dynamics conserves the total number of particles in the bulk, but the boundary dynamics destroys this quantity since it injects/removes particles in/from the bulk.
  \subsection{Infinitesimal generator}
\label{sec:generator}
 The dynamics described above is Markovian and can be completely characterized by mean of its infinitesimal generator, see \cite{Li,Liggett2}.
 The Markov process  $\{\eta_t\,:\, t\geq 0\}$ whose dynamics we have just defined has infinitesimal generator denoted by $\mathcal L_N$ which  is expressed as 
\begin{equation}\label{generator_ssep}
\mathcal L_{N}=\mathcal L_{N,0}+\mathcal L_{N,b},
\end{equation}
where  $\mathcal L_{N,0}$ and $\mathcal L_{N,b}$ are given on functions $f:\Omega_N\rightarrow \bbbr$ by  
\begin{equation*}\label{lnb}
\begin{split}
(\mathcal L_{N,0}f)(\eta)\;=\;
\sum_{x=1}^{N-2}\frac 12 \Big(f(\eta^{x,x+1})-f(\eta)\Big)\,, 
\end{split}
\end{equation*}
\begin{equation}\label{generator_ssep_boundary}
\mathcal L_{N,b}=\mathcal L_{N,b}^1+\mathcal L_{N,b}^{N-1},
\end{equation}
where  for $x\in\{1,N-1\}$
\begin{equation*}\label{lno}
(\mathcal L_{N,b}^xf)(\eta)\;=\;\frac{{\kappa}}{N^\theta}c_{x}(\eta,r(x))\Big(f( \eta^x)-f(\eta)\Big)\,,
\end{equation*}
  $r(1)=\alpha$ and  $r(N-1)=\beta$,
\begin{equation}\label{tranformations}
(\eta^{x,y})(z) = 
\begin{cases}
\eta(z), \; z \ne x,y,\\
\eta(y), \; z=x,\\
\eta(x), \; z=y
\end{cases}
, \quad (\eta^x)(z)= 
\begin{cases}
\eta(z), \; z \ne x,\\
1-\eta(x), \; z=x,
\end{cases}
\end{equation}
and for $x\in\{1,N-1\}$
\begin{equation}\label{rate_c}
c_{x} (\eta;r(x)) :=\frac 12\left[ \eta(x)  \left(1-r(x) \right) + (1-\eta(x))r(x)\right].
\end{equation}

Note that the generator above splits into the sum of the generator $\mathcal L_{N,0}$ (which is related to the jumps in the bulk) and $\mathcal L_{N,b}$ (which is related to the jumps from the boundary or from the reservoirs). We will refer to the first one as the \emph{exchange dynamics} and the latter one as the \emph{flip dynamics}, because in $\mathcal L_{N,0}$ we exchange the occupation variables $\eta(x)$ and $\eta(x+1)$ and in $\mathcal L_{N,b}^x$ we flip the value of the occupation variable at $\eta(x)$.

We consider  the Markov process  speeded up in the  time scale $\Theta(N)$ and we note that the process  $\{\eta_{t\Theta(N)}\,:\,t\ge 0\} $ has  infinitesimal generator  given by $\Theta(N)\mathcal L_{N}$. To see this relation, let $\tilde{\mathcal L}_N$ be the generator of the process  $\{\eta_{t\Theta(N)}\,:\,t\ge 0\} $. By definition, for $f:\Omega_N\to\bbbr$,  we have that
\begin{equation}
\tilde {\mathcal L}_Nf =\lim_{s\to 0} \frac{\tilde{S}_{s}f-f}{s},
\end{equation}
where $\tilde{S}_s:=S_{s\Theta(N)}$ is the semigroup associated to $\tilde{ \mathcal L}_N$ and $S_s$  is the semigroup associated to $\mathcal L_N$. Then, 
\begin{equation}\begin{split}
\Theta (N)\mathcal L_N f =&\lim_{t\to 0} \Theta(N)\frac{S_{t}f-f}{t}
=\lim_{s\to 0}\Theta(N)\frac{S_{s\Theta(N)}f-f}{s\Theta (N)}=\tilde {\mathcal L}_Nf,
\end{split}
\end{equation} from where  we conclude that $\tilde{\mathcal L}_N:=\Theta(N)\mathcal L_N$.

We note that  $\eta_{t\theta(N)}$ depends on $\alpha$, $\beta$, $\theta$ and $\kappa $ but we will omit these indexes in order to simplify notation.  Fix $T>0$ and $\theta\in \bbbr$. Let $\mu_N$ be a probability measure in $\Omega_N.$ We denote by $\bbbp_{\mu _{N}}$ the probability measure in the Skorohod space $\mathcal D([0,T], \Omega_N)$ induced by the  Markov process $\{\eta_{t\Theta(N)}\,:\,t\geq{0}\}$ and the initial probability measure $\mu_N$ and we denote by $ {E}_{\bbbp_{\mu _{N}}}$ the expectation with respect to $\bbbp_{\mu _{N}}$.

Our goal in these notes is to analyse the impact of changing the strength of the reservoirs (by changing the value of  $\theta$) on the macroscopic behavior of the system. More precisely, we want to obtain the hydrodynamic equations of the process which will have different boundary conditions depending on the range of the parameter $\theta$ which rules the strength of the reservoirs. 
Before proceeding, in the next subsection we analyse the invariant measures for this model.

\subsection{Stationary measures}
 \label{sec:stat_mea}

For $\rho\in(0,1)$ we denote by $\nu^N_\rho$ the Bernoulli product measure in $\Omega_N$ with density $\rho$, that is, for $x\in\Lambda_N$:
\begin{equation}\label{eq:bernoulli_mea}
\nu^N_\rho\{\eta: \eta(x)=1\}=\rho.
\end{equation}
According to this measure the occupation variables $\{\eta(x)\}_{x\in\Lambda_N}$ are independent and for each $x\in\Lambda_N$ the random variable $\eta(x)$ has Bernoulli distribution of parameter $\rho$. 
When we restrict the parameters $\alpha$ and $\beta$ such that $\alpha=\beta=\rho$, then these measures are invariant for the dynamics described above. In fact, a stronger result is true, see the next lemma where we prove that that these measures are reversible.

\begin{lemma}\label{lem:bern_rev}
For $\alpha=\beta=\rho$ the Bernoulli product measures $\nu^N_\rho$ are reversible.
\end{lemma}

\begin{proof}
Fix two functions $f,g:\Omega_N\to\bbbr$. To prove the lemma, we need to show  that
\begin{equation}\label{eq:rev_cond}
\int_{\Omega_N}g(\eta)\mathcal L_Nf(\eta)d\nu^N_{\rho}=\int_{\Omega_N}f(\eta)\mathcal L_Ng(\eta)d\nu^N_{\rho}.
\end{equation}
Let us start with the exchange dynamics given by $\mathcal L_{N,0}$. In this case we need to check that
\begin{equation*}
\sum_{x\in\Lambda_N}\int_{\Omega_N}g(\eta)(f(\eta^{x,x+1})-f(\eta))d\nu^N_{\rho}=\sum_{x\in\Lambda_N}\int_{\Omega_N}f(\eta)(g(\eta^{x,x+1})-g(\eta))d\nu^N_{\rho}.
\end{equation*}
For that purpose note that, for fixed $x\in\Lambda_N$ and performing a change of variables $\xi=\eta^{x,x+1}$, we have that 
\begin{equation*}
\begin{split}
\int_{\Omega_N}g(\eta)f(\eta^{x,x+1})d\nu^N_{\rho}=&\sum_{\eta\in\Omega_N}g(\eta)f(\eta^{x,x+1})\nu^N_\rho(\eta)\\=&\sum_{\xi\in\Omega_N}g(\xi^{x,x+1})f(\xi)\frac{\nu^N_{\rho}(\xi^{x,x+1})}{\nu^N_\rho(\xi)}\nu^N_\rho(\xi).
\end{split}
\end{equation*}
Now note that
\begin{equation*}
\nu^N_\rho(\xi)=\prod_{x\in\Lambda_N}\rho^{\xi(x)}(1-\rho)^{1-\xi(x)}
\end{equation*}
so that 
\begin{itemize}
\item if $\xi(x)=1$ and $\xi(x+1)=0$,  denoting by  $\tilde \xi$ the configuration $\xi$ removing its values at $x$ and $x+1$ so that $\xi=(\tilde \xi, \xi(x),\xi(x+1))$, then $\nu^N_\rho(\xi)=\nu^N_\rho(\tilde \xi)\rho(1-\rho)$ and $\nu^N_\rho(\xi^{x,x+1})=\nu^N_\rho(\tilde \xi)(1-\rho)\rho$, so that 
\begin{equation}\label{price}\frac{\nu^N_{\rho}(\xi^{x,x+1})}{\nu^N_\rho(\xi)}=1.\end{equation}
\item  if $\xi(x)=0$ and $\xi(x+1)=1$, then $\nu^N_\rho(\xi)=\nu^N_\rho(\tilde \xi)(1-\rho)\rho$ and $\nu^N_\rho(\xi^{x,x+1})=\nu^N_\rho(\tilde \xi)\rho(1-\rho)$, so that \eqref{price} is also true.
\end{itemize}
Therefore,  we obtain that 
\begin{equation*}
\begin{split}
\int_{\Omega_N}g(\eta)f(\eta^{x,x+1})d\nu^N_{\rho}=\sum_{\xi\in\Omega_N}g(\xi^{x,x+1})f(\xi)\nu^N_\rho(\xi)=\int_{\Omega_N}g(\eta^{x,x+1})f(\eta)d\nu^N_{\rho},
\end{split}
\end{equation*} which proves \eqref{eq:rev_cond} for $\mathcal L_{N,0}.$ For the flip dynamics given by $\mathcal L_{N,b}$ we note, for the left boundary, that
\begin{equation*}
\begin{split}
\int_{\Omega_N}&g(\eta)c_1(\eta,\alpha)f(\eta^{1})d\nu^N_{\rho}\\=&\sum_{\eta\in\Omega_N}g(\eta)(1-\eta(1))\alpha f(\eta^{1})\nu^N_\rho(\eta)+\sum_{\eta\in\Omega_N}g(\eta)(1-\alpha)f(\eta^{1})\nu^N_\rho(\eta).
\end{split}
\end{equation*}
By the change of variables $\xi=\eta^1$, the previous expression can be written as 
\begin{equation*}
\sum_{\xi\in\Omega_N}f(\xi)\Big\{g(\xi^1)\xi(1)\alpha \frac{\nu^N_\rho(\xi^1)}{\nu^N_\rho(\xi)}+ g(\xi^1)(1-\xi(1))(1-\alpha) \frac{\nu^N_\rho(\xi^1)}{\nu^N_\rho(\xi)}\Big\}\nu^N_\rho(\xi).
\end{equation*}
A simple computation shows that if $\xi(1)=1$, then $\frac{\nu^N_\rho(\xi^1)}{\nu^N_\rho(\xi)}=\frac{1-\rho}{\rho}$ so that the previous expression can be written as
\begin{equation*}
\frac{\kappa}{N^\theta}\sum_{\xi\in\Omega_N}f(\xi)\Big\{g(\xi^1)\xi(1)\alpha \frac{1-\rho}{\rho}+g(\xi^1)(1-\xi(1))(1-\alpha)\frac{\rho}{1-\rho}\Big\}\nu^N_\rho(\xi),
\end{equation*}
from where we get, for $\alpha=\rho$, that
\begin{equation*}
\begin{split}
\int_{\Omega_N}g(\eta)c_1(\eta,\alpha)f(\eta^{1})d\nu^N_{\rho}=\int_{\Omega_N}g(\eta^1)c_1(\eta^1,\rho)f(\eta)d\nu^N_{\rho}.
\end{split}
\end{equation*}
The same computation can be done if $\xi(1)=0$, from where we conclude.
We can repeat the same computation for the right boundary and this  proves \eqref{eq:rev_cond} for $\mathcal{L}_{N,b}.$ This ends the proof of the lemma. \qed
\end{proof}

When $\alpha\neq \beta$, the Bernoulli product measures are not reversible nor invariant. A simple way to check the non-invariance is to argue as follows. Suppose that the measures $\nu_\rho^N$  are invariant. Then we know that for any function $f:\Omega_N\to\bbbr$ we have that
\begin{equation}\label{inv_cond}
\int_{\Omega_N}\mathcal L_N f(\eta)d\nu^N_\rho=0.
\end{equation}
But for $f(\eta)=\eta(1)$, a simple computation shows that
$\mathcal L_{N,0}f(\eta)=\frac 12(\eta(2)-\eta(1))$ and $\mathcal L_{N,b}f(\eta)=\frac{\kappa}{2N^\theta}[\alpha-\eta(1)]$, so that $$\int_{\Omega_N}\mathcal L_N f(\eta)d\nu^N_\rho=\frac{\kappa}{2N^\theta}(\alpha-\rho)$$ and this equals to $0$ iff $\alpha=\rho$. Analogously, repeating the same computations as above for $f(\eta)=\eta(N-1),$ we would conclude \eqref{inv_cond} iff $\beta=\rho$. But this contradicts the fact that $\alpha\neq \beta$.

When $\alpha\neq \beta$, since we have a finite state irreducible Markov process, then there exists a unique stationary measure that we denote by $\mu_{ss}.$ A way to get information about this measure is to use the matrix ansatz method introduced in \cite{Derrida1,Derrida2}.  The idea behind the method is the following. Let  $$f_{N-1}(\eta(1),\cdots,\eta(N-1))$$ denote the weight of the configuration  $\eta:=(\eta(1),\cdots,\eta(N-1))$ with respect to the stationary measure $\mu_{ss}$ and let us suppose that 
\begin{equation*}\label{eq:matrix_function}
f_{N-1}(\eta(1),\eta(2),\cdots,\eta(N-1)) = \textbf{w}^{T} X_{\eta(1)}X_{\eta(2)}\cdots X_{\eta(N-1)}\textbf{v},
\end{equation*}
where
\begin{equation*}\label{eq:matrix}
X_{\eta(x)} = \eta(x)D+(1-\eta(x))E,
\end{equation*}
and $D,E$ are matrices (which in general do not commute) and the vectors $\textbf{w}^{T},\textbf{v}$ are present in order to convert the matrix product into a scalar.  In the figure below we take $N=6 $ and we present a possible configuration $\eta=(0,1,0,1,1)$ whose corresponding weight is given by $f_{N-1}(\eta) = \textbf{w}^{T} EDEDD\textbf{v}.$
\begin{figure}[!htb]
\begin{center}
 \begin{tikzpicture}[thick, scale=0.85]
 \draw (-4.0,0) -- (2.0,0) ; 
\foreach \x in  {-4,-3,-2,-1,0,1,2} 
\draw[shift={(\x,0)},color=black] (0pt,3pt) -- (0pt,-3pt) node[below] {};

\draw[shift={(-4,-0.1)},color=black] (0pt,3pt) -- (0pt,-3pt) node[below] {$\downarrow$};
\draw (-4,-1) node[] {$\textbf{w}^{T}$};
	
\draw[shift={(-3,-0.1)},color=black] (0pt,3pt) -- (0pt,-3pt) node[below] {$\downarrow$};
\draw (-3,-1) node[] {$E$};	

\draw[shift={(-2,-0.1)},color=black] (0pt,3pt) -- (0pt,-3pt) node[below] {$\downarrow$};
\draw (-2,-1) node[] {$D$};	

\draw[shift={(-1,-0.1)},color=black] (0pt,3pt) -- (0pt,-3pt) node[below] {$\downarrow$};
\draw (-1,-1) node[] {$E$};

\draw[shift={(0,-0.1)},color=black] (0pt,3pt) -- (0pt,-3pt) node[below] {$\downarrow$};
\draw (0,-1) node[] {$D$};

\draw[shift={(1,-0.1)},color=black] (0pt,3pt) -- (0pt,-3pt) node[below] {$\downarrow$};
\draw (1,-1) node[] {$D$};

\draw[shift={(2,-0.1)},color=black] (0pt,3pt) -- (0pt,-3pt) node[below] {$\downarrow$};
\draw (2,-1) node[] {$\textbf{v}$};
  
    \node[shape=circle,draw=white,minimum size=0.4cm] 
(K1) at (-3.0,0.4) {};
    \node[ball color=black!30!,shape=circle,minimum size=0.7cm] (B) at (-2.0,0.4) {};
    \node[shape=circle,draw=white,minimum size=0.4cm] 
(K2) at (0.0,0.4) {};
 \node[ball color=black!30!,shape=circle,minimum size=0.7cm] (D) at (0,0.4) {};
    
    \node[ball color=black!30!,shape=circle,minimum size=0.7cm] (D) at (1.0,0.4) {};
\end{tikzpicture}
\end{center}
\end{figure}
Let $P(\eta(1),\eta(2),\cdots,\eta(N-1))$ be the normalized weight of the configuration $\eta:=(\eta(1),\cdots,\eta(N-1))$ with respect to the stationary state $\mu_{ss}$, which is given by
\begin{equation*}\label{eq:m_a_stat}
P(\eta(1),\eta(2),\cdots,\eta(N-1)) = \frac{f_{N-1}(\eta(1),\eta(2),\cdots,\eta(N-1))}{Z_{N-1}},
\end{equation*}
where $Z_{N-1}$ is the sum of the weights of the $2^{N-1}$ possible configurations in  $\Omega_N$:
\begin{equation*}\label{eq:ZN1}
Z_{N-1}= \sum_{\eta(1)\in \{ 0,1\} }\cdots\sum_{\eta(N-1)\in \{0,1\} }f_{N-1}(\eta(1),\eta(2),\cdots,\eta(N-1)).
\end{equation*}
From the definition of $f_{N-1}$, we have that
\begin{equation*}\label{eq:m_a_stat_2}
P(\eta(1), \eta(2),\cdots,\eta(N-1))=\frac{\textbf{w}^{T} X_{\eta(1)}X_{\eta(2)}\cdots X_{\eta(N-1)}\textbf{v}}{Z_{N-1}},
\end{equation*}
and the normalization  can be written as 
\begin{equation}\label{eq:norm}
\begin{split}
Z_{N-1} =& \; \sum_{\eta(1)\in \{ 1,0 \}}\cdots \sum_{\eta(N-1)\in \{ 1,0 \}}\textbf{w}^{T} X_{\eta(1)}X_{\eta(2)}\cdots X_{\eta(N-1)}\textbf{v}\\
=& \sum_{\eta(1)\in \{ 1,0 \}}\cdots \sum_{\eta(N-2)\in \{ 1,0 \}}\textbf{w}^{T} X_{\eta(1)}X_{\eta(2)}\cdots X_{\eta(N-2)}(D+E)\textbf{v} \\
=& \cdots = \textbf{w}^{T}(D+E)^{N-1}\textbf{v}.
\end{split}
\end{equation}
Let us now impose  conditions on the matrices $D$ and $E$. For that purpose, let $C=D+E$. The expectation of the  occupation variable at the site $x$, with respect to the stationary state $\mu_{ss}$, is given by 
\begin{equation}\label{eq:dens_ssep}
\begin{split}
\rho_{ss}^N(x)&=\int_{\Omega_N}\eta(x)d\mu_{ss} \\
&= \frac{\sum_{\eta(1)\in \{1,0\} }\cdots \sum_{\eta(N-1)\in \{1,0\} }\eta(x)f_{N-1}(\eta(1),\cdots,\eta(N-1))}{Z_{N-1}}\\
 &=\frac{1}{Z_{N-1}} \sum_{\eta(1)\in \{ 1,0 \} }\cdots \sum_{\eta(N-1)\in \{ 1,0 \}}\Big[\textbf{w}^{T}\prod_{j=1}^{x-1}X_{\eta(j)}D \prod_{j=x+1}^{N-1}X_{\eta(j)}\textbf{v}\Big] \\ 
&= \frac{\textbf{w}^{T}C^{x-1}DC^{N-1-x}\textbf{v}}{{\textbf{w}^{T}C^{N-1}\textbf{v}}}.
\end{split}
\end{equation} 
The function $\rho_{ss}^N(\cdot)$ is called the stationary empirical density profile since it is the average with respect to the stationary measure $\mu_{ss}$, otherwise we refer to it as the empirical density profile. 
Note that above the sum does not contain the factor  $\eta(x)\in \{1,0\}$ since the expectation is non-zero iff   $\eta(x)=1$.
We can also compute the expectation of the product of two point occupation variables at the sites $x$ and $y$, with respect to the stationary state $\mu_{ss}$, that is, for  $1\leq x < y \leq N-1$, we have that
\begin{equation}\label{eq:esp_matrix}
\begin{split}
\int_{\Omega_N}\eta(x)&\eta(y)d\mu_{ss}  =\\& =\frac{\sum_{\eta(1)\in \{ 0,1 \}}\cdots\sum_{\eta(N-1)\in \{ 0,1 \}}\eta(x)\eta(y)f_{N-1}(\eta(1),\cdots,\eta(N-1))}{Z_{N-1}} \nonumber \\
&=  \frac{\textbf{w}^{T}C^{x-1}DC^{y-x-1}DC^{N-1-y}\textbf{v}}{\textbf{w}^{T}C^{N-1}\textbf{v}}.
\end{split}
\end{equation}
Therefore,  the two point correlation function, with respect to the stationary state $\mu_{ss}$, is given   on $1\leq x < y \leq N-1$   by
\begin{equation}\label{eq:corr_matrix}
\begin{split}
\varphi_{ss}^N(x,y):=&\int_{\Omega_N}(\eta(x)-\rho_{ss}^N(x))(\eta(y)-\rho_{ss}^N(y))d\mu_{ss}\\
=& \;\; \frac{\textbf{w}^{T}C^{x-1}DC^{y-x-1}DC^{N-1-y}\textbf{v}}{\textbf{w}^{T}C^{N-1}\textbf{v}} \\
-& \;\; \frac{\textbf{w}^{T}C^{x-1}DC^{N-1-x}\textbf{v}}{\textbf{w}^{T}C^{N-1}\textbf{v}}.\frac{\textbf{w}^{T}C^{y-1}DC^{N-1-y}\textbf{v}}{\textbf{w}^{T}C^{N-1}\textbf{v}}.
\end{split}
\end{equation} 
A simple computation (see \cite{de_Paula}) shows that for the dynamics that we are considering in this section, the matrices $D,E$ and the vectors $\textbf{w}^{T}, \textbf{v}$ satisfy the  following relations:
\begin{equation}\label{eq:algSLOW}
\begin{split}
&DE-ED=D+E=C,\\
&\textbf{w}^{T}\left[ \frac{\kappa \alpha}{2N^{\theta}}E-\frac{\kappa(1-\alpha)}{2N^{\theta}}D \right] = \textbf{w}^{T}, \\ 
&\left[ \frac{\kappa(1-\beta)}{2N^{\theta}}D - \frac{\kappa \beta}{2N^{\theta}}E \right]\textbf{v} = \textbf{v}.
\end{split}
\end{equation}
We note that the equations above also show that  $$C(D+I)=(D+E)(D+I) = DD+D+ED+E,$$  and that $C(D+I)=DD+DE=DC$. Analogously we have that  $CD = (D-I)C$.
Using \eqref{eq:norm}, we obtain that $Z_{N-1}$  is given by 
\begin{equation*}
Z_{N-1} = \frac{1}{(\alpha-\beta)^{N-1}}\frac{\Gamma(2N^{\theta}+N-1)}{\Gamma(2N^{\theta})},
\end{equation*}
where  $\Gamma(\cdot)$ denotes the Gamma function.
For the details on these computations we refer the interested reader to \cite{de_Paula}. Now, in \eqref{eq:dens_ssep},  by writing $DC^{N-1-x}=DC C^{N-2-x}$ and using the fact that $C(D+I)=DC$ we obtain 
\begin{equation*}
\rho_{ss}^N(x) = \,\,\frac{\textbf{w}^{T}C^{x-1}C(D+I)C^{N-2-x}\textbf{v}}{Z_{N-1}} 
=\frac{\textbf{w}^{T}C^{x}{DC^{N-2-x}}\textbf{v}}{Z_{N-1}} + \frac{\textbf{w}^{T}C^{N-2}\textbf{v}}{Z_{N-1}}.
\end{equation*}
Repeating the  procedure above  and using the explicit expression for $Z_{N-1}$ given above, we obtain a simple expression for $\rho^N_{ss}(x)$ given by 
\begin{equation}\label{eq:dens_SLOW}
\rho_{ss}^N(x) =\beta + (N-x)\frac{\alpha-\beta}{2N^{\theta}+N-2} + (N^{\theta}-1)\frac{\alpha-\beta}{2N^{\theta}+N-2}.
\end{equation}
In fact last identity can be rewritten as 
\begin{equation*}
\rho_{ss}^N(x) =\frac{\kappa(\beta-\alpha)x}{2N^{\theta}+N-2} +\alpha+ \frac{\kappa(\beta-\alpha)x}{2N^{\theta}+N-2}\Big(\frac{N^\theta}{\kappa}-1\Big).
\end{equation*}
Analogously, from a simple, but long computation (see  \cite{de_Paula}), we have that 
\begin{equation*}
\begin{split}
\int_{\Omega_N}\eta(x)\eta(y)d\mu_{ss} =\beta \rho_{ss}^N(x)+ (N-y+N^{\theta}-1)\frac{\alpha-\beta}{2N^{\theta}+N-2}\rho_{ss}^{N-1}(x),
\end{split}
\end{equation*}
and from \eqref{eq:dens_SLOW}, we obtain
\begin{equation*}\label{eq:dens_XY_SLOW}
\begin{split}
\int_{\Omega_N}\eta(x)&\eta(y)d\mu_{ss}= \beta\left[ \frac{\beta(x+N^{\theta}-1)+\alpha(N-x+N^{\theta}-1)}{2N^{\theta}+N-2} \right]\\
+&  \frac{(N-y+N^{\theta}-1)(\alpha-\beta)}{2N^{\theta}+N-2}\left[ \frac{\beta(x+N^{\theta}-1)+\alpha(N-x+N^{\theta}-2)}{2N^{\theta}+N-3} \right].
\end{split}
\end{equation*}
Putting together last expresssions and doing simple, but long, computations we conclude that
\begin{equation}\label{eq:corr_est_stat}
\begin{split}
\varphi_{ss}^N(x,y)=-\frac{(\alpha-\beta)^{2}(x+N^{\theta}-1)(N-y+N^{\theta}-1)}{(2N^{\theta}+N-2)^{2}(2N^{\theta}+N-3)}.
\end{split}
\end{equation}
From the previous identity it follows that
\begin{equation}\label{eq:bound_stat_corr}
\max_{x<y}|\varphi_{ss}^N(x,y)| = 
\begin{cases}
O\Big(\frac{N^\theta}{N^2}\Big), \; \theta<1,\\
O\Big(\frac {1}{N^\theta}\Big), \; \theta\geq 1,
\end{cases}\to_{N\to\infty}0.
\end{equation}
This means that as the size of the bulk tens to infinity, the two point correlation function vanishes. In the next subsection we analyse the empirical profile and the two point correlation function for more general initial measures.

\subsection{Empirical profile and  correlations}
\label{sec:empirical_prof}
Before stating the hydrodynamic limit result we explain here how to have a guess on the form of the hydrodynamic equations by using the \emph{empirical profile}, which was defined above in the case of the measure $\mu_{ss}$. Now we generalize its definition.  For a measure $\mu_N$ in $\Omega_N$ and for each $x\in\Lambda_N$ we denote by $\rho_t^N(x)$ the empirical profile at the site $x$, given by

\begin{equation*}\label{eq:rho_t}
\rho^N_t(x)\;=\;{E}_{\bbbp_{\mu_N}}[\eta_{tN^2}(x)]\,.
\end{equation*}
We extend this definition to the boundary by setting 
 \begin{equation*}\label{eq:ext_bound}
 \rho^N_t(0)\;=\;\alpha\mbox{ and }\rho^N_t(N)\;=\;\beta\,, \mbox{ for all }t\geq 0\,. 
\end{equation*} 
Note that since $\mu_{ss}$ is a stationary measure the stationary empirical profile $\rho_{ss}^N(\cdot)$
 does not depend on time, but now since $\mu_N$ is a general measure the empirical profile $\rho^N_t(\cdot)$ depends on time. 
 From Kolmogorov's backward equation we know that $\rho_t^N(\cdot)$ is a solution of 
 $$ \partial_t \rho_t^N(x)= {E}_{\bbbp_{\mu_N}}[\mathcal L_N\eta_{tN^2}(x)].$$ A simple computation shows that $$\mathcal L_N\eta(x)=j_{x-1,x}(\eta)-j_{x,x+1}(\eta)$$ where 
 for $x\in\Lambda_N$, the quantity $j_{x,x+1}(\eta)$ denotes the microscopic current at the bond $\{x,x+1\}$, which is given by the difference between the jump rate from $x$ to $x+1$ and the jump rate from $x+1$ to $x$. Note that for $x=0$ (resp. $x=N-1$) $j_{x,x+1}$ is equal to the creation rate minus the annihilation rate   at the site $x=1$ (resp. $x=N-1$).  Therefore
 \begin{equation}
 \begin{split}
& j_{0,1}(\eta)=\frac{\kappa}{2N^\theta}(\alpha-\eta(1)),\\
& j_{x,x+1}(\eta)=\frac{1}{2}(\eta(x)-\eta(x+1)), \forall x\in\{1,...,N-2\}\\
& j_{N-1,N}(\eta)=\frac{\kappa}{2N^\theta}(\eta(N-1)-\beta).
 \end{split}
 \end{equation}
  A simple computation shows that $\rho_t^N(\cdot)$ is a solution of the equation
\begin{equation}\label{eq:disc_heat}
\left\{
\begin{array}{ll}
 \partial_t \rho_t^N(x) \;= \; \big(N^2\mathcal B^\theta_N \rho_t^N\big)(x)\,, \;\; x\in\Lambda_N\,,\;\;t \geq 0\,,\\
 \rho_t^N(0)\;=\alpha\, \,,\;\;t \geq 0,\\ \rho^N_t(N)\;=\;\beta\,,\;\;t \geq 0,
\end{array}
\right.
\end{equation}
 where the operator $\mathcal B_N^\theta$ acts on functions $f:\Lambda_N\cup \{0,N\}\to\bbbr$ as
\begin{equation*}\label{disc_heat}
\left\{
\begin{array}{ll}
N^2(\mathcal B^\theta_Nf)(x)\;=\;\frac 12\Delta_Nf(x)\,, ~~\textrm{ for } x\in\{2,...,N-2\},\\
N^2(\mathcal B^\theta_Nf)(1)=\frac{N^2}{2}(f(2)-f(1))+\frac{{\kappa}N^2}{2N^\theta}(f(0)-f(1)),\\
N^2(\mathcal B^\theta_Nf)(N-1)=\frac{N^2}{2}(f(N-2)-f(N-1))+\frac{{\kappa}N^2}{2N^\theta}(f(N)-f(N-1)).
\end{array}
\right.
\end{equation*}
Above $\Delta_Nf$ denotes the discrete Laplacian of $f(\cdot)$ which is given on $x\in\Lambda_N$ by \begin{equation}\label{eq:disc_lap}
\Delta_Nf(x)=f(x+1)+f(x-1)-2f(x).
\end{equation}
Note that for $\theta=0$ the operator $\mathcal B^\theta_N$ is basically the discrete laplacian but when $\theta\neq 0$ we see some distortion at the boundary due to the mechanism of creation and annihilation. 

A simple computation shows that the stationary solution of \eqref{eq:disc_heat} is given by
$$\rho^N_{ss}(x)={E}_{\bbbp_{\mu_{ss}}}[\eta_{tN^2}(x)]=a_Nx+b_N$$
where  the coefficients $a_N$ and $b_N$ are equal to $$a_N =\frac{{\kappa}(\beta-\alpha)}{2N^\theta+{\kappa}(N-2)} \quad \textrm{and }\quad b_N=a_N\Big(\frac{N^\theta}{{\kappa}}-1\Big)+\alpha.$$
From this we get that
 \begin{equation}\label{eq:conve_emp_prof_stat_prof}
\lim_{N\to\infty}\max_{x\in\Lambda_N}\big|\rho^N_{ss}(x)-\bar{\rho}(\tfrac xN)\big|=0,
\end{equation}
where  for $q\in(0,1)$
\begin{equation}\label{eq: stat_sol}
\bar{\rho}(q)=\left\{
\begin{array}{ll}
(\beta-\alpha)q+\alpha\,;\,\theta<1,\\
\frac{{\kappa}(\beta-\alpha)}{2+{\kappa}}q+\alpha+\frac{\beta-\alpha}{2+{\kappa}}\,;\,\theta=1,\\
\frac{\beta+\alpha}{2}\,;\,\theta>1.
\end{array}
\right.
\end{equation}
Note that $\bar{\rho}(\cdot)$  will be a stationary solution of the hydrodynamic equation that we are looking for. See Figure \ref{fig:stat_sol} for a representation of $\bar{\rho}(\cdot)$.

Now we obtain information about the two point correlation function. 
Let $$V_N= \{ (x,y)\in \{ 0,...,N\}^{2}: 0<x<y<N \},$$ and its boundary $$\partial V_N = \{ (x,y)\in \{ 0,...,N \}^{2}: x=0 \,\, \textrm{or} \,\, y=N \}.$$ See Figure \ref{fig:set_V}.

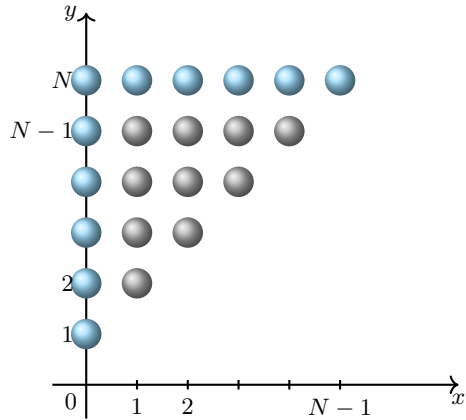
\begin{figure}[!htb]
\centering
\begin{tikzpicture}[thick, scale=0.9]
\draw[->] (-0.5,0)--(5.5,0) node[anchor=north]{$x$};
\draw[->] (0,-0.5)--(0,5.5) node[anchor=east]{$y$};
\begin{scope}[scale=0.75]
\foreach \x in {1,...,4} 
	\foreach \y in {\x,...,4}
		\shade[ball color=black!30!](\x,1+\y) circle (0.3);
		
\foreach \x in {0,...,5} 
	\shade[ball color=columbiablue](\x,6) circle (0.3); 
	 
\foreach \y in {1,...,5} 
	\shade[ball color=columbiablue](0,\y) circle (0.3);  
\end{scope}	
\draw (0,0) node[anchor=north east] {$0$};
\draw (0.75,2pt)--(0.75,-2pt) node[anchor=north] {$1$};
\draw (1.5,2pt)--(1.5,-2pt) node[anchor=north]{$2$};
\draw (2.25,2pt)--(2.25,-2pt);
\draw (3,2pt)--(3,-2pt);
\draw (3.75,2pt)--(3.75,-2pt) node[anchor=north]{\small $N-1$};
\draw (-0.05,.75) node[anchor=east] {$1$};
\draw (-0.05,1.5) node[anchor=east]{$2$};
\draw (-0.05,3.75) node[anchor= east]{$N-1$};
\draw (-0.05,4.5) node[anchor=east]{$N$};
\end{tikzpicture}
\caption{The set $V_N$ and its boundary  $\partial V_N$}
\label{fig:set_V}
\end{figure}

For $x<y\in V_N$, let $\varphi^N_{t}(x,y)$ denote the two point correlation function between the occupation sites at $x<y \in V_N$ which is defined by 
\begin{equation}\label{eq:corr_function}
\varphi_t^N(x,y)={E}_{\bbbp_{\mu_{N}}}[(\eta_{tN^2}(x)-\rho_{t}^N(x))(\eta_{tN^2}(y)-\rho_{t}^N(y))].
\end{equation}
Doing some simple, but long, computations we see that $\varphi^N_{t}$ is a solution of
\begin{equation}\label{eq:disc_eq_corr}
\begin{cases}
\partial_t \varphi_t^N(x,y)=n^2 \mathcal A^\theta_N \varphi^n_t(x,y) +g_t^N(x,y), & \textrm{ for } (x,y)\in V_N,\; t>0,\\
\varphi_t^N(x,y)=0, & \textrm{ for } (x,y)\in \partial  V_N, \;t>0,\\
\varphi_0^N(x,y)= E_{\mu_N}[\eta_0(x)\eta_0(y)]-\rho_0^N(x)\rho_0^N(y), & \textrm{ for } (x,y)\in V_N\cup \partial V_N,\\
\end{cases}
\end{equation}
where $\mathcal A_N^\theta$ is the linear operator that acts on functions $f:V_N\cup \partial V_N \to\bbbr$ as
 \begin{equation*}
 (\mathcal A_N^\theta f)(u)=\sum_{v\in V_N} c^\theta_N(u,v)\big[ f(v)-f(u)\big],
 \end{equation*}
 with
 \begin{equation*}
c^\theta_N(u,v)\;=\; \begin{cases}
 1, & \textrm{ if } \; \Vert u-v\Vert =1 \textrm{ and } \;u, v\in  V_N,\\
 N^{-\theta}, & \textrm{ if }\;  \Vert u-v\Vert=1\textrm{ and } u\in V_N,\; v\in \partial V_N,\\ 
 0,& \textrm{ otherwise, }
 \end{cases}
 \end{equation*}
 for $\theta\geq 0$.
Note that $\mathcal A^\theta_N$ is the generator of a random walk in $V_N\cup \partial V_N$ with jump rates given by $c_N^\theta(u,v)$, which is absorbed at $\partial V_N$. Above   $\Vert \cdot \Vert$ denotes the supremum norm, 
$$g^N_{t}(x,y) = -(\nabla_{N}^{+}\rho_{t}^{N}(x))^{2} \delta_{y=x+1}$$ and  \begin{equation}\label{discrete_gradient_+}\nabla_{N}^{+}\rho_{t}^{N}(x)= N(\rho_{t}^{N}(x+1)-\rho_{t}^{N}(x)).\end{equation}
 In this case, contrarily to the empirical profile, is is quite complicated to obtain an expression for the stationary solution of \eqref{eq:disc_eq_corr}. Nevertheless, we  note that a simple, but long, computation shows that the solution obtained in \eqref{eq:corr_est_stat}, in the case where the starting measure is the stationary state $\mu_{ss}$,  is in fact the stationary solution of  \eqref{eq:disc_eq_corr}. We  also observe that in \cite{FGN_Robin} it was obtained the following bound on the case $\theta=1$ for a general initial measure $\mu_N$. There it was proved that there exists a constant $C>0$ such that
\begin{equation}\label{eq:corr_bound_robin}
\sup_{t\geq 0}\max_{(x,y)\in V_N}|\varphi_t^N(x,y)|\;\leq \; \frac{C}{N}\,,
\end{equation}
but we note that the bounds on the  other regimes of $\theta$ are still open, apart the case $\theta=0$ where the bound above is given by $C/N^2$, see \cite{LMO}.

\subsection{Hydrodynamic equations} \label{sec:hyd_eq_ssep}

From now on up to the rest of these notes we fix a finite time horizon $[0,T]$.  We denote by $\langle \cdot,\cdot\rangle _{\mu}$ the inner product  in $L^{2}([0,1])$ with respect to a measure $\mu$ defined in $[0,1]$ and $\| \cdot\|_{L^2 (\mu) }$ is the corresponding norm. We note that  when $\mu$ is the Lebesgue measure we write $\langle \cdot,\cdot\rangle$ and $\| \cdot\|_{L^2}$ for the corresponding norm. 

We denote by $C^{m,n}([0, T] \times [0,1])$ the set of functions defined on $[0, T] \times [0,1] $ that are $m$ times differentiable on the first variable and $n$ times differentiable  on the second variable. For a function $G:=G(s,q)\in C^{m,n}([0, T] \times [0,1])$ we denote by $\partial _{s}G$  its derivative with respect to the time variable $s$ and and by $\partial _{q}G$  its derivative with respect to the space variable $q$. For simplicity of notation we set $\Delta G:=\partial_{q}^{2}G$. We will also make use of the  set  $C^{m,n}_c ([0,T] \times [0,1])$ of functions $G \in C^{m,n}([0, T] \times[0, 1])$  such that for any time $s$ the function $G_s$ has a compact support included in $(0,1)$ and we denote by $C_{c}^{m}(0,1)$ (resp. $C_c^\infty (0,1)$) the set of all $m$ continuously differentiable (resp. smooth) real-valued  functions defined on $(0,1)$ with compact support.  The supremum norm is denoted by $\| \cdot \|_{\infty}$. Finally,  $C^{m,n}_0 ([0,T] \times [0,1])$ is the set of functions $G \in C^{m,n}([0, T] \times[0, 1])$  such that for any time $s$ the function $G_s$ vanishes at the boundary, that is, $G_s(0)=G_s(1)=0$.

Now we want to define the space where the solutions of the hydrodynamic  equations will live on, namely the Sobolev space $\mathcal H_1$ on $[0,1]$. For that purpose, we define the semi inner-product $\langle \cdot, \cdot \rangle_{1}$ on the set $C^{\infty} ([0,1])$ by 
\begin{equation}
\langle G, H \rangle_{1} = \int_{0}^1 (\partial_q G)(q) \, (\partial_q H)(q)  \, dq,
\end{equation}  
for $G,H\in C^{\infty} ([0,1])$
and  the corresponding semi-norm is denoted by $\| \cdot \|_{1}$. 

\begin{definition}
\label{Def. Sobolev space}
The Sobolev space $\mathcal{H}^{1}$ on $[0,1]$ is the Hilbert space defined as the completion of $C^\infty ([0,1])$ for the norm 
$$\| \cdot\|_{{\mathcal H}^1}^2 :=  \| \cdot \|_{L^2}^2  +  \| \cdot \|^2_{1}.$$
Its elements elements coincide a.e. with continuous functions.  

The space $L^{2}(0,T;\mathcal{H}^{1})$ is the set of measurable functions $f:[0,T]\rightarrow  \mathcal{H}^{1}$ such that 
$$\int^{T}_{0} \Vert f_{s} \Vert^{2}_{\mathcal{H}^{1}}ds< \infty. $$
\end{definition}

We can now give the definition of the weak solutions of the  hydrodynamic equations that will be derived for the symmetric simple exclusion process in contact with stochastic reservoirs. We start by giving the notion of a weak solution to the heat equation with Dirichlet boundary conditions which will be the notion that we will derive in the regime $\theta\in[0,1)$.
In what follows  $g:[0,1]\rightarrow [0,1]$ is a measurable function and it is the initial condition of all the partial differential equations that we define below, that is ${ \rho}_{0}(q)= g(q),$ for all $q\in(0,1)$.
\begin{definition}
\label{Def. Dirichlet source Condition-g_ssep}
We say that  $\rho:[0,T]\times[0,1] \to [0,1]$ is a weak solution of the heat equation with Dirichlet boundary conditions
 \begin{equation}\label{eq:Dirichlet source Equation-g_ssep}
 \begin{cases}
 &\partial_{t}\rho_{t}(q)=\frac 12\Delta\, {\rho} _{t}(q), \quad (t,q) \in [0,T]\times(0,1),\\
 &{ \rho} _{t}(0)=\alpha, \quad { \rho}_{t}(1)=\beta,\quad t \in [0,T],
 \end{cases}
 \end{equation}
 starting from  a measurable function $g:[0,1]\rightarrow [0,1]$, 
if the following two conditions hold:
\begin{enumerate}
\item $\rho \in L^{2}(0,T;\mathcal{H}^{1})$;

\item $\rho$ satisfies the weak formulation:
\begin{equation}\label{eq:Dirichlet_source_ integral-g_ssep}
\begin{split}
F_{Dir}:=\int_0^1 \rho_{t}(q)  G_{t}(q) \,dq & -\int_0^1 g(q)   G_{0}(q) \,dq \\
&- \int_0^t\int_0^1 \rho_{s}(q)\Big(\frac 12\Delta + \partial_s\Big) G_{s}(q)  \,dq \, ds\\
&+ \int_0^t\Big\{\frac \beta 2 \partial_qG_s(1)-\frac \alpha 2 \partial_q G_s(0)\Big\}\, ds=0,
\end{split}   
\end{equation}
for all $t\in [0,T]$ and any function $G \in C_0^{1,2} ([0,T]\times[0,1])$.
\end{enumerate}

\end{definition}

 In the regime $\theta<0$ we will make use of another notion of weak solution to the heat equation with Dirichlet boundary conditions which uses as input for test functions elements in the set $C_c^{1,2} ([0,T]\times[0,1])$. Since functions in that space have compact support, in order to get a proper notion of weak solution  we need to add an extra condition to  Definition \ref{Def. Dirichlet source Condition-g_ssep} (see 3. in Definition \ref{Def. Dirichlet_c source Condition-g_ssep}). 

\begin{definition}
\label{Def. Dirichlet_c source Condition-g_ssep}
We say that  $\rho:[0,T]\times[0,1] \to [0,1]$ is a weak solution of the heat equation with Dirichlet boundary conditions given in \eqref{eq:Dirichlet source Equation-g_ssep}, starting from  a measurable function $g:[0,1]\rightarrow [0,1]$, 
if the following three conditions hold:
\begin{enumerate}
\item $\rho \in L^{2}(0,T;\mathcal{H}^{1})$, 
\item $\rho$ satisfies the weak formulation:
\begin{equation}\label{eq:Dirichlet_c_source_ integral-g_ssep}
\begin{split}
F^c_{Dir}:=\int_0^1 \rho_{t}(q)  G_{t}(q) \,dq  &-\int_0^1 g(q)   G_{0}(q) \,dq \\
&- \int_0^t\int_0^1 \rho_{s}(q)\Big(\frac 12\Delta + \partial_s\Big) G_{s}(q)  \,dq\, ds=0,
\end{split}   
\end{equation}
for all $t\in [0,T]$ and any function $G \in C_c^{1,2} ([0,T]\times[0,1])$, 

\item  $\rho _{t}(0)=\alpha, \quad { \rho}_{t}(1)=\beta$
for all $t \in (0,T]$.
\end{enumerate}
\end{definition}

\begin{remark}
We note that \eqref{eq:Dirichlet_c_source_ integral-g_ssep} coincides with \eqref{eq:Dirichlet_source_ integral-g_ssep} by taking as input a test function $G\in C_c^{1,2} ([0,T]\times[0,1])$, since in this case $\partial_qG_s(0)=\partial_qG_s(1)=0$, so that the  last term in \eqref{eq:Dirichlet_source_ integral-g_ssep} vanishes.
\end{remark}

Now we introduce the notion of weak solution of the hydrodynamic equation that we will derive  in the case $\theta=1$. In this regime the boundary reservoirs are so slow and as a consequence, a different boundary condition appears. In the case of Dirichlet boundary conditions, the value of the profile $\rho_t(\cdot)$ is fixed to be equal to $\alpha$ at $0$ and $\beta$ at $1$. This is no longer the case when $\theta\geq 1$ as we will see later on.

\begin{definition}
\label{Def. Robin Condition-g_ssep}
We say that  $\rho:[0,T]\times[0,1] \to [0,1]$ is a weak solution of the heat equation with Robin boundary conditions 
 \begin{equation}\label{Robin Equation-g_ssep}
 \begin{cases}
 &\partial_{t}\rho_{t}(q)=\frac 12 \Delta\, {\rho} _{t}(q), \quad (t,q) \in [0,T]\times(0,1),\\
 &\partial_{q}\rho _{t}(0)=\kappa (\rho_{t}(0) -\alpha),\quad \partial_{q} \rho_{t}(1)=\kappa(\beta -\rho_{t}(1)),\quad t \in [0,T],
 \end{cases}
 \end{equation}
 starting from  a measurable function $g:[0,1]\rightarrow [0,1]$, 
if the following two conditions hold: 
\begin{enumerate}
\item $\rho \in L^{2}(0,T;\mathcal{H}^{1})$, 

\item $\rho$ satisfies the weak formulation:
\begin{equation}\label{eq:Robin integral-g_ssep}
\begin{split}
&F_{Rob}:=\int_0^1 \rho_{t}(q)  G_{t}(q) \,dq  -\int_0^1 g(q)   G_{0}(q) \,dq \\
& - \int_0^t\int_0^1 \rho_{s}(q) \Big(\frac 12\Delta + \partial_s\Big) G_{s}(q)  \,ds\, dq + \frac 12\int^{t}_{0}   \{\rho_{s}(1)\partial_q G_{s}(1)-\rho_{s}(0) \partial_q G_{s}(0) \} \, ds\\
& \qquad-\frac {\kappa} {2}\int^{t}_{0} \{ G_{s}(0)(\alpha -\rho_{s}(0)) +  G_{s}(1)(\beta -\rho_{s}(1)) \}\,  ds=0,
\end{split}   
\end{equation}
for all $t\in [0,T]$ and  any function $G \in C^{1,2} ([0,T]\times[0,1])$. 
\end{enumerate}
\end{definition}

In the regime $\theta=1$ the boundary reservoirs are so slow so that a type of Robin boundary condition appears. In this case it fixes the value of the flux through the system as being proportional to the difference of concentration. Note that, for example at $q=0$, the value $\partial_q\rho_t(0)$ corresponds to the flux of particles through the left boundary and $\kappa(\rho_t(0)-\alpha)$ corresponds to the difference of the concentration, since in this case, contrarily to what happens in the case of Dirichlet boundary conditions,  it is not true that $\rho_t(0)=\alpha$ (the value of the profile at the boundaries is not fixed!)

\begin{remark} \label{neumann_cond_rem}
Observe that in the case $\kappa=0$ the equation  above is the heat equation with Neumann boundary conditions and it is the hydrodynamic equation that we will derive in the case $\theta>1.$
\end{remark}

\begin{remark}\label{rem:uniq_weak_sol}
We observe that all the partial differential equations  defined above have a unique weak solution in the sense given above. We do not include the proof of this result in these notes but we refer the interested reader to \cite{BGJO} for the proof of the uniqueness in the case of Dirichlet boundary conditions and to \cite{Adriana} for the proof of the uniqueness in the case of Robin boundary conditions. 
\end{remark}
\subsubsection{- Deriving the weak formulation:}
We note that the weak formulation given in all the regimes above can be obtained from the formal expression of the corresponding partial differential equation in the following way. Take a test function  $G \in C^{1,2} ([0,T]\times[0,1])$ and multiply both sides of the equality 
$$\partial_{s}\rho_{s}(q)=\frac 12 \Delta\, {\rho} _{s}(q)$$
 by $G$ and then integrate in the time interval $[0,t]$ and in the space interval $[0,1]$ to get
\begin{equation}\label{eq:int}
\int_0^1\int_0^t  \partial_{s}\rho_{s}(q)G_s(q) \,ds \, dq=\int_0^1\int_0^t\frac 12\Delta\, {\rho} _{s}(q)G_s(q)  \,ds \, dq.
\end{equation}
To treat the term at the left hand side of last display, we perform an integration by parts in the time integral and we get to 
\begin{equation} \label{eq:lhs_pde_int}
\int_0^1 \rho_{t}(q)  G_{t}(q) \,dq  -\int_0^1 g(q)   G_{0}(q) \,dq - \int_0^t\int_0^1 \rho_{s}(q) \partial_sG_{s}(q)  \,ds \, dq.
\end{equation}
The term at the right hand side of \eqref{eq:int} can be treated by doing an integration by parts in the space integral and we get to
\begin{equation*}
\frac 12\int_0^t\partial_q\rho_s(1)G_s(1)-\partial_q\rho_s(0)G_s(0)\, ds-\frac 12\int_{0}^t\int_0^1 \partial_q\rho_s(q)\partial_qG_s(q)\, ds\, dq.
\end{equation*}
Now, we do another integration by parts in the integral in space at the term on the right hand side of last expression and we write the previous display as
\begin{equation}\label{eq:rhs_pde_int}
\begin{split}
&\frac 12\int_0^t\partial_q\rho_s(1)G_s(1)-\partial_q\rho_s(0) G_s(0)\, ds\\-&\frac 12\int_0^t \rho_s(1)\partial_qG_s(1)-\rho_s(0)\partial_qG_s(0)+\frac 12\int_{0}^t\int_0^1 \rho_s(q)\Delta G_s(q)\, ds\, dq.
\end{split}
\end{equation}
Putting together \eqref{eq:rhs_pde_int} and \eqref{eq:lhs_pde_int} we obtain
\begin{equation*}
\begin{split}
\int_0^1 \rho_{t}(q)  G_{t}(q) \,dq  -\int_0^1 g(q)   G_{0}(q) \,dq =&\int_{0}^t\int_0^1 \rho_s(q)\Big(\frac 12\Delta+\partial_s\Big) G_s(q)\, ds\, dq\\+&\frac 12\int_0^t\partial_q\rho_s(1)G_s(1)-\partial_q\rho_s(0) G_s(0)\, ds\\-&\frac 12\int_0^t \rho_s(1)\partial_qG_s(1)-\rho_s(0)\partial_qG_s(0)\, ds.
\end{split}
\end{equation*}
Now we obtain each one of the weak formulations given above. We start with the case where $G \in C_0^{1,2} ([0,T]\times[0,1])$ and we will derive \eqref{eq:Dirichlet_source_ integral-g_ssep}. For that purpose note that since $G$ vanishes at the boundary of $[0,1]$ and since $\rho_s(0)=\alpha$ and $\rho_s(1)=\beta$, the expression in the previous display becomes equivalent to $F_{Dir}=0$. 

On the other hand, if $G \in C_c^{1,2} ([0,T]\times[0,1])$,  then $G$  vanishes at the boundary of $[0,1]$ and $\partial_q G$ also vanishes at the boundary of $[0,1]$, so that for $\rho$ satisfying the Dirichlet boundary conditions of \eqref{eq:Dirichlet source Equation-g_ssep}, the expression in the display above  becomes equivalent to $F^c_{Dir}=0$.

Finally for $G \in C^{1,2} ([0,T]\times[0,1])$ and for $\rho$ satisfying the Robin boundary conditions of \eqref{Robin Equation-g_ssep}, the expression in the previous display becomes equivalent
 to $F_{Rob}=0$.
 \subsubsection{-Stationary solutions:}
Now we deduce the stationary solutions for each one of the equations given above. We start with \eqref{eq:Dirichlet source Equation-g_ssep}. For that purpose note that, denoting by $\bar\rho(\cdot)$ the stationary solution we have that $\Delta \bar\rho(t,q)=0$ implies that $\bar\rho(q)=aq+b$ for $a,b\in\bbbr$ and $q\in(0,1)$. 
Imposing the Dirichlet boundary conditions we arrive at $$a=(\beta-\alpha)\quad \textrm{and}\quad b=\beta,$$ so that 
\begin{equation}\label{eq:stat_sol_dir}
\bar\rho_{Dir}(q)=(\beta-\alpha)q+\alpha.
\end{equation}

On the other hand, imposing the Robin boundary conditions of \eqref{Robin Equation-g_ssep} we arrive at 
$$a=\frac{\kappa(\beta-\alpha)}{2+\kappa}\quad \textrm{and}\quad b=\alpha+\frac{\beta-\alpha}{2+\kappa},$$ so that for $q\in(0,1)$
\begin{equation}\label{eq:stat_sol_rob}\bar\rho_{Rob}(q)=\frac{\kappa(\beta-\alpha)}{2+\kappa}q+\alpha+\frac{\beta-\alpha}{2+\kappa}.
\end{equation}
Finally, if we impose the Neumann boundary conditions, any constant solution is a stationary solution of \eqref{Robin Equation-g_ssep} with $\kappa=0$ (which corresponds to the Neumann regime). In this case we note that the stationary solution is not unique. 
Below we draw the graph of these stationary solutions for a choice of $\alpha=0.2$ and $\beta=0.8$.
\begin{figure}[h]
    \centering
    \includegraphics[width=0.7\textwidth]{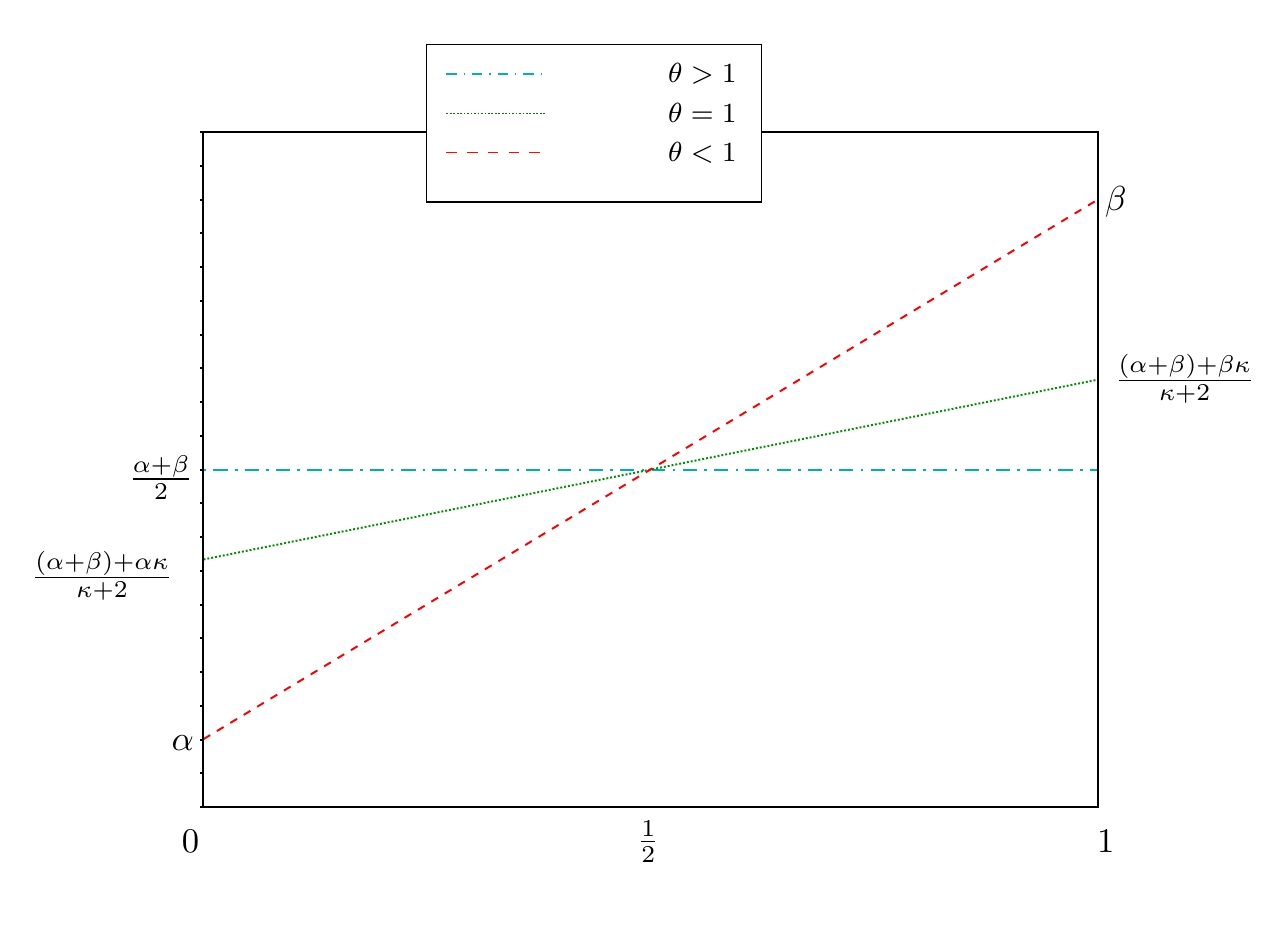}
    \caption{Stationary solutions of the hydrodynamic equations.}
    \label{fig:stat_sol}
\end{figure}

Now we give the explicit expression for the solution of each hydrodynamic equation.
\begin{proposition}
\label{prop:stat}
We have that:
\begin{enumerate}
\item The solution of \eqref{eq:Dirichlet source Equation-g_ssep} with initial condition $g(\cdot)$ is equal to 
 \begin{equation*}
\rho_t (q)= {\bar \rho}_{Dir} (q)+ \sum _{n=1}^{\infty} e^{-\frac{(n\pi)^2}{2} t}2\sin (n\pi  q ).
 \end{equation*}
\item The solution of \eqref{Robin Equation-g_ssep} with initial condition $g(\cdot)$ is equal to 
$$\rho_t(q) ={\bar \rho}_{Rob} (q)+ \sum _{n=1}^{\infty} C_{n}e^{-\frac{\lambda_{n}}{2}t}X_{n}(q),$$
where
\begin{equation}\label{basis}
X_n(q)=A_n\sin (\sqrt{\lambda_n} \,q )+ A_n\kappa \sqrt{\lambda_n} \cos(\sqrt{\lambda_n} \, q),
\end{equation} 
$A_n$ is a normalizing constant in such a way that $X_n$ has unitary $L^2([0,1])$-norm and $$C_{n} = \int_{0}^{1}(g(q)-{\bar \rho} (q))X_{n}(q)dq.$$
\end{enumerate}
\end{proposition}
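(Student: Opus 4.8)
The plan is to solve each linear parabolic problem by first removing the inhomogeneity in the boundary data and then diagonalizing the (self-adjoint) spatial operator. Since Remark~\ref{rem:uniq_weak_sol} already grants uniqueness of the weak solution, it suffices to \emph{exhibit} a function of the announced form and to check that it is a weak solution in the sense of Definition~\ref{Def. Dirichlet source Condition-g_ssep} (resp. Definition~\ref{Def. Robin Condition-g_ssep}). In both cases I would write $\rho_t(q)=\bar\rho(q)+u_t(q)$, where $\bar\rho$ is the stationary profile computed in \eqref{eq:stat_sol_dir} (resp. \eqref{eq:stat_sol_rob}). Because $\bar\rho$ is affine, it satisfies $\frac12\Delta\bar\rho=0$ and carries exactly the prescribed boundary data, so $u_t$ must solve the same heat equation $\partial_t u_t=\frac12\Delta u_t$ but now with \emph{homogeneous} boundary conditions and initial datum $u_0=g-\bar\rho$. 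The coefficients $C_n=\int_0^1(g(q)-\bar\rho(q))X_n(q)\,dq$ appearing in the statement are then precisely the coordinates of $u_0$ in the relevant eigenbasis.

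For part (1) the homogeneous conditions are the Dirichlet ones $u_t(0)=u_t(1)=0$, and the eigenvalue problem $-X''=\lambda X$ on $(0,1)$ with $X(0)=X(1)=0$ is the classical one: the eigenvalues are $\lambda_n=(n\pi)^2$ and the normalized eigenfunctions $\sqrt2\,\sin(n\pi q)$ form an orthonormal basis of $L^2([0,1])$. Expanding $u_0$ in this basis and evolving each mode by the factor $e^{-\lambda_n t/2}$ dictated by $\partial_t u=\frac12\Delta u$ gives $u_t(q)=\sum_{n\ge1}C_n\,e^{-(n\pi)^2 t/2}\,\sqrt2\,\sin(n\pi q)$, which is the stated series once one adds back $\bar\rho_{Dir}$.

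For part (2) the homogeneous conditions obtained from \eqref{Robin Equation-g_ssep} are $\partial_q u_t(0)=\kappa\,u_t(0)$ and $\partial_q u_t(1)=-\kappa\,u_t(1)$, and I would treat the corresponding regular Sturm--Liouville problem $-X''=\lambda X$ with these separated Robin conditions. Inserting the general solution $A\sin(\sqrt\lambda q)+B\cos(\sqrt\lambda q)$ into the condition at $q=0$ fixes the relative weight of the $\sin$ and $\cos$ terms, producing the shape \eqref{basis} up to the normalizing constant $A_n$; imposing the condition at $q=1$ then yields the transcendental equation whose roots are the eigenvalues $\lambda_1<\lambda_2<\cdots\to\infty$. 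The decisive structural input is that $\frac12\Delta$ with these boundary conditions is self-adjoint and non-positive on $L^2([0,1])$: integrating by parts gives the boundary contribution $-\tfrac{\kappa}{2}\big(X(0)^2+X(1)^2\big)\le 0$, so the spectrum is real and, by Sturm--Liouville theory, the eigenfunctions $\{X_n\}$ (normalized in $L^2$) form a complete orthonormal system. Expanding $u_0=g-\bar\rho_{Rob}$ with coefficients $C_n$ and propagating each mode by $e^{-\lambda_n t/2}$ produces the announced formula.

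The routine but essential remaining step is to justify that the resulting series is genuinely a weak solution. For $t>0$ the factors $e^{-\lambda_n t/2}$ with $\lambda_n\to\infty$ decay faster than any power of $n$, so the series and all its $q$- and $t$-derivatives converge uniformly; this legitimates termwise differentiation, shows that $\rho_t$ solves the classical equation together with its boundary conditions, and yields $\rho\in L^2(0,T;\mathcal H^1)$. As $t\to0^+$, Parseval's identity gives convergence of the series to $g$ in $L^2([0,1])$, recovering the initial condition. I expect the genuine obstacle to be the Robin case: unlike the Dirichlet/Fourier situation, completeness of $\{X_n\}$ is not a textbook identity and must be extracted from the self-adjoint Sturm--Liouville framework above, and one must check that the explicit form \eqref{basis}, the transcendental eigenvalue relation, and the normalization $\langle X_n,X_m\rangle=\delta_{nm}$ are mutually consistent. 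Once completeness and the smoothing estimate are established, matching the series against Definition~\ref{Def. Robin Condition-g_ssep} and invoking the uniqueness of Remark~\ref{rem:uniq_weak_sol} closes the argument.
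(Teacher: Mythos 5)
Your proposal is correct and takes essentially the same route as the paper: subtract the stationary profile ($\bar\rho_{Dir}$ or $\bar\rho_{Rob}$), reduce to the heat equation with homogeneous Dirichlet or Robin boundary conditions, and expand the initial datum $g-\bar\rho$ in the corresponding eigenbasis, with each mode decaying as $e^{-\lambda_n t/2}$. The paper's proof simply asserts these expansions as ``well known'' and only sketches the Robin eigenvalue computation, so your additional steps (Sturm--Liouville self-adjointness and completeness, uniform convergence of the series for $t>0$, verification of the weak formulation and appeal to uniqueness) are refinements of the same argument; note moreover that your part (1), which carries the coefficients $C_n$ of $g-\bar\rho_{Dir}$ in front of the modes, is the correct form of the expansion that the paper writes without them.
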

\begin{proof}
The solution $\rho(\cdot)$ to (\ref{eq:Dirichlet source Equation-g_ssep})  starting from a profile $g(\cdot)$ is such that $u=\rho -\bar \rho$ is solution to  (\ref{eq:Dirichlet source Equation-g_ssep})  with homogeneous boundary conditions $\alpha=\beta=0$, i.e. 
 \begin{equation}\label{eq:Dir_Equation-g_hom}
 \begin{cases}
 &\partial_{t}u_{t}(q)=\frac 12 \Delta\, u _{t}(q), \quad (t,q) \in [0,T]\times(0,1),\\
 &u_{t}(0) =0=u_{t}(1),\quad t \in [0,T].
 \end{cases}
 \end{equation}
It is well known that $u_t(q)$ is given by 
$$ u_t(q) =\sum _{n=1}^{\infty} e^{-\frac{(n\pi)^2}{2} t}2\sin (n\pi  q ).$$
 From the previous computations  we conclude that  the solution $\rho(\cdot)$ of  (\ref{eq:Dirichlet source Equation-g_ssep}) starting from $g(\cdot)$ is given by $$\rho_t(q) = (\beta -\alpha)q+\alpha+ \sum _{n=1}^{\infty} e^{-\frac{(n\pi)^2}{2} t}2\sin (n\pi  q ).$$

On the other hand, the solution $\rho(\cdot)$ of  (\ref{Robin Equation-g_ssep}) starting from $g(\cdot)$ is such that $u=\rho -\bar \rho$ is solution to (\ref{Robin Equation-g_ssep}) with $\alpha=\beta=0$, i.e.
 \begin{equation}\label{Robin Equation-g_hom_ssep}
 \begin{cases}
 &\partial_{t}u_{t}(q)= \frac 12\Delta\, u _{t}(q), \quad (t,q) \in [0,T]\times(0,1),\\
 &\partial_{q}u _{t}(0)=\kappa u_{t}(0) ,\quad \partial_{q} u_{t}(1)=-\kappa u_{t}(1),\quad t \in [0,T].
 \end{cases}
 \end{equation}
It is well known that $u_t(q)$ is given by 
$$ u_t(q) =\sum _{n=1}^{\infty} C_{n}e^{-\frac{\lambda_{n}}{2}t}X_{n}(q),$$
where $X_n(q)$ writes as 
$$X_n(q)=A_n\sin (\sqrt{\lambda_n}  q )+ B_n \cos(\sqrt{\lambda_n} q),$$for some constants $A_n$ and $B_n$.
Then,  the first  boundary condition in \eqref{Robin Equation-g_hom_ssep} gives
$B_n= \sqrt{\lambda_n}\kappa A_n.$ To avoid the null solution we consider  $A_n\neq 0$.
The second boundary condition in \eqref{Robin Equation-g_hom_ssep} gives
\begin{equation}\label{trans}
\tan (\sqrt{\lambda_n}) =\frac{2\kappa\sqrt{\lambda_n}}{\lambda_n\kappa^2-1},
\end{equation}
whose solution $\lambda_n$ satisfying $(n-1)\pi\leq \sqrt{\lambda_n}\leq n\pi$ is such that  $\lambda_n\sim n^2\pi^2$ as $n\to\infty$. From the previous computations we get that
$X_n(q)$ is given by \eqref{basis} and there $A_n$ is a normalizing constant in such a way that $X_n$ has unitary $L^2([0,1])$-norm. Moreover
 $$C_{n} = \int_{0}^{1}(g(q)-\bar\rho(q))X_{n}(q)dq.$$
 From the previous computations  we conclude that  the solution $\rho(\cdot)$ of  (\ref{Robin Equation-g_ssep}) starting from $g(\cdot)$ is given by $$\rho_t(q)= \dfrac{\kappa(\beta -\alpha)}{2+\kappa}  q+ \alpha+\frac{\beta-\alpha}{2+\kappa}+ \sum _{n=1}^{\infty} C_{n}e^{-\frac{\lambda_{n}}{2}t}X_{n}(q).$$
\end{proof}

\subsection{Hydrodynamic Limit}
\label{sec:HL}

In this section we want to state the hydrodynamic limit of the process $\{\eta_{tN^2}\,:\,t\geq{0}\}$ with state space $\Omega_{N}$ and with infinitesimal generator $N^{2}\mc L_{N}$ defined in \eqref{generator_ssep}. Note that here we are going to take  $\Theta(N)=N^2$. Let ${\mc M}^+$ be the space of positive measures on $[0,1]$ with total mass bounded by $1$ equipped with the weak topology. For any configuration  $\eta \in \Omega_{N}$ we define the empirical measure $\pi^{N}(\eta,dq)$ on $[0,1]$ by 
\begin{equation}\label{MedEmp}
\pi^{N}(\eta, dq)=\dfrac{1}{N-1}\sum _{x\in \Lambda_{N}}\eta(x)\delta_{\frac{x}{N}}\left( dq\right),
 \end{equation}
where $\delta_{a}$ is a Dirac mass on $a \in [0,1]$, and
$$\pi^{N}_{t}(\eta, dq):=\pi^{N}(\eta_{tN^2}, dq).$$
This measure gives weight  $\frac{1}{N}$ to each occupied site of the  configuration  $\eta$. 

Fix $T>0$ and $\theta\in \bbbr$. Recall that  $\bbbp_{\mu _{N}}$ is the probability measure in the Skorohod space $\mathcal D([0,T], \Omega_N)$ induced by the  Markov process $\{\eta_{tN^2}\,:\,t\geq{0}\}$ and the initial probability measure $\mu_N$ and we denote by $E _{\bbbp_{\mu_{N}}}$ the expectation with respect to $\bbbp_{\mu _{N}}$.  Now let $\lbrace \bbbq_{N}\rbrace_{N\geq 1}$ be the  sequence of probability measures on $\mathcal D([0,T],\mathcal{M}^{+})$ induced by the  Markov process $\{\pi_{t}^N\,:\,t\geq{0}\}$  and by $\bbbp_{\mu_{N}}$.

At this point we need to fix an initial profile $\rho_0: [0,1]\rightarrow[0,1]$ which is  measurable  and an initial probability measure $\mu_N\in\Omega_N$. We are going to consider the following set of initial measures:  

\begin{definition}\label{def:meas_ass}
A sequence of probability measures $\lbrace\mu_{N}\rbrace_{N\geq 1 }$ in $\Omega_{N}$  is associated to the profile $\rho_{0}(\cdot)$ if for any continuous function $G:[0,1]\rightarrow \bbbr$  and any $\delta > 0$ 
\begin{equation}\label{assoc_mea}
  \lim _{N\to\infty } \mu _{N}\Big( \eta \in \Omega_{N} : \Big| \dfrac{1}{N-1}\sum_{x \in \Lambda_{N} }G\left(\tfrac{x}{N} \right)\eta({x}) - \int_{0}^1G(q)\rho_{0}(q)dq \Big|   > \delta \Big)= 0.
\end{equation}
\end{definition}
Note that \eqref{assoc_mea} states that
\begin{equation}\label{assoc_mea_alternative}
\int G(q)\pi^N(\eta,dq)\longrightarrow_{N\to\infty} \int_{0}^1G(q)\rho_{0}(q)dq,
\end{equation} 
with respect to $\mu_N$, which 
means that the empirical measure at time $t=0$ converges, in probability with respect to $\mu_N$, as $N\to\infty$, to the  deterministic measure $\rho_0(q)dq$, which is absolutely continuous with respect to the Lebesgue measure and the density is the profile $\rho_0(\cdot)$.

The hydrodynamic limit that we want to derive states that the previous result is also true for any $t\in[0,T]$, that is, the empirical measure at time $t$ converges in probability with respect to the distribution of the system at time $t$, as $N\to\infty$, to the  deterministic measure $\rho_t(q)dq$, where $\rho_t(\cdot)$ is a solution (here in the weak sense) to some partial differential equation, \emph{the hydrodynamic equation}.

The first main result of these notes is summarized in the following theorem (see also Figure \ref{fig:hlf}).

\begin{theorem}
\label{th:hyd_ssep}
 Let $g:[0,1]\rightarrow[0,1]$ be a measurable function and let $\lbrace\mu _{N}\rbrace_{N\geq 1}$ be a sequence of probability measures in $\Omega_{N}$ associated to $g(\cdot)$. Then, for any $t\in[0,T]$,
\begin{equation*}\label{limHidreform}
 \lim _{N\to\infty } \bbbp_{\mu _{N}}\Big( \eta_{\cdot} : \left\vert \dfrac{1}{N-1}\sum_{x \in \Lambda_{N} }G\left(\tfrac{x}{N} \right)\eta_{tN^2}(x) - \int_{0}^1G(q)\rho_{t}(q)dq \right\vert    > \delta \Big)= 0,
\end{equation*}
where  $\rho_{t}(\cdot)$ is the unique weak solution of : 
\begin{itemize}
\item [$\bullet$]  \eqref{eq:Dirichlet source Equation-g_ssep} as given in Definition \ref{Def. Dirichlet_c source Condition-g_ssep}, if $\theta<0$; 
\item[$\bullet$] \eqref{eq:Dirichlet source Equation-g_ssep} as given in Definition \ref{Def. Dirichlet source Condition-g_ssep}, if $\theta\in[0,1)$;
\item[$\bullet$]  (\ref{Robin Equation-g_ssep}), if $\theta =1$;
\item[$\bullet$]  (\ref{Robin Equation-g_ssep}) with $\kappa =0$, if $\theta>1$.
\end{itemize}
\end{theorem}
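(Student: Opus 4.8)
The plan is to follow the classical entropy method for hydrodynamic limits: establish tightness of the sequence $\{\bbbq_{N}\}_{N\ge 1}$ on $\mathcal{D}([0,T],\mathcal{M}^+)$, characterize every limit point as (the measure associated to) a weak solution of the relevant equation, and then invoke the uniqueness recorded in Remark~\ref{rem:uniq_weak_sol} to upgrade subsequential convergence to convergence of the whole sequence. The starting point is Dynkin's formula: for $G\in C^{1,2}([0,T]\times[0,1])$ I would introduce the mean-zero martingale
\[
M_t^N(G)=\langle \pi_t^N,G_t\rangle-\langle \pi_0^N,G_0\rangle-\int_0^t\big(\partial_s+N^2\mathcal L_N\big)\langle \pi_s^N,G_s\rangle\,ds,
\]
and compute $N^2\mathcal L_N$ acting on $\langle \pi^N,G\rangle$ using $\mathcal L_N\eta(x)=j_{x-1,x}(\eta)-j_{x,x+1}(\eta)$. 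Summation by parts produces the discrete Laplacian $\tfrac12\Delta_N G$ in the bulk together with boundary contributions carrying the factor $N^{2-\theta}$ coming from $\mathcal L_{N,b}$. A direct estimate of the quadratic variation, which is of order $1/N$ up to boundary terms, shows that $M_t^N(G)\to 0$ in $L^2(\bbbp_{\mu_N})$.

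Next I would prove tightness via Aldous' criterion, using the martingale decomposition and the bound on the quadratic variation to control oscillations of $\langle \pi_\cdot^N,G\rangle$. Relative compactness then yields convergent subsequences, and a standard argument based on the uniform bound $0\le\eta(x)\le1$ shows that any limit point is concentrated on trajectories of the form $\pi_t(dq)=\rho_t(q)\,dq$, absolutely continuous with respect to Lebesgue measure with $\rho_t$ taking values in $[0,1]$. Separately, an energy (entropy-production) estimate controlling the Dirichlet form gives condition (1) in each definition of weak solution, namely $\rho\in L^2(0,T;\mathcal H^1)$.

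The crux is closing the equation and reading off the correct boundary condition. In the bulk, the replacement lemmas (one-block and two-blocks estimates) let me replace the discrete-Laplacian term by $\tfrac12\Delta$ applied to the limiting density, recovering the interior integral of each weak formulation. At the boundary, the behavior is dictated entirely by the scaling $N^{2-\theta}$ of the reservoir terms. When $\theta<1$ this factor diverges, which forces $\rho_t(0)=\alpha$ and $\rho_t(1)=\beta$, i.e.\ the Dirichlet values; when $\theta=1$ it stays of order one and contributes exactly the surface integrals appearing in \eqref{eq:Robin integral-g_ssep}, producing the Robin condition; when $\theta>1$ it vanishes, giving the Neumann regime ($\kappa=0$). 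In each regime this requires a dedicated \emph{boundary replacement lemma}, replacing $\eta(1)$ and $\eta(N-1)$ by suitable local averages of the density, which is the main obstacle: it demands uniform control of the empirical density near the reservoirs and must be carried out separately for each range of $\theta$. The sub-case $\theta<0$ is the most delicate, since the diverging boundary terms cannot be controlled through test functions that are nonzero at the boundary; this is precisely why the compactly supported formulation of Definition~\ref{Def. Dirichlet_c source Condition-g_ssep} is used, with the Dirichlet values $\rho_t(0)=\alpha$, $\rho_t(1)=\beta$ verified as the separate condition~(3) rather than extracted from the weak formulation.

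Finally, combining the interior and boundary identifications shows that any limit point is supported on weak solutions of the equation corresponding to the given range of $\theta$ in the sense of the appropriate definition. Since each of these equations admits a unique weak solution, all subsequential limits coincide with the deterministic trajectory $\rho_t(q)\,dq$, so the full sequence $\bbbq_N$ converges and the probability in the statement tends to $0$, which is the assertion of the theorem.
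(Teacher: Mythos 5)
Your proposal follows essentially the same route as the paper's proof: Dynkin martingales for $\langle\pi^N_t,G\rangle$, tightness via Aldous' criterion split by regime of $\theta$, characterization of absolutely continuous limit points through regime-dependent boundary replacement lemmas (with the $\theta<0$ case handled by compactly supported test functions plus a separate verification that the profile equals $\alpha,\beta$ at the boundary), and finally uniqueness of weak solutions to upgrade subsequential to full convergence. One minor correction: since the bulk term produced by the generator is the \emph{linear} functional $\langle\pi^N_s,\tfrac12\Delta_N G\rangle$, no one-block/two-block estimates are needed there — uniform convergence of $\Delta_N G$ to $\Delta G$ suffices — and the only genuine replacement lemmas are at the boundary, which for $\theta\in[0,1)$ replace $\eta_{sN^2}(1)$, $\eta_{sN^2}(N-1)$ by the constants $\alpha$, $\beta$ (not by local averages; that is the $\theta\geq1$ mechanism).
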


\begin{figure}
\begin{center}
\begin{tikzpicture}[scale=0.25]
\shade[right color=black!30, left color=gray!50] (0,5) -- (0,10) --(20,10)--(20,5)-- cycle;
\shade[right color=columbiablue, left color=gray!50] (0,5) -- (20,5) --(20,-10)--(0,-10)-- cycle;
\draw (0,12) node[right]{$\theta$};
\draw (24,0) node[right]{};
\draw (-1,0) node[left]{$\theta=0$};
\draw (-1,5) node[left]{$\theta=1$};
\draw[->,>=latex] (0,0) -- (23,0);
\draw[->,>=latex] (0,-11) -- (0,11);
\draw[-,=latex,black,ultra thick] (0,5) -- (20, 5) node[midway, sloped, below] {{Heat eq. \& Robin b.c.}};
\node[right, columbiablue] at (0,8) {Heat eq. \& Neumann b.c.} ;
\node[right,gray] at (0,-2) {Heat eq. \& Dirichlet b.c.} ;
\fill[black] (0,5) circle (0.3cm);
\end{tikzpicture}
\end{center}
\caption{The three hydrodynamic equations depending on $\theta$.}
\label{fig:hlf}
\end{figure}
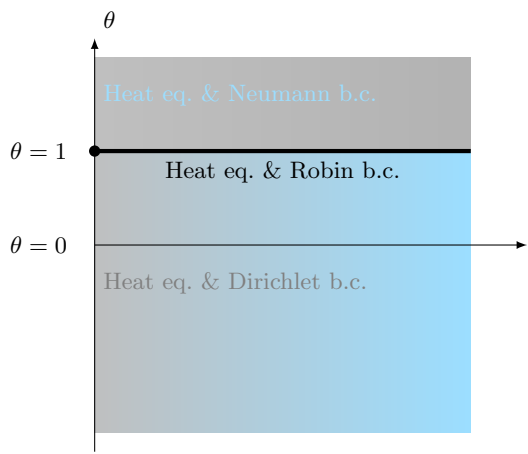

\begin{remark}
We note that in \cite{Adriana} it was studied the case where the reservoirs are slowed (which corresponds to the regime $\theta\geq 0$). In the previous theorem we considered also the case where the reservoirs are fast (which corresponds to $\theta<0$) but we note that the macroscopic behavior of the  system is also given by the heat equation with Dirichlet boundary conditions as happens in the case $\theta\in[0,1)$. To prove this result we note that the notion of weak solution in the case $\theta<0$ is different from the notion of weak solution in the case $\theta\in[0,1)$ since it uses as input functions with compact support. 
\end{remark}

The proof of Theorem \ref{th:hyd_ssep} proceeds as follows:  We split the proof into showing first the tightness of the sequence  $\lbrace \bbbq_{N}\rbrace_{N\geq 1}$ and then we characterize  uniquely the limiting point $\bbbq$ of this sequence. These two results combined together, imply  the convergence of  $\lbrace\bbbq_{N}\rbrace_{N\geq 1}$ to $\bbbq$  as $N\rightarrow \infty$.

The next section is dedicated to the presentation of an heuristic argument to deduce the hydrodynamic equations from the interacting particle system by means of the Dynkin's formula; in Section \ref{sec:tightness} we present the proof of tightness and in Section \ref{limit point} we characterize the limit point $\bbbq$. We note that  in order to characterize the limit point $\bbbq$,  we prove in Section \ref{sec:abs_cont} that all limiting points of  the sequence $\lbrace \bbbq_{N}\rbrace_{N\geq 1}$ are concentrated on trajectories of  measures that are  absolutely continuous with respect to the Lebesgue measure and  that the   density $\rho_{t}(\cdot)$ is a weak solution of  the corresponding hydrodynamic equation. From the  uniqueness of weak solutions of the hydrodynamic equations, see Remark \ref{rem:uniq_weak_sol}, we conclude that $\lbrace \bbbq_{N}\rbrace_{N\geq 1}$ has a unique limit point $\bbbq$, and therefore we conclude the convergence of the sequence  to this limit point.

\subsection{Heuristics for  hydrodynamic equations}

\label{sec:heuri_ssep}
In this section we give the main ideas which are behind the identification of limit points as weak solutions of the partial differential equations given in  Section \ref{sec:hyd_eq_ssep}. Now we argue that the density $\rho_t(\cdot)$ is a weak solution of the corresponding hydrodynamic equation for each regime of $\theta$. We remark that we are not going to prove here that the solution $\rho_t(\cdot)$ belongs to the space $L^2(0,T; \mc H^1)$ but we refer the reader to \cite{Adriana,BGJO} for a complete proof of this fact.  In order to prove  that $\rho_t(\cdot)$ satisfies the weak formulation we use  auxiliary martingales associated to the Markov process $\{\eta_t : t\geq 0\}$. For that purpose, and to make the exposition simpler, we  fix a function $G:[0,1]\to\bbbr$ which does not depend on time and which is two times continuously differentiable. If $\theta <0$ we will assume further that it has a compact support included in $(0,1)$. 
First we recall  Dynkin's formula.
\begin{theorem}\label{Dynkin_formula}
Let $\{ \eta_{t}\,:\,t\geq 0\}$ be a  Markov process with generator  $\mathcal L$  and with countable state space $E$. Let $F: \bbbr^{+}\times E \rightarrow \bbbr$ be a bounded function such that 
\begin{itemize}
\item {$\forall \eta \in E, F(\cdot,\eta)\in C^{2}(\bbbr^{+})$,}
\item { there exists a finite constant $C$, such that for $j=1,2$ $$\sup_{(s,\eta)}|\partial_{s}^{j}F(s,\eta)|\leq C.$$} 
\end{itemize}
For $t \geq 0$, let
\begin{equation*}
\begin{split}
M_{t}^{F} =&F(t,\eta_{t}) - F(0,\eta_{0}) - \int_{0}^{t}(\partial_{s}+\mathcal L)F(s,\eta_{s})ds,\\
N_t^F=&  (M^F_t)^2-\int_0^t \{\mathcal L F(s,\eta_s)^2-2F(s,\eta_s)\mathcal L F(s,\eta_s)\}ds.
\end{split}
\end{equation*} 
Then, $\{ M_{t}^{F}\}_{t\geq 0}$ and  $\{ N_{t}^{F}\}_{t\geq 0}$ are  martingales with respect to  $\mathcal F_s=\sigma(\eta_s\,;\, s\leq t)$. 
\end{theorem}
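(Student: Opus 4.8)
The plan is to establish both martingale identities by reducing, via the Markov property, to statements about the expectation $E_\eta[\,\cdot\,]$ of the process started from a fixed state $\eta\in E$, and then to exploit the characterisation of the generator $\mathcal L$ through differentiation of the semigroup. Writing $\mathcal L f(\eta)=\sum_{\eta'}c(\eta,\eta')\big(f(\eta')-f(\eta)\big)$ for the jump rates $c(\eta,\eta')$ of the process, the two boundedness hypotheses on $F$ (uniform bounds on $F$ and on $\partial_s F,\partial_s^2F$), together with boundedness of the relevant rates on the countable state space, guarantee that every expectation below is finite and that one may differentiate under the expectation and apply Fubini.

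For the first martingale, I would let $P_t$ denote the semigroup of $\{\eta_t\}$ and set $v(t):=E_\eta[F(t,\eta_t)]=(P_tF(t,\cdot))(\eta)$. Differentiating and using the product-type rule
\[
\frac{d}{dt}\big(P_tF_t\big)=P_t(\partial_tF_t)+P_t\,\mathcal L F_t,
\]
which encodes Kolmogorov's equation applied to the time-dependent function $F$, gives $v'(t)=E_\eta\big[(\partial_s+\mathcal L)F(t,\eta_t)\big]$. Integrating from $0$ to $t$ yields $E_\eta[M_t^F]=0$ for every $\eta$. To upgrade this to the conditional statement $E[M_t^F\mid\mathcal F_s]=M_s^F$, I would write
\[
M_t^F-M_s^F=F(t,\eta_t)-F(s,\eta_s)-\int_s^t(\partial_r+\mathcal L)F(r,\eta_r)\,dr,
\]
condition on $\mathcal F_s$, and invoke the Markov property to replace the conditioning by $\eta_s$; the same integrated identity, now run over $[s,t]$ with initial state $\eta_s$, shows that the $\mathcal F_s$-conditional expectation of the increment vanishes.

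For the second martingale, the key algebraic fact is that the carr\'e du champ operator $\Gamma(F):=\mathcal L(F^2)-2F\,\mathcal L F$ satisfies $\Gamma(F)(\eta)=\sum_{\eta'}c(\eta,\eta')\big(F(\eta')-F(\eta)\big)^2\ge 0$, so that the integrand defining $N_t^F$ is exactly $\Gamma(F)(s,\eta_s)$. I would then identify $M^F$ as a pure-jump martingale: between jumps $F(s,\eta_s)$ varies only through $\partial_sF$, so $M^F_t=\sum_{\tau\le t}\big(F(\tau,\eta_\tau)-F(\tau,\eta_{\tau^-})\big)-\int_0^t\mathcal L F(s,\eta_s)\,ds$, whence its quadratic variation is $[M^F]_t=\sum_{\tau\le t}\big(F(\tau,\eta_\tau)-F(\tau,\eta_{\tau^-})\big)^2$. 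The compensator of this sum of squared jumps is, by the same generator reasoning as in Part~1 (a L\'evy-system / Dynkin computation), precisely $\int_0^t\Gamma(F)(s,\eta_s)\,ds$. Combining this with the Itô product identity $(M_t^F)^2=2\int_0^t M_{s^-}^F\,dM_s^F+[M^F]_t$, in which the stochastic integral against the martingale $M^F$ is itself a martingale, gives that $(M_t^F)^2-\int_0^t\Gamma(F)(s,\eta_s)\,ds$ is a martingale, i.e. $N_t^F$. An equivalent, more self-contained route is to reduce to $s=0$ by the Markov property and verify $E_\eta[(M_t^F)^2]=E_\eta\big[\int_0^t\Gamma(F)(s,\eta_s)\,ds\big]$ by applying Part~1 to $F^2$ and substituting the identity $(\partial_s+\mathcal L)F^2=2F(\partial_s+\mathcal L)F+\Gamma(F)$.

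The main obstacle is analytic rather than algebraic: justifying the semigroup differentiation $\tfrac{d}{dt}(P_tF_t)=P_t(\partial_t+\mathcal L)F_t$ for a time-dependent test function, and the attendant interchanges of differentiation, time-integration and expectation. This is exactly where the boundedness hypotheses enter decisively—they bound $\mathcal L F$ and $\partial_sF$ uniformly, so dominated convergence legitimises differentiating $v(t)$ under $E_\eta$ and Fubini legitimises every exchange, while the compensation of the jump functionals in Part~2 is controlled by the same bounds. Once this point is secured, both martingale statements follow from the elementary algebra of the generator, the carr\'e du champ identity, and the Markov property.
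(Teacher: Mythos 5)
The paper itself contains no proof of Theorem \ref{Dynkin_formula}: it is quoted as a classical tool (Dynkin's formula; see, e.g., Appendix 1 of \cite{KL}) and is immediately applied to $F(t,\eta_t)=\langle \pi^N_t,G\rangle$, so your proposal can only be judged on its own merits. On those merits it is correct in outline, and it is precisely the standard argument. Part 1 — differentiating $v(t)=(P_tF_t)(\eta)$ via $\tfrac{d}{dt}(P_tF_t)=P_t(\partial_tF_t)+P_t\mathcal{L}F_t$, integrating to get $E_\eta[M^F_t]=0$, then upgrading to $E[M^F_t-M^F_s\mid\mathcal F_s]=0$ by the Markov property applied to the time-shifted function — is exactly right, and you correctly locate the real work in justifying the interchanges of differentiation, expectation and time integration. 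Note that this also needs $\mathcal L F$ bounded, i.e., bounded jump rates: the theorem leaves this implicit, but it is automatic in the paper's application, where the state space $\Omega_N$ is finite.

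For Part 2, your Route A (pure-jump decomposition of $M^F$, quadratic variation equal to the sum of squared jumps, L\'evy-system compensator $\int_0^t\Gamma(F)(s,\eta_s)\,ds$, It\^o product formula) is complete modulo the cited semimartingale facts, and the carr\'e du champ identity $\Gamma(F)=\mathcal L(F^2)-2F\mathcal L F=\sum_{\eta'}c(\cdot,\eta')\big(F(\eta')-F(\cdot)\big)^2$ is the correct algebraic heart. Route B, however, is stated too quickly: applying Part 1 to $F^2$ together with $(\partial_s+\mathcal L)F^2=2F(\partial_s+\mathcal L)F+\Gamma(F)$ gives an expression for $E_\eta[F^2(t,\eta_t)]-F^2(0,\eta)$, but converting that into $E_\eta[(M^F_t)^2]=E_\eta\big[\int_0^t\Gamma(F)(s,\eta_s)\,ds\big]$ still requires expanding the square of $M^F_t=F(t,\eta_t)-F(0,\eta)-\int_0^t(\partial_r+\mathcal L)F(r,\eta_r)\,dr$ and showing the cross terms cancel; this uses orthogonality of martingale increments (for $\mathcal F_r$-measurable $A_r$ one has $E_\eta[A_r(M^F_t-M^F_r)]=0$) together with a Fubini/symmetry argument for the resulting double time integral, or equivalently a direct differentiation of $t\mapsto E_\eta[(M^F_t)^2]$. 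That omitted computation is routine rather than a flaw in strategy, so with it supplied either route yields a complete proof.
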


Let us fix a test function $G:[0,1]\to\bbbr$ and apply Dynkin's formula with 
\begin{equation}\label{eq:F_for_Dynkin}
F(t,\eta_t)=\langle \pi^N_t,G\rangle=\frac{1}{N-1}\sum_{x\in\Lambda_N}\eta_{tN^2}(x)G\Big(\tfrac{x}{N}\Big). \end{equation}
Above  $\left\langle \pi_{t}^{N},G\right\rangle$ represents  the integral of $G$ with respect the measure $ \pi_{t}^{N}$. 
Note that $F$ does not depend on time, only through $\eta_t$. A simple computation shows that
\begin{equation}\label{gen_action_ssep}\begin{split}
N^2\mathcal L_N\langle \pi^N_s,G\rangle&=\langle \pi^N_s,\frac 12\Delta_NG\rangle\\
&+\frac 12\Big(\nabla _N^+G(0)\eta_{sN^2}(1)-\nabla_N^-G(1)\eta_{sN^2}(N-1)\Big)\\
&+\frac {\kappa}{2}\frac{N^{2-\theta}}{N-1}G\Big(\tfrac{1}{N}\Big)(\alpha-\eta_{sN^2}(1))\\&+\frac {\kappa}{2}\frac{N^{2-\theta}}{N-1}G\Big(\tfrac{N-1}{N}\Big)(\beta-\eta_{sN^2}(N-1)),
\end{split}
\end{equation}
from where we obtain that
\begin{equation}\label{Dynkin'sFormula_ssep}
\begin{split}
M_{t}^{N}(G)= \langle \pi_{t}^{N},G\rangle -\langle \pi_{0}^{N},G\rangle&-\int_{0}^{t}\langle \pi^N_s,\frac 12\Delta_NG\rangle\, ds\\
&-\frac 12 \int_0^t\nabla _N^+G(0)\eta_{sN^2}(1)-\nabla_N^-G(1)\eta_{sN^2}(N-1)\, ds\\
&-\frac {\kappa}{2}\int_0^t\frac{N^{2-\theta}}{N-1}G\Big(\tfrac{1}{N}\Big)(\alpha-\eta_{sN^2}(1))\, ds\\&-\frac {\kappa}{2}\int_0^t\frac{N^{2-\theta}}{N-1}G\Big(\tfrac{N-1}{N}\Big)(\beta-\eta_{sN^2}(N-1)) \, ds,
\end{split}
\end{equation}
is a martingale with respect to the natural filtration  $\{\mathcal{F}_{t}\}_{ t\ge 0}$, where for each $t\ge 0$, $\mathcal{F}_t:=\sigma(\eta_s: s < t)$. Above, $\Delta_N$ is the discrete laplacian defined in \eqref{eq:disc_lap}, $\nabla_N^+ $ is defined in \eqref{discrete_gradient_+}
 and $$\nabla_N^-f(x)=N(f(x)-f(x-1)).$$ 
 Now we look at the integral terms in \eqref{Dynkin'sFormula_ssep}.

\subsubsection{- The case $\theta \in[0,1)$:}  In this regime, we take a  test function $G:[0,1]\to\bbbr$  two times continuously differentiable such that $G(0)=G(1)=0$. Then, we can subtract $G(0)$ (resp. $G(1)$) in the fifth term (resp. sixth term) at the right hand side of \eqref{Dynkin'sFormula_ssep} and then doing a Taylor expansion on $G$  we get that
 \begin{equation*}\begin{split}
M^N_t(G)&=\langle \pi^N_t,G\rangle-\langle \pi^N_0,G\rangle-\int_0^t\langle \pi^N_s,\frac 12\Delta_NG\rangle ds\\
&-\frac 12\int_0^t\nabla _N^+G(0)\eta_{sN^2}(1)-\nabla_N^-G(1)\eta_{sN^2}(N-1) ds +{O(N^{-\theta})}.
\end{split}
\end{equation*}
If  we can replace $\eta_{sN^2}(1)$ by $\alpha$ and $\eta_{sN^2}(N-1)$ by $\beta$, which will be a consequence of Lemma \ref{Rep-Dirichlet2} in Appendix \ref{fix_profile} (see Remark \ref{RL_theta_Dir}), then above we have 
\begin{equation*}\begin{split}
M^N_t(G)&=\langle \pi^N_t,G\rangle-\langle \pi^N_0,G\rangle-\int_0^t\langle \pi^N_s,\frac 12\Delta_NG\rangle ds\\
&-\frac 12\int_0^t\nabla _N^+G(0)\alpha-\nabla_N^-G(1)\beta ds  + {O(N^{-\theta})}
\end{split}
\end{equation*}
plus a term that vanishes as $N\to+\infty$.

Taking the expectation with respect to $ \mu_N$ in the expression above we get 
 \begin{equation*}\begin{split}
&\frac{1}{N-1}\sum_{x=1}^{N-1}G\Big(\tfrac{x}{N}\Big)\Big(\rho_t^N(x)-\rho_0^N(x)\Big)-\int_0^t\frac{1}{N-1}\sum_{x=1}^{N-1}\frac 12\Delta_NG\Big(\tfrac{x}{N}\Big)\rho_{s}^N(x) ds\\
&-\frac 12\int_0^t\nabla _N^+G(0)\alpha-\nabla_N^-G(1)\beta ds +{O(N^{-\theta})}=0.
\end{split}
\end{equation*}
Note that above we used the fact that the average of martingales is constant in time and that  $M^N_0(G)=0$. 
Now, assuming that $\rho_t^N(x)\sim\rho_t(\tfrac xN)$ and taking the limit as $N\to\infty$ we get that 
\begin{equation*}\begin{split}
&\int_0^1\rho_t(q)G(q)-\rho_0(q)G(q)dq -\int_0^t\int_0^1 \frac 12\Delta G(q)\rho_{s}(q) dq ds\\
&-\frac 12\int_0^t\partial_qG(0)\alpha-\partial_qG(1)\beta ds =0.
\end{split}
\end{equation*}

Note that the restriction $\theta\geq 0$ comes from the fact that the errors, which arise from the Taylor expansion in $G$, have to vanish as $N\to\infty$ and the restriction $\theta<1$ comes from the replacement of the occupation variables $\eta(1)$ and $\eta(N-1)$ by $\alpha$ and $\beta$, respectively, see Lemma \ref{Rep-Dirichlet2} in Appendix \ref{fix_profile} . At this point compare the previous expression with the weak formulation given in \eqref{eq:Dirichlet_source_ integral-g_ssep} and note that the test function $G$ does not depend on time.

\subsubsection{- The case $\theta<0$:} 

In this regime we take a  function $G:[0,1]\to\bbbr$ with compact support and  we note that the last three terms at the right hand side of \eqref{Dynkin'sFormula_ssep} vanish in this case. From this and the same arguments as above we  get that \begin{equation*}\begin{split}
M^N_t(G)&=\langle \pi^N_t,G\rangle-\langle \pi^N_0,G\rangle-\int_0^t\langle \pi^N_s,\frac 12 \Delta_NG\rangle ds.
\end{split}
\end{equation*}
Taking the expectation with respect to $\bbbp \mu_N$ in the expression above 
and assuming that $\rho_t^N(x)\sim\rho_t(\tfrac xN)$, and then taking the limit as $N\to\infty$ we get that 
\begin{equation*}\begin{split}
&\int_0^1\rho_t(q)G(q)-\rho_0(q)G(q)dq -\int_0^t\int_0^1 \frac 12\Delta G(q)\rho_{s}(q) dq ds=0.
\end{split}
\end{equation*}
Again compare with the weak formulation given in \eqref{eq:Dirichlet_c_source_ integral-g_ssep} and note that the test function $G$ does not depend on time. 

\begin{remark}\label{fix_prof_ssep}
We remark here that in this particular case there is an extra condition in Definition  \ref{Def. Dirichlet_c source Condition-g_ssep} with respect to the other notions of weak solutions where we only have to check the weak formulation and to show that the solution belongs to a Sobolev space. In this case we also need  to show that the value of the profile $\rho_t(\cdot)$ is fixed at the boundary. We leave this issue to Appendix \ref{fix_profile}. 
\end{remark}

\subsubsection{- The case $\theta=1$:}

In this case we consider an arbitrary function $G:[0,1] \to \bbbr$ which is two times continuously differentiable and we get  \begin{equation*}\begin{split}
M^N_t(G)&=\langle \pi^N_t,G\rangle-\langle \pi^N_0,G\rangle-\int_0^t\langle \pi^N_s,\frac 12\Delta_NG\rangle ds\\
&-\frac 12\int_0^t\nabla _N^+G(0)\eta_{sN^2}(1)-\nabla_N^-G(1)\eta_{sN^2}(N-1) ds\\
 &-\frac {\kappa}{2}\frac{N}{N-1}\int_0^tG\Big(\tfrac{1}{N}\Big)(\alpha-\eta_{sN^2}(1))+G\Big(\tfrac{N-1}{N}\Big)(\beta-\eta_{sN^2}(N-1)) ds. \end{split}
\end{equation*}

In this regime Lemma \ref{Rep-Dirichlet2} in Appendix \ref{fix_profile}  is no longer valid. Nevertheless, by Remark \ref{rep_for_ssep_robin}   we can replace $\eta_{sN^2}(1)$ (resp.  $\eta_{sN^2}(N-1)$) by the average in a box around $1$ (resp. $N-1$):
\begin{equation}\label{boxes_ssep}
\overrightarrow{\eta}_{sN^2}^{\varepsilon N}(1):=\frac{1}{\varepsilon N}\sum_{x=1}^{1+\varepsilon N}\eta_{sN^2}(x), \quad \overleftarrow{\eta}_{sN^2}^{\varepsilon N}(N-1):=\frac{1}{\varepsilon N}\sum_{x=N-1}^{N-1-\varepsilon N}\eta_{sN^2}(x).\end{equation}
Here we note that the sum above  goes from $1$ to $1+\lfloor{\varepsilon N\rfloor}$ but for sake of simplicity we write $1+\varepsilon N$. 
By noting that $$\overrightarrow{\eta}^{\varepsilon N}_{sN^2}(1)\sim\rho_s(0)\quad  (\textrm{resp.} \overleftarrow{\eta}_{sN^2}^{\varepsilon N}(N-1)\sim\rho_s(1)),$$  for details on this approximation see for example \cite{Adriana,BGJO} - and repeating the same arguments as above, we  get to
\begin{equation*}
\begin{split}
\int_0^1\rho_t(q)G(q)-\rho_0(q)G(q)dq &-\int_0^t\int_0^1 \frac 12\Delta G(q)\rho_{s}(q) dq ds\\
&-\frac 12\int_0^t\partial_qG(0)\rho_s(0)-\partial_qG(1)\rho_s(1) ds \\&+\frac \kappa 2\int_0^tG(0)(\alpha-\rho_s(0))-G(1)(\beta-\rho_s(1)) ds=0.
\end{split}
\end{equation*}
Again compare with the weak formulation given in \eqref{eq:Dirichlet_c_source_ integral-g_ssep} and note that the test function $G$ does not depend on time.

\subsubsection{- The case $\theta>1$:} 

This regime is quite similar to the previous one. We consider again an arbitrary function $G:[0,1] \to \bbbr$ which is two times continuously differentiable and we note that the last two terms at the right hand side of \eqref{Dynkin'sFormula_ssep} vanish  since $\theta>1$. Then, repeating the same arguments as in the previous section and noting that Remark \ref{rep_for_ssep_robin}  also applies to $\theta>1$ we obtain at the end that
\begin{equation*}\begin{split}
&\int_0^1\rho_t(q)G(q)-\rho_0(q)G(q)dq -\int_0^t\int_0^1\frac 12 \Delta G(q)\rho_{s}(q) dq ds\\
&-\frac 12\int_0^t\partial_qG(0)\rho_s(0)-\partial_qG(1)\rho_s(1) ds =0.
\end{split}
\end{equation*}
Again compare with the weak formulation given in \eqref{eq:Dirichlet_c_source_ integral-g_ssep} and note that the test function $G$ does not depend on time.

\begin{remark}
Note that the parameter $\kappa$ that appears in the boundary dynamics is only seen at the macroscopic level in the case $\theta=1$ which corresponds to the heat equation with Robin boundary conditions. 
\end{remark}
\subsection{Tightness}\label{sec:tightness}

In this section we show that the sequence of probability measures $\{\bbbq_N\}_{N\geq 1} $, defined in the beginning  of Section \ref{sec:HL}, is tight in the Skorohod space $\mathcal D([0,T], \mathcal M_+)$. In order to do that, we invoke the Aldous's criterium  which says that
\begin{lemma}
A sequence $\{P_{N}\}_{N\geq 1}$ of probability measures defined on $\mathcal D([0,T],\mathcal{M}_{+})$ is tight if these two conditions hold:
\begin{description}
\item[a.]
For every $t\in{[0,T]}$ and every $\varepsilon>0$, there exists $K_{\varepsilon}^{t}\subset{\mathcal{M}_{+}}$ compact, such that
\begin{equation*}
\sup_{N\geq 1}P_{N}\Big(\pi_{t}\notin{K_{\varepsilon}^{t}}\Big)\leq{\varepsilon},
\end{equation*}
\item [b.]
For every $\varepsilon>0$
\begin{equation*}
\lim_{\gamma\rightarrow{0}}\limsup_{N\rightarrow{\infty}}\sup_{\substack{\tau\in{\mathcal{T}_{T}}\\ \theta\leq{\gamma}}}
P_{N}\Big(d(\pi_{\tau+\theta},\pi_{\tau})>\varepsilon\Big)=0,
\end{equation*}
\end{description}
where $\mathcal{T}_{T}$ denotes the set of stopping times with respect to the canonical filtration, bounded by $T$ and $d$ is the metric in the
space $\mathcal{M}_{+}$.
\end{lemma}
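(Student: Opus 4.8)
The statement is the classical Aldous criterion for tightness in a Skorohod space, so the plan is to derive it from Prohorov's theorem together with the standard compactness characterization of $\mathcal{D}([0,T],\mathcal{M}_{+})$. Since every measure in $\mathcal{M}_{+}$ has total mass bounded by $1$ and $[0,1]$ is compact, the space $\mathcal{M}_{+}$ equipped with the weak topology is itself a compact metric space with metric $d$. By Prohorov's theorem it then suffices to prove that $\{P_N\}_{N\geq 1}$ is relatively compact in $\mathcal{D}([0,T],\mathcal{M}_{+})$, and for this I would invoke the Arzel\`a--Ascoli-type characterization: relative compactness is equivalent to (i) tightness of the time-$t$ marginals for every $t$ in a dense subset of $[0,T]$, together with (ii) a uniform control of the Skorohod modulus of continuity $w'(\cdot,\gamma)$, namely $\lim_{\gamma\to 0}\limsup_{N\to\infty}P_N\big(w'(\pi,\gamma)>\varepsilon\big)=0$ for every $\varepsilon>0$; here $w'(\pi,\gamma)$ measures the smallest oscillation of $\pi$ achievable over a partition of $[0,T]$ into blocks of length at least $\gamma$.

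Condition (i) follows at once from hypothesis \textbf{a.} --- indeed, by compactness of $\mathcal{M}_{+}$ one may even take $K^{t}_{\varepsilon}=\mathcal{M}_{+}$, so (i) is automatic. The whole substance of the lemma is therefore the implication from the stopping-time bound \textbf{b.} to the modulus estimate (ii), and this is the step I expect to be the main obstacle; it is precisely the core of Aldous's theorem and the only place where the c\`adl\`ag (rather than continuous) nature of the paths must be accommodated.

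To establish (ii) I would fix $\varepsilon>0$ and scan each trajectory by a sequence of stopping times that detect oscillations of size $\varepsilon$: set $\tau_0=0$ and, recursively,
\[
\tau_{j+1}=\inf\{t>\tau_j:\ d(\pi_t,\pi_{\tau_j})>\varepsilon\}\wedge T.
\]
Each $\tau_j$ belongs to $\mathcal{T}_{T}$, so hypothesis \textbf{b.} applies to the increments $d(\pi_{\tau_j+\theta},\pi_{\tau_j})$ with $\theta\leq\gamma$. The key point is that, with probability tending to $1$ uniformly in $N$ as $\gamma\to 0$, no two consecutive times $\tau_j,\tau_{j+1}$ are closer than $\gamma$ and only finitely many of them fall in $[0,T]$; on that event the partition they induce witnesses $w'(\pi,\gamma)\leq 2\varepsilon$. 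Quantifying this --- bounding the probability that some increment over a gap shorter than $\gamma$ exceeds $\varepsilon$, and excluding an accumulation of the $\tau_j$ --- is exactly where \textbf{b.} is used and constitutes the delicate part of the argument.

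Combining (i) and (ii) through the compactness characterization gives relative compactness of $\{P_N\}_{N\geq 1}$, and Prohorov's theorem converts this into tightness, which proves the lemma. I emphasize that this criterion is classical and in practice one simply cites it (see \cite{KL}); its role here is as a tool, and the genuine work that follows will be the verification of \textbf{a.} and \textbf{b.} for the concrete sequence $\{\bbbq_{N}\}_{N\geq 1}$ --- the first being immediate from the compactness of $\mathcal{M}_{+}$, the second being controlled through the Dynkin martingale $M^{N}_{t}(G)$ of \eqref{Dynkin'sFormula_ssep} and an estimate on its quadratic variation.
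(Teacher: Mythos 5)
Your proposal is sound in outline, but note that the paper itself offers no proof of this lemma: it is Aldous's classical tightness criterion, stated as a tool and justified by the reference \cite{KL}, and the actual mathematical content of the paper's tightness section is the verification of conditions \textbf{a.} and \textbf{b.} for the specific sequence $\{\bbbq_N\}_{N\geq 1}$ --- exactly as you anticipate in your closing paragraph. Your sketch follows the standard textbook route (Prohorov's theorem, the Arzel\`a--Ascoli-type characterization of relative compactness in $\mathcal D([0,T],\mathcal{M}_{+})$, and the scanning of trajectories by oscillation stopping times), and your observation that compactness of $\mathcal{M}_{+}$ makes condition \textbf{a.} essentially automatic is correct; the paper exploits the same fact when it disposes of condition \textbf{a.} by a one-line Chebyshev argument. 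The one substantive caveat is that the heart of Aldous's theorem --- deducing the bound on the modulus $w'(\pi,\gamma)$ from condition \textbf{b.}, which requires a Fubini-type argument over deterministic time shifts $\theta$ precisely because the random gaps $\tau_{j+1}-\tau_j$ are not themselves admissible inputs to \textbf{b.} --- is flagged by you as ``delicate'' but not carried out, so as a self-contained proof your text is incomplete at exactly that point. Since the lemma's role in the paper is that of a cited classical result, this is an acceptable resolution, and your identification of where the real difficulty lies, and of where the paper's genuine work begins, is accurate.
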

By Proposition 1.7 of Chapter 4 in \cite{KL} it is enough to show that for every function $G$ in a dense subset of $C([0,1])$, with respect to the uniform topology, the sequence of measures that
corresponds to the real processes $\langle\pi_{t}^{N},G\rangle$ is tight.

In our setting case, the first condition {\bf{a.}} above translates by saying that:
\begin{equation*}
\lim_{A\rightarrow{+\infty}}\lim_{N\rightarrow{+\infty}}\bbbp_{\mu_N}\Big(|\langle\pi_{t}^{N},G\rangle|>A\Big)=0.
\end{equation*}
This is a consequence of Chebychev's inequality and the fact that for the exclusion type dynamics, the number of particles per site is at most one, we leave the details on this to the reader.  So, it remains to show condition \textbf{b}.
In this context and since we are considering the real process $\langle\pi_{t}^{N},G\rangle$, the distance $d$ above is the usual distance in $\bbbr$. Then, we must show that
  for all $\varepsilon > 0$ and any function $G$ in a dense subset of $C([0,1])$, with respect to the uniform topology,  it holds that
\begin{equation}
\label{eq:tight_1}
\displaystyle \lim _{\delta \rightarrow 0} \limsup_{N\rightarrow\infty} \sup_{\tau  \in \mathcal{T}_{T},\bar\tau \leq \delta} {\bbbp}_{\mu _{N}}\Big(\eta_{\cdot}:\Big| \langle\pi^{N}_{\tau+\bar\tau},G \rangle-\langle\pi^{N}_{\tau},G\rangle\Big|> \varepsilon \Big) =0.
\end{equation}
Above we assume that all the stopping times are bounded by $T$, thus, $\tau+ \bar\tau$ should be understood as $ (\tau+ \bar\tau) \wedge T$.

Recall that it is enough to prove the assertion for functions $G$ in a dense subset of $C([0,1])$ with respect to the uniform topology.  We will use two different dense sets, namely the space $C^1([0,1])$ in the case $\theta<1$ and the space $C^2([0,1])$ in the case $\theta\geq 1$, which are both dense in $C([0,1])$ with respect to the uniform topology.  For that purpose, we split the proof according to $\theta\geq 1$ and $\theta<1$. When $\theta\geq 1$ we  prove (\ref{eq:tight_1}) directly for functions  $G \in C^{2}([0,1])$ and we conclude that the sequence is tight. For  $\theta<1$, we prove (\ref{eq:tight_1}) first for functions $G \in C^{2}_{c}(0,1)$ and then we extend it, by a $L^1$ approximation procedure which is explained below, to functions $G\in C^1([0,1])$.

Recall from (\ref{Dynkin'sFormula_ssep})  that $M_{t}^{N}(G)$ is a martingale with respect to the natural filtration  $\{\mathcal{F}_{t}\}_{t\geq 0}$. Then
\begin{equation*}
\begin{split}
 &{\bbbp}_{\mu _{N}}\Big(\eta_{\cdot}:\left\vert \langle\pi^{N}_{\tau+ \bar\tau},G\rangle-\langle\pi^{N}_{\tau},G\rangle\right\vert > \varepsilon \Big)\\
 =&{\bbbp}_{\mu _{N}}\Big(\eta_{\cdot}: \Big| M_{\tau}^{N}(G)- M_{\tau+ \bar\tau}^{N}(G) + \int_{\tau}^{\tau+ \bar\tau} N^2 \mc L_{N} \langle \pi_{s}^{N},G \rangle ds  \Big| > \varepsilon \Big)\\
 \leq &{\bbbp}_{\mu _{N}}\Big(\eta_{\cdot}: \Big| M_{\tau}^{N}(G)- M_{\tau+ \bar\tau}^{N}(G)  \Big| > \frac \varepsilon 2 \Big)\\
 +&{\bbbp}_{\mu _{N}}\Big(\eta_{\cdot}: \Big|  \int_{\tau}^{\tau+ \bar\tau} N^2 \mc L_{N} \langle \pi_{s}^{N},G \rangle ds  \Big| > \frac \varepsilon 2 \Big).
 \end{split}
\end{equation*}
Applying Chebychev's inequality (resp. Markov's inequality) in the first (resp. second) term on the right hand side of last inequality,  we can bound the previous expression from above by
\begin{equation*}
\begin{split}
 \frac{2}{\varepsilon^2}{{E}}_{\bbbp_{\mu _{N}}}\Big[ \Big( M_{\tau}^{N}(G)- M_{\tau+ \bar\tau}^{N}(G)  \Big)^2 \Big]
 +\frac{2}{\varepsilon}{{E}}_{\bbbp_{\mu _{N}}}\Big[ \Big| \int_{\tau}^{\tau+ \bar\tau} N^2 \mc L_{N} \langle \pi_{s}^{N},G \rangle ds\Big|\Big].
 \end{split}
\end{equation*}
Therefore, in order to prove (\ref{eq:tight_1}) 
it is enough to show that
\begin{eqnarray} 
\label{eq:tight_2}
\displaystyle \lim _{\delta\rightarrow 0} \limsup_{N\rightarrow\infty} \sup_{\tau  \in \mathcal{T}_{T},\bar\tau \leq \delta}{{E}}_{\bbbp_{\mu _{N}}}\Big[ \Big| \int_{\tau}^{\tau+ \bar\tau}N^2 \mc L_{N}\langle \pi_{s}^{N},G\rangle ds \Big|\Big] = 0
\end{eqnarray}
and
 \begin{equation} 
 \label{eq:tight_3}
\displaystyle \lim _{\delta\rightarrow 0} \limsup_{N\rightarrow\infty} \sup_{\tau  \in \mathcal{T}_{T},\bar\tau \leq \delta}{{E}}_{\bbbp_{\mu _{N}}}\left[\left( M_{\tau}^{N}(G)- M_{\tau+ \bar\tau}^{N}(G) \right)^{2}  \right]=0.
\end{equation}

Let us start by proving \eqref{eq:tight_2}.  Given a test function $G$, we will show that there exists a constant $C$ such that
\begin{equation}
\label{eq:claim_ssep}
N^2 \mc L_N ( \langle \pi^N_{s}, G \rangle ) \le  C
\end{equation}
for any $s\le T$. We start with the case $\theta\geq1$. For that purpose, recall 
\eqref{gen_action_ssep}. 
Note that, since $|\eta_{sN^2} (x)| \le 1$ for all $s\in[0,t]$ and since $G \in C^{2}([0,1])$, we have that
\begin{equation*}\begin{split}
\Big|\langle \pi^N_s,\Delta_NG\rangle+\nabla _N^+G(0)\eta_{sN^2}(1)-\nabla_N^-G(1)\eta_{sN^2}(N-1)\Big|\leq 2\|G''\|_\infty+2\|G\|_\infty
\end{split}
\end{equation*}
and 
\begin{equation*}\begin{split}
\Big|{\kappa}N^{1-\theta}G\Big(\tfrac{1}{N}\Big)(\alpha-\eta_{sN^2}(1))+{\kappa}N^{1-\theta}G\Big(\tfrac{N-1}{N}\Big)&(\beta-\eta_{sN^2}(N-1))\Big|\\&\leq 4\kappa N^{1-\theta}\|G\|_\infty\\&\leq4\kappa\|G\|_\infty. 
\end{split}
\end{equation*}
This proves \eqref{eq:claim_ssep} for the case $\theta\geq 1.$ In the case $\theta<1$, we take $G \in C_c^{2}([0,1])$ and we see that in this case \eqref{gen_action_ssep} reduces to 
$
\langle \pi^N_s,\frac 12\Delta_NG\rangle
$ whose absolute value is bounded from above by $\|G''\|_\infty$ and this proves \eqref{eq:claim_ssep} for the case $\theta<1.$

Let us now prove  \eqref{eq:tight_3}. Applying Dynkin's formula with $F(\cdot, \cdot)$ given by \eqref{eq:F_for_Dynkin} we get that  
\begin{equation}\label{QV_mart}
\displaystyle\left( M^{N}_{t}(G)\right)^{2}-\int^{t}_{0} N^2\left[\mc  L_{N} \langle\pi^{N}_{s},G \rangle^{2}- 2\langle\pi^{N}_{s},G \rangle \mc L_{N} \langle\pi^{N}_{s},G \rangle\right]ds,
\end{equation} is a martingale with respect to the natural filtration  $\{\mathcal{F}_{t}\}_{t\ge 0}$. A simple computation shows that
\begin{equation*}\begin{split}
&N^2\left[\mc  L_{N,0} \langle\pi^{N}_{s},G \rangle^{2}- 2\langle\pi^{N}_{s},G \rangle \mc L_{N,0} \langle\pi^{N}_{s},G \rangle\right]
\\=&\frac{1}{2N^2}\sum_{x=1}^{N-2}(\eta_{sN^2}(x)-\eta_{sN^2}(x+1))^2(\nabla_N^+G(\tfrac{x}{N}))^2
\end{split}\end{equation*}
and by using the fact that $|\eta_{sN^2} (x)| \le 1$ for all $s\in[0,t]$ last expression is bounded from above by $\frac{2}{N}\|G'\|_\infty$. 
On the other hand, we also have that
\begin{equation*}\begin{split}
&N^2\left[\mc  L_{N,b} \langle\pi^{N}_{s},G \rangle^{2}- 2\langle\pi^{N}_{s},G \rangle \mc L_{N,b} \langle\pi^{N}_{s},G \rangle\right]\\
=&\frac{\kappa}{2N^\theta}\Big[c_1(\eta_{sN^2},\alpha)G(\tfrac{1}{N})^2+c_{N-1}(\eta_{sN^2},\beta)G(\tfrac{N-1}{N})^2\Big]
\end{split}
\end{equation*}
and by using the fact that $|\eta_{sN^2} (x)| \le 1$ for all $s\in[0,t]$ last expression is bounded from above by $\frac{4\kappa}{N^\theta}\|G\|^2_\infty$.

This ends the proof of tightness in the case $\theta\geq 1$, since  $C^{2}([0,1])$ is a dense subset of $C([0,1])$ with respect to the uniform topology. Nevertheless, for $\theta < 1$, since we considered functions $G\in C^{2}_{c}(0,1)$, last display is equal to zero. Therefore,  we have proved (\ref{eq:tight_2}) and (\ref{eq:tight_3}), and thus (\ref{eq:tight_1}), but for functions $G\in C^{2}_{c}(0,1)$ and, as mentioned above, we need to extend this result to functions in $C^1([0,1])$. To accomplish that,  we take a function $G \in C^1([0,1])\subset L^{1}([0,1])$,  and we take  a sequence of functions $\{G_{k}\}_{k \geq 0} \in C^{2}_{c}(0,1)$ converging to $G$, with respect to the $L^{1}$-norm, as $k \to \infty$. Now, since the probability in (\ref{eq:tight_1}) is less or equal than 
\begin{eqnarray*}\label{T7} \nonumber
 &&{\bbbp}_{\mu _{N}}\Big(\eta_{\cdot} :\left\vert \langle\pi^{N}_{\tau+ \bar\tau},G_{k}\rangle-\langle\pi^{N}_{\tau},G_{k}\rangle\right\vert > \dfrac{\varepsilon}{2} \Big)  \\\nonumber
&+&{\bbbp}_{\mu _{N}}\Big(\eta_{\cdot}:\left\vert \langle\pi^{N}_{\tau+ \bar\tau},G-G_{k}\rangle-\langle\pi^{N}_{\tau},G-G_{k}\rangle\right\vert > \dfrac{\varepsilon}{2} \Big) 
\end{eqnarray*}
and  since $G_{k}$ has compact support, from the computation above, it remains only to check that the last probability vanishes as $N \to \infty$ and then $k \to \infty$. 
For that purpose, we use the fact that 
\begin{equation}\label{T8}
\left\vert \langle\pi^{N}_{\tau+ \bar\tau},G-G_{k}\rangle-\langle\pi^{N}_{\tau},G-G_{k}\rangle\right\vert\leq \dfrac{2}{N}\sum _{x\in \Lambda_{N}}\left\vert(G-G_{k})(\tfrac{x}{N})\right\vert, 
\end{equation}
and we use the estimate
\begin{equation*}
\begin{split}
\dfrac{1}{N}\sum _{x\in \Lambda_{N}}\left\vert(G-G_{k})(\tfrac{x}{N})\right\vert & \le \sum _{x\in \Lambda_{N}}  \int_{\frac xN}^{\frac {x+1}{N}} \left\vert(G-G_{k})(\tfrac{x}{N}) -(G-G_k) (q) \right\vert dq \\
&+\,  \int_{0}^{1}\vert (G-G_{k})(q)\vert dq\\
& \le \cfrac{1}{N} \| (G- G_k)' \|_{\infty} + \int_{0}^{1}\vert (G-G_{k})(q)\vert dq.
\end{split}
\end{equation*}
The result follows by first  taking $N\to\infty$ and then $k \to \infty$.

\subsection{The limit point}\label{limit point}

Here, we prove at first that all limit points $\bbbq$ of the sequence $\{\bbbq_{N}\}_{N\geq 1}$ are concentrated on measures absolutely continuous with respect to the
Lebesgue measure, that are equal to $g(q)dq$ at the initial time and finally that $\bbbq$ is concentrated on trajectories of measures satisfying $\pi_t(dq)=\rho_t(q)dq$, where $\rho_t(\cdot)$ is the  weak solution of the corresponding hydrodynamic equation.  Let $\bbbq$ be a limit point of $\{\bbbq_{N}\}_{N\geq 1}$.

\subsubsection{-Characterization of absolutely continuity:}
\label{sec:abs_cont}
We start by showing that $\bbbq$ is concentrated on measures which are absolutely continuous with respect to the Lebesgue measure. Fix a continuous function $G:[0,1]\rightarrow{\bbbr}$. Since
\begin{equation*}
\sup_{t\in{[0,T]}}|\langle\pi_{t}^{N},G\rangle|\leq{\frac{1}{N}\sum_{x\in{\Lambda_N}}|G(\tfrac{x}{N})}|,
\end{equation*}
which is a consequence of the fact  of having at most one particle per site, the function that associates to each trajectory $\pi_{.}$,
$\sup_{t\in{[0,T]}}|\langle\pi_{t},G\rangle|$ is continuous. As a consequence, all limit points are concentrated in trajectories $\pi_{t}$ such that
\begin{equation*}
|\langle\pi_{t},G\rangle|\leq{\int_{0}^1|G(q)|dq}.
\end{equation*}
In order to show that the measure $\pi_t$ is absolutely continuous with respect to the Lebesgue measure, that we denote by $Leb$, we have to show that
for each set $A$ such that $Leb(A)=0$, then $\pi_{t}(A)=0$. With this purpose, we use last estimate for a sequence of continuous functions $\{G_{N}\}_{N\geq 1}$ that converges to the indicator function over the set $A$ and the result follows. Concluding, we have just proved that
\begin{equation*}
\bbbq\Big(\pi_{\cdot}: \pi_{t}(dq)=\pi(t,q)dq, \forall t\in[0,T]\Big)=1
\end{equation*}
i.e. $\pi_{t}(dq)$ is absolutely continuous with respect to the Lebesgue measure with a density $\pi(t,q)$.
\subsubsection{-Characterization of the initial measure:}
\label{sec:charc_initial_measure}
Here we show that $\bbbq$ is concentrated on a Dirac measure equal to $g(q)dq$ at time $0$.  For that purpose, fix $\varepsilon>0$. From the results of Section \ref{sec:tightness}, we know, from the weak convergence over a
subsequence and Portmanteau's Theorem, that:
\begin{equation*}
\begin{split}
&\bbbq\Big(\Big|\frac{1}{N}\sum_{x\in{\Lambda_{N}}}G(\tfrac{x}{N})\eta_{0}(x)-\int_{0}^1G(q)g(q)dq\Big|>\varepsilon\Big)\\
&\leq{\liminf_{K\rightarrow{+\infty}}\bbbq_{N_{k}}
\Big(\Big|\frac{1}{N}\sum_{x\in{\Lambda_{N}}}G(\tfrac{x}{N})\eta_{0}(x)-\int_{0}^1G(q)g(q)dq\Big|>\varepsilon\Big)}\\
&=\liminf_{K\rightarrow{+\infty}}\mu_{N_{k}}
\Big(\Big|\frac{1}{N}\sum_{x\in{\Lambda_{N}}}G(\tfrac{x}{N})\eta(x)-\int_{0}^1G(q)g(q)dq\Big|>\varepsilon\Big).
\end{split}
\end{equation*}
This last limit  is equal to zero, by the hypothesis of $\mu_{N}$ being associated to the profile $g(\cdot)$, see Definition \ref{def:meas_ass}. This shows that
\begin{equation*}
\bbbq\Big(\pi_{\cdot}: \pi_{0}(dq)=g(q)dq\Big)=1.
\end{equation*}

\subsubsection{-Characterization of the density $\pi(t,q)$:}
\label{sec:charac_limit_p}

Up to here  we know that all limit points  $\bbbq $ of the sequence sequence $\{\bbbq_N\}_{N\geq 1}$ are concentrated on trajectories $\pi_{t}(dq)$ which are absolutely continuous with respect to the Lebesgue measure, that is, $\pi_{t}(dq)=\pi(t,q)dq$. Moreover,  we also know  that all limit points $\bbbq$ of the sequence $\{\bbbq_N\}_{N\geq 1}$ are such that the initial trajectory is  a Dirac measure equal to $g(q)dq$. Now we prove that all limit points are concentrated on  trajectories of measures of the form $\rho_t(q)dq$, that is we are going to show that $\pi(t,q)=\rho_t(q)$ and that $\rho_{t}(\cdot)$ is a weak solution of the corresponding hydrodynamic equation. For that purpose, let $\bbbq$ be a limit point of the sequence $\lbrace\bbbq_{N}\rbrace_{N \geq 1}$, whose existence follows from the computations of Section \ref{sec:tightness}  and assume, without lost of generality, that $\lbrace\bbbq_{N}\rbrace_{ N \ge 1}$ converges to $\bbbq$, as $N\to+\infty$.  

\begin{proposition}
\label{prop:weak_sol_car}
If $\bbbq$ is a limit point of $ \{\bbbq_{N}\}_{N\in\bbbn}$  then 
$$\bbbq\left( \pi _{\cdot}: F_\theta= 0,\forall t\in [0,T],\, \forall G \in C_\theta\,\right)=1,$$
where
$$
F_\theta = 
\begin{cases}
F^c_{Dir}, \;\textrm{if }\ \theta<0,\\
F_{Dir}, \;\textrm{if }\theta\in[0,1),\\
F_{Rob}, \; \textrm{if }\theta\geq 1,
\end{cases}
\quad \textrm{and }\quad
C_\theta = 
\begin{cases}
C_c^{1,2} ([0,T]\times[0,1]), \;\textrm{if }\theta<0,\\
C_0^{1,2} ([0,T]\times[0,1]), \;\textrm{if }\theta\in[0,1),\\
C^{1,2} ([0,T]\times[0,1]), \; \textrm{if }\theta\geq 1.
\end{cases}
$$
\end{proposition}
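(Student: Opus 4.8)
The plan is to turn the heuristic computation of Section~\ref{sec:heuri_ssep} into a rigorous statement about the limit point $\bbbq$. Since $F_\theta$ is, for each fixed $t$ and $G$, a bounded functional of the trajectory $\pi_\cdot$ that is continuous on the set of absolutely continuous trajectories (which carries full $\bbbq$-measure by Section~\ref{sec:abs_cont}), it suffices to fix $G$ in the class $C_\theta$ and a time $t$ and to prove $\bbbq(|F_\theta|>\delta)=0$ for every $\delta>0$; a countable dense family of test functions together with the continuity of $t\mapsto\langle\pi_t,G_t\rangle$ (the limiting trajectories are continuous, as the jumps of $\pi^N_t$ are of size $O(1/N)$) then upgrades this to the statement for all $t$ and all $G$. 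To handle time-dependent test functions I would apply Dynkin's formula (Theorem~\ref{Dynkin_formula}) to $F(s,\eta)=\langle\pi^N_s,G_s\rangle$, which reproduces the martingale $M^N_t(G)$ of \eqref{Dynkin'sFormula_ssep} together with the extra term $\int_0^t\langle\pi^N_s,\partial_sG_s\rangle\,ds$.

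First I would dispose of the martingale. The quadratic-variation computation of Section~\ref{sec:tightness} bounds the bulk contribution of the bracket by $CN^{-1}\|G'\|_\infty$ and the boundary contribution by $CN^{-\theta}\|G\|_\infty^2$; hence $E_{\bbbp_{\mu_N}}[(M^N_t(G))^2]\to 0$ in every regime once $G\in C_\theta$, the vanishing of $G$ at the endpoints (when $\theta\in[0,1)$) improving the boundary term to $O(N^{-\theta-2})$ and its compact support (when $\theta<0$) making that term vanish identically, while the factor $N^{-\theta}$ already suffices when $\theta\geq1$. Thus $M^N_t(G)\to0$ in $L^2(\bbbp_{\mu_N})$. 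I would then pass to the limit in the bulk terms: by Section~\ref{sec:abs_cont} the quantities $\langle\pi^N_t,G_t\rangle$ and $\langle\pi^N_0,G_0\rangle$ converge to $\int_0^1\rho_tG_t\,dq$ and, using the initial-measure characterization of Section~\ref{sec:charc_initial_measure}, to $\int_0^1 g\,G_0\,dq$; and since $\Delta_NG_s\to\Delta G_s$ uniformly, $\int_0^t\langle\pi^N_s,(\tfrac12\Delta_N+\partial_s)G_s\rangle\,ds$ converges to $\int_0^t\int_0^1\rho_s(\tfrac12\Delta+\partial_s)G_s\,dq\,ds$.

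The crux, and the step I expect to be the main obstacle, is the boundary terms. For $\theta\in[0,1)$ I take $G_s(0)=G_s(1)=0$ and Taylor expand at the endpoints, so that $G_s(1/N)=N^{-1}\partial_qG_s(0)+O(N^{-2})$ renders the reservoir terms of order $N^{-\theta}$; I would then invoke the boundary replacement Lemma~\ref{Rep-Dirichlet2} to substitute $\eta_{sN^2}(1)$ by $\alpha$ and $\eta_{sN^2}(N-1)$ by $\beta$ inside the time integral, after which the reservoir terms carry the factor $\alpha-\alpha=0$ and disappear, leaving precisely $-\tfrac12\int_0^t(\partial_qG_s(0)\,\alpha-\partial_qG_s(1)\,\beta)\,ds$, i.e. $F_{Dir}=0$. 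For $\theta<0$ the compact support of $G$ makes all boundary terms vanish identically, giving $F^c_{Dir}=0$; the supplementary requirement that $\rho$ attain the values $\alpha,\beta$ at the endpoints (item~3 of Definition~\ref{Def. Dirichlet_c source Condition-g_ssep}) is the object of the separate argument in Appendix~\ref{fix_profile}. For $\theta\geq1$ I take an arbitrary $G\in C^{1,2}([0,T]\times[0,1])$ and, since Lemma~\ref{Rep-Dirichlet2} no longer holds, use instead the local-average replacement of Remark~\ref{rep_for_ssep_robin} to replace $\eta_{sN^2}(1)$ and $\eta_{sN^2}(N-1)$, in both the gradient and the reservoir boundary terms, by the box averages $\overrightarrow{\eta}_{sN^2}^{\varepsilon N}(1)$ and $\overleftarrow{\eta}_{sN^2}^{\varepsilon N}(N-1)$ of \eqref{boxes_ssep}; letting $N\to\infty$ identifies these with $\varepsilon^{-1}\int_0^\varepsilon\rho_s(q)\,dq$ and $\varepsilon^{-1}\int_{1-\varepsilon}^1\rho_s(q)\,dq$, and then $\varepsilon\to0$ produces the traces $\rho_s(0)$ and $\rho_s(1)$. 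Since $N^{2-\theta}/(N-1)\to1$ when $\theta=1$ and $\to0$ when $\theta>1$, the gradient terms give $\tfrac12\int_0^t(\rho_s(1)\partial_qG_s(1)-\rho_s(0)\partial_qG_s(0))\,ds$ and the reservoir terms the Robin contribution $-\tfrac\kappa2\int_0^t(G_s(0)(\alpha-\rho_s(0))+G_s(1)(\beta-\rho_s(1)))\,ds$ in the first case and no such term in the second, so that $F_{Rob}=0$, respectively $F_{Rob}=0$ with $\kappa=0$.

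Finally I would assemble these limits. The mechanism is to bound $\bbbq(|F_\theta|>\delta)$ by $\liminf_N\bbbq_N$ of the corresponding event for the $\varepsilon$-regularized functional — the one in which the traces $\rho_s(0),\rho_s(1)$ are replaced by the continuous averages $\langle\pi_s,\varepsilon^{-1}\mathbf 1_{[0,\varepsilon]}\rangle$ and $\langle\pi_s,\varepsilon^{-1}\mathbf 1_{[1-\varepsilon,1]}\rangle$ — using Portmanteau's theorem and the weak convergence $\bbbq_N\to\bbbq$; at the $N$-level this event is controlled by the martingale estimate established above and by the replacement errors, which vanish. Sending $\varepsilon\to0$, using the trace regularity $\rho\in L^2(0,T;\mathcal H^1)$ imported from \cite{Adriana,BGJO} (as flagged in Section~\ref{sec:heuri_ssep}), recovers $F_\theta$ itself and yields $\bbbq(|F_\theta|>\delta)=0$. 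The genuinely hard part is the two replacement results (Lemma~\ref{Rep-Dirichlet2} and Remark~\ref{rep_for_ssep_robin}): controlling the boundary occupation variables — pinning them to the reservoir densities when the reservoirs are fast, or matching them to a macroscopic average when they are slow — is exactly what separates the Dirichlet regime from the Robin/Neumann regime and lies at the heart of the argument.
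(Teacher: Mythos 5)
Your proposal is correct and follows essentially the same route as the paper's proof: $\varepsilon$-regularization of the boundary traces by box averages, Portmanteau's theorem to pass from $\bbbq$ to $\bbbq_{N}$, Dynkin's martingale controlled via Doob's inequality and the quadratic-variation bounds of Section \ref{sec:tightness}, Taylor expansions on $G$, and the two replacement results (Lemma \ref{Rep-Dirichlet2} for $\theta\in[0,1)$, Remark \ref{rep_for_ssep_robin} for $\theta\geq 1$). One caveat: your opening claim that $F_\theta$ is a continuous functional on absolutely continuous trajectories is false for $\theta\geq 1$, since the traces $\rho_s(0),\rho_s(1)$ are not weakly continuous in $\pi_\cdot$ (precisely the obstruction the paper flags before introducing $\iota^0_\varepsilon,\iota^1_\varepsilon$), but your assembly step already repairs this by regularizing the traces before invoking Portmanteau, exactly as the paper does.
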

\begin{proof}
We consider the case $\theta\geq 1$.
Note that we need  to verify, for $\delta > 0$ and $ G\in C^{1,2} ([0,T]\times[0,1])$,   that  
\begin{eqnarray}\label{prob_charac}
&&\bbbq\left(\pi _{\cdot}\in \mc D([0,T], \mathcal{M^{+}}): \sup_{0\le t \le T} \left\vert F_{Rob} \right\vert>\delta\right)=0,
\end{eqnarray}
Recall $F_{Rob}$ from \eqref{eq:Robin integral-g_ssep} and note that,  due to the terms  that involve $\rho_s(1)$ and $\rho_s(0)$ and  that appear in $F_{Rob}$, the set inside the  probability  in \eqref{prob_charac} is not an open set in the Skorohod space,  and as a consequence  we cannot use directly Portmanteau's Theorem. To avoid this difficulty, we fix $\varepsilon>0$ and we consider two approximations of the identity given by \begin{equation}\label{eq:aprox_ident}
\iota^0_\varepsilon(q)=\frac{1}{\varepsilon}\textrm{1}_{(0,\varepsilon)}(q)\quad \textrm{and}\quad \iota^1_\varepsilon(q)=\frac{1}{\varepsilon}\textrm{1}_{(1-\varepsilon,1)}(q)
\end{equation} and we sum and subtract  to $\rho_{s}(0)$  and to  $\rho_{s}(1)$ the mean  
\begin{equation}\label{convolutions}
\langle \pi_s, \iota^0_\varepsilon\rangle= \tfrac{1}{\varepsilon}\int_{0}^{\varepsilon}\rho_{s}(q)dq\quad \textrm{and}\quad \langle\pi_s, \iota^1_\varepsilon\rangle=\tfrac{1}{\varepsilon}\int_{1-\varepsilon}^{\varepsilon}\rho_{s}(q)dq,
\end{equation}
 respectively.  Above we used the fact that $\bbbq$ is concentrated on trajectories $\pi_{t}(dq)$ which are absolutely continuous with respect to the Lebesgue measure: $\pi_{t}(dq)=\rho_{t}(q)dq$.
Thus,  we bound the probability in \eqref{prob_charac} from above by the sum of the following terms
\begin{equation}\label{RC6}
\begin{split}
&\bbbq\left(  \sup_{0\le t \le T} \Big|\int_0^1 \rho_{t}(q)  G_{t}(q) \,dq  -\int_0^1  \rho_{0}(q) G_{0}(q) \,dq \right.\\
-&\left.\int_0^t\int_0^1 \rho_{s}(q)  \Big(\frac 12\Delta+\partial_s\Big) G_{s}(q)\,dqds 
   -\frac 12 \int^{t}_{0}  G_{s}(0)\alpha+ G_{s}(1)\beta \,ds  \right.\\
  +& \left.\frac 12\int^{t}_{0}  \langle\pi_s, \iota^1_\varepsilon\rangle\Big( \partial_{q}G_{s}(1)+  G_{s}(1)\Big)\,ds - \frac 12 \int^{t}_{0}\langle \pi_s, \iota^0_\varepsilon\rangle\Big(\partial_{q}G_{s}(0) -  G_{s}(0\Big)\, ds \Big|>\dfrac{\delta}{4}\right),
\end{split}
\end{equation}
\begin{equation}
\label{RC71}
\bbbq \left(  \Big| \int^{1}_{0} (\rho_{0}(q)-g(q)) G_{0}(q)\, dq \Big|>\dfrac{\delta}{4}\right),
\end{equation}
\begin{equation}
\label{RC7}
\sum_{k\in\{0,1\}}\bbbq\left(  \sup_{0\le t \le T}  \Big| \frac 12\int^{t}_{0} ( \rho_{s}(k)-\langle \pi_s, \iota^k_\varepsilon\rangle )\left[ G_{s}(k)- \partial_{q}G_{s}(k)  \right]ds  \Big|>\dfrac{\delta}{4}\right),
\end{equation}
and
we note that the terms  in (\ref{RC7})  converge to $0$ as $\varepsilon \to 0$  since we are comparing $\rho_s(0)$ and $\rho_s(1)$ with the   averages \eqref{convolutions} around  $0$  and  $1$, respectively. Moreover,  (\ref{RC71}) is equal to zero since $\bbbq$ is a limit point of $\{\bbbq_N\}_{N\geq1}$ and $\bbbq_N$ is induced by a measure $\mu_N$ which is associated to the profile $g(\cdot)$.  Note that in  (\ref{RC6}) we still cannot use Portmanteau's Theorem, since the functions $\iota^0_\varepsilon$ and $\iota_\varepsilon^1$ are not continuous. Nevertheless,  by approximating each one of these functions by continuous functions  in such a way that the error vanishes as $\varepsilon \to 0$ then, from Proposition A.3 of \cite{FGN} we can use Portmanteau's Theorem and   bound (\ref{RC6}) from above by
\begin{equation}
\label{RC1111}
\begin{split}
&\liminf_{N\to\infty}\,\bbbq_{N}\Big( \sup_{0\le t \le T} \Big| \int_0^1 \rho_{t}(q)  G_{t}(q) \,dq  -\int_0^1  \rho_{0}(q)  G_{0}(q) \,dq \\
&-\int_0^t\int_0^1 \rho_{s}(q)  \Big(\frac 12\Delta+\partial_s\Big) G_{s}(q)\,dqds -\frac 12\int^{t}_{0}  G_{s}(0)\alpha+ G_{s}(1)\beta \,ds  \\
&  -\frac 12\int^{t}_{0}\langle \pi_s, \iota^0_\varepsilon\rangle\Big(\partial_{q}G_{s}(0) -  G_{s}(0\Big)\, ds + \frac 12\int^{t}_{0}  \langle\pi_s, \iota^1_\varepsilon\rangle\Big(\partial_{q}G_{s}(1)+  G_{s}(1)\Big)\,ds \Big|>\dfrac{\delta}{2^{4}}\Big).
\end{split}
\end{equation} 
Summing and subtracting $\displaystyle\int_{0}^{t} N^{2}\mc L_{N}\langle \pi_{s}^{N},G_{s}\rangle ds$ to the term inside the supremum in  (\ref{RC1111}), recalling \eqref{Dynkin'sFormula_ssep} and \eqref{boxes_ssep}, the definition of $\bbbq_N$,
we bound \eqref{RC1111}  from above by the sum of the next two terms
\begin{equation}\label{RC12}
\liminf_{N\to\infty}\,\bbbp_{\mu_N}\left( \sup_{0\le t \le T} \left\vert M_{t}^{N}(G) \right\vert>\dfrac{\delta}{2^{5}}\right),
\end{equation} 
and
\begin{equation}
\label{RC13}
\begin{split}
&\liminf_{N\to\infty}\,\bbbp_{\mu_N}\left(  \sup_{0\le t \le T} \Big|\int_0^t N^{2}\mc L_{N}\langle \pi_{s}^{N},G_{s}\rangle\,ds-\int_0^t\int_0^1 \rho_{s}(q) \frac 12\Delta G_{s}(q)\,dqds \right.\\
 & -\frac 12 \int^{t}_{0}\overrightarrow{\eta}^{\varepsilon N}_{sN^2}(1)\Big(\partial_{q}G_{s}(0) - G_{s}(0\Big)\, ds +\frac 12 \int^{t}_{0}  \overleftarrow{\eta}^{\varepsilon N}_{sN^2}(N-1)\Big( \partial_{q}G_{s}(1)+   G_{s}(1)\Big)\,ds\\
 &\left.  -\frac 12\int^{t}_{0}  G_{s}(0)\alpha+ G_{s}(1)\beta\,ds  \Big|>\dfrac{\delta}{2^{5}}\right).
\end{split}
\end{equation} 
 Doob's inequality together with the computations right below \eqref{QV_mart} show that (\ref{RC12}) goes to $0$ as $N\to\infty$. Finally, (\ref{RC13}) can be rewritten as  
 \begin{equation}
 \label{RC14}
 \begin{split}
&\liminf_{N\to\infty}\,\bbbp _{\mu _{N}}\left( \sup_{0\le t \le T}  \Big| \int_0^t N^{2}\mc L_{N}\langle \pi_{s}^{N},G_{s}\rangle\,ds-\int_0^t \langle \pi_{s}^{N} ,\frac 12\Delta G_{s}\rangle\,ds \right. \\
& -\frac 12\int^{t}_{0}\overrightarrow{\eta}^{\varepsilon N}_{sN^2}(1)\Big(\partial_{q}G_{s}(0) - G_{s}(0\Big)\, ds + \frac 12\int^{t}_{0}  \overleftarrow{\eta}^{\varepsilon N}_{sN^2}(N-1)\Big( \partial_{q}G_{s}(1)+ G_{s}(1)\Big)\,ds \\
&\left. -\frac 12\int^{t}_{0}  G_{s}(0)\alpha+ G_{s}(1)\beta\,ds   \Big|>\dfrac{\delta}{2^{5}}\right).
\end{split}
\end{equation} 
Now, from (\ref{gen_action_ssep}) we can bound from above the probability in (\ref{RC14}) by the sum of the following terms
\begin{equation}
\label{RC15}
\bbbp_{\mu_{N}}  \left(\sup_{0\le t \le T} \Big| \cfrac{1}{N}\int_{0}^{t} \sum_{x\in \Lambda_N} \frac 12 \Delta_NG_s(\tfrac{x}{N})\eta_{sN^2}(x) ds  -  \int_{0}^{t}\left\langle \pi_{s}^{N},\frac 12\Delta G_{s} \right\rangle   \, ds \Big|>\dfrac{\delta}{2^{6}}\right),
\end{equation}
\begin{equation}
\label{RC16}
\begin{split}
&\bbbp_{\mu_{N}}  \left( \sup_{0\le t \le T}  \Big| \frac 12\int_{0}^{t} \nabla _N^+G_s(0)\eta_{sN^2}(1)- \overrightarrow{\eta}^{\varepsilon N}_{sN^{2}}(1) \partial_{q}G_{s}(0)   \, ds  \Big|>\dfrac{\delta}{2^{6}}\right),
\end{split}
\end{equation}
and
\begin{equation}
\label{RC18}
\begin{split}
 &\bbbp_{\mu_{N}}\left(\sup_{0\le t \le T} \Big| \frac 12 \int_{0}^{t}{\kappa}N^{1-\theta}G_s\Big(\tfrac{1}{N}\Big)(\alpha-\eta_{sN^2}(1))-G_{s}(0)(\alpha-\overrightarrow{\eta}^{\varepsilon N}_{sN^2}(1)) ds\Big| > \dfrac{\delta}{2^{6}}\right) 
 \end{split}
\end{equation}
and two other  terms which are very similar to the two previous ones but related to the  action of the right boundary dynamics given by $\mc L_{N,b}^{N-1}$.   Applying a Taylor expansion on the test function $G$ it is easy to show that  (\ref{RC15}) goes to $0$ as $N\to \infty$. Also  by Taylor expansion,  (\ref{RC16})  can be bounded from above by 
\begin{equation}
\label{RC20}
\bbbp_{\mu_{N}}  \left( \sup_{0\le t \le T} \Big| \int_{0}^{t} \partial_{q}G_{s}(0)(  \eta_{sN^2}(1)- \overrightarrow{\eta}^{\varepsilon N}_{sN^{2}}(1))ds \Big|>\dfrac{\delta}{2^{8}}\right),
\end{equation}
plus a term that vanishes as $N\to\infty$. 
Using Lemma \ref{Rep-Neumann} we see that (\ref{RC20}) vanishes as $N\to \infty$. The term  (\ref{RC18}) can be estimated using exactly the same argument that we just used, that is: Taylor expansion on $G$ plus Lemma \ref{Rep-Neumann}. For the terms related to the right boundary the argument is the same and with this  we finish the proof.

We leave the other case, namely $\theta<1$ for the reader. This case is even simpler than the previous one and for the interested reader we refer to, for example,  \cite{Adriana,BGJO}.

\end{proof}
\subsection{Hydrostatic limit}
\label{sec:hydrostatic}

In this section we prove that the hydrodynamic limit holds when we start the system from the stationary measure $\mu_{ss}$, see Section \ref{sec:stat_mea}.  By looking at the statement of Theorem \ref{th:hyd_ssep} we see that, in fact, to conclude we only need to show  the next result.

\begin{proposition}
\label{th:hyd_stat_ssep}
 Let $\mu _{ss}$ be the stationary measure for the Markov process $\{\eta_{tN^2}\,:\,t\geq0\}$ with generator $N^2\mc L_N$. Then, $\mu_{ss}$ is associated to the profile $\bar\rho:[0,1]\to[0,1]$ given on $q\in{(0,1)}$ by \eqref{eq: stat_sol}, that is
 \begin{equation*}
\bar{\rho}(q)=\left\{
\begin{array}{ll}
(\beta-\alpha)q+\alpha\,;\,\theta<1,\\
\frac{{\kappa}(\beta-\alpha)}{2+{\kappa}}q+\alpha+\frac{\beta-\alpha}{2+{\kappa}}\,;\,\theta=1,\\
\frac{\beta+\alpha}{2}\,;\,\theta>1,
\end{array}
\right.
\end{equation*} which is a stationary solution of the corresponding hydrodynamic equation, see \eqref{eq:stat_sol_dir} and \eqref{eq:stat_sol_rob}.
\end{proposition}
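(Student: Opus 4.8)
The plan is to verify the defining property of association from Definition \ref{def:meas_ass} directly, by means of a second-moment (variance) estimate combined with Chebyshev's inequality. The two analytic inputs needed---the convergence of the stationary profile and the decay of the stationary two point correlations---have already been established in \eqref{eq:conve_emp_prof_stat_prof} and \eqref{eq:bound_stat_corr}, so the whole argument reduces to assembling them.

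First I would fix a continuous $G:[0,1]\to\bbbr$ and $\delta>0$ and set
$$Y_N(\eta)=\frac{1}{N-1}\sum_{x\in\Lambda_N}G\big(\tfrac xN\big)\eta(x),$$
the goal being $\mu_{ss}\big(|Y_N-\int_0^1 G(q)\bar\rho(q)\,dq|>\delta\big)\to 0$. I would route this through the mean. Since $E_{\mu_{ss}}[\eta(x)]=\rho^N_{ss}(x)$, we have $E_{\mu_{ss}}[Y_N]=\frac{1}{N-1}\sum_{x\in\Lambda_N}G(\tfrac xN)\rho^N_{ss}(x)$. Using the uniform convergence \eqref{eq:conve_emp_prof_stat_prof} of $\rho^N_{ss}(\cdot)$ to $\bar\rho(\cdot)$ and recognizing the resulting expression as a Riemann sum, I would conclude that $E_{\mu_{ss}}[Y_N]\to\int_0^1 G(q)\bar\rho(q)\,dq$. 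Hence for $N$ large the mean lies within $\delta/2$ of the target, and it suffices to control the fluctuation of $Y_N$ about its mean.

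Next I would estimate the variance. Writing
$$\mathrm{Var}_{\mu_{ss}}(Y_N)=\frac{1}{(N-1)^2}\sum_{x,y\in\Lambda_N}G\big(\tfrac xN\big)G\big(\tfrac yN\big)\,\mathrm{Cov}_{\mu_{ss}}(\eta(x),\eta(y)),$$
I would split the sum into its diagonal and off-diagonal parts. On the diagonal, $\mathrm{Cov}_{\mu_{ss}}(\eta(x),\eta(x))=\rho^N_{ss}(x)(1-\rho^N_{ss}(x))\le \tfrac14$, so this contribution is at most $\|G\|_\infty^2/(4(N-1))\to 0$. Off the diagonal, $\mathrm{Cov}_{\mu_{ss}}(\eta(x),\eta(y))=\varphi^N_{ss}(x,y)$, and bounding each term by $\max_{x<y}|\varphi^N_{ss}(x,y)|$ yields a contribution at most $\|G\|_\infty^2\,\max_{x<y}|\varphi^N_{ss}(x,y)|$, which vanishes by \eqref{eq:bound_stat_corr} in every regime of $\theta$ (both the $O(N^{\theta-2})$ bound for $\theta<1$ and the $O(N^{-\theta})$ bound for $\theta\ge1$ tend to zero). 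Therefore $\mathrm{Var}_{\mu_{ss}}(Y_N)\to 0$, and Chebyshev's inequality gives $\mu_{ss}(|Y_N-E_{\mu_{ss}}[Y_N]|>\delta/2)\le 4\mathrm{Var}_{\mu_{ss}}(Y_N)/\delta^2\to 0$. Combining with the mean convergence finishes the verification that $\mu_{ss}$ is associated to $\bar\rho(\cdot)$.

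The argument has essentially no hard step, precisely because the two nontrivial facts---the profile convergence and the correlation decay---were already obtained through the matrix ansatz. The only point requiring slight care is the $\theta$-dependence of the correlation bound \eqref{eq:bound_stat_corr}; the mildest decay occurs at the borderline $\theta=1$, where it is of order $N^{-1}$, but this still vanishes, so no regime is genuinely obstructive. (When $\alpha=\beta$ the stationary measure is the reversible Bernoulli product measure of Lemma \ref{lem:bern_rev}, for which the correlations vanish identically and $\bar\rho$ is constant, so the statement is immediate.)
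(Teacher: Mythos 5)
Your proof is correct and follows essentially the same route as the paper: both arguments reduce the statement to the profile convergence \eqref{eq:conve_emp_prof_stat_prof} plus a Riemann-sum approximation for the deterministic part, and to the correlation bound \eqref{eq:bound_stat_corr} plus the trivial $O(1/N)$ diagonal term for the fluctuation part; your Chebyshev-on-the-variance step is just the paper's Markov-plus-Cauchy--Schwarz second-moment estimate in an equivalent form.
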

\begin{proof}
Recall from \eqref{assoc_mea}, that we need to prove:
\begin{equation*}
  \lim _{N\to\infty } \mu _{ss}\Big( \eta \in \Omega_{N} : \Big| \dfrac{1}{N}\sum_{x \in \Lambda_{N} }G\left(\tfrac{x}{N} \right)\eta({x}) - \int_{0}^1G(q)\rho_{0}(q)dq \Big|   > \delta \Big)= 0
\end{equation*}
for any continuous function $G:[0,1	]\to\bbbr$. 
By Markov's and triangular inequalities, we bound the previous probability from above by
\begin{equation}\label{eq:stat_hydro_proof}
\begin{split}
&\frac{1}{\delta}{E}_{\bbbp_{\mu _{ss}}}\Big[  \Big| \dfrac{1}{N}\sum_{x \in \Lambda_{N} }G\left(\tfrac{x}{N} \right)\Big(\eta({x}) -\rho_{ss}^N(x)\Big)\Big|\\+&\Big|\frac{1}{N}\sum_{x \in \Lambda_{N} }G\left(\tfrac{x}{N} \right)\rho_{ss}^N(x)- \int_{0}^1G(q)\bar\rho(q)dq \Big| \Big]\\
\leq & \frac{1}{\delta}{E}_{\bbbp_{\mu _{ss}}}\Big[  \Big| \dfrac{1}{N}\sum_{x \in \Lambda_{N} }G\left(\tfrac{x}{N} \right)\Big(\eta({x}) -\rho_{ss}^N(x)\Big)\Big|\Big]\\+&\frac{1}{\delta}\Big|\frac{1}{N}\sum_{x \in \Lambda_{N} }G\left(\tfrac{x}{N} \right)\rho_{ss}^N(x)- \int_{0}^1G(q)\bar\rho(q)dq \Big|.
\end{split}
\end{equation}
The last  term  can be bounded from above by
\begin{equation*}
\begin{split}
&\frac{1}{\delta}\Big|\frac{1}{N}\sum_{x \in \Lambda_{N} }G\left(\tfrac{x}{N} \right)\Big(\rho_{ss}^N(x)-\bar\rho\Big(\tfrac{x}{N}\Big)\Big)\Big|\\+&\frac{1}{\delta}\Big|\frac{1}{N}\sum_{x \in \Lambda_{N} }G\left(\tfrac{x}{N} \right)\bar\rho\Big(\tfrac{x}{N}\Big) -\int_{0}^1G(q)\bar\rho(q)dq \Big|.
\end{split}
\end{equation*}
The term at the left hand side of last expression is bounded from above by 
\begin{equation*}
\begin{split}
\frac{1}{\delta}\frac{1}{N}\sum_{x \in \Lambda_{N} }\Big|G\left(\tfrac{x}{N} \right)\Big|\Big|\rho_{ss}^N(x)-\bar\rho\Big(\tfrac{x}{N}\Big)\Big|\leq \frac{\|G\|_{\infty}}{\delta}\max_{x\in\Lambda_N}\Big|\rho_{ss}^N(x)-\bar\rho\Big(\tfrac{x}{N}\Big)\Big|
\end{split}
\end{equation*}
where from \eqref{eq:conve_emp_prof_stat_prof} it vanishes as  $N\to\infty$, while  the term at the right hand side also vanishes as $N\to\infty$ since we compare the Riemann sum with the corresponding converging integral.

To finish the proof  it remains to analyse the third term in \eqref{eq:stat_hydro_proof}. By the Cauchy-Schwarz's inequality the expectation appearing in that term can bounded  from above by
 \begin{equation*}
\begin{split}
& \Big(  \Big| \dfrac{1}{N^2}\sum_{x \in \Lambda_{N} }G\left(\tfrac{x}{N} \right){E}_{\bbbp_{\mu _{ss}}}\Big[(\eta({x}) -\rho_{ss}^N(x))^2\Big]\\+&\frac{2}{N}\sum_{x<y}G\left(\tfrac{x}{N} \right)G\left(\tfrac{y}{N} \right){E}_{\bbbp_{\mu _{ss}}}\Big[(\eta({x}) -\rho_{ss}^N(x))(\eta({y}) -\rho_{ss}^N(y))\Big]\Big)^\frac 12\\\leq&\Big(\frac{C\|G\|_\infty}{N}+2\|G\|_\infty\max_{x<y}\varphi_{ss}^N(x,y)\Big)^\frac 12.
\end{split}
\end{equation*}
From \eqref{eq:bound_stat_corr} the previous expression vanishes as $N\to\infty$. This finishes the proof.
\end{proof}

Note that the proof presented above uses the information about the two point correlation function which is not always easy to obtain. We refer the reader to \cite{flm} for another proof of this results without using the knowledge on the  correlations.

\section{Symmetric exclusion with long jumps in contact with reservoirs}
 \label{chap2}
 
 \subsection{The model}
\label{sec:model_LJ}
 In this section we want to generalize the results of the previous section to the case where particles can give jumps arbitrarily large.  As in the previous section, the bulk consists in the set of points $\Lambda_N=\{1,..., N-1\}$ and we artificially add two end points $x=0$ and $x=N$. Now, we explain the dynamics of the models we consider  and we start by describing the conditions on the  jump rate.  For that purpose,  let $p:\bbbz\times \bbbz\rightarrow{[0,1]}$ be a transition probability  such that $p(x,y)=p(y-x)$ and which is symmetric. We are going to discuss two cases: the first one, when $p(\cdot)$ has  finite  variance and the second one when $p(\cdot)$ has  infinite variance.   Note that since $p(\cdot)$ is symmetric it has mean zero, that is:
$$\sum_{z\in \bbbz}z p(z)=0.$$
We denote $m=\sum_{z\ge 1} z p(z)$. As an example we consider $p(\cdot)$ given by $p(0)=0$ and  \begin{equation}
\label{eq:choice_p}
p(z) = 
\dfrac{c_{\gamma}}{\vert z\vert^{\gamma+1}},
\end{equation}
 for $z\neq 0$, 
where  $c_{\gamma}$ is a normalizing constant.   For simplicity of the presentation we stick to this choice of $p(\cdot)$ along this section but we note that many of our results are true, in the case where $p(\cdot)$ has finite variance, in a more general setting where we only assume $p(\cdot)$ to be translation invariant and  mean zero.

We consider the process in contact with  stochastic   reservoirs  at the left and the right of the bulk. We fix four parameters $\alpha, \beta\in[0,1]$, $\kappa>0$ and   $\theta \in \bbbr$, so that particles can get in the bulk of the system from the site $x=0$ to any site $y\in\Lambda_N$ at rate  $\alpha\kappa N^{-\theta} p(y)$    or leave the bulk from any site $y\in\Lambda_N$ to the site $x=0$ at rate  $(1-\alpha)\kappa N^{-\theta} p(y)$; and particles can get in   the bulk to any site $y\in\Lambda_N$  from the site $x=N$ at rate $\beta\kappa N^{-\theta} p(N-y)$ or leave the bulk  from any site $y\in\Lambda_N$ to the site $x=N$  at rate  $(1-\beta)\kappa N^{-\theta} p(N-y)$.

We define the  dynamics of the process in the following way.  We start with the bulk dynamics. Each pair of sites of the bulk $\{x,y\} \subset \Lambda_N$ carries a Poisson process of intensity $p(y-x) /2$.  Poisson processes associated to different bonds are independent. If for the configuration $\eta$, the clock associated to the bound $\{x,y\}$ rings, then we exchange the value of the occupation variables $\eta(x)$ and $\eta(y)$ at rate $p(y-x) /2$. Now we explain the dynamics at the boundary. Each pair of sites $\{0,x\}$ with  $x\in\Lambda_N$ carries 	two Poisson processes, all of them being independent. If for the configuration $\eta$, the clock associated to the Poisson process of the oriented bond $\{0,x\}$ (resp. $\{x,0\}$) rings, then we change the value $\eta(x)$ into $1-\eta(x)$ with rate $\kappa N^{-\theta}p(x) \alpha (1-\eta(x))$ (resp. $\kappa N^{-\theta}p(x)(1- \alpha)\eta(x)$).  At the right boundary the dynamics is similar but instead of $\alpha$ the intensity is given by $\beta$. Observe that the reservoirs ($x=0$ and $x=N$) add and remove particles on all the sites of the bulk $\Lambda_N$, and not only at the boundaries $x=1$ and $x=N-1$ as happened in the model of Section \ref{chap1}, but with a rate that decreases as the distance from the corresponding reservoir increases. We remark that as in the previous section, we could do another interpretation of the previous dynamics  at the boundary,  as follows. Particles  can either be created or annihilated  at any site $x\in\Lambda_N$  according to the following rates:
\begin{itemize}
\item from the left reservoir, from $x=0$ to $y\in\Lambda_N$:
\begin{itemize}
\item creation rate: $\alpha\kappa N^{-\theta}p(y)$, 
\item annihilation rate: $(1-\alpha)\kappa N^{-\theta}p(y)$.
\end{itemize}
\vspace{0.1cm}
\item from the right reservoir, from $x=N-1$ to $y\in\Lambda_N$:
\begin{itemize}
\item creation rate: $\beta\kappa N^{-\theta}p(N-y)$, 
\item annihilation rate: $(1-\beta)\kappa N^{-\theta}p(N-y)$.
\end{itemize}
\end{itemize}
Let us see an illustration of the dynamics just described with $N=11$ and the configuration $\eta=(1,1,0,0,0,0,1,0,1,1)$:

 \begin{center}
 \begin{tikzpicture}[thick, scale=0.85][h!]
 \draw[latex-] (-6.5,0) -- (5.5,0) ;
\draw[-latex] (-6.5,0) -- (5.5,0) ;
\foreach \x in  {-6,-5,-4,-3,-2,-1,0,1,2,3,4,5}
\draw[shift={(\x,0)},color=black] (0pt,0pt) -- (0pt,-3pt) node[below] 
{};
 \node[ball color=black!30!,shape=circle,minimum size=0.7cm] (A) at (1,0.4) {};
    \node[ball color=black!30!,shape=circle,minimum size=0.7cm] (B) at (-5,0.4) {};
       \node[shape=circle,minimum size=0.7cm] (N) at (-5,1.2) {};
        \node[shape=circle,minimum size=0.7cm] (NN) at (-6,0.4) {};
       \node[shape=circle,minimum size=0.7cm] (P) at (-5,2.0) {};
    \node[ball color=black!30!,shape=circle, minimum size=0.7cm] (C) at (-4,0.4) {};
   \node[ball color=black!30!,shape=circle, minimum size=0.7cm] (E) at (3,0.4) {};
         \node[ball color=columbiablue,shape=circle,minimum size=0.7cm] (L) at (-6,0.4) {};
          \node[shape=circle,minimum size=0.7cm] (M) at (-6,1.2) {};
          \node[shape=circle,minimum size=0.7cm] (Q) at (-6,2.0) {};
           \node[ball color=columbiablue,shape=circle,minimum size=0.7cm] (M) at (-6,1.2) {};
           \node[shape=circle,minimum size=0.7cm] (A') at (0,0.4) {};
               \node[shape=circle,minimum size=0.7cm] (A'') at (2,0.4) {};
         
          \node[shape=circle,minimum size=0.7cm] (S) at (5,1.2) {};
           \node[ball color=columbiablue,shape=circle,minimum size=0.7cm] (R) at (5,0.4) {};
              \node[ball color=columbiablue,shape=circle,minimum size=0.7cm] (NN) at (-6,0.4) {};
            \node[ball color=columbiablue,shape=circle,minimum size=0.7cm] (S) at (5,1.2) {};
              \node[shape=circle,minimum size=0.7cm] (T) at (5,2.0) {};
               \node[shape=circle,minimum size=0.7cm] (U) at (4,3.6) {};
                 \node[shape=circle,minimum size=0.7cm] (V) at (5,3.0) {};
               
                      \node[shape=circle,minimum size=0.7cm] (Y) at (4,0.4) {};
               
           \node[ball color=black!30!,shape=circle,minimum size=0.7cm] (Y) at (4,0.4) {};
            
                  \node[shape=circle,minimum size=0.7cm] (W) at (4,3.0) {};
               
           \node[ball color=columbiablue,shape=circle,minimum size=0.7cm] (T) at (5,2.0) {};
              \node[shape=circle,minimum size=0.7cm] (U) at (4,2.0) {};
              \path [->] (T) edge[bend right =60] node[above] {${\textcolor{red}{\kappa N^{-\theta}}\beta p(5)}$} (A');
              
                \path [->] (Y) edge[bend left=60] node[above] {$\textcolor{red}{\kappa N^{-\theta}}{(1-\beta)p(1)}$} (S);
              
               \path [->] (C) edge[bend right=60] node[above] {$\textcolor{red}{\kappa N^{-\theta}}{(1-\alpha)p(2)}$} (M);
               \path [->] (M) edge[bend left=60] node[above] {$\textcolor{red}{\kappa N^{-\theta}}\alpha p(8)$} (A'');
              
    \node[shape=circle,minimum size=0.7cm] (K) at (-0,0.4) {};
        \node[shape=circle,minimum size=0.7cm] (G) at (1,0.4) {};
\tclock{1.9}{-0.8}{columbiablue}{0.8}{0}
\tclock{0.65}{-0.8}{columbiablue}{0.8}{0}
\tclock{4.4}{-0.8}{columbiablue}{0.8}{0}
\tclock{3.15}{-0.8}{columbiablue}{0.8}{0}
\tclock{-4.4}{-0.8}{columbiablue}{0.8}{0}
\tclock{-5.65}{-0.8}{columbiablue}{0.8}{0}
\tclock{-6.9}{-0.8}{columbiablue}{0.8}{0}
\tclock{5.65}{-0.8}{columbiablue}{0.8}{0}
 \end{tikzpicture}
 \end{center}

 \bigskip

The infinitesimal generator of the  process is given by  
\begin{equation}
\label{Generator}
 \mc L_{N} = \mc L_{N,0}+ \mc L_{N,b},
\end{equation}
where  $\mc L_{N,0}$ and $\mc L_{N,b}$  act on functions  $f:\Omega_N \to \bbbr$  as
\begin{equation}\label{generators}
\begin{split}
&(\mc L_{N,0} f)(\eta) =\cfrac{1}{2} \, \sum_{x,y \in \Lambda_N} p(x-y) [ f(\eta^{x,y}) -f(\eta)],\\
&(\mc L_{N,b} f)(\eta) = \frac{\kappa}{N^\theta}\sum_{y\in\{0,N\}} \sum_{x \in \Lambda_N} p(y-x)c_{x}(\eta,r(y)) [f(\eta^x) - f(\eta)]
\end{split}
\end{equation} where  the configurations $\eta^{x,y}$ and $\eta^x$ have been defined in \eqref{tranformations}, the rates $c_{x}(\eta,r(y))$ have been defined in \eqref{rate_c} and $r(0)=\alpha$ and  $r(N)=\beta$.

We consider  the Markov process  speeded up in the  time scale $t\Theta(N)$ and  note that $\{ \eta_{t \Theta(N)} \,:\, t\ge 0\} $ has  infinitesimal generator given by $\Theta(N)\mc L_{N}$. Although $\eta_{t \theta(N)}$ depends on $\alpha$, $\beta$ and $\theta$, we shall omit these index in order to simplify notation. 

As in Section \ref{sec:stat_mea} we can prove that the Bernoulli product measures $\nu_\rho^N$ as defined in \eqref{eq:bernoulli_mea} are reversible when we consider $\alpha=\beta=\rho$.  The proof is quite similar to the one given in Lemma \ref{lem:bern_rev} and for that reason it is omitted.

In the next section we analyse the case where $p(\cdot)$ has finite variance and we denote it by $\sigma^2$, so that  $$\sigma^{2}:=\sum_{z\in \bbbz}z^{2}p(z)<\infty.$$  As an example we consider $p(\cdot)$ as in \eqref{eq:choice_p}, that is $p(0)=0$ and  \begin{equation*}
p(z) = 
\dfrac{c_{\gamma}}{\vert z\vert^{\gamma+1}},
\end{equation*}
 for $z\neq 0$, 
where  $c_{\gamma}$ is a normalizing constant and we take $\gamma> 2$, so that $p(\cdot)$ has finite variance.   For simplicity of the presentation we stick to this choice of $p(\cdot)$ whenever we mention to the case where $p(\cdot)$ has finite variance but we note that many of our results are true in the more general setting where we just assume $p(\cdot)$ to be translation invariant, mean zero and with finite variance. 

\begin{remark} We note that for the choice of  $p$ with $p(1)=\frac 12=p(-1)$ the dynamics described above coincides with the one of the first section. In that sense many of the results that we will derive here are a generalization of those obtained before. 
\end{remark}
 
 In Section \ref{sec:inf_var} we analyse the case where $p(\cdot)$ is as in \eqref{eq:choice_p} but we consider $\gamma\in(1,2)$ so that $p(\cdot)$ has mean zero but with infinite variance. We note that in the case $\gamma=2$ the transition probability $p(\cdot)$ also has mean zero  and  infinite variance, but in this case  the results are similar to those when $p(\cdot) $ has finite variance, see Remark \ref{rem:gamma=2}.

\subsection{The finite variance case}
\label{sec:fin_var}

\subsubsection{-Hydrodynamic equations:} \label{sec:hyd_eq_long_jumps} Recall the notation introduced in Section \ref{sec:hyd_eq_ssep}.  We can now give the definition of the weak solutions of the  hydrodynamic equations that will be derived in this section when $p(\cdot)$ is assumed to have finite variance. In what follows  $g:[0,1]\rightarrow [0,1]$ is a measurable function and it is the initial condition of all the partial differential equations that we define below, that is ${ \rho}_{0}(q)= g(q),$ for all $q\in(0,1)$.

\begin{definition}
\label{Def. Dirichlet source Condition-g}
Let   $\hat \sigma \ge 0$ and $\hat \kappa \geq  0$ be some parameters.  We say that  $\rho:[0,T]\times[0,1] \to [0,1]$ is a weak solution of the reaction-diffusion equation with  Dirichlet boundary conditions
 \begin{equation}\label{eq:Dirichlet source Equation-g}
 \begin{cases}
 &\partial_{t}\rho_{t}(q)=\frac{{\hat \sigma}^2}{2}\Delta\, {\rho} _{t}(q)+ {\hat \kappa} \Big\{ \frac{\alpha-\rho_t(q)}{q^{\gamma+1}}+\frac{\beta-\rho_t(q)}{(1-q)^{\gamma+1}}\Big\}, \quad (t,q) \in (0,T]\times(0,1),\\
 &{ \rho} _{t}(0)=\alpha, \quad { \rho}_{t}(1)=\beta,\quad t \in (0,T],
 \end{cases}
 \end{equation} starting from a measurable function $g:[0,1]\rightarrow [0,1]$,
if the following three conditions hold:
\begin{enumerate}
\item \begin{itemize}

\item $\rho \in L^{2}(0,T;\mathcal{H}^{1})$ if $\hat \sigma >0$,

 \item $\int_0^T \int_0^1 \Big\{ \frac{(\alpha-\rho_t(q))^2}{q^{\gamma+1}}+\frac{(\beta-\rho_t(q))^2}{(1-q)^{\gamma+1}}\Big\} \, dq\, dt <\infty$ if $\hat \kappa >0$, \end{itemize}
\item $\rho$ satisfies the weak formulation:
\begin{equation}\label{eq:Dirichlet_source_ integral-g}
\begin{split}
&F_{RD}:=\int_0^1 \rho_{t}(q)  G_{t}(q) \,dq  -\int_0^1 g(q)   G_{0}(q) \,dq \\
&- \int_0^t\int_0^1 \rho_{s}(q)\Big(\dfrac{\hat \sigma^{2}}{2}\Delta + \partial_s\Big) G_{s}(q)  \,dq \, ds\\
&- {\hat \kappa}\int_0^t\int_0^1G_s(q)\left( \frac{\alpha-\rho_s(q)}{q^{\gamma+1}}+\frac{\beta-\rho_s(q)}{(1-q)^{\gamma+1}}\right)\, dq\,ds=0,
\end{split}   
\end{equation}
for all $t\in [0,T]$ and any function $G \in C_c^{1,2} ([0,T]\times[0,1])$, 

\item if $\hat \sigma >0$ then $\rho _{t}(0)=\alpha, \quad { \rho}_{t}(1)=\beta$
for all $t \in [0,T]$.
\end{enumerate}
\end{definition}

\begin{remark}
Observe that in the case $\hat \sigma >0$ and $\hat \kappa=0$ we recover the heat equation with Dirichlet boundary conditions. If $\hat \sigma =0$ the equation does not have the diffusion term. 
\end{remark}

\begin{definition}
\label{Def. Robin Condition-g}
Let $\hat \sigma>0$ and $\hat m\ge 0$ be some parameters.  We say that  $\rho:[0,T]\times[0,1] \to [0,1]$ is a weak solution of the heat equation with Robin boundary conditions 
 \begin{equation}\label{Robin Equation-g}
 \begin{cases}
 &\partial_{t}\rho_{t}(q)= \frac{\hat \sigma^2}{2}\Delta\, {\rho} _{t}(q), \quad (t,q) \in [0,T]\times(0,1),\\
 &\partial_{q}\rho _{t}(0)=\frac{2\hat m}{\hat \sigma^2}(\rho_{t}(0) -\alpha),\quad \partial_{q} \rho_{t}(1)=\tfrac{2\hat m}{\hat \sigma^2}(\beta -\rho_{t}(1)),\quad t \in [0,T],
 \end{cases}
 \end{equation}
starting from a measurable function $g:[0,1]\rightarrow [0,1]$, if the following two conditions hold: 
\begin{enumerate}
\item $\rho \in L^{2}(0,T;\mathcal{H}^{1})$, 

\item $\rho$ satisfies the weak formulation:
\begin{equation}\label{eq:Robin integral-g}
\begin{split}
F_{Rob}:=\int_0^1 &\rho_{t}(q)  G_{t}(q) \,dq  -\int_0^1 g(q)   G_{0}(q) \,dq \\
& - \int_0^t\int_0^1 \rho_{s}(q) \Big(\dfrac{\hat \sigma^{2}}{2}\Delta + \partial_s\Big) G_{s}(q)  \,dq\, ds \\&+ \dfrac{\hat \sigma^{2}}{2}\int^{t}_{0}   \{\rho_{s}(1) \partial_q G_{s}(1)-\rho_{s}(0)  \partial_q G_{s}(0) \} \, ds\\
&-\hat m \int^{t}_{0} \{ G_{s}(0)(\alpha -\rho_{s}(0)) +  G_{s}(1)(\beta -\rho_{s}(1)) \}\,  ds=0,
\end{split}   
\end{equation}
for all $t\in [0,T]$, any function $G \in C^{1,2} ([0,T]\times[0,1])$. 
\end{enumerate}
\end{definition}

\begin{remark} \label{neumann_cond_rem_lj}
Observe that in the case $\hat m =0$ the equation above is the heat equation with Neumann boundary conditions. 
\end{remark}

\subsubsection{-Hydrodynamic Limit:}
\label{sec:HL_long_jumps} Recall the notion of the empirical measure given in Section \ref{sec:hyd_eq_ssep} and note that in this case we have 
$$\pi^{N}_{t}(\eta, dq):=\pi^{N}(\eta_{t\theta(N)}, dq)$$ and we note that, in this case, the time scale $\theta(N)$ will change with the range of $\theta$, contrarily to what happens in the model of Section \ref{chap1}. As before, let $\bbbp _{\mu _{N}}$ be the probability measure in the Skorohod space $\mathcal D([0,T], \Omega_N)$ induced by the  Markov process $\{\eta_{t\theta(N)}\,:\,t\ge 0\}$ and the initial probability measure $\mu_N$ and we denote by $E _{\bbbp_{\mu _{N}}}$ the expectation with respect to $\bbbp_{\mu _{N}}$. Let $\lbrace\bbbq_{N}\rbrace_{N\geq 1}$ be the  sequence of probability measures on $\mathcal D([0,T],\mathcal{M}^{+})$ induced by the  Markov process $\{\pi_{t}^{N}\,;\,t\geq 0\}$ and by $\bbbp_{\mu_{N}}$.

\begin{remark}
We note that due to the presence of  long jumps in  the system, we cannot obtain information about the empirical profile nor the two point correlation function in a simple way as we did in Section \ref{sec:empirical_prof}. We also note that the matrix ansatz method described in Section \ref{sec:stat_mea} in this case does not give us any information about the stationary measures for this model. This study is left for a future work. 
\end{remark}

Let $g: [0,1]\rightarrow[0,1]$ be a measurable function and let  $\lbrace\mu_{N}\rbrace_{N\geq 1 }$ be a sequence of probability measures in $\Omega_{N}$  associated to $g(\cdot)$, see  \eqref{assoc_mea}. The first result in this section   is stated in the following theorem (see Figure \ref{fig:hlf_lj}).

\begin{theorem}
\label{th:hyd_long_jumps}
 Let $g:[0,1]\rightarrow[0,1]$ be a measurable function and let $\lbrace\mu _{N}\rbrace_{N\geq 1}$ be a sequence of probability measures in $\Omega_{N}$ associated to $g(\cdot)$. Then, for any $0\leq t \leq T$,
\begin{equation*}\label{limHidreform_ljfv}
 \lim _{N\to\infty } \bbbp_{\mu _{N}}\Big( \eta_{\cdot} : \left\vert \dfrac{1}{N-1}\sum_{x \in \Lambda_{N} }G\left(\tfrac{x}{N} \right)\eta_{t\theta(N)}(x) - \int_{0}^1G(q)\rho_{t}(q)dq \right\vert    > \delta \Big)= 0,
\end{equation*}
where  the time scale is given by
 \begin{equation}\label{time_scales}
\Theta(N)= \begin{cases}
 N^2, &\quad\textrm{if} \,\,\,  \theta\geq 1-\gamma,\\
 N^{\gamma+\theta+1}, &\quad   \textrm{if} 
\,\,\, \theta<1-\gamma,\\
 \end{cases}
 \end{equation}
and $\rho_{t}(\cdot)$ is the unique weak solution of : 
\begin{itemize}
\item[$\bullet$] \eqref{eq:Dirichlet source Equation-g} with $\hat \sigma=0$ and $\hat \kappa=\kappa c_\gamma $, if $\theta<1-\gamma$;
\item [$\bullet$]  \eqref{eq:Dirichlet source Equation-g} with $\hat \sigma=\sigma$ and $\hat \kappa=\kappa c_\gamma $, if $\theta =1-\gamma$; 
\item[$\bullet$]  \eqref{eq:Dirichlet source Equation-g} with $\hat \sigma=\sigma$ and $\hat \kappa=0$, if $\theta \in (1-\gamma,1)$;
\item[$\bullet$]  (\ref{Robin Equation-g}) with $\hat \sigma=\sigma$ and $\hat m=\tfrac\kappa 2 $, if $\theta =1$;
\item[$\bullet$]  (\ref{Robin Equation-g}) with $\hat \sigma=\sigma$ and $\hat m=0$, if $\theta>1$.
\end{itemize}
\end{theorem}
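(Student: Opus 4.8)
The plan is to follow the same three-step scheme used in the proof of Theorem~\ref{th:hyd_ssep}: first establish tightness of the sequence $\{\bbbq_N\}_{N\geq 1}$ in $\mathcal D([0,T],\mathcal M^+)$, then characterize every limit point $\bbbq$ as concentrated on trajectories $\pi_t(dq)=\rho_t(q)\,dq$ whose density is a weak solution of the announced equation, and finally invoke uniqueness of weak solutions (the analogues of Remark~\ref{rem:uniq_weak_sol}, proved in \cite{BGJO,Adriana}) to conclude that the whole sequence converges. The engine of the argument is again Dynkin's formula (Theorem~\ref{Dynkin_formula}) applied to $F(t,\eta)=\langle\pi_t^N,G\rangle$ for a fixed test function $G$ (with compact support when $\theta<1$, general when $\theta\geq 1$), now with the generator $\Theta(N)\mc L_N$ for the time scale $\Theta(N)$ in \eqref{time_scales}.

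The first task is to compute $\Theta(N)\mc L_N\langle\pi_s^N,G\rangle$ and to read off the regimes from its scaling. Splitting $\mc L_N=\mc L_{N,0}+\mc L_{N,b}$ as in \eqref{Generator}, the bulk part produces a discrete operator acting on $G$; since $p(\cdot)$ is symmetric, mean zero and of finite variance $\sigma^2$, a Taylor expansion together with a summation by parts shows that $N^2\mc L_{N,0}\langle\pi_s^N,G\rangle$ converges in the interior to $\langle\pi_s^N,\tfrac{\sigma^2}{2}\Delta G\rangle$, while the jumps that would leave $\Lambda_N$ generate boundary corrections carrying the first moment $m$, responsible for the gradient terms of the Robin/Neumann formulation. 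The boundary part $\mc L_{N,b}$ produces the reaction sums
\begin{equation*}
\frac{\Theta(N)\kappa}{N^\theta(N-1)}\sum_{x\in\Lambda_N} \Big[p(x)(\alpha-\eta_{s\Theta(N)}(x))+p(N-x)(\beta-\eta_{s\Theta(N)}(x))\Big]G\big(\tfrac xN\big).
\end{equation*}
Using $p(x)=c_\gamma|x|^{-(\gamma+1)}$ and $x=qN$, the diffusion prefactor is of order $\Theta(N)N^{-2}$ and the reaction prefactor of order $\Theta(N)\kappa N^{-\theta-\gamma-1}$. Balancing these against each other and against the constant $1$ yields the dichotomy in \eqref{time_scales}: for $\theta>1-\gamma$ diffusion dominates ($\Theta(N)=N^2$, reaction vanishes), for $\theta<1-\gamma$ reaction dominates ($\Theta(N)=N^{\gamma+\theta+1}$, diffusion vanishes), and for $\theta=1-\gamma$ both survive, giving $\hat\sigma=\sigma$, $\hat\kappa=\kappa c_\gamma$. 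Thus the position of $\theta$ relative to $1-\gamma$ selects the bulk operator (pure reaction, reaction--diffusion, or pure diffusion), while its position relative to $1$ selects the boundary condition---Dirichlet for $\theta<1$, Robin ($\hat m=\kappa/2$) for $\theta=1$, Neumann ($\hat m=0$) for $\theta>1$---exactly as in the heuristics of Section~\ref{sec:heuri_ssep}.

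Tightness is then obtained as in Section~\ref{sec:tightness} via Aldous' criterion: using that there is at most one particle per site one checks, as in \eqref{eq:claim_ssep}, that $|\Theta(N)\mc L_N\langle\pi_s^N,G\rangle|$ is bounded uniformly for $G\in C^2([0,1])$ (or $C^2_c(0,1)$ when $\theta<1$), and a direct computation of the quadratic variation of $M_t^N(G)$, as in \eqref{QV_mart}, shows that it vanishes as $N\to\infty$; the passage from $C^2_c$ to $C^1$ in the regime $\theta<1$ is done by the same $L^1$-approximation as before. The characterization of limit points then proceeds as in Section~\ref{limit point}: absolute continuity and the identification of the initial measure with $g(q)\,dq$ are unchanged, and it remains to show that the density satisfies $F_{RD}=0$ or $F_{Rob}=0$.

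The crux of the argument---and the step I expect to be the main obstacle---is the replacement of the microscopic occupation variables by their macroscopic counterparts. In the diffusive/Robin regimes ($\theta\geq 1$) one replaces local averages $\overrightarrow{\eta}^{\varepsilon N}(1)$, $\overleftarrow{\eta}^{\varepsilon N}(N-1)$ by the boundary density exactly as in \eqref{boxes_ssep}, whereas in the Dirichlet regimes ($\theta<1$) one fixes the boundary values to $\alpha,\beta$ through the appropriate analogue of Lemma~\ref{Rep-Dirichlet2}. The genuinely new difficulty relative to Chapter~\ref{chap1} is the long-range reaction term: one must make sense of and control the singular integrals $\int_0^1 G_s(q)\big(\tfrac{\alpha-\rho_s(q)}{q^{\gamma+1}}+\tfrac{\beta-\rho_s(q)}{(1-q)^{\gamma+1}}\big)\,dq$ near $q=0,1$, which is precisely why the integrability condition (1) of Definition~\ref{Def. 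Dirichlet source Condition-g} must be established for the limit density; this requires a careful energy estimate showing that $\rho_t$ matches $\alpha,\beta$ at the boundary fast enough to offset the $q^{-(\gamma+1)}$ singularity. Once these replacements and the Sobolev regularity $\rho\in L^2(0,T;\mathcal H^1)$ (when $\hat\sigma>0$) are in place, Portmanteau's theorem together with the martingale estimates identifies the limit density as the weak solution, and uniqueness finishes the proof.
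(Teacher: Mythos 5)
Your proposal is correct and follows essentially the same route as the paper's proof: Dynkin's formula with the generator $\Theta(N)\mc L_N$, the prefactor balancing that yields \eqref{time_scales} and the five regimes, tightness via Aldous' criterion, characterization of limit points through the replacement lemmas (Lemma \ref{Rep-Neumann} for $\theta\geq 1$; Lemmas \ref{Rep-Dirichlet2} and \ref{repla_novo} to fix the profile at the boundary for $\theta<1$), and uniqueness of weak solutions. One minor imprecision worth noting: the gradient terms of the Robin/Neumann formulation come from the tail sums $\Theta_x^{\pm}$, whose total converges to $\sigma^2/2$ (see \eqref{eq:sum236}), while the coefficient $\hat m=\tfrac\kappa 2$ comes from the reservoir mass $\sum_{x\in\Lambda_N}p(y-x)\to\tfrac 12$ in \eqref{mean}, not from the first moment $m$ --- but this does not affect the structure of your argument.
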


\begin{remark} We note that for  a transition probability transition $p(\cdot)$ which is symmetric and with finite variance the last three regimes obtained above are in force (however \eqref{eq:Dirichlet source Equation-g} with $\hat\kappa=0$ is obtained for $\theta\in[0,1)$). We note that the two first regimes depend on the specific choice of the transition probability $p(\cdot)$ that we have assumed in \eqref{eq:choice_p}.  We also note that if we impose that the higher moments of $p(\cdot)$ are finite then the regime \eqref{eq:Dirichlet source Equation-g} with $\hat\kappa=0$ can be reached for  $ \theta\in[v,1)$ where $v<0$ depends on the finiteness of the moments of $p(\cdot)$.
\end{remark}

\begin{remark}\label{rem:gamma=2}
Despite, in the case $\gamma=2$, the transition probability $p(\cdot)$ has infinite variance, we obtain a very similar behavior to the one described above     but the time scale that one has to consider is $N^2/\log(N)$ instead of $N^2$. We leave the adaptation of the proof in this case as an exercise to the reader.
\end{remark}

\begin{remark} We note that the solution of the hydrodynamic equation depends on the parameter $\kappa$ which appears at the boundary dynamics in two different regimes of $\theta$, namely $\theta=1-\gamma$ and $\theta=1$.\end{remark}

\begin{figure}
\begin{center}
\begin{tikzpicture}[scale=0.25]
\shade[right color=black!30, left color=gray!50] (0,5) -- (0,10) --(20,10)--(20,5)-- cycle;
\shade[right color=columbiablue, left color=gray!50] (0,5) -- (20,5) --(20,-20)--(0,0)-- cycle;
\shade[top color=black!30, bottom color=black!80] (0,0) -- (20,-20) --(0,-20)-- cycle;
\draw (0,12) node[right]{$\theta$};
\draw (24,0) node[right]{$\gamma$};
\draw (-1,0) node[left]{$\theta=-1, \gamma=2$};
\draw (-1,5) node[left]{$\theta=1, \gamma=2$};
\draw[->,>=latex] (0,0) -- (23,0);
\draw[->,>=latex] (0,-21) -- (0,11);
\draw[-,=latex,white,ultra thick] (0,0) -- (20, -20) node[midway, above, sloped] {{Reaction-Diffusion eq. \& Dirichlet b.c. }};
\draw[-,=latex,black,ultra thick] (0,5) -- (20, 5) node[midway, sloped, below] {{Heat eq. \& Robin b.c.}};
\node[right, columbiablue] at (4,8) {Heat eq. \& Neumann b.c.} ;
\node[right,gray] at (5,-2) {Heat eq. \&  Dirichlet b.c.} ;
\node[right, columbiablue] at (0.5,-18) {Reaction eq. \& Dirichlet b.c.} ;
\fill[black] (0,0) circle (0.3cm);
\fill[black] (0,5) circle (0.3cm);
\draw[<-,black,ultra thick] (19,-18) -- (21, -15) node[right] {$\theta=1-\gamma$};
\end{tikzpicture}
\end{center}
\caption{The five different hydrodynamic regimes in terms of $\gamma$ and $\theta$.}
\label{fig:hlf_lj}
\end{figure}
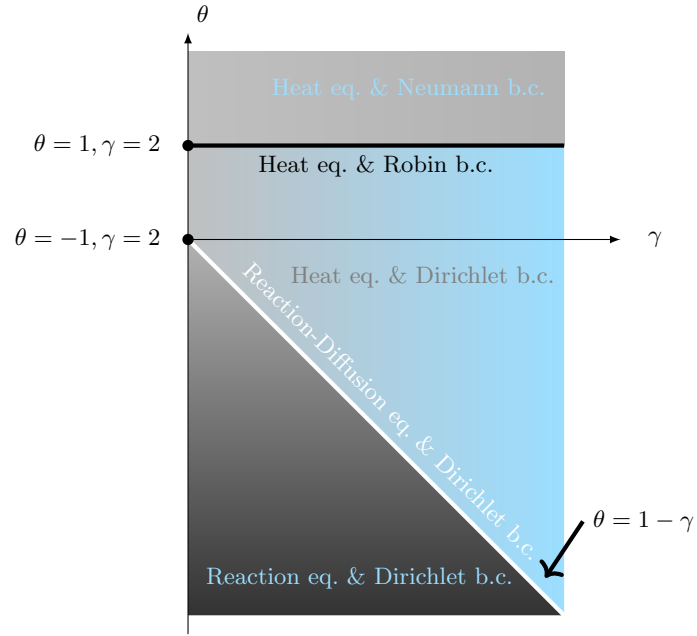

Now note that as before, the stationary solutions of the hydrodynamic limits in the  case $\theta>1-\gamma$  are standard and for that reason they are ommited. On the other hand, the form and properties of the stationary solutions in the case $\theta \le 1-\gamma$  are more complicated to obtain in the case $\theta= 1-\gamma$. This problem is studied in more details in \cite{JO-V} for a slighlty different dynamics. In Figure \ref{fig:stat_sol_lj} we only present some graphs of the stationary solutions and refer the interested reader to \cite{JO-V} for a complete description on the behavior of those solutions. 
Below we draw the graph of these stationary solutions for a choice of $\alpha=0.2$ and $\beta=0.8$.
\begin{figure}
    \centering
    \includegraphics[width=0.7\textwidth]{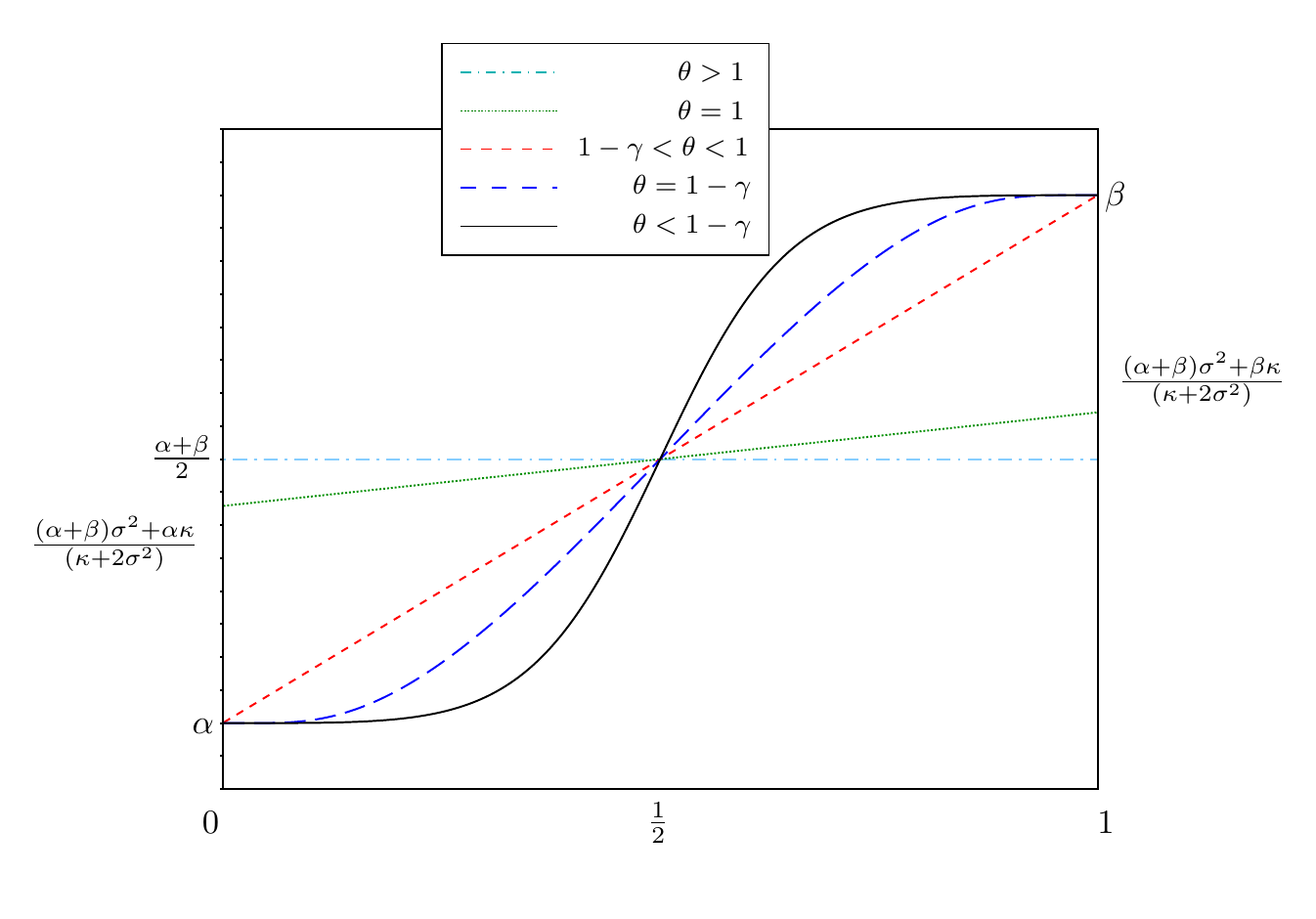}
    \caption{Stationary solutions of the hydrodynamic equations.}
    \label{fig:stat_sol_lj}
\end{figure}

The proof of Theorem \ref{th:hyd_long_jumps} is described in  Section \ref{sec:HL} below Figure \ref{fig:hlf} and for that reason many steps now are omitted. The proof of tightness  of the sequence  $\lbrace\bbbq_{N}\rbrace_{N\geq 1}$ is quite similar to the one given in Section \ref{sec:tightness}. The characterization of limit points is also close to the one given in Section \ref{limit point}, the only difference comes at the level of the identification of the density as a weak solution of the corresponding  partial differential equation. For that purpose, 
the next section is dedicated to the presentation of an heuristic argument to deduce the weak formulation for the  solution of the corresponding hydrodynamic equation. The adaptation of the  rest of the arguments to this new dynamics is left to the reader. 

\subsubsection{-Heuristics for hydrodynamic equations:} \label{sec:heuri_long_jumps} As in Section \ref{sec:heuri_ssep}, the identification of the density $\rho_t(\cdot)$ as a weak solution of the corresponding hydrodynamic equation is obtained by using auxiliary martingales. Fix then a function $G:[0,1]\to\bbbr$ which does not depend on time and which  is two times continuously differentiable. As in Section \ref{sec:heuri_ssep}, we use Dynkin's formula and we note that 
\begin{equation}\label{genaction}
\begin{split}
\int_0^t \Theta(N)& \mc L_N ( \langle \pi^N_{s}, G \rangle ) \, ds =\\& \cfrac{\Theta(N)}{N-1} \int_0^t  \sum_{x\in \Lambda_N}  \tilde{\mc L_N}G(\tfrac{x}{N}) \eta_{s\theta(N)}(x) \, ds \\+& \cfrac{ \kappa \Theta(N)}{(N-1)N^\theta} \int_0^t \sum_{y\in\{0,N\}}\sum_{x \in \Lambda_N}  G(\tfrac{x}{N})p(y-x) (r(y)-\eta_{s\theta(N)}(x)) \, ds,
\end{split}
\end{equation}
where for all $x\in \Lambda_N$
\begin{equation}
\label{eq:mcln}\begin{split}
 (\tilde{\mc L}_N G) (\tfrac{x}{N}) &= \sum_{y \in \Lambda_N} p(y-x) \left[ G(\tfrac{y}{N}) -G(\tfrac{x}{N})\right].
\end{split}
\end{equation}   
Now, we extend the first sum in \eqref{genaction} to all the integers so that we extend the function $G$ to $\bbbr$ in such a way that it remains two times continuously differentiable. By the definition of $\tilde {\mc L_{N}}$, we get that
\begin{equation}\label{bulkaction}
\begin{split}
  \cfrac{\Theta(N)}{N-1}\int_0^t \sum_{x\in \Lambda_N}& \tilde{ \mc L_N}G(\tfrac{x}{N}) \eta_{s\theta(N)}(x) \, ds
 \\
 =&\cfrac{\Theta(N)}{N-1} \int_0^t\sum_{x\in \Lambda_N} (K_N G) (\tfrac{x}{N})\eta_{s\theta(N)}(x)\, ds \\
 -& \cfrac{\Theta(N)}{N-1}  \int_0^t\sum_{x\in \Lambda_N}\sum_{y\leq 0}\left[G(\tfrac{y}{N})- G(\tfrac{x}{N})\right] p(x-y)\eta_{s\theta(N)}(x) \, ds \\
-& \cfrac{\Theta(N)}{N-1} \int_0^t \sum_{x\in \Lambda_N}\sum_{y\geq N}\left[G(\tfrac{y}{N})- G(\tfrac{x}{N})\right] p(x-y)\eta_{s\theta(N)}(x)\, ds,
\end{split}
\end{equation}
where 
\begin{equation}
\label{eq:kappan}
 ({K}_N G) (\tfrac{x}{N}) = \sum_{y \in\bbbz} p(y-x) \left[ G(\tfrac{y}{N}) -G(\tfrac{x}{N})\right].
\end{equation}
Now, we are going to analyse how the different boundary conditions appear on the hydrodynamic equations given in Section \ref{sec:hyd_eq_long_jumps} from this dynamics.

\subsubsection{- The case $\theta <1-\gamma$:} 
\label{sec:CLPDC0} 
Take a function $G: (0,1) \to\bbbr$ two times continuously differentiable  and with compact support  in $(0,1)$, so that we can choose an extension by $0$ outside of the  support of $G$. Since $\Theta(N)=N^{\gamma+\theta+1}$ (see the statement of Theorem \ref{th:hyd_long_jumps}) a simple computation shows that the first term in \eqref{bulkaction}  vanishes for $\theta<1-\gamma$. Indeed, by a Taylor expansion on $G$ and the fact that $p(\cdot)$ is mean zero, we have that
\begin{eqnarray*}
&& N^{\gamma+\theta+1}\sum_{y\in \bbbz}(G(\tfrac{y+x}{N})-G(\tfrac{x}{N}))p(y)
\end{eqnarray*}
is of same order as
$$N^{\gamma+\theta-1}G''(\tfrac{x}{N})\sum_{y\in \bbbz}y^2p(y)$$
and since $\theta<1-\gamma$ last expression vanishes as $N\to\infty. $

Now, the second and third terms in \eqref{bulkaction} vanish as $N\to\infty$, since $\Theta(N)=N^{\gamma+\theta+1}$ and $\theta<1-\gamma$. Note that since $G$ vanishes outside $(0,1)$, those terms can be rewritten as 
\begin{equation}\label{bulkaction_extra}
\begin{split}
& \cfrac{\Theta(N)}{N-1} \int_0^t  \sum_{x\in \Lambda_N} G(\tfrac{x}{N})r_N^-(\tfrac xN)\eta_{sN^{\gamma+\theta+1}}(x) \, ds\\+& \cfrac{\Theta(N)}{N-1} \int_0^t \sum_{x\in \Lambda_N} G(\tfrac{x}{N})r_N^+(\tfrac xN)\eta_{sN^{\gamma+\theta+1}}(x)\, ds,
\end{split}
\end{equation}
where 
\begin{equation}
\label{eq:r_-andr_+}\begin{split}
 r_N^-(\tfrac xN)&=\sum_{y\geq x}p(y),\quad  r_N^+(\tfrac xN)=\sum_{y\leq x-N}p(y).
\end{split}
\end{equation}   
We observe that, for any $a \in (0,1)$, uniformly in $u \in (a,1-a)$, as $N\to \infty$:
\begin{equation}
\label{eq:rs-r+-}
\begin{split}
&N^\gamma r_N^- ( [uN]) \to_{N\to+\infty} c_\gamma \gamma^{-1} u^{-\gamma}:=r^- (u),\\ &N^\gamma r_N^+ ( [uN]) \to_{N\to+\infty} c_\gamma \gamma^{-1} (1-u)^{-\gamma}:=r^+ (u).\end{split}
\end{equation}

Now we note that we can bound from above, for example the  term at the left hand side  in \eqref{bulkaction_extra} by $N^{\theta+1}$ times 
$$\cfrac{1}{N-1} \int_0^t \sum_{x \in \Lambda_N} N^\gamma r_N^- (\tfrac{x}{N})\, |G(\tfrac{x}{N})|$$ 
because  $|\eta_{sN^{\gamma+\theta}}(x)| \le 1$ for all $s>0$. Since $\theta<-1$ and since the previous sum converges to the (finite) integral of $|G| r^-$ on $(0,1)$, by (\ref{eq:rs-r+-}), the previous display vanishes as $N\to\infty$. Now we look at the boundary terms in \eqref{genaction}, which can be written, for the choice of $\Theta(N)=N^{\gamma+\theta+1}$, as:
\begin{equation*}
\frac{\kappa N^{\gamma+1}}{N-1}  \int_0^t\sum_{y\in\{0,N\}}\sum_{x\in\Lambda_N}G \big(\tfrac{x}{N} \big )p(y-x) (r(y)-\eta_{sN^{\gamma+\theta+1}}(x))\, ds 
\end{equation*}
which is equal to 
$$\kappa \, \int_0^t  \langle \alpha- \pi_s^N, G p \rangle+\langle \beta- \pi_s^N, G \tilde{p} \rangle\, ds ,$$ where $\tilde{p}(q)=p(1-q)$,  and
can be replaced, thanks to the fact that $G$ has compact support, by
\begin{equation*}
 \kappa  \int_0^1 G(q) \Big(p (q) (\alpha-\rho_s (q))+\tilde{p}(q)(\beta-\rho_s(q))\Big)  dq 
\end{equation*}
as $N \to \infty$. The last convergence holds because $G$ has compact support included in $(0,1)$ so that $G p$ and $G\tilde p$ are  continuous function. 
From the previous computations we recognize the terms in \eqref{eq:Dirichlet_source_ integral-g} with $\hat{\kappa}=\kappa c_\gamma$ and $\hat \sigma=0$.

\subsubsection{- The case $\theta =1-\gamma$:} 
 \label{sec:CLPDC1}  In this case we also take a function $G:(0,1) \to\bbbr$ two times continuously differentiable and with compact support  in $(0,1)$, so that we can choose an extension by $0$ outside of its support. 
In this case, since $\Theta(N)=N^2$, by Lemma \ref{convergence laplacian},
 which we state below, the first term in \eqref{bulkaction} can be replaced, for $N$ sufficiently big, by 
 \begin{eqnarray*}
\cfrac{1}{N-1} \int_0^t \sum_{x\in \Lambda_N}\tfrac{\sigma^2}{2} \Delta G(\tfrac{x}{N}) \, \eta_{sN^2}(x) \, ds=\int_0^t \langle \pi^N_s, \tfrac {\sigma^2}{2}\Delta G \rangle\, ds.
\end{eqnarray*}
Moreover, a  similar  computation to the one above shows that  the second and third terms in \eqref{bulkaction} vanish as $N\to\infty$ (recall that $\Theta(N)=N^{2}$ and $\gamma>2$). Finally, the second term in  \eqref{genaction} can be rewritten as 
\begin{equation*}\label{gen_action}
\cfrac{ \kappa N^{\gamma+1}}{(N-1)}\int_0^t \sum_{y\in\{0,N\}}\sum_{x \in \Lambda_N}  G(\tfrac{x}{N})p(y-x)\,  (r(y)- \eta_{sN^2}(x)) \, ds
\end{equation*}
and repeating the  analysis we did in the previous case it converges, as $N\to\infty$ to 
\begin{equation*}
 \kappa  \int_0^t  \int_0^1 G(q) \Big(p (q) (\alpha-\rho_s (q))+\tilde{p}(q)(\beta-\rho_s(q))\Big)  dq \, ds.
\end{equation*}
As above, from the previous computations we recognize the terms in  \eqref{eq:Dirichlet_source_ integral-g} with $\hat{\kappa}=\kappa c_\gamma$ and $\hat{\sigma}=\sigma$.

\subsubsection{- The case $\theta \in (1-\gamma,1)$:} 
\label{sec:CLPDC} Take again  a function $G: (0,1) \to\bbbr$ two times continuously differentiable and with compact support  in $(0,1)$ and extend it by $0$ outside  $(0,1)$. As above,  since $\Theta(N)=N^2$, by Lemma \ref{convergence laplacian},
 which we prove below, the first term in \eqref{bulkaction} can be replaced, for $N$ sufficiently big, by 
 \begin{eqnarray*}
\int_0^t \langle \pi^N_s, \tfrac {\sigma^2}{2}\Delta G \rangle\, ds.
\end{eqnarray*}
Now, the second term in  \eqref{gen_action}  equals to
 \begin{equation*}
 \cfrac{\kappa N^{2-\theta}}{N-1}\int_0^t \sum_{y\in\{0,N\}} \sum_{x\in \Lambda_N}G(\tfrac{x}{N}) p(y-x) (r(y)-\eta_{sN^2}(x)) \, ds
\end{equation*}
and vanishes as $N\to\infty$ since $\theta>1-\gamma$. Now, the last two terms in  \eqref{bulkaction}  also vanish because, for example, the second term in \eqref{bulkaction} can be written as 
 \begin{equation*}
\int_0^t \cfrac{N^{2}}{N-1} \sum_{x\in \Lambda_N}G(\tfrac{x}{N}) r_N^-(\tfrac xN) \eta_{sN^2}(x) \, ds
\end{equation*}
which can be bounded from above by a constant times $t N^{2-\gamma}$ times a sum converging to the integral of $| G | r^-$ on $(0,1)$,  and since $\gamma>2$ this term vanishes. 
From this, we see the terms in  \eqref{eq:Dirichlet_source_ integral-g} with $\hat{\kappa}=0$ and $\hat{\sigma}=\sigma$. 

\begin{remark}
We remark here that in the last three cases, similarly to what we have seen in the case $\theta<0$ for the models of Section \ref{chap1} (see Remark \ref{fix_prof_ssep}), there is an extra condition in the definition of the weak solution of \eqref{eq:Dirichlet source Equation-g}. In this notion of solution we need to show that the value of the profile $\rho_t(\cdot)$ is fixed at the boundary. This issue is analysed in  Appendix \ref{fix_profile}. 
\end{remark}

 \subsubsection{- The case $\theta=1$:} 
In this case we consider a function $G:[0,1] \to \bbbr$ which is two times continuously differentiable and we extend it on $\bbbr$ in a two times continuously differentiable function with compact support which strictly contains $[0,1]$. Note that in this case  $G$ can take non-zero values at $0$ and $1$. As above,  since $\Theta(N)=N^2$, by Lemma \ref{convergence laplacian},
 which we state below and which holds for this new space of test functions, the first term in \eqref{bulkaction} can be replaced, for $N$ sufficiently big, by 
 \begin{eqnarray*}
\int_0^t \langle \pi^N_s, \tfrac {\sigma^2}{2}\Delta G \rangle\, ds.
\end{eqnarray*} 
Now we look  at the terms coming from the boundary, namely the  last term in \eqref{genaction}. Then, in the  term for $y=0$ of \eqref{genaction}(resp. for $y=N$) we do at first a Taylor expansion on $G$ and then we  replace $\eta(x)$ by the average $\overrightarrow{\eta}^{\varepsilon N}(1)= \tfrac{1}{\varepsilon N}\sum_{x =1}^{1+\varepsilon N}\eta(x)$ (resp. $\eta(x)$ by $\overleftarrow{\eta}^{\varepsilon N}(N-1)=\tfrac{1}{\varepsilon N}\sum_{x =N-1-\varepsilon N}^{N-1}\eta(x)$), which can be done as a consequence of Lemma \ref{Rep-Neumann} as pointed out in Remark \ref{sec_rep_robin}. Moreover, note that for $y=0$ and $y=N$ it holds that
\begin{equation}\label{mean}
\begin{split}
\sum_{x \in \Lambda_N}   p(y-x)\xrightarrow[N\to+\infty]{} \frac 12.
\end{split}
\end{equation}
Therefore, we can write the last term in \eqref{genaction} as
\begin{equation*}
\frac \kappa 2\int_0^t \{ (\alpha-\overrightarrow{\eta}^{\varepsilon N}_{sN^2}(1)) G(0) + {(\beta-\overleftarrow{\eta}^{\varepsilon N}_{s N^2}(N-1))} G(1) \} \, ds,
\end{equation*}
plus terms that vanish as $N\to+\infty$. Since   
$$\overrightarrow{\eta}^{\varepsilon N} _{sN^2}(1)\sim \rho_s(0)\quad\textrm{and}\quad\overleftarrow{\eta}^{\varepsilon N}_{sN^2}(N-1)\sim\rho_s(1)$$ last term writes as 
\begin{equation}\label{rob_cond1}
\frac \kappa 2  \int_0^t \{ (\alpha-\rho_s (0))G(0) + {(\beta-\rho_s (1))} G(1) \} \, ds.
\end{equation}
Now, we analyse  the two last terms in \eqref{bulkaction}.  Since the function $G$ has been extended into a two times continuously differentiable function on $\bbbr$, by a Taylor expansion on $G$ we can write those terms as 
 \begin{equation}\label{eq:1_0}
   \cfrac{N}{N-1}\int_0^t \sum_{x\in \Lambda_N}G'(\tfrac{x}{N})\Theta^-_x\eta_{sN^2}(x)\, ds-\cfrac{N}{N-1} \int_0^t\sum_{x\in \Lambda_N}G'(\tfrac{x}{N})\Theta^+_x\eta_{s N^2}(x)\, ds
\end{equation}
plus terms that vanish as $N\to+\infty$.
Above for $x\in\Lambda_N$, 
 \begin{eqnarray*}
  \Theta^-_x=\sum_{y\leq 0} (x-y)p(x-y)\quad \textrm{and}\quad \Theta^+_x=\sum_{y\geq N} (y-x)p(x-y).
\end{eqnarray*}
Note that
\begin{equation}
\label{eq:thetaminus}
 \frac{1}{N}\sum_{x\in \Lambda_N}x\Theta^-_x  \xrightarrow[N\to +\infty]{} 0\quad \textrm{and}\quad \frac{1}{N}\sum_{x\in \Lambda_N}x\Theta^+_x  \xrightarrow[N\to +\infty]{} 0.
\end{equation}
Moreover, note that
 \begin{equation}
 \label{eq:sum236}
 \begin{split}
 &\sum_{x\in \Lambda_N}\Theta^-_x= \sum_{x\in \Lambda_N}\sum_{y\geq x}yp(y)\xrightarrow[N\to+\infty]{} \tfrac{\sigma^2}{2},
\\
&\sum_{x\in \Lambda_N}\Theta^+_x= \sum_{x\in \Lambda_N}\sum_{y\geq N-x}yp(y)\xrightarrow[N\to +\infty]{} \tfrac{\sigma^2}{2}.
 \end{split}
 \end{equation}
In order to prove the convergence of  $\,\sum_{x\in \Lambda_N}\Theta^-_x$ (or of $\sum_{x\in \Lambda_N}\Theta^+_x$ in (\ref{eq:sum236})) we use Fubini's theorem  to get that
\begin{equation}
\begin{split}
\sum_{x\in \Lambda_N}\Theta^-_x &= \sum_{y\in\Lambda_{N}}\sum_{x=1}^{y}yp(y)+\sum_{y\geq N}\sum_{x\in \Lambda_{N}}yp(y)\\\nonumber
&= \sum_{y\in\Lambda_{N}}y^{2}p(y)+(N-1)\sum_{y\geq N}yp(y),
\end{split}
\end{equation}
 and since  $\gamma>2$ the result follows. 
By another Taylor expansion on $G$ we can write \eqref{eq:1_0}  as 
 \begin{equation}\label{eq:rob_1}
  \cfrac{N}{N-1} G'(0)\int_0^t\sum_{x\in \Lambda_N}\Theta^-_x\eta_{s N^2}(x)\, ds-\cfrac{N}{N-1}G'(1) \int_s^t \sum_{x\in \Lambda_N}\Theta^+_x\eta_{s N^2}(x)\, ds
\end{equation}
plus terms that vanish as $N\to+\infty$. 
From Lemma \ref{Rep-Neumann} we can replace in the  term on the left (resp. right) hand side of last expression $\eta_{sN^2}(x)$ by $\overrightarrow{\eta}^{\varepsilon N}_{sN^2}(1)$ (resp. $\overleftarrow{\eta}^{\varepsilon N}_{sN^2}(N-1)$). Therefore, \eqref{eq:rob_1} can be replaced, for $N$ sufficiently big and for $\varepsilon$ sufficiently small, by
  \begin{eqnarray*}
  \int_0^t G'(0)\tfrac{\sigma^2}{2}\overrightarrow{\eta}^{\varepsilon N}_{sN^2}(1)-G'(1)\tfrac{\sigma^2}{2}\overleftarrow{\eta}^{\varepsilon N}_{sN^2}(N-1)\, ds.
\end{eqnarray*}
Since $\overrightarrow{\eta}^{\varepsilon N} _{sN^2}(1)\sim \rho_s(0)\quad\textrm{and}\quad\overleftarrow{\eta}^{\varepsilon N}_{sN^2}(N-1)\sim\rho_s(1),$  last term tends to
\begin{eqnarray}
 \label{rob_cond2}
 \int_0^t G'(0)\tfrac{\sigma^2}{2}\rho_s (0)-G'(1)\tfrac{\sigma^2}{2}\rho_s (1)\, ds,
\end{eqnarray} as $N\to\infty$.

Putting together \eqref{rob_cond1} and \eqref{rob_cond2} we see the boundary terms  that appear at the right hand side of  \eqref{eq:Robin integral-g}.
 
\subsubsection{- The case $\theta >1$:} 
 \label{subsec:thetage1}
 In this case we consider an arbitrary function $G:[0,1] \to \bbbr$ which is two times continuously differentiable and we extend it on $\bbbr$ in a two times continuously differentiable function with compact support. Its support strictly contains $[0,1]$ since $G$ can take non-zero values at $0$ and $1$. As in the last case,  since $\Theta(N)=N^2$, by Lemma \ref{convergence laplacian}, the first term in \eqref{bulkaction} can be replaced, for $N$ sufficiently big, by 
 \begin{eqnarray*}
\int_0^t \langle \pi^N_s, \tfrac {\sigma^2}{2}\Delta G \rangle\, ds.
\end{eqnarray*}  
The last term in \eqref{genaction} vanishes, as $N\to\infty$ since, we can bound it   by a constant times
 \begin{equation*}
N^{1-\theta} \sum_{x \in \Lambda_N}   p(x).
\end{equation*}
Since $\gamma>2$ last display vanishes if $\theta>1$, as $N\to+\infty$. 
 Thus, we only need to look at the expression (\ref{bulkaction}). Therefore, in order to see the boundary terms that appear in \eqref{eq:Robin integral-g}, we can use exactly the computations already done in the case $\theta =1$ from  which we obtain (\ref{rob_cond2}).

We finish this section with the statement of the lemma which is used above in order to obtain the diffusion term in the equations above in the cases $\theta\geq 1-\gamma$. Its proof can be seen in \cite{BGJO}.

\begin{lemma}\label{convergence laplacian}
Let $G:\bbbr\to \bbbr$ be a two times continuously differentiable function with compact support. We have
\begin{eqnarray*}
&&\limsup_{N\rightarrow\infty} \sup_{x \in \Lambda_N} \left\vert N^2\sum_{y\in \bbbz}(G(\tfrac{y+x}{N})-G(\tfrac{x}{N}))p(y) - \frac{\sigma^{2}}{2}\Delta G(\tfrac{x}{N})\right\vert= 0.
\end{eqnarray*}
\end{lemma}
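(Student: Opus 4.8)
The plan is to Taylor-expand $G$ to second order around $x/N$ and then exploit the two structural properties of $p(\cdot)$: that it is symmetric (hence mean zero) and that it has finite variance $\sigma^2=\sum_{z}z^2p(z)$, which under the choice \eqref{eq:choice_p} is where $\gamma>2$ enters. Since $G$ is only twice continuously differentiable I would use the exact integral form of the remainder: for every $y\in\bbbz$,
\[
G(\tfrac{x+y}{N})-G(\tfrac{x}{N}) = \frac{y}{N}G'(\tfrac{x}{N}) + \int_0^{y/N}\Big(\tfrac{y}{N}-t\Big)G''(\tfrac{x}{N}+t)\,dt.
\]
The first-order contribution disappears after summation against $p(y)$, because $\sum_{y}\tfrac{y}{N}G'(\tfrac{x}{N})p(y)=\tfrac{G'(x/N)}{N}\sum_y y\,p(y)=0$ by symmetry; the absolute convergence justifying this rearrangement follows from the finite second moment via $|y|\le 1+y^2$. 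Multiplying by $N^2$ and summing therefore leaves only the remainder, which I would compare with the target $\frac{\sigma^2}{2}\Delta G(\tfrac{x}{N})=\frac{\sigma^2}{2}G''(\tfrac{x}{N})$, rewritten as $N^2\sum_y p(y)G''(\tfrac{x}{N})\int_0^{y/N}(\tfrac{y}{N}-t)\,dt$ using the elementary identities $\int_0^{y/N}(\tfrac{y}{N}-t)\,dt=\tfrac{y^2}{2N^2}$ and $\sum_y y^2 p(y)=\sigma^2$.

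Subtracting, the expression inside the supremum becomes
\[
N^2\sum_{y\in\bbbz} p(y)\int_0^{y/N}\Big(\tfrac{y}{N}-t\Big)\big[G''(\tfrac{x}{N}+t)-G''(\tfrac{x}{N})\big]\,dt.
\]
Let $\omega$ be the modulus of continuity of $G''$, which is well defined with $\omega(0^+)=0$ since $G''$ is continuous with compact support and hence uniformly continuous. On the interval of integration one has $|t|\le |y|/N$, so $|G''(\tfrac{x}{N}+t)-G''(\tfrac{x}{N})|\le\omega(|y|/N)$, while $\big|\int_0^{y/N}(\tfrac{y}{N}-t)\,dt\big|=\tfrac{y^2}{2N^2}$ because the integrand keeps a constant sign across the interval. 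This produces the bound
\[
\sup_{x\in\Lambda_N}\Big|\,N^2\sum_{y}(G(\tfrac{x+y}{N})-G(\tfrac{x}{N}))p(y)-\tfrac{\sigma^2}{2}G''(\tfrac{x}{N})\,\Big|\le \frac12\sum_{y\in\bbbz} y^2 p(y)\,\omega(|y|/N),
\]
and crucially the right-hand side no longer depends on $x$.

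It then remains to show this uniform bound tends to $0$ as $N\to\infty$. For each fixed $y$ we have $\omega(|y|/N)\to 0$, and the general term is dominated by $2\|G''\|_\infty\,y^2 p(y)$, which is summable precisely because $p(\cdot)$ has finite variance. Dominated convergence for series then gives $\sum_y y^2 p(y)\,\omega(|y|/N)\to 0$, which finishes the argument. The step I expect to require the most care is exactly this control of the remainder uniformly in $x$: because $G$ is assumed only $C^2$, there is no third derivative to bound, so replacing the Lagrange remainder by its integral form and using the uniform continuity of $G''$ is essential — it is this device that simultaneously yields an $x$-independent estimate and supplies the summable dominating sequence needed to close by dominated convergence.
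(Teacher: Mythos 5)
Your proof is correct and complete. One remark on comparison: the paper itself does not prove this lemma at all --- it simply defers to \cite{BGJO} --- so there is no in-paper argument to measure yours against. What you wrote is the standard and essentially optimal route: Taylor with the \emph{integral} remainder (the right tool, since $G$ is only $C^2$ and a Lagrange/third-derivative bound is unavailable), symmetry of $p(\cdot)$ to annihilate the first-order term (with the absolute-summability check $|y|\le 1+y^2$ justifying the rearrangement), rewriting $\tfrac{\sigma^2}{2}G''(\tfrac{x}{N})$ as the same second-order sum with the frozen derivative, and then the $x$-independent bound $\tfrac12\sum_y y^2 p(y)\,\omega(|y|/N)$ killed by dominated convergence for series. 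The only stylistic variant one typically sees (and which the cited reference effectively uses) is to split the sum into $|y|\le \varepsilon N$ and $|y|>\varepsilon N$, controlling the first range by uniform continuity of $G''$ and the second by the vanishing tail $\sum_{|y|>\varepsilon N} y^2p(y)\to 0$; your dominated-convergence formulation packages both regimes in one step and is, if anything, cleaner. Your bound is also uniform over all real $x$, which is slightly stronger than the stated supremum over $\Lambda_N$.
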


\subsection{The infinite variance case}\label{sec:inf_var}

In this section we analyse the case in which $p(\cdot)$ is as in \eqref{eq:choice_p} but now $\gamma\in(1,2)$ so that $p(\cdot)$ has mean zero but infinite variance. We also consider only the case where $\theta=-1$, but we note that in the regime $\theta<-1$ the behavior of the system, when we take the time scale $\Theta(N)=N^{\gamma+\theta+1}$ is the same as when $\theta<1-\gamma$ and when $p(\cdot)$   has finite variance, that is, it is given by the weak solution of \eqref{eq:Dirichlet source Equation-g} with $\hat\sigma=0$ and $\hat \kappa= \kappa c_\gamma$. The other regimes are open and seem to be quite challenging. Recall the infinitesimal generator given in
\eqref{Generator} and \eqref{generators} and since we are restricted to the case $\theta=-1$,  we consider  the Markov process  speeded up in the  time scale $\Theta(N)=N^\gamma$, so that $\{ \eta_{t N^\gamma} \,:\, t\ge 0\} $ has  infinitesimal generator given by $N^\gamma\mc L_{N}$.  As in Section \ref{sec:stat_mea} we can prove that the Bernoulli product measures $\nu_\rho^N$ as defined in \eqref{eq:bernoulli_mea} are reversible when we consider $\alpha=\beta=\rho$.  The proof is quite similar to the one given in Lemma \ref{lem:bern_rev} and for that reason it is omitted.

\subsubsection{-Hydrodynamic equations:} We can now give the definition of the weak solution of the  hydrodynamic equation that will be derived in this section when $p(\cdot)$ is assumed to have infinite variance.

Recall the notations introduced in the beginning of Section \ref{sec:hyd_eq_ssep}.  
We recall the definition of the fractional Laplacian operator of exponent $\gamma/2$  denoted by $(-\Delta)^{\gamma/2}$. It is a non-local operator which  is defined on the set of functions $G:\bbbr \to \bbbr$ such that
\begin{equation}
\label{eq:integ1}
\int_{-\infty}^{\infty} \cfrac{|G(q)|}{(1 +|q|)^{1+\gamma}} dq < \infty
\end{equation}
by
\begin{equation}
(-\Delta)^{\gamma/2} G \, (q) = c_\gamma  \lim_{\varepsilon \to 0} \int_{-\infty}^{\infty} {\bf 1}_{|q-v| \ge \varepsilon} \, \cfrac{G(q) -G(v)}{|q-v|^{1+\gamma}} dv
\end{equation}
provided the limit exists, which is the case, for example, if $G$ is in the Schwartz space. Recall that  $c_{\gamma}$ is fixed in (\ref{eq:choice_p}). Up to a multiplicative constant, $-(-\Delta)^{\gamma/2}$ is the generator of a $\gamma$-L\'evy stable process. 

We define another  operator $L$ whose  action is given on functions $G \in C_c^{\infty} ((0,1))$, by
$$\forall q \in (0,1), \quad ({ L} G)(q) = c_\gamma  \lim_{\varepsilon \to 0} \int_{0}^{1} {\bf 1}_{|q-v| \ge \varepsilon} \, \cfrac{G(v) -G(q)}{|q-v|^{1+\gamma}} dv.$$
The operator ${L}$ is called the \textit{regional fractional Laplacian} on $(0,1)$. The semi inner-product $\langle \cdot, \cdot \rangle_{\gamma/2}$ is defined on the set $C_c^{\infty}((0,1))$ by 
\begin{equation}
\langle G, H \rangle_{\gamma/2} =  \cfrac{c_{\gamma}}{2} \iint_{[0,1]^2} \cfrac{(H(q) -H(v)) (G(q) -G(v))}{|q-v|^{1+\gamma}} \, dq dv.
\end{equation}  
The corresponding semi-norm is denoted by $\| \cdot \|_{\gamma/2}$. Observe that for any $G,H \in C_c^{\infty} ((0,1))$ we have that
\begin{equation*}
-\int_0^1 G(q) { L} H(q)\, dq = -\int_0^1 {L} G(q) H(q) \, dq = \langle G, H \rangle_{\gamma/2}
\end{equation*}
and note that for all $q \in (0,1), $
\begin{equation}\label{Operator_LL}
\quad ({L} G)(q) = -(-\Delta)^{\gamma/2}  G\, (q) +  V_1(q) G(q) 
\end{equation}
where  $V_1(q)=r^-(q)+r^+(q)$, see \eqref{eq:rs-r+-}, that is, $V_1(\cdot)$ is given  on $q\in(0,1)$ by:
\begin{equation}\label{eq: V_1}
V_1(q)=c_\gamma \gamma^{-1}\Big(\frac{1}{q^{\gamma}}+\frac{1}{(1-q)^{\gamma}}\Big).
\end{equation}

\begin{definition}
\label{Def. Sobolev space_gamma}
The Sobolev space $\mathcal{H}^{\gamma/2}$ consists of all square integrable functions $g: (0,1) \rightarrow \bbbr$ such that $\| g \|_{\gamma/2} <\infty$. This is a Hilbert space for the norm $\| \cdot\|_{{\mc H}^{\gamma/2}}$ defined by
$$\Vert g \Vert_{\mathcal{H}^{\gamma/2}}^2:= \Vert g \Vert^{2} + \Vert g \Vert_{\gamma/2}^{2}.$$
Its elements elements coincide a.e. with continuous functions.

The space $L^{2}(0,T;\mathcal{H}^{\gamma/2})$ is the set of measurable functions $f:[0,T]\rightarrow  \mathcal{H}^{\gamma/2}$ such that 
$$\int^{T}_{0} \Vert f_{t} \Vert^{2}_{\mathcal{H}^{\gamma/2}}dt< \infty. $$
\end{definition}
We now extend the definition of the regional fractional Laplacian on $(0,1)$ to the space $\mc H ^{\gamma/2}$.
\begin{definition}\label{def:Dist}
For $\rho \in \mc H^{\gamma/2}$ we define the distribution $L \rho$ by $$\int_0^1 L\rho(q)G(q)\,dq = \int_0^1 \rho(q)L G(q)\, dq,\quad  G\in C_{c}^{\infty}((0,1)). $$
\end{definition}

Let  ${ L}_\kappa$ be the regional fractional Laplacian on $[0,1]$ with zero Dirichlet boundary conditions, indexed by $\kappa$, and taking the form
\begin{equation}\label{def:LLKappa}
{ L}_\kappa= L -\kappa\tilde  {V}_1,
\end{equation}
where for $q\in(0,1)$, 
\begin{equation}\label{eq: V_1_tilde}
\tilde V_1(q)=p(q)+\tilde p(q)=c_\gamma \Big(\frac{1}{q^{\gamma+1}}+\frac{1}{(1-q)^{\gamma+1}}\Big).
\end{equation}
Above $\tilde p(q)=p(1-q)$.
Below  $g:[0,1]\rightarrow [0,1]$ is a measurable function and it is the initial condition of the partial differential equation that we obtain in this section.
\begin{definition}
\label{Def. Dirichlet Condition}
 Let $\kappa>0$ be some parameter. We say that  $\rho^{\kappa}:[0,T]\times[0,1] \to [0,1]$ is a weak solution of the regional fractional reaction-diffusion equation with Dirichlet boundary conditions given by  
\begin{equation}\label{eq:Dirichlet Equation}
\begin{cases}
&\partial_{t} \rho_{t}^{\kappa}(q)= L_{\kappa} \rho_t^{\kappa}(q)+ \kappa \tilde{V}_0(q),  \quad (t,q) \in [0,T]\times(0,1),\\
 &{ \rho^{\kappa}_{t}}(0)=\alpha, \quad { \rho^{\kappa}_{t}}(1)=\beta,\quad t \in [0,T],
 \end{cases}
 \end{equation}
where  $$\tilde {V_0}(q)=\alpha p(q)+\beta \tilde p(q) =c_\gamma\Big(\frac{\alpha}{q^{1+\gamma}}+\frac{\beta}{(1-q)^{1+\gamma}}\Big),$$ and starting from a measurable function  $g:[0,1]\rightarrow [0,1]$, 
 if: 
\begin{enumerate}

\item  $\rho^{\kappa} \in L^{2}(0,T;\mathcal{H}^{\gamma/2})$.
\item   $\int_0^T \int_0^1 \Big\{ \frac{(\alpha-\rho_t^{\kappa}(q))^2}{q^{1+\gamma}}+\frac{(\beta-\rho_t^{\kappa}(q))^2}{(1-q)^{1+\gamma}}\Big\} \, dq\, dt <\infty$ . 
\item For all $t\in [0,T]$ and {all functions} $G \in C_c^{1,\infty} ([0,T]\times (0,1))$ we have that 
\begin{equation}
\label{eq:Dirichlet integral}
\begin{split}
F_{Dir}^{\kappa}&:=\int_0^1 \rho_{t}^{\kappa}(q)  G_{t}(q)\, dq  -\int_0^1 g(q)   G_{0}(q)\, dq\\&- \int_0^t\int_0^1 \rho_{s}^{\kappa}(q)\Big(\partial_s +  L_{\kappa} \Big) G_{s}(q)\, dqds \\
&-\kappa \int^{t}_{0}\int_0^1 G_s(q) \tilde {V}_0 (q)\, dq\,ds=0.
\end{split}   
\end{equation}
\end{enumerate}
\end{definition}

\begin{remark}\label{rem:uniq_weak_sol_iv}
We observe that the partial differential equation above has a unique weak solution in the sense defined above. We do not include the proof of this result in these notes but we refer the interested reader to \cite{BGJO} for the proof of the uniqueness for a  very similar equation. The same proof gives uniqueness in this case.
\end{remark}
\subsubsection{-Hydrodynamic Limit:}\label{subsec:HL_inf_var} Recall the notion of the empirical measure given in Section \ref{sec:hyd_eq_ssep} and note that in this case we have 
$$\pi^{N}_{t}(\eta, dq):=\pi^{N}(\eta_{tN^\gamma}, dq)$$ since the time scale now is equal to $\theta(N)=N^\gamma$. 

The second result of this section is stated in the following theorem.

\begin{theorem}\label{theo:hydro_limit}
 Let $g:[0,1]\rightarrow[0,1]$ be a measurable function and let $\lbrace\mu _{N}\rbrace_{N\geq 1}$ be a sequence of probability measures in $\Omega_{N}$ associated to $g(\cdot)$. Then, for any $0\leq t \leq T$,
\begin{equation*}\label{limHidreform_ljiv}
 \lim _{N\to\infty } \bbbp_{\mu _{N}}\left( \eta_{\cdot} : \left\vert \dfrac{1}{N-1}\sum_{x \in \Lambda_{N} }G\left(\tfrac{x}{N} \right)\eta_{tN^{\gamma}}(x) - \int_{0}^1G(q)\rho_{t}^{\kappa}(q)dq \right\vert    > \delta \right)= 0,
\end{equation*}
where  $\rho_{t}^{\kappa}(\cdot)$ is the unique weak solution of (\ref{eq:Dirichlet Equation}) in the sense of Definition \ref{Def. Dirichlet Condition}.
\end{theorem}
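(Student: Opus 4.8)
The plan is to follow the three-step scheme already used for Theorem \ref{th:hyd_ssep} and Theorem \ref{th:hyd_long_jumps}: first establish tightness of $\{\bbbq_N\}_{N\ge 1}$ in $\mc D([0,T],\mc M^+)$, then show that every limit point $\bbbq$ is concentrated on trajectories $\pi_t(dq)=\rho^\kappa_t(q)\,dq$ whose density is a weak solution of \eqref{eq:Dirichlet Equation} in the sense of Definition \ref{Def. Dirichlet Condition}, and finally invoke the uniqueness of Remark \ref{rem:uniq_weak_sol_iv} to upgrade subsequential convergence to convergence of the whole sequence. The engine is again Dynkin's formula (Theorem \ref{Dynkin_formula}) applied to $F(t,\eta)=\langle \pi^N_t,G\rangle$ with the time scale $\Theta(N)=N^\gamma$, which produces a martingale $M^N_t(G)$ with integral part $\int_0^t N^\gamma\mc L_N\langle \pi^N_s,G\rangle\,ds$ structured as in \eqref{genaction}.

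The first technical ingredient I would isolate is the scaling limit of the generator acting on a test function $G\in C^\infty_c((0,1))$. For the bulk part the key is a nonlocal analogue of Lemma \ref{convergence laplacian}: since $p(z)=c_\gamma|z|^{-(\gamma+1)}$ and $\Theta(N)=N^\gamma$, one checks that $N^\gamma(\tilde{\mc L}_N G)(x/N)$ converges, uniformly on the support of $G$, to $(LG)(x/N)$, so the bulk term tends to $\int_0^t\langle \pi^N_s,LG\rangle\,ds$; because $G$ is compactly supported the singular kernel stays away from the boundary and no correction survives. For the boundary part, inserting $\theta=-1$ makes the prefactor $\kappa N^{\gamma+1}/(N-1)$, and writing $x=Nq$ turns $N^{\gamma+1}p(x)=N^{\gamma+1}c_\gamma x^{-(\gamma+1)}$ into $c_\gamma q^{-(\gamma+1)}$ (and $N^{\gamma+1}p(N-x)\to c_\gamma(1-q)^{-(\gamma+1)}$), so the reservoir term converges to $\kappa\int_0^t\int_0^1 G(q)\big(\tilde V_0(q)-\tilde V_1(q)\rho_s(q)\big)\,dq\,ds$. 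Recognising $L_\kappa=L-\kappa\tilde V_1$ from \eqref{def:LLKappa}, the sum of these two limits is exactly $\int_0^t\langle \pi_s,L_\kappa G\rangle\,ds+\kappa\int_0^t\int_0^1 G(q)\tilde V_0(q)\,dq\,ds$, which reproduces $F^\kappa_{Dir}=0$ of \eqref{eq:Dirichlet integral}.

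Tightness follows as in Section \ref{sec:tightness}: by the Aldous criterion it suffices to bound $|N^\gamma\mc L_N\langle \pi^N_s,G\rangle|$ by a constant (which the previous estimates give, using $|\eta(x)|\le 1$ and the boundedness of $G/q^{\gamma+1}$ and $G/(1-q)^{\gamma+1}$ for compactly supported $G$) and to control the quadratic variation of $M^N_t(G)$. The latter reduces to estimating $\frac{N^\gamma}{(N-1)^2}\sum_{x,y\in\Lambda_N}p(y-x)\big(G(y/N)-G(x/N)\big)^2$, and a Riemann-sum computation shows this is of order $N^{-1}\|G\|_{\gamma/2}^2$, hence vanishes; the boundary contribution to the bracket is of order $N^{-1}$ and is likewise negligible. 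Combined with Doob's inequality this gives $M^N_t(G)\to 0$ in probability, and the arguments of Section \ref{limit point} then yield absolute continuity of the limit measures and identify the initial datum with $g(q)\,dq$.

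The hard part will be the two energy conditions in Definition \ref{Def. Dirichlet Condition}, namely $\rho^\kappa\in L^2(0,T;\mc H^{\gamma/2})$ and the boundary integrability $\int_0^T\int_0^1\{(\alpha-\rho_t)^2 q^{-(\gamma+1)}+(\beta-\rho_t)^2(1-q)^{-(\gamma+1)}\}\,dq\,dt<\infty$. In this nonlocal, Dirichlet setting these two conditions replace the ``fix the boundary value'' step of Section \ref{sec:abs_cont} and Appendix \ref{fix_profile}: they are what encodes the boundary data $\alpha,\beta$ at the macroscopic level. I would obtain them through an energy estimate, bounding the expectation of the microscopic Dirichlet form $\frac12\sum_{x,y}p(y-x)(\eta(y)-\eta(x))^2$ together with the reservoir gradients against the relative entropy of $\mu_N$, and testing the martingale identity against a family of functions concentrating near the boundary, thereby transferring these microscopic quantities into the fractional seminorm $\|\rho_s\|_{\gamma/2}^2$ and the singular boundary integrals; the uniform bounds then pass to the limit by lower semicontinuity. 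Once these estimates are in place, together with a replacement lemma of the type used in the appendix to close the boundary terms, the density $\rho^\kappa$ satisfies all three conditions of Definition \ref{Def. Dirichlet Condition}, and uniqueness (Remark \ref{rem:uniq_weak_sol_iv}) completes the proof.
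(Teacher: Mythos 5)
Your proposal follows essentially the same route as the paper: tightness via Aldous' criterion with the generator and quadratic-variation bounds, identification of limit points through Dynkin's formula with the key input $N^\gamma(\tilde{\mc L}_N G)(q)\to (LG)(q)$ uniformly on compacts (the paper invokes Lemma 3.3 of \cite{BJ} for exactly this), the Riemann-sum limit of the $\theta=-1$ reservoir term yielding $-\kappa\tilde V_1\rho+\kappa\tilde V_0$ so that $L_\kappa=L-\kappa\tilde V_1$ and the weak formulation $F^{\kappa}_{Dir}=0$ emerge, and finally uniqueness of weak solutions to upgrade subsequential convergence. The only difference is that you sketch an entropy/Dirichlet-form argument for the two energy conditions of Definition \ref{Def. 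Dirichlet Condition}, whereas the notes do not carry this step out and instead defer it (and the uniqueness proof) to \cite{BGJO}.
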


\subsubsection{-Heuristics for hydrodynamic equations:} \label{sec:CL} 
Fix $G:[0,1]\to\bbbr$ which does not depend on time and has compact support included in $(0,1)$.  
Recall \eqref{genaction} and \eqref{bulkaction} and recall  that we assumed  $\theta=-1$, so that \eqref{gen_action} now writes as 
\begin{equation}\label{mart_dec_inf_var}
\begin{split}
\int_0^tN^\gamma \mc L_N ( \langle& \pi^N_{s}, G \rangle ) \, ds = \cfrac{N^\gamma}{N-1}  \int_0^t \sum_{x\in \Lambda_N} (\tilde{\mc L}_N G) (\tfrac{x}{N})\eta_{sN^\gamma}(x)  \\+& \cfrac{ \kappa N^{\gamma+1}}{(N-1)} \int_0^t \sum_{y\in\{0,N\}}\sum_{x \in \Lambda_N}  G(\tfrac{x}{N})p(y-x) (r(y)-\eta_{sN^\gamma}(x)) \, ds.
\end{split}
\end{equation}
Note that the first term   on the right hand side in last display is equal to  $$\int_0^t\langle \pi_{s}^{N}, \tilde{\mc L}_N G \rangle \, ds.$$ Since  from Lemma 3.3 in  \cite{BJ}, we can deduce  that 
\begin{equation}\label{F_convergenceLL }
\lim _{N\to \infty}N^{\gamma}(\tilde{\mc L}_N G)(q)  = (L G)(q)
\end{equation}
uniformly in  $[a,1-a]$, for all functions $G$ with compact support included in $[a,1-a]$. Therefore, the first term   on the right hand side of \eqref{mart_dec_inf_var} can be replaced by 
\begin{equation}\label{eq:first_term}
\int_0^t \int_{0}^{1}(L G)(q) \rho_{s}^{\kappa}(q)\, dq\, ds,
\end{equation}
for $N$ sufficiently big. Now, the second term on the right hand side in \eqref{mart_dec_inf_var} is equal to $$\kappa\int_0^t \langle \alpha -\pi_{s}^{N},  Gp \rangle \, ds+\kappa \int_0^t\langle \beta -\pi_{s}^{N},  G \tilde{p} \rangle \, ds$$ and converges, as $N\to\infty$, to 
\begin{equation}\label{eq:second_term}
\begin{split}
 & \kappa\int_0^t\int_{0}^{1}(\alpha -\rho_{t}^{\kappa}(q))G(q)p(q)du +\kappa\int_0^t\int_{0}^{1}(\beta-\rho_{t}^{\kappa}(q))G(q)\tilde{p}(q)dq\\
&=-  \kappa\int_0^t \int_{0}^{1}\rho_{t}^{\kappa}(q)G(q)\tilde{V}_{1}(q)dq+\kappa\int_0^t\int_{0}^{1}G(q)\tilde {V}_{0}(q)dq.\\
\end{split}
\end{equation}
Putting together \eqref{eq:first_term} and \eqref{eq:second_term} and using \eqref{def:LLKappa} we recognize the corresponding  terms in \eqref{eq:Dirichlet integral}.

We finish this section  by noting that in \cite{BGJO2}  it was studied a similar dynamics to the one described above. There we considered the same bulk dynamics with long jumps given by $p(\cdot)$ with the choice \eqref{eq:choice_p} and $\gamma\in(1,2)$ but the boundary dynamics was different. In \cite{BGJO2} instead of considering just one boundary at each end point of the bulk, it was added  infinitely many reservoirs at the left and at the right of the bullk. As in the dynamics described above, particles can be injected and removed from the system at any point of the bulk by any of the reservoirs located at $y\leq 0$ or $y\geq N$. We note that in the case of this new dynamics the results obtained in \cite{BGJO2} are similar to those presented here, except that the transitions occur for a different value of $\theta$ and for that reason, the potential $\tilde{V}_1$ that appears in the definition of $L_\kappa$ in  the reaction-diffusion equation \eqref{eq:Dirichlet Equation} has a different power than the one that appears in the hydrodynamic  equation in \cite{BGJO2}. 
It would be very interesting to analyse other types of boundary dynamics superposed to the bulk dynamics that we defined above in order to see if we can come up with new fractional reaction-diffusion equations with more tricky boundary conditions than the Dirichlet boundary conditions that we obtained here. And  it would be very interesting to look at  the case where $\theta>-1$, the slow boundary regime, when $p(\cdot)$ is given as above with $\gamma\in(1,2)$, see the area coloured in rose in the figure below.  This is a subject to pursue in the near future. 
In the figure below we summarize the scenario of the hydrodynamic limit  for the models of this section.

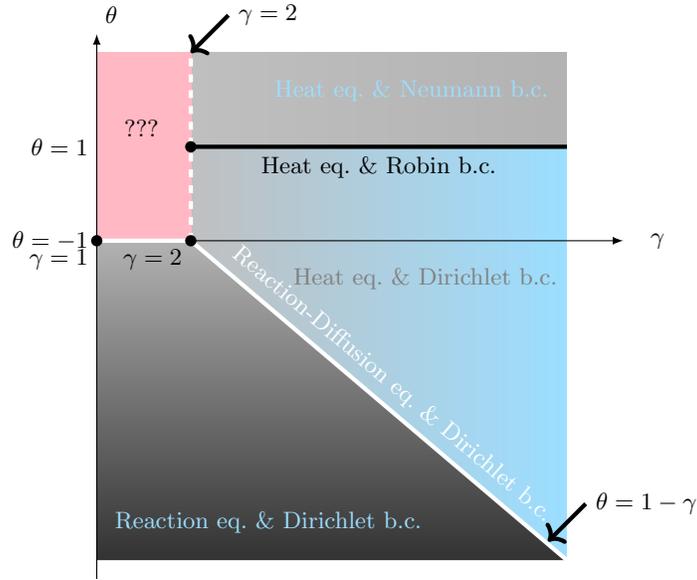
\begin{figure}[!htb]
\begin{center}
\begin{tikzpicture}[scale=0.25]
\shade[right color=black!30, left color=gray!50] (0,5) -- (0,10) --(20,10)--(20,5)-- cycle;
\shade[right color=columbiablue, left color=gray!50] (0,5) -- (20,5) --(20,-17)--(0,0)-- cycle;
\shade[top color=black!30, bottom color=black!80] (-5,0) -- (0,0) --(20,-17)--(-5,-17)--cycle;
\shade[top color=cherryblossompink, bottom color=cherryblossompink] (-5,0) -- (0,0) --(0,10)--(-5,10)--cycle;
\draw (-4,6) node[right]{???};
\draw (-5,12) node[right]{$\theta$};
\draw (24,0) node[right]{$\gamma$};
\draw (-5,0) node[left]{$\theta=-1$};
\draw (-5,5) node[left]{$\theta=1$};
\draw (-5,-1) node[left]{$\gamma=1$};
\draw (0,-1) node[left]{$\gamma=2$};
\draw[->,>=latex] (-5,0) -- (23,0);
\draw[->,>=latex] (-5,-18) -- (-5,11);
\draw[-,=latex,white,ultra thick, dashed] (0,0) -- (0, 10);
\draw[<-,black,ultra thick] (0,10) -- (2, 12) node[right] {$\gamma=2$};
\draw[-,=latex,white,ultra thick] (0,0) -- (20, -17) node[midway, above, sloped] {{Reaction-Diffusion eq. \& Dirichlet b.c. }};
\draw[-,=latex,white,ultra thick] (-5,0) -- (0, 0);
\draw[-,=latex,black,ultra thick] (0,5) -- (20, 5) node[midway, sloped, below] {{Heat eq. \& Robin b.c.}};
\node[right, columbiablue] at (4,8) {Heat eq. \& Neumann b.c.} ;
\node[right,gray] at (5,-2) {Heat eq. \&  Dirichlet b.c.} ;
\node[right, columbiablue] at (-4.5,-15) {Reaction eq. \& Dirichlet b.c.} ;
\fill[black] (-5,0) circle (0.3cm);
\fill[black] (0,0) circle (0.3cm);
\fill[black] (0,5) circle (0.3cm);
\draw[<-,black,ultra thick] (19,-16) -- (21, -14) node[right] {$\theta=1-\gamma$};
\end{tikzpicture}
\end{center}
\caption{Hydrodynamical behavior of the symmetric exclusion with long jumps.}
\label{fig:hlf_lj}
\end{figure}

\appendix
\label{sec:RL}

\section{Auxiliary results}
In this section we establish some technical results that are needed in order to prove the  hydrodynamic limit for the models discussed in the previous sections.

\subsection{Entropy bound}
 From now on, we suppose that $\alpha \leq \beta$. 
Let  $\rho : [0, 1] \rightarrow [0, 1]$ be a   function such that  $\alpha \leq \rho(q)\leq\beta$, for all $q\in [0,1]$. Let $\nu_{\rho(\cdot)}^{N}$ be the Bernoulli product measure on 
$\Omega_{N}$ with marginals given by
\begin{equation}\label{eq:ber_mea_prof}
\nu_{\rho(\cdot)}^{N}\lbrace \eta: \eta_{x} = 1 \rbrace = \rho\left( \tfrac{x}{N}\right).\end{equation}

Given two functions $f,g:\Omega_{N} \to \bbbr$ and a probability measure $\mu$ on $\Omega_{N}$, we denote here by $\langle f, g\rangle_{\mu }$ the scalar product between $f$ and $g$ in $L^2 (\Omega_N, \mu)$, that is,
$$ \langle f, g\rangle_{\mu } = \int _{\Omega_{N}} f(\eta)g(\eta) \, d\mu .$$
Let $H_{N}(\mu\vert \nu_{\rho(\cdot)}^{N})$ be the relative entropy of a probability measure $\mu$ on 
$\Omega_{N}$ with respect to the probability measure $\nu_{\rho(\cdot)}^{N} $ on $\Omega_{N}$. We claim that there exists a constant $C_0 :=C(\alpha,\beta)$, such that
\begin{equation}\label{H}
H_{N}(\mu\vert \nu_{\rho(\cdot)}^{N})\leq C_0N.
\end{equation}
For that purpose note that, since  $\nu^N_{\rho(\cdot)}$ is product we have that
$$\nu^N_{\rho(\cdot)}(\eta)=\prod_{x=1}^{N-1}\rho(\tfrac{x}{N})^{\eta(x)}(1-\rho(\tfrac xN))^{1-\eta(x)}\geq({\alpha \wedge (1-\beta)})^N$$
from where we obtain that 
\begin{eqnarray*}
H(\mu|\nu_{\rho(\cdot)}^{N}) &=& \sum_{\eta\in \Omega_{N}}\mu(\eta)\log\left(\dfrac{\mu(\eta)}{\nu_{\rho(\cdot)}^{N}(\eta)}\right)
\leq  \sum_{\eta\in \Omega_{N}}\mu(\eta)\log\left(\dfrac{1}{\nu_{\rho(\cdot)}^{N}(\eta)}\right)\\
&\leq & \log\left(\left[\dfrac{1}{\alpha \wedge (1-\beta)}\right]^{N} \right)\sum_{\eta\in \Omega_{N}}\mu(\eta)
\leq  N\log\left(\dfrac{1}{\alpha \wedge (1-\beta)}\right)\leq C_0N.
\end{eqnarray*}

We remark here that below when we use as reference measure the Bernoulli product measure given in \eqref{eq:ber_mea_prof} we have to  restrict to $\alpha\neq 0$ and $\beta\neq 1$ since in last estimate  the constant $C_0=-\log(\alpha \wedge(1-\beta))$. We also note that when we use the Bernoulli product measure with a constant parameter we do not need to impose that restriction.

\subsection{Estimates on Dirichlet forms}
In this section we consider the model described in Section \ref{chap2} since the results for the model of Section \ref{chap1} can be obtained easily from the ones we derive below. In any case we present some remarks along the text about the corresponding results  for the model of Section \ref{chap1}.

For a probability measure $\mu$ on $\Omega_N$, $x,y \in \Lambda_N$ and a density function $f:\Omega_N \to [0,\infty)$ with respect to $\mu$ we introduce 
\begin{eqnarray*}
I_{x,y}(\sqrt f,\mu)&:=& \int_{\Omega_N} \left(\sqrt {f(\eta^{x,y})}-\sqrt {f(\eta)}\right)^{2} d\mu,\\
I_{x}^{r(y)}(\sqrt f,\mu)&:=& \int_{\Omega_N}  c_{x}(\eta;r(y))\left(\sqrt {f(\eta^x)}-\sqrt {f(\eta)}\right)^{2} d\mu.
\end{eqnarray*}
In last identity $y\in\{0,N\}$ and $r(0)=\alpha$ and $r(N)=\beta$.
We define
\begin{eqnarray*}
\mc D_{N}(\sqrt{f},\mu )&:=& (\mc D_{N,0}+\mc D_{N, b})(\sqrt{f},\mu) \end{eqnarray*}
where 
\begin{eqnarray} 
\label{left_rig_form}
\mc D_{N,0}(\sqrt{f},\mu):=\cfrac{1}{2}\sum_{x,y\in\Lambda_N}p(y-x)\, I_{x,y}(\sqrt{f},\mu), 
\end{eqnarray}
\begin{eqnarray}
\label{left_dir_form}
\mc D_{N,b}(\sqrt{f},\mu):=\frac{\kappa}{N^\theta}\sum_{y\in\{
0,N\}}\sum_{x\in\Lambda_N}p(y-x)\, I^{r(y)}_{x}(\sqrt{f},\mu).
\end{eqnarray} 
 Note that for the models of Section \ref{chap1} the expressions above simplify to 
 \begin{eqnarray} 
\label{left_rig_form_ssep}
\mc D^{NN}_{N,0}(\sqrt{f},\mu):=\sum_{x\in\Lambda_N}\, I_{x,x+1}(\sqrt{f},\mu), 
\end{eqnarray}
\begin{equation}
\label{left_dir_form_ssep}
\begin{split}
\mc D^{NN}_{N,b}(\sqrt{f},\mu):=&\frac{\kappa}{N^\theta}\Big( I^{\alpha}_{1}(\sqrt{f},\mu)+I^{\beta}_{N-1}(\sqrt{f},\mu)\Big).
\end{split}
\end{equation} 
  
Our first goal is to express, for the measure $\mu=\nu_{\rho(\cdot)}^{N}$,  a relation between the Dirichlet form defined by $-\langle \mc L_N\sqrt{f},\sqrt{f} \rangle_{\nu_{\rho(\cdot)}^{N}}$ and $
\mc D_{N}(\sqrt{f},\nu_{\rho(\cdot)}^{N} )$. 
We claim that for any positive constant $B$, there exists a constant $C>0$ such that
\begin{equation}\label{dir_est}
\begin{split}
\frac{1}{BN}\langle \mc L_{N}\sqrt{f},\sqrt{f} \rangle_{\nu_{\rho(\cdot)}^N} &\leq -\dfrac{1}{4BN}\mc D_{N}(\sqrt{f},\nu_{\rho(\cdot)}^N) \\&+ \frac{C}{BN}\sum_{x,y\in\Lambda_N}p(y-x)\Big(\rho(\tfrac xN)-\rho(\tfrac yN)\Big)^2
\\ &+ \frac{C\kappa}{BN^{1+\theta}}\sum_{y \in {\{0,N\}}} \sum_{x\in\Lambda_N} \Big(\rho(\tfrac xN)-r(y)\Big)^2p(y-x).
\end{split}
\end{equation}
Our aim is then to choose $\rho(\cdot)$ in order to minimize the error term, i.e. the two last terms at  the right hand side of the previous inequality. 

\begin{remark}\label{prices_for_DF}

\quad 

\begin{enumerate}
\item If $p(\cdot)$ has finite variance $\sigma^2$, then:

\begin{itemize}
\item for $\rho(\cdot)$ Lipschitz and such that $\rho(0)=\alpha$ and $\rho(1) =\beta$, we get
\begin{equation}
\label{dir_est_lip}
\begin{split}
\frac{1}{BN}\langle\mc  L_{N}\sqrt{f},\sqrt{f} \rangle_{\nu_{\rho(\cdot)}^N} &\leq -\dfrac{1}{4BN}\mc D_{N}(\sqrt{f},\nu_{\rho(\cdot)}^N) + \frac{C}{BN^2}\sigma^2\\& + \frac{C\kappa}{BN^{3+\theta}}\sum_{y \in {\{0,N\}}}\sum_{x\in\Lambda_N}\big( y-x\big)^2p(y-x)\\
&\leq -\dfrac{1}{4BN}\mc D_{N}(\sqrt{f},\nu_{\rho(\cdot)}^N) + \frac{C}{BN^2}\sigma^2 + \frac{C\kappa}{BN^{3+\theta}}.
\end{split}
\end{equation}
\item for $\rho(\cdot)$  such that $\rho(0)=\alpha$, $\rho(1) =\beta$, H\"older of parameter $\frac \gamma 2$ at the boundaries and Lipschitz inside, we get 
\begin{equation}
\label{dir_est_holder}
\begin{split}
\frac{1}{BN}\langle \mc L_{N}\sqrt{f},\sqrt{f} \rangle_{\nu_{\rho(\cdot)}^N} \leq& -\dfrac{1}{4BN}\mc D_{N}(\sqrt{f},\nu_{\rho(\cdot)}^N) + \frac{C}{BN^2} \sigma^2 + \frac{C\kappa \log(N)}{BN^{\gamma+\theta+1}}.
\end{split}
\end{equation}
\item for $\rho(\cdot)$ such that $\rho(0)=\alpha$, $\rho(1) =\beta$, H\"older of parameter $\frac{1+\gamma}{2}$ at the boundaries and Lipschitz inside, we get 
\begin{equation}
\label{dir_est_holder_alt}
\begin{split}
\frac{1}{BN}\langle \mc L_{N}\sqrt{f},\sqrt{f} \rangle_{\nu_{\rho(\cdot)}^N} \leq& -\dfrac{1}{4BN}\mc D_{N}(\sqrt{f},\nu_{\rho(\cdot)}^N) + \frac{C}{BN^2} \sigma^2 + \frac{C\kappa}{BN^{\gamma+\theta+1}}.
\end{split}
\end{equation}
\item for $\rho(\cdot)$  constant, equal to $\alpha$ or to $\beta$,  we have 
\begin{equation}\label{dir_est_const}
\begin{split}
\frac{1}{BN}\langle \mc L_{N}\sqrt{f},\sqrt{f} \rangle_{\nu_{\alpha}^N} &\leq  -\dfrac{1}{4BN}\mc D_{N}(\sqrt{f},\nu_{\alpha}) + \frac{C\kappa 
}{BN^{\theta+1}}.
\end{split}
\end{equation}
\end{itemize}
\item If $p(\cdot)$  is such that $p(1)=p(-1)=\frac 12$, then:  
\begin{itemize}
\item  for  $\rho(\cdot)$ Lipschitz and such that $\rho(0)=\alpha$, $\rho(1) =\beta$ and  locally constant at $0$ and $1$, we get
\begin{equation}
\label{dir_est_lip_ssep}
\begin{split}
\frac{1}{BN}\langle\mc  L_{N}\sqrt{f},\sqrt{f} \rangle_{\nu_{\rho(\cdot)}^N} &\leq -\dfrac{1}{4BN}\mc D^{NN}_{N}(\sqrt{f},\nu_{\rho(\cdot)}^N) + \frac{C}{BN^2}.
\end{split}
\end{equation}
Note that the choice of asking $\rho(\cdot)$ to be locally constant at $0$ and $1$ turns the errors coming from the boundary dynamics to vanish. 
\item for  $\rho(\cdot)$  constant, equal to $\alpha$ or to $\beta$, then we have 
exactly the same error as in \eqref{dir_est_const}.
\end{itemize}
\item If $p(\cdot)$ has infinite variance, then:
\begin{itemize}
\item for  $\rho(\cdot)$ Lipschitz and such that $\rho(0)=\alpha$ and $\rho(1) =\beta$, we get
\begin{equation}
\label{dir_est_lip_iv}
\begin{split}
\frac{1}{BN}\langle\mc  L_{N}\sqrt{f},\sqrt{f} \rangle_{\nu_{\rho(\cdot)}^N} &\leq -\dfrac{1}{4BN}\mc D_{N}(\sqrt{f},\nu_{\rho(\cdot)}^N) \\&+ \frac{C}{BN^3}\sum_{x,y\in\Lambda_N}\frac{1}{|x-y|^{\gamma-1}}\\& + \frac{C\kappa}{BN^{3+\theta}}\sum_{y \in {\{0,N\}}}\sum_{x\in\Lambda_N}\big( y-x\big)^2p(y-x)\\
&\leq -\dfrac{1}{4BN}\mc D_{N}(\sqrt{f},\nu_{\rho(\cdot)}^N) + \frac{C}{BN^\gamma}\sigma^2 + \frac{C\kappa}{BN^{\gamma+\theta+1}}.
\end{split}
\end{equation}
\end{itemize}
\end{enumerate}
\end{remark}
In order to prove \eqref{dir_est} we need some intermediate results. For that purpose we recall from \cite{BGJO} the following two lemmas.

\begin{lemma}
\label{lemmaleft0}
Let $T : \eta \in  \Omega_N \to T(\eta) \in \Omega_N$ be a transformation in the configuration space and $c: \eta\in\Omega_N \to c(\eta)$ be a positive local function. Let $f$ be a density with respect to a probability  measure $\mu$ on $\Omega_N$. Then, we have that
\begin{eqnarray}
\label{use_comp}
\begin{split}
&\left\langle c (\eta) [ \sqrt{f(T (\eta))} -\sqrt{f(\eta)}]\; ,\;\sqrt{f (\eta)} \right\rangle_{\mu}  \\
&\le  -\dfrac{1}{4}\int c (\eta)\left(\left[ \sqrt{f(T (\eta))}\right]-\left[ \sqrt{f(\eta)}\right]\right)^{2}  d\mu\\
& + \dfrac{1}{16}\int \dfrac{1}{c (\eta)}\left[c (\eta)-c (T(\eta)) \dfrac{\mu (T(\eta))}{\mu(\eta)} \right]^{2}\left(\left[ \sqrt{f(T(\eta))}\right]+\left[ \sqrt{f(\eta)}\right]\right)^{2}  d\mu.
\end{split}
\end{eqnarray}
\end{lemma}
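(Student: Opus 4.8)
The plan is to reduce \eqref{use_comp} to an elementary pointwise estimate after a single change of variables on the finite set $\Omega_N$. Throughout I would abbreviate $u=\sqrt{f(\eta)}$ and $v=\sqrt{f(T(\eta))}$, and I would use that the transformations relevant for the generators, namely the exchanges $\eta\mapsto\eta^{x,y}$ and the flips $\eta\mapsto\eta^{x}$ of \eqref{tranformations}, are involutions, so that $T\circ T=\mathrm{id}$ and $T$ is a bijection of $\Omega_N$; recall also that $c$ and $\mu$ are strictly positive, so every quotient below is well defined. First I would rewrite the inner product by means of the elementary identity $u(v-u)=-\tfrac12(v-u)^2+\tfrac12(v^2-u^2)$, which gives
\begin{equation*}
\left\langle c(\eta)\,[v-u]\,,\,u\right\rangle_{\mu}=-\frac12\int_{\Omega_N} c(\eta)(v-u)^2\,d\mu+\frac12\int_{\Omega_N} c(\eta)(v^2-u^2)\,d\mu .
\end{equation*}
The first term already is a multiple of the target Dirichlet term, so the whole issue is to control the second integral by the two terms on the right-hand side of \eqref{use_comp}.

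The crucial step is to treat $\tfrac12\int c(\eta)(v^2-u^2)\,d\mu$ by a \emph{symmetrization}: I would split it as $\tfrac14\int c(\eta)(v^2-u^2)\,d\mu+\tfrac14\int c(\eta)(v^2-u^2)\,d\mu$ and perform the change of variables $\eta\mapsto T(\eta)$ in the second copy only. Since $T$ is an involution, $\sum_{\eta}h(\eta)\mu(\eta)=\sum_{\eta}h(T\eta)\mu(T\eta)$ for any $h$, whence $\int c(\eta)v^2\,d\mu=\int c(T\eta)\tfrac{\mu(T\eta)}{\mu(\eta)}u^2\,d\mu$ and $\int c(\eta)u^2\,d\mu=\int c(T\eta)\tfrac{\mu(T\eta)}{\mu(\eta)}v^2\,d\mu$ (recall $v^2=f(T\eta)$, $u^2=f(\eta)$). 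Substituting these into the transformed copy and setting $R(\eta):=c(\eta)-c(T\eta)\tfrac{\mu(T\eta)}{\mu(\eta)}$, the two copies combine into
\begin{equation*}
\frac12\int_{\Omega_N} c(\eta)(v^2-u^2)\,d\mu=\frac14\int_{\Omega_N} R(\eta)\,(v-u)(v+u)\,d\mu .
\end{equation*}
It is exactly this half-change-of-variables that manufactures both the factor $R(\eta)=c(\eta)-c(T\eta)\tfrac{\mu(T\eta)}{\mu(\eta)}$ and the factor $(v+u)$ that appear in \eqref{use_comp}.

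Finally I would bound the cross term by Young's inequality. Writing $R(\eta)(v-u)(v+u)=\bigl[\sqrt{c(\eta)}\,(v-u)\bigr]\bigl[R(\eta)(v+u)/\sqrt{c(\eta)}\bigr]$ and using $XY\le X^2+\tfrac14 Y^2$ (that is, $(X-\tfrac12 Y)^2\ge 0$, valid for all real $X,Y$ regardless of the sign of $R$) yields, pointwise,
\begin{equation*}
\frac14 R(\eta)(v-u)(v+u)\le \frac14 c(\eta)(v-u)^2+\frac{1}{16}\,\frac{R(\eta)^2}{c(\eta)}\,(v+u)^2 .
\end{equation*}
Adding this to the term $-\tfrac12\int c(\eta)(v-u)^2\,d\mu$ turns the diffusive coefficient into $-\tfrac12+\tfrac14=-\tfrac14$, and one reads off precisely the right-hand side of \eqref{use_comp}. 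I do not expect a serious obstacle: the lemma is in essence a completion of squares, and the only points that need care are the bookkeeping ones, namely verifying that $T$ is an involution and that the strict positivity of $c$ and $\mu$ legitimizes both the change of variables and the divisions by $c(\eta)$, $\mu(\eta)$.
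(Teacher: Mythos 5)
Your proof is correct, and there is no in-paper argument to compare it against line by line: the paper states Lemma \ref{lemmaleft0} without proof, recalling it from \cite{BGJO}. Your route is the natural (and, as far as the technique goes, the standard) one: the pointwise identity $u(v-u)=-\tfrac12(v-u)^2+\tfrac12(v^2-u^2)$, the ``half plus half'' change of variables $\eta\mapsto T(\eta)$ in one copy, which is precisely what manufactures the factor $R(\eta)=c(\eta)-c(T(\eta))\tfrac{\mu(T(\eta))}{\mu(\eta)}$ and the factor $v+u$, and then Young's inequality $XY\le X^2+\tfrac14 Y^2$; the constants $\tfrac14$ and $\tfrac1{16}$ come out exactly. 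It is also the same symmetrization trick this paper uses repeatedly in its own computations (for instance in the proofs of Lemma \ref{lem:bern_rev} and Lemma \ref{Rep-Neumann}), so your argument is fully in the spirit of the source.

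One point deserves emphasis, and it is to your credit that you flagged it explicitly: the involution hypothesis $T\circ T=\mathrm{id}$ is not cosmetic, and the lemma as literally stated (for an arbitrary transformation $T$) is false. For example, take $N=2$, so that $\Omega_N$ consists of two configurations, let $\mu$ be uniform, $c\equiv 1$, let $f$ equal $2$ on the occupied configuration and $0$ on the empty one, and let $T$ be the constant map onto the occupied configuration: then the left-hand side of \eqref{use_comp} equals $0$, the correction term vanishes identically (since $c$ is constant and $\mu$ uniform), yet the Dirichlet term is strictly positive, so the right-hand side equals $-\tfrac14<0$. Even for a bijective but non-involutive $T$ (a $3$-cycle on a three-point space), the change of variables naturally produces $\mu(T^{-1}(\eta))/\mu(\eta)$ rather than $\mu(T(\eta))/\mu(\eta)$, and one can choose $\mu$ and $f$ so that \eqref{use_comp} fails. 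Since the lemma is only ever applied with $T(\eta)=\eta^{x,y}$ or $T(\eta)=\eta^{x}$ from \eqref{tranformations}, which are involutions, your restriction is the correct reading of the statement rather than a gap --- but it does mean the hypotheses as written are too weak, and your proof quietly repairs that.
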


\begin{lemma}
\label{lem:densaf}
There exists a constant $C:=C(\rho)$ such that for any $N \ge 1$ and density $f$ be a density with respect to $\nu_{\rho(\cdot)}^N$
\begin{equation*}
\sup_{x\ne y \in \Lambda_N} \int_{\Omega_N} f ( \eta^{x,y}) \; d\nu_{\rho(\cdot)}^N \; \le \; C, \quad \quad  \sup_{x \in \Lambda_N} \int_{\Omega_N} f ( \eta^x) \; d\nu_{\rho(\cdot)}^N \; \le \; C.
\end{equation*} 
\end{lemma}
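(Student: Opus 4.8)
The plan is to exploit the fact that both transformations $\eta \mapsto \eta^{x,y}$ and $\eta \mapsto \eta^{x}$ defined in \eqref{tranformations} are \emph{involutions} of $\Omega_N$ (applying either twice returns the original configuration), hence bijections. A change of variables then turns each integral into a weighted total mass of the density $f$, the weight being a Radon--Nikodym ratio that I will bound by a constant depending only on $\alpha$ and $\beta$.

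First I would treat the exchange term. Fixing $x \ne y \in \Lambda_N$ and performing the substitution $\xi = \eta^{x,y}$, the involution property gives
\begin{equation*}
\int_{\Omega_N} f(\eta^{x,y}) \, d\nu_{\rho(\cdot)}^N \;=\; \sum_{\xi \in \Omega_N} f(\xi)\, \frac{\nu_{\rho(\cdot)}^N(\xi^{x,y})}{\nu_{\rho(\cdot)}^N(\xi)}\, \nu_{\rho(\cdot)}^N(\xi).
\end{equation*}
Since $\nu_{\rho(\cdot)}^N$ is the product measure \eqref{eq:ber_mea_prof} and $\xi^{x,y}$ differs from $\xi$ only at the two sites $x$ and $y$, the ratio simplifies to the product of the four marginal factors at those sites, and each such factor is a quotient of the form $\rho(\tfrac xN)/\rho(\tfrac yN)$, $(1-\rho(\tfrac xN))/(1-\rho(\tfrac yN))$, or their reciprocals. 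Using the standing assumption $\alpha \le \rho(\cdot) \le \beta$ together with $\alpha>0$ and $\beta<1$, every such quotient is bounded by $\max\{\beta/\alpha,\,(1-\alpha)/(1-\beta)\}$, so that
\begin{equation*}
\sup_{x \ne y \in \Lambda_N}\; \sup_{\xi \in \Omega_N} \frac{\nu_{\rho(\cdot)}^N(\xi^{x,y})}{\nu_{\rho(\cdot)}^N(\xi)} \;\le\; C(\alpha,\beta).
\end{equation*}
Because $f$ is a density with respect to $\nu_{\rho(\cdot)}^N$, we have $\sum_{\xi} f(\xi)\nu_{\rho(\cdot)}^N(\xi)=1$, and pulling the uniform bound out of the sum yields the first claimed inequality with $C=C(\alpha,\beta)$.

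The flip term is handled in exactly the same way. Substituting $\xi = \eta^{x}$ (again an involution) reduces the integral to $\sum_{\xi} f(\xi)\,[\nu_{\rho(\cdot)}^N(\xi^{x})/\nu_{\rho(\cdot)}^N(\xi)]\,\nu_{\rho(\cdot)}^N(\xi)$, and now the ratio involves only the marginal factor at the single site $x$: it equals $(1-\rho(\tfrac xN))/\rho(\tfrac xN)$ when $\xi(x)=1$ and $\rho(\tfrac xN)/(1-\rho(\tfrac xN))$ when $\xi(x)=0$, both bounded by $\max\{(1-\alpha)/\alpha,\,\beta/(1-\beta)\}$. The same argument then gives the second inequality. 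The only substantive point — and the place where the hypotheses genuinely enter — is the uniform control of the Radon--Nikodym ratio, which is exactly where one needs $\rho$ to stay bounded away from $0$ and $1$, consistent with the restriction $\alpha \ne 0$, $\beta \ne 1$ noted after \eqref{H}; no other difficulty arises.
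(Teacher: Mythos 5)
Your proof is correct and is essentially the standard argument — indeed the paper itself states this lemma without proof, recalling it from \cite{BGJO}, where it is proved in exactly this way: change of variables via the involutions $\eta\mapsto\eta^{x,y}$ and $\eta\mapsto\eta^{x}$, followed by a uniform bound on the Radon--Nikodym ratio using $0<\alpha\le\rho(\cdot)\le\beta<1$ and $\int f\,d\nu_{\rho(\cdot)}^N=1$. The only cosmetic slip is that for the exchange term the ratio is a \emph{product} of two quotients, so the uniform bound is $\frac{\beta(1-\alpha)}{\alpha(1-\beta)}$ rather than the stated maximum, which changes nothing since it is still a constant depending only on $\alpha$ and $\beta$.
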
 
 A simple consequence of the previous lemmas is the next two corollaries.  
Recall the bulk generator $\mc L_{N,0}$ given in \eqref{generators}.
\begin{corollary}
\label{lemmabulk}
There exists a constant $C>0$ (independent of $f(\cdot)$ and $N$) such that
\begin{eqnarray*}
\left\langle \mc L_{N,0}\sqrt{f},\sqrt{f}\right \rangle_{\nu_{\rho(\cdot)}^N}  &\le&  -\dfrac{1}{4}\mc D_{N,0}(\sqrt{f},\nu_{\rho(\cdot)}^N) + C\sum_{x,y\in\Lambda_N}p(y-x)\Big(\rho(\tfrac xN)-\rho(\tfrac yN)\Big)^2
\end{eqnarray*}
for any density $f(\cdot)$ with respect to $\nu_{\rho(\cdot)}^N$.
\end{corollary}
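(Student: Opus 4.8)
The plan is to apply Lemma \ref{lemmaleft0} bond by bond. First I would expand the inner product using the explicit form of the bulk generator in \eqref{generators}, namely
$$\langle \mc L_{N,0}\sqrt{f},\sqrt{f}\rangle_{\nu_{\rho(\cdot)}^N} = \frac12 \sum_{x,y\in\Lambda_N} p(y-x) \int_{\Omega_N} \big(\sqrt{f(\eta^{x,y})}-\sqrt{f(\eta)}\big)\sqrt{f(\eta)}\, d\nu_{\rho(\cdot)}^N.$$
For each ordered pair $(x,y)$ I would invoke Lemma \ref{lemmaleft0} with the transformation $T(\eta)=\eta^{x,y}$ and constant rate $c\equiv 1$, since the exchange rate $\tfrac12 p(y-x)$ carries no dependence on $\eta$ and factors out of the expectation. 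The negative contribution produced by the lemma is $-\tfrac14\int\big(\sqrt{f(\eta^{x,y})}-\sqrt{f(\eta)}\big)^2\, d\nu_{\rho(\cdot)}^N=-\tfrac14\, I_{x,y}(\sqrt f,\nu_{\rho(\cdot)}^N)$; weighting by $\tfrac12 p(y-x)$ and summing over $x,y$ reproduces exactly $-\tfrac14\,\mc D_{N,0}(\sqrt f,\nu_{\rho(\cdot)}^N)$, by the definition \eqref{left_rig_form}.

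It then remains to control the error terms of Lemma \ref{lemmaleft0}. The key computation is the Radon--Nikodym ratio $\nu_{\rho(\cdot)}^N(\eta^{x,y})/\nu_{\rho(\cdot)}^N(\eta)$: since $\nu_{\rho(\cdot)}^N$ is a product measure, the ratio depends only on the occupation variables at $x$ and $y$, equals $1$ when $\eta(x)=\eta(y)$, and for $\eta(x)\ne\eta(y)$ a direct computation, writing $a=\rho(\tfrac xN)$ and $b=\rho(\tfrac yN)$, gives
$$1 - \frac{\nu_{\rho(\cdot)}^N(\eta^{x,y})}{\nu_{\rho(\cdot)}^N(\eta)} = \frac{a-b}{a(1-b)} \quad\text{or}\quad \frac{b-a}{(1-a)b}.$$
Because $\alpha\le\rho\le\beta$ with $\alpha>0$ and $\beta<1$, the denominators are bounded below uniformly in $N$, so the squared bracket is at most $C\big(\rho(\tfrac xN)-\rho(\tfrac yN)\big)^2$. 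I would then use $\big(\sqrt{f(\eta^{x,y})}+\sqrt{f(\eta)}\big)^2 \le 2f(\eta^{x,y})+2f(\eta)$ together with Lemma \ref{lem:densaf} and the normalization $\int f\, d\nu_{\rho(\cdot)}^N=1$ to bound the expectation of this quantity by a constant independent of $x,y,N$. Collecting, each error term is at most $C\,p(y-x)\big(\rho(\tfrac xN)-\rho(\tfrac yN)\big)^2$, and summation over $x,y$ yields the second term in the statement.

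The main obstacle is the uniform control of the Radon--Nikodym ratio: one must verify that $\big[1-\nu_{\rho(\cdot)}^N(\eta^{x,y})/\nu_{\rho(\cdot)}^N(\eta)\big]^2$ is genuinely of order $\big(\rho(\tfrac xN)-\rho(\tfrac yN)\big)^2$ with a constant that does not degenerate as $N\to\infty$. This is precisely where the hypothesis $\alpha>0$, $\beta<1$ enters, keeping $\rho$ bounded away from the endpoints of $[0,1]$ and hence the denominators $a(1-b)$ and $(1-a)b$ bounded below. Everything else is a routine bookkeeping of the bond-by-bond estimate.
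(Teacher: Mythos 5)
Your proposal is correct and follows essentially the same route as the paper: the paper's proof of this corollary is exactly the one-line instruction to apply Lemma \ref{lemmaleft0} with $c\equiv 1$ and $T(\eta)=\eta^{x,y}$, noting that the Radon--Nikodym factor satisfies $|\tfrac{\nu_{\rho(\cdot)}^N(\eta^{x,y})}{\nu_{\rho(\cdot)}^N(\eta)}-1|^2\le C(\rho(\tfrac xN)-\rho(\tfrac yN))^2$, with Lemma \ref{lem:densaf} controlling the factor $(\sqrt{f(\eta^{x,y})}+\sqrt{f(\eta)})^2$. Your write-up merely fills in the bookkeeping (the explicit ratio computation and the role of $\alpha>0$, $\beta<1$) that the paper leaves implicit.
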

Now we look at the generator of the boundary dynamics given in \eqref{generators}.
\begin{corollary}
\label{lemmaleft1} Let $\theta \in \bbbr$ be fixed. There exists a constant $C>0$ (independent of $f(\cdot)$ and $N$) such that
\begin{equation}
\begin{split}
 \langle \mc L_{N, b}\sqrt{f},\sqrt{f} \rangle_{\nu_{\rho(\cdot)}^N}& \le  -\dfrac{1}{4}\mc D_{N,b}(\sqrt{f},\nu_{\rho(\cdot)}^N) \\
&+ \frac{C\kappa }{N^{\theta}}\sum_{x\in\Lambda_N}\Big(\rho(\tfrac xN)-\alpha\Big)^2p(x)\\&+ \frac{C\kappa}{N^{\theta}}\sum_{x\in\Lambda_N}\Big(\rho(\tfrac xN)-\beta\Big)^2p(N-x)
\end{split}
\end{equation}
for any density $f(\cdot)$ with respect to $\nu_{\rho(\cdot)}^N$.
\end{corollary}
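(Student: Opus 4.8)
The plan is to apply Lemma \ref{lemmaleft0} to each elementary flip appearing in the boundary generator and then to control the resulting error by the explicit structure of the Bernoulli product reference measure. Writing out the definition of $\mc L_{N,b}$ from \eqref{generators}, one has
$$\langle \mc L_{N,b}\sqrt f,\sqrt f\rangle_{\nu_{\rho(\cdot)}^N}=\frac{\kappa}{N^\theta}\sum_{y\in\{0,N\}}\sum_{x\in\Lambda_N}p(y-x)\Big\langle c_x(\eta,r(y))\big(\sqrt{f(\eta^x)}-\sqrt{f(\eta)}\big),\sqrt{f(\eta)}\Big\rangle_{\nu_{\rho(\cdot)}^N}.$$
For each fixed pair $(x,y)$ I would apply Lemma \ref{lemmaleft0} with $T(\eta)=\eta^x$ and $c(\eta)=c_x(\eta,r(y))$. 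Summing the first term on the right-hand side of \eqref{use_comp} over $x$ and $y$ and reinstating the prefactor $\tfrac{\kappa}{N^\theta}p(y-x)$ reproduces exactly $-\tfrac14\mc D_{N,b}(\sqrt f,\nu_{\rho(\cdot)}^N)$, by the definition \eqref{left_dir_form} of the Dirichlet form. It therefore remains only to bound the second (error) term of \eqref{use_comp}.

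The heart of the argument is a pointwise estimate on the factor
$$\frac{1}{c_x(\eta,r(y))}\Big[c_x(\eta,r(y))-c_x(\eta^x,r(y))\frac{\nu_{\rho(\cdot)}^N(\eta^x)}{\nu_{\rho(\cdot)}^N(\eta)}\Big]^2.$$
Because the flip $\eta\mapsto\eta^x$ only changes the occupation at $x$, the measure ratio equals $\tfrac{1-\rho(x/N)}{\rho(x/N)}$ when $\eta(x)=1$ and $\tfrac{\rho(x/N)}{1-\rho(x/N)}$ when $\eta(x)=0$; and from \eqref{rate_c} the rates $c_x(\eta,r(y))$ and $c_x(\eta^x,r(y))$ take the explicit values $\tfrac12(1-r(y))$ and $\tfrac12 r(y)$ according to $\eta(x)$. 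A direct two-case computation then gives that the bracket equals $\tfrac{1}{2\rho(x/N)}\big(\rho(x/N)-r(y)\big)$ when $\eta(x)=1$ and $\tfrac{1}{2(1-\rho(x/N))}\big(r(y)-\rho(x/N)\big)$ when $\eta(x)=0$, so that the displayed factor is bounded by $C(\alpha,\beta)\big(\rho(\tfrac xN)-r(y)\big)^2$ uniformly in $\eta$. The constant depends only on lower bounds for $\rho(x/N)$, $1-\rho(x/N)$, $r(y)$ and $1-r(y)$; this is where the standing assumption $0<\alpha\le\beta<1$ is used, since $\alpha\le\rho(x/N)\le\beta$ keeps the measure bounded away from degeneracy.

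With this pointwise bound in hand, I would factor it out of the error term of \eqref{use_comp} and bound the remaining integral $\int\big(\sqrt{f(\eta^x)}+\sqrt{f(\eta)}\big)^2\,d\nu_{\rho(\cdot)}^N\le 2\int f(\eta^x)\,d\nu_{\rho(\cdot)}^N+2\int f(\eta)\,d\nu_{\rho(\cdot)}^N$ by a constant, using that $f$ is a density together with the uniform bound of Lemma \ref{lem:densaf}. Summing over $x\in\Lambda_N$ and $y\in\{0,N\}$ and recalling $p(0-x)=p(x)$ and $p(N-x)$, this produces precisely the two error terms $\tfrac{C\kappa}{N^\theta}\sum_x(\rho(\tfrac xN)-\alpha)^2p(x)$ and $\tfrac{C\kappa}{N^\theta}\sum_x(\rho(\tfrac xN)-\beta)^2p(N-x)$ claimed in the statement. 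The only genuinely delicate step is the pointwise estimate of the bracket above and the verification that its constant stays finite under $0<\alpha\le\beta<1$; everything else is bookkeeping, so I expect that explicit two-case computation to be the main point requiring care.
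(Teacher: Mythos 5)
Your proposal is correct and follows essentially the same route as the paper: the paper proves this corollary by applying Lemma \ref{lemmaleft0} with exactly your choices $T(\eta)=\eta^x$ and $c(\eta)=c_x(\eta;r(y))$, recovering $-\tfrac14\mathcal D_{N,b}$ from the first term and bounding the error bracket by a constant times $\big(\rho(\tfrac xN)-r(y)\big)^2$ (the paper leaves these details to the reader, under the same standing restriction $\alpha\neq 0$, $\beta\neq 1$ noted in its appendix). Your two-case computation of the bracket and the use of Lemma \ref{lem:densaf} to control $\int\big(\sqrt{f(\eta^x)}+\sqrt{f(\eta)}\big)^2\,d\nu_{\rho(\cdot)}^N$ are precisely the omitted bookkeeping, carried out correctly.
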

To prove the first corollary take $c \equiv1$, $T(\eta)=\eta^{x,y}$ and note that $$| \theta^{x,y} (\eta) -1 |^2 \leq C (\rho(\tfrac xN) -\rho (\tfrac yN))^2.$$ To prove  the second corollary  we take for each $y\in\{0,N\}$,  $c (\eta) = c_{x}(\eta;r(y))$  and $T(\eta) = \eta^{x}$. 
From the two previous corollaries the claim (\ref{dir_est}) follows easily. We leave the details of the gaps to the reader.

\subsection{Replacement Lemmas}
In this section we prove rigorously all the replacements that were mentioned along the Sections \ref{sec:heuri_ssep} and \ref{sec:heuri_long_jumps}.  We first recall Lemma 5.5  of \cite{BGJO} adapted to our situation (with just one reservoirs at each end point of the bulk).
\begin{lemma}
 \label{bound}
For any density $f(\cdot)$ with respect to $\nu_{\rho(\cdot)}^N$,  any $x\in \Lambda_{N}$, any $y\in\{0,N\}$ and any positive constant $A_x$, there exists a constant $C$ such that
$$\left\vert \left\langle \eta(x)-r(y),f\right\rangle_{\nu_{\rho(\cdot)}^{N}} \right\vert \; \leq\; \dfrac{C}{A_{x}}  I_{x}^{r(y)}(\sqrt{f},\nu_{\rho(\cdot)}^{N})+ CA_{x}+C\Big|\rho(\tfrac{x}{N})- r(y)\Big|.$$
\end{lemma}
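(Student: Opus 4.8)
The plan is to estimate the scalar product $\langle \eta(x)-r(y),f\rangle_{\nu^N_{\rho(\cdot)}}$ by writing $f=(\sqrt f)^2$ and inserting the flipped configuration $\eta^x$. Abbreviate $\nu=\nu^N_{\rho(\cdot)}$, $\rho_x=\rho(\tfrac xN)$, $\psi(\eta)=\eta(x)-r(y)$ and $c_x(\eta)=c_x(\eta;r(y))$ as in \eqref{rate_c}. Using the elementary identity $\sqrt{f(\eta)}=[\sqrt{f(\eta)}-\sqrt{f(\eta^x)}]+\sqrt{f(\eta^x)}$, I would split
\begin{align*}
\langle \psi,f\rangle_{\nu}
=\underbrace{\int \psi(\eta)\sqrt{f(\eta)}\big[\sqrt{f(\eta)}-\sqrt{f(\eta^x)}\big]\,d\nu}_{=:J_1}
+\underbrace{\int \psi(\eta)\sqrt{f(\eta)}\sqrt{f(\eta^x)}\,d\nu}_{=:J_2}.
\end{align*}
The term $J_1$ will produce the Dirichlet form together with the error of order $A_x$, while $J_2$ will be responsible for the boundary error $C|\rho_x-r(y)|$.

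For $J_1$ I would insert the weight $c_x(\eta)^{1/2}$ and its inverse and apply Young's inequality $ab\le \tfrac{A_x}{2}a^2+\tfrac{1}{2A_x}b^2$, to get
\begin{align*}
|J_1|\;\le\; \frac{A_x}{2}\int \frac{\psi(\eta)^2}{c_x(\eta)}f(\eta)\,d\nu
\;+\;\frac{1}{2A_x}\int c_x(\eta)\big(\sqrt{f(\eta^x)}-\sqrt{f(\eta)}\big)^2 d\nu.
\end{align*}
The second summand is exactly $\tfrac{1}{2A_x}I^{r(y)}_x(\sqrt f,\nu)$. For the first, a direct case analysis on $\eta(x)\in\{0,1\}$ using \eqref{rate_c} shows that $\psi(\eta)^2/c_x(\eta)$ equals $2(1-r(y))$ or $2r(y)$, hence is bounded by $2$ (the degenerate cases $r(y)\in\{0,1\}$ force $\psi\equiv0$ at the relevant configurations and cause no trouble); since $f$ is a density this first summand is at most $A_x$.

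The main obstacle is $J_2$, which I would treat by symmetrization. Performing the change of variables $\eta\mapsto\eta^x$ and using that $\nu$ is a Bernoulli product measure (so $\nu(\eta^x)/\nu(\eta)$ equals $(1-\rho_x)/\rho_x$ or $\rho_x/(1-\rho_x)$ according to whether $\eta(x)=1$ or $\eta(x)=0$), one rewrites $J_2$ and averages the two resulting expressions to obtain
\begin{align*}
2J_2=\int \sqrt{f(\eta)}\sqrt{f(\eta^x)}\,B(\eta)\,d\nu,\qquad
B(\eta):=\psi(\eta)+\psi(\eta^x)\frac{\nu(\eta^x)}{\nu(\eta)}.
\end{align*}
A short computation in each of the cases $\eta(x)=1$ and $\eta(x)=0$ yields $B(\eta)=(\rho_x-r(y))/\rho_x$ and $B(\eta)=(\rho_x-r(y))/(1-\rho_x)$ respectively, so that $|B(\eta)|\le C|\rho_x-r(y)|$, where $C$ depends only on $\alpha,\beta$ through $\min(\alpha,1-\beta)$; this is precisely where the restriction $\alpha\neq 0$, $\beta\neq 1$ (imposed for the reference profile measure) is used.

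Finally, by Cauchy--Schwarz together with Lemma \ref{lem:densaf}, I would bound $\int \sqrt{f(\eta)}\sqrt{f(\eta^x)}\,d\nu\le \big(\int f\,d\nu\big)^{1/2}\big(\int f(\eta^x)\,d\nu\big)^{1/2}\le C$, whence $|J_2|\le C|\rho_x-r(y)|$. Collecting the bounds for $J_1$ and $J_2$ then gives
$$\big|\langle \eta(x)-r(y),f\rangle_{\nu}\big|\;\le\;\frac{1}{2A_x}I^{r(y)}_x(\sqrt f,\nu)+A_x+C|\rho_x-r(y)|,$$
which is the assertion after renaming constants. The only genuinely delicate point is the symmetrization producing $B(\eta)$; once the exact cancellation leaving a factor $\rho_x-r(y)$ is identified, the remaining steps are routine applications of Young's and Cauchy--Schwarz inequalities and of the uniform density estimate of Lemma \ref{lem:densaf}.
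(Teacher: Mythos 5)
Your proof is correct. Note that the paper itself does not prove Lemma \ref{bound}: it is recalled from Lemma 5.5 of \cite{BGJO}, so there is no in-paper argument to compare line by line; but your proof is exactly the standard one behind that citation, and it uses precisely the paper's own toolkit (the flip transformation $\eta\mapsto\eta^x$ with rate $c_x(\eta;r(y))$ as in Lemma \ref{lemmaleft0}, Young's inequality, the Radon--Nikodym computation for $\nu_{\rho(\cdot)}^N(\eta^x)/\nu_{\rho(\cdot)}^N(\eta)$, and the density bound of Lemma \ref{lem:densaf}). All the key steps check out: the case analysis giving $\psi^2/c_x\le 2$, the cancellation $\psi(\eta)+\psi(\eta^x)\nu(\eta^x)/\nu(\eta)=(\rho(\tfrac xN)-r(y))/\rho(\tfrac xN)$ or $(\rho(\tfrac xN)-r(y))/(1-\rho(\tfrac xN))$, and the need for $\alpha\neq 0$, $\beta\neq 1$ exactly where the paper flags that restriction. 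The only difference from the usual presentation is where you split: the standard route decomposes the density as $f=\tfrac12\bigl(f-f\circ\mathrm{flip}\bigr)+\tfrac12\bigl(f+f\circ\mathrm{flip}\bigr)$, so the symmetric part is bounded by $C\bigl|\rho(\tfrac xN)-r(y)\bigr|$ using only $\int f\,d\nu_{\rho(\cdot)}^N=1$, while Lemma \ref{lem:densaf} is needed in the antisymmetric part; your split at the level of $\sqrt f$ moves that burden onto the cross term $J_2$, where Cauchy--Schwarz plus Lemma \ref{lem:densaf} does the job. Same ingredients, slightly redistributed, and both yield the stated bound.
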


 The first replacement lemma that we prove is the one that is needed for the model of Section \ref{chap2} when $p(\cdot)$ has finite variance for the case $\theta\geq 1$. 
 \begin{lemma} \label{Rep-Neumann}
For any $t>0$, for $\gamma>2$ and for any   $ \theta \geq 1$ we have that
\begin{equation*}
\begin{split}
&\lim_{\varepsilon \to 0}\lim _{N\rightarrow \infty} E_{\bbbp_{\mu_N}}\left[\Big|\int_0^t\sum_{x\in\Lambda_N}\Theta_x^- \, (\eta_{sN^2}(x)-\overrightarrow{\eta}^{\varepsilon N}_{sN^2}(1))\, ds\Big|\right] =0,\\
&\lim_{\varepsilon \to 0}\lim _{N\rightarrow \infty}  E_{\bbbp_{\mu_N}}\left[\Big|\int_0^t\sum_{x\in\Lambda_N}\Theta_x^+ \, (\eta_{sN^2}(x)-\overleftarrow{\eta}^{\varepsilon N}_{sN^2}(N-1))\, ds\Big|\right] =0.
\end{split}
\end{equation*}
\end{lemma}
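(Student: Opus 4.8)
The plan is to follow the usual scheme for boundary replacement lemmas. By symmetry I treat only the first identity and write $V(\eta)=\sum_{x\in\Lambda_N}\Theta_x^-\big(\eta(x)-\overrightarrow{\eta}^{\varepsilon N}(1)\big)$, with $\overrightarrow{\eta}^{\varepsilon N}(1)=\tfrac1{\varepsilon N}\sum_{z=1}^{1+\varepsilon N}\eta(z)$. The first step is a truncation exploiting $\gamma>2$: since $\Theta_x^-=\sum_{z\ge x}z\,p(z)=c_\gamma\sum_{z\ge x}z^{-\gamma}\sim x^{1-\gamma}$, the tail mass obeys $\sum_{x>1+\varepsilon N}\Theta_x^-\le C(\varepsilon N)^{2-\gamma}\to 0$ as $N\to\infty$ for fixed $\varepsilon$. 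Splitting $V=V_{\mathrm{in}}+V_{\mathrm{out}}$ according to whether $x\le 1+\varepsilon N$ or $x>1+\varepsilon N$, and using $|\eta(x)-\overrightarrow{\eta}^{\varepsilon N}(1)|\le1$, the part $V_{\mathrm{out}}$ is bounded deterministically by $C(\varepsilon N)^{2-\gamma}$, so its time integral vanishes after $N\to\infty$. This is precisely what dictates the order of the limits.

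It then remains to treat $V_{\mathrm{in}}$. Here I would invoke the entropy inequality with the constant reference measure $\nu_\alpha^N$ (for which \eqref{H} gives $H(\mu_N\mid\nu_\alpha^N)\le C_0N$) together with the Feynman--Kac formula, as in \cite{KL,BGJO}, to bound $E_{\bbbp_{\mu_N}}\big[\big|\int_0^t V_{\mathrm{in}}(\eta_{sN^2})\,ds\big|\big]$ by $\tfrac{C_0}{B}+t\sup_f\big\{\langle V_{\mathrm{in}},f\rangle_{\nu_\alpha^N}+\tfrac{N}{B}\langle \mc L_N\sqrt f,\sqrt f\rangle_{\nu_\alpha^N}\big\}$ for every $B>0$, the supremum running over densities $f$ with respect to $\nu_\alpha^N$. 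By the Dirichlet form estimate \eqref{dir_est_const} the last quadratic form is at most $-\tfrac{N}{4B}\mc D_N(\sqrt f,\nu_\alpha^N)+\tfrac{C\kappa}{BN^{\theta}}$, and because $\theta\ge1$ the boundary error $\tfrac{C\kappa}{BN^\theta}$ vanishes as $N\to\infty$. Everything is thus reduced to controlling $\langle V_{\mathrm{in}},f\rangle_{\nu_\alpha^N}$ by $\tfrac{N}{4B}\mc D_N(\sqrt f,\nu_\alpha^N)$ up to an error that is small in $\varepsilon$.

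For this I would write $V_{\mathrm{in}}=\tfrac1{\varepsilon N}\sum_{x,z=1}^{1+\varepsilon N}\Theta_x^-\big(\eta(x)-\eta(z)\big)$ and telescope each difference along nearest-neighbor bonds, $\eta(x)-\eta(z)=\sum_w(\eta(w+1)-\eta(w))$, with $w$ ranging between $x$ and $z$ and hence remaining inside $[1,1+\varepsilon N]$. The crucial point is that nearest-neighbor jumps carry the $O(1)$ rate $p(1)=c_\gamma>0$, so a change of variables followed by Cauchy--Schwarz yields $|\langle\eta(w+1)-\eta(w),f\rangle_{\nu_\alpha^N}|\le 2\sqrt{I_{w,w+1}(\sqrt f,\nu_\alpha^N)}$. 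Collecting the weights, each bond $w$ accumulates a coefficient $a_w\le\sum_x\Theta_x^-\le\sigma^2$, and there are only $O(\varepsilon N)$ such bonds. Young's inequality $2\sqrt{I_{w,w+1}}\le\beta_w^{-1}+\beta_w I_{w,w+1}$, with $\beta_w$ tuned so that the quadratic part is absorbed into $\tfrac{N}{4B}\mc D_N(\sqrt f,\nu_\alpha^N)$, leaves a cost $\tfrac{CB}{N}\sum_w a_w^2\le CB\varepsilon$. Hence the supremum above is at most $CB\varepsilon$, and letting first $N\to\infty$, then $\varepsilon\to0$, then $B\to\infty$ annihilates every error term. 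The right boundary is identical after the reflection $x\mapsto N-x$.

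The main obstacle is the routing of the replacement, and this is where $\gamma>2$ and the diffusive time scale $N^2$ really enter. Transporting the mass directly through the long bond $\{x,z\}$ of rate $p(z-x)$ is hopeless: absorbing that term into the Dirichlet form costs $p(z-x)^{-1}\sim|z-x|^{\gamma+1}$, and the accumulated cost diverges like $N^\gamma$. The two remedies---first truncating the slowly decaying kernel $\Theta_x^-$ at scale $\varepsilon N$ (legitimate only because $\gamma>2$), and then moving mass exclusively along nearest-neighbor bonds of rate $c_\gamma$---are exactly what render the cost $O(B\varepsilon)$ rather than divergent. A minor technical point, handled as in \cite{KL,BGJO}, is that $\nu_\alpha^N$ is not invariant under the full dynamics when $\alpha\neq\beta$; this is harmless because the variational bound only sees the symmetric Dirichlet form, whose estimate \eqref{dir_est_const} is valid for every density $f$.
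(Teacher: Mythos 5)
Your proof is correct, and its engine is the same as the paper's: the entropy bound \eqref{H} with the constant reference measure $\nu^N_{\alpha}$, the Feynman--Kac formula, the Dirichlet form estimate \eqref{dir_est_const}, and a telescoping of occupation differences along nearest-neighbour bonds, whose rate $p(1)>0$ allows the quadratic terms to be absorbed into $\tfrac{N}{4B}\mc D_{N}(\sqrt f,\nu^N_{\alpha})$ by Young's inequality at a cost of order $B\varepsilon$, with the limits taken in the order $N\to\infty$, $\varepsilon\to 0$, $B\to\infty$. Where you genuinely diverge from the paper is in the treatment of the sites far from the averaging box. You remove them deterministically, bounding their contribution by $\sum_{x>1+\varepsilon N}\Theta^-_x$, which vanishes as $N\to\infty$ for fixed $\varepsilon$ because it is the tail of a convergent series (by \eqref{eq:sum236} summability alone suffices; your quantitative rate $(\varepsilon N)^{2-\gamma}$ is more than is needed), so no stochastic estimate is required there at all. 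The paper instead keeps the sites $x\geq \varepsilon N$ inside the variational problem and telescopes from the box $[1,1+\varepsilon N]$ all the way out to $x$; the price of these long paths is an extra error of order $\tfrac{B}{N}\sum_{x}x\,\Theta^-_x$, which vanishes by \eqref{eq:thetaminus}. Both mechanisms exploit $\gamma>2$ through the decay of $\Theta^-_x$, but your decomposition isolates it more cleanly and shortens the argument, while the paper's version is the template that is recycled verbatim for the variants of Remark \ref{sec_rep_robin}.

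One correction: multiplying \eqref{dir_est_const} by $N^{2}$ gives the boundary error $\tfrac{C\kappa}{B}N^{1-\theta}$, as recorded in \eqref{eq:errors_df}, not $\tfrac{C\kappa}{BN^{\theta}}$. Consequently, at $\theta=1$ this term does \emph{not} vanish as $N\to\infty$; it remains equal to $\tfrac{C\kappa}{B}$ and only disappears in the final limit $B\to\infty$. Since you take $B\to\infty$ last, your proof is unaffected, but the sentence claiming that this error dies in the $N\to\infty$ limit should be amended accordingly.
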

\begin{proof}
Below $C$ is a constant than can change from line to line.
Note that since $\theta\geq 1$ we have $\theta(N)=N^2$. We present the proof for the first term, but we note that the proof for the second one is analogous. Here we take as reference measure the Bernoulli product measure with constant parameter (for example $\alpha$) and we recall  \eqref{dir_est_const}, from where we see that  
\begin{equation}\label{eq:errors_df}
\begin{split}
\frac{N}{B}\langle \mc L_{N}\sqrt{f},\sqrt{f} \rangle_{\nu_{\alpha}} &\leq  -\dfrac{N}{4B}\mc D_{N}(\sqrt{f},\nu_{\alpha}^N) + \frac{C\kappa}{B} N^{1-\theta}
\end{split}
\end{equation}
so that the error to change the Dirichlet form vanishes as $N\to\infty$ for $\theta>1$ and for $\theta=1$ it vanishes when $B\to+\infty$.

By the entropy  and Jensen's  inequalities, the first expectation in the statement of the lemma is bounded from above, for any constant $B>0$, by
\begin{equation*}
\begin{split}
\dfrac{H(\mu _{N}\vert \nu^{N}_{\alpha})}{BN}
+ \dfrac{1}{BN}\log {E}_{\bbbp_{\nu^{N}_{\alpha}}}\left[ e^{BN\Big| \int _{0}^{t}\sum _{x \in \Lambda_{N}}\Theta_x^-(\eta_{sN^2}(x)-\overrightarrow{\eta}^{\varepsilon N}_{sN^2}(1))\, ds\Big|}\right]\\
\end{split}.
\end{equation*}
We can remove the absolute value inside the exponential since $e^{\vert x\vert} \leq e^{x}+e^{-x}$ and  \begin{equation}\label{Log bounded}
\limsup_{N\rightarrow\infty} N^{-1}\log(a_{N}+b_{N})\leq \max \left\lbrace \limsup_{N\rightarrow\infty} N^{-1}\log(a_{N}), \limsup_{N\rightarrow\infty} N^{-1}\log(b_{N}) \right\rbrace.
\end{equation}
By \eqref{H}, the Feynman-Kac's formula and  \eqref{dir_est_const}, last expression can be estimated from above by 
\begin{equation} 
\label{repla_term}
\frac {C_0}{ B} +t \sup _{f}\Big\{\sum _{x \in \Lambda_{N}}\Theta_x^-\langle \eta(x)-\overrightarrow{\eta}^{\varepsilon N}(1),f\rangle_{\nu^{N}_\alpha}   -\dfrac{N}{4B}\mc D_{N}(\sqrt{f},\nu_{\alpha}) + \frac{C\kappa}{B} N^{1-\theta}\Big\},
\end{equation}
where the supremum is carried over all the densities $f(\cdot)$ with respect to $\nu_{\alpha}^N$. 

Now we have to split the sum in $x$, depending on whether  $N-1 \ge x \ge \varepsilon N$ or $x\leq \varepsilon N-1$. We start by the first case and we have 
\begin{eqnarray*}
\langle \eta(x)-\overrightarrow{\eta}^{\varepsilon N}(1),f\rangle_{\nu^{N}_\alpha} &=& \dfrac{1}{\varepsilon N}\sum _{y=1}^{1+\varepsilon N} \int (\eta(x)-\eta(y))f(\eta)\; d\nu^{N}_\alpha\\
&=& \dfrac{1}{1+\varepsilon N}\sum _{y=1}^{\varepsilon N}\sum_{z=y}^{x-1}\int (\eta(z+1)-\eta(z))f(\eta)\; d\nu^{N}_\alpha.
\end{eqnarray*}
By writing the previous term as its half plus its half and by performing in one of the terms the change of variables 
$\eta$ into $\eta^{z,z+1}$, for which the measure $\nu^N_\alpha$ is invariant, we write it as 
$$\dfrac{1}{2\varepsilon N}\sum _{y=1}^{1+\varepsilon N}\sum_{z=y}^{x-1}\int (f(\eta)-f(\eta^{z,z+1}))
(\eta(z+1)-\eta(z))\, d{\nu_\alpha^N}.$$
By using the fact that $(a-b)=(\sqrt a -\sqrt b)(\sqrt a+\sqrt b)$ for any $a,b \ge 0$ and since $ab \leq \dfrac{Aa^{2}}{2}+\dfrac{b^{2}}{2A}$ for all $A>0$, we have that 
\begin{equation}
\label{est_repla_term_1}
\begin{split}
&\sum_{x=\varepsilon N}^{N-1} \Theta_x^-\langle \eta(x)-\overrightarrow{\eta}^{\varepsilon N}(1),f\rangle_{\nu^{N}_\alpha}\\
 &\le \cfrac{A}{2} \, \sum _{x=\varepsilon N}^{N-1}\dfrac{ \Theta_x^-}{2\varepsilon N}\sum _{y=1}^{1+\varepsilon N}\sum_{z=y}^{x-1}\int (\sqrt {f(\eta)}-\sqrt{f(\eta^{z,z+1})})^2 d{\nu^{N}_\alpha}\\
&+ \cfrac{1}{2A} \, \sum _{x=\varepsilon N}^{N-1} \dfrac{\Theta_x^-}{2\varepsilon N}\sum _{y=1}^{1+\varepsilon N}\sum_{z=y}^{x-1}\int (\sqrt {f(\eta)}+\sqrt{f(\eta^{z,z+1})})^2 (\eta(z+1)-\eta(z))^2 d{\nu^{N}_\alpha}.
\end{split}
\end{equation}
By neglecting the jumps of size bigger than one, we see that
$$\sum_{z\in\Lambda_N}\int \Big(\sqrt {f(\eta)}-\sqrt{f(\eta^{z,z+1})}\Big)^2\; d{\nu_{\alpha}^N\; }\leq C \; \mc D_{N,0}(\sqrt f,\nu_{\alpha}^N).$$ 
Therefore, by using also (\ref{eq:thetaminus}), the first term at the right hand side of (\ref{est_repla_term_1}) can be bounded from above by 
\begin{eqnarray}
\label{eq:ouf}
\cfrac{A}{4}  \sum _{x =\varepsilon N}^{N-1} \Theta_x^-\; \sum_{z\in\Lambda_N}\int \Big(\sqrt {f(\eta)}-\sqrt{f(\eta^{z,z+1} )}\Big)^2\; \leq C A \mc D_{N,0}(\sqrt f,\nu^{N}_\alpha).
\end{eqnarray}
Recall (\ref{dir_est_const}) and observe that $$\mc D_N (\sqrt f,\nu_\alpha^N) \ge \mc D_{N,0} (\sqrt f,\nu_\alpha^N).$$ Then we choose the constant $A$ in the form $A=C N/B$ for some constant $C$. Moreover, for this choice of $A$,  we can bound from above the last term at the right hand side of \eqref{est_repla_term_1} by (use Lemma \ref{lem:densaf}) 
\begin{equation}
\begin{split}
\frac{B}{N}\; \sum _{x=\varepsilon N}^{N-1} \Theta_x^-\; &\dfrac{1}{2\varepsilon N}\sum _{y=1}^{\varepsilon N}\sum_{z=y}^{x-1}\int (\sqrt {f(\eta)}+\sqrt{f(\eta^{z,z+1})})^2
(\eta(z+1)-\eta(z))^2 d{\nu^{N}_\alpha}\; \\
&\leq C  \frac{B}{N}\sum _{x \in \Lambda_{N}}x\Theta_x^-
\end{split}
\end{equation}
which vanishes as $N \to \infty$ by (\ref{dir_est_const}). Note that the previous result holds for any  $\varepsilon>0.$ Now we analyse the case when $x\le \varepsilon N -1$. In that case, we write
\begin{equation*}
\begin{split}
\langle \eta(x)-\overrightarrow{\eta}^{\varepsilon N}(1),f\rangle_{\nu^{N}_\alpha} &=\dfrac{1}{1+\varepsilon N}\sum _{y=1}^{\varepsilon N} \int (\eta(x)-\eta(y))f(\eta)\; d\nu^{N}_\alpha\\
&= \dfrac{1}{\varepsilon N}\sum _{y=1}^{x-1}\sum_{z=y}^{x-1}\int (\eta(z+1)-\eta(z))f(\eta)\; d\nu^{N}_\alpha \\&-  \dfrac{1}{\varepsilon N}\sum _{y=x+1}^{1+\varepsilon N}\sum_{z=x}^{y-1}\int (\eta(z+1)-\eta(z))f(\eta)\; d\nu^{N}_\alpha .
\end{split}
\end{equation*}
and the same estimates as before give that there exists a constant $C>0$ such that for any $A>0$, 
\begin{equation*}
\begin{split}
\sum _{x=1}^{\varepsilon N-1} \Theta_x^-\langle \eta(x)-\overrightarrow{\eta}^{\varepsilon N}(1),f\rangle_{\nu^{N}_\alpha} &\le C \left[ A D_N (\sqrt f ,\nu_\alpha^N) + \cfrac{\varepsilon N}{A} \sum_{x=1}^{\varepsilon N -1} \Theta_x^-  \right].
\end{split}
\end{equation*}
Recall (\ref{dir_est_const}) and (\ref{eq:thetaminus}). Then, we choose $A=N/ \, 8CB$ and the result follows. \qed
\end{proof}

\begin{remark}\label{sec_rep_robin}
We note that above, if we change in the statement of the lemma $\Theta_x^-$ by $r_N^-$ (resp. $\Theta_x^+$ by $r_N^+$) then the same result holds by performing exactly the same estimates as above, because what we need is that
\begin{equation}
\sum _{x \in \Lambda_{N}}\Theta_x^\pm <+\infty \quad \textrm {and}\quad \frac{1}{N} \sum _{x \in \Lambda_{N}}x\Theta_x^\pm\to_{N\to+\infty}0
\end{equation}
which also holds for $r_N^\pm $ instead of $\Theta_x^\pm$ since $\gamma>2.$
\end{remark}

\begin{remark}\label{rep_for_ssep_robin}
Let us see now what the previous lemma says when $p(1)=p(-1)=\frac 12$. In this case we note that we have the same estimate as in \eqref{eq:errors_df}, see 2.  in Remark \ref{prices_for_DF} and also note  that $\Theta_x^-\neq 0$ for $x=1$ and $\Theta_x^-= 0$ for $x\neq1$. Moreover,  $\Theta_1^-=p(1)=\frac 12$, so that the result above reads as 
\begin{equation*}
\lim_{\varepsilon \to 0}\lim _{N\rightarrow \infty}  E_{\bbbp_{\mu_N}}\left[\Big|\int_0^t (\eta_{sN^2}(1)-\overrightarrow{\eta}^{\varepsilon N}_{sN^2}(1))\, ds\Big|\right] =0.
\end{equation*}
\begin{equation*}
\lim_{\varepsilon \to 0}\lim _{N\rightarrow \infty} E_{\bbbp_{\mu_N}}\left[\Big|\int_0^t (\eta_{sN^2}(N-1)-\overleftarrow{\eta}^{\varepsilon N}_{sN^2}(N-1))\, ds\Big|\right] =0.\\
\end{equation*}
\end{remark}

\subsection{Fixing the profile at the boundary}
\label{fix_profile}
Let $\bbbq$ be a limit point of the sequence $\lbrace\bbbq_{N}\rbrace_{N \geq 1}$ and assume, without lost of generality, that $\lbrace\bbbq_{N}\rbrace_{ N \ge 1}$ converges to $\bbbq$, as $N\to+\infty$.  In this section we prove that for the model of Section \ref{chap2} if $\theta \in[1-\gamma,1)$ (and also for the model of Section \ref{chap1} when $\theta<0$) that the profile satisfies $\rho_t(0)=\alpha$ and $\rho_t(1)=\beta$ for $t\in(0,T]$ a.e. We present the proof for $\rho_t(0)=\alpha$ but the other case is completely analogous. 

Recall \eqref{boxes_ssep}. Observe that 
$$ E_{\bbbp_{\mu_N}}\left[\Big|\int_0^t(\overrightarrow{\eta}^{\varepsilon N}_{sN^2}(1)-\alpha)\, ds\Big|\right] = {E}_{\bbbq_N}  \left[\Big|\int_0^t( \langle \pi_s, \iota^0_\varepsilon \rangle -\alpha)\, ds\Big|\right]$$
where $\iota_\varepsilon^0 (\cdot)  =\varepsilon^{-1} \, {\bf 1}_{(0,\varepsilon)} (\cdot)$. Therefore we have that for any $\delta>0$, 
$${\bbbq_N}  \left[\Big|\int_0^t( \langle \pi_s, \iota^0_\varepsilon \rangle -\alpha)\, ds\Big| >\delta \right] \; \le \; \delta^{-1} \,   E_{\bbbp_{\mu_N}}\left[\Big|\int_0^t(\overrightarrow{\eta}^{\varepsilon N}_{sN^2}(1)-\alpha)\, ds\Big|\right].$$ 
Note that $\iota_{\varepsilon}^0$ is not a continuous function so the set $$\Big\{ \pi \, ;\, \Big|\int_0^t( \langle \pi_s, \iota^0_\varepsilon \rangle -\alpha)\, ds\Big| >\delta \Big\}$$ is not an open set in the Skorohod topology, but, a simple argument as we did in Section \ref{sec:charac_limit_p} allows to overcome the problem. Therefore, by Portemanteau's Theorem  we conclude that
$${\bbbq}  \left[\Big|\int_0^t( \langle \pi_s, \iota^0_\varepsilon \rangle -\alpha)\, ds\Big| >\delta \right] \; \le \; \delta^{-1} \, \liminf_{N \to \infty} \,  E_{\bbbp_{\mu_N}}\left[\Big|\int_0^t(\overrightarrow{\eta}^{\varepsilon N}_{sN^2}(1)-\alpha)\, ds\Big|\right].$$
Now, if we are able to prove that the right hand side of the previous inequality is zero,  since we have that $\bbbq$ a.s. $\pi_s (dq) = \rho_s (q) dq$ with $\rho_s(\cdot)$ a continuous function in $0$ for a.e. $s$, by taking the limit $\varepsilon \to 0$, we can deduce that $\bbbq$ a.s. $\rho_s (0) =\alpha$ for $s$ a.e. The result follows from the next lemma.

\begin{lemma} \label{fix_prof}
For any $t\in[0,T]$ we have that 
\begin{equation*}
\begin{split}
&\lim_{\varepsilon \to 0} \, \lim _{N\rightarrow \infty} E_{\bbbp_{\mu_N}}\left[\Big|\int_0^t(\overrightarrow{\eta}^{\varepsilon N}_{sN^2}(1)-\alpha)\, ds\Big|\right] =0,\\
&\lim_{\varepsilon \to 0}  \, \lim _{N\rightarrow \infty}  E_{\bbbp_{\mu_N}}\left[\Big|\int_0^t(\overleftarrow{\eta}^{\varepsilon N}_{sN^2}(N-1)-\beta)\, ds\Big|\right] =0.
\end{split}
\end{equation*}
\end{lemma}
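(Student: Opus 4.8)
The plan is to run the standard entropy and Feynman--Kac scheme, reducing each expectation to a static variational problem which is then closed using the Dirichlet form estimates of Remark~\ref{prices_for_DF} together with Lemma~\ref{bound}. Fix as reference measure $\nu^N_{\rho(\cdot)}$, where $\rho(\cdot)$ satisfies $\rho(0)=\alpha$, $\rho(1)=\beta$, is Lipschitz inside $(0,1)$ and H\"older of parameter $\tfrac{1+\gamma}{2}$ at the boundary, so that \eqref{dir_est_holder_alt} applies (for the model of Section~\ref{chap1} one instead takes $\rho(\cdot)$ locally constant at the endpoints and uses \eqref{dir_est_lip_ssep}). By the entropy inequality, the bound \eqref{H}, the elementary bound $e^{|x|}\le e^{x}+e^{-x}$ combined with \eqref{Log bounded}, and Feynman--Kac's formula, the first expectation in the statement is bounded, for every $B>0$, by
\begin{equation*}
\frac{C_0}{B}+t\,\sup_f\Big\{ \big\langle \overrightarrow{\eta}^{\varepsilon N}(1)-\alpha,\, f\big\rangle_{\nu^N_{\rho(\cdot)}} +\frac{N}{B}\,\big\langle \mc L_N\sqrt f,\sqrt f\big\rangle_{\nu^N_{\rho(\cdot)}}\Big\},
\end{equation*}
the supremum being over densities $f$ with respect to $\nu^N_{\rho(\cdot)}$; here the time scale $\Theta(N)=N^2$ produces the prefactor $\Theta(N)/(BN)=N/B$ in front of the Dirichlet term. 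Invoking \eqref{dir_est_holder_alt} multiplied by $N^2$, the diffusive and boundary errors become $\tfrac{C}{B}\sigma^2+\tfrac{C\kappa}{B}N^{1-\gamma-\theta}$, which is $O(1/B)$ precisely because $\theta\ge 1-\gamma$, while the term $-\tfrac{N}{4B}\mc D_N(\sqrt f,\nu^N_{\rho(\cdot)})$ stays available to absorb the first contribution.

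Next I would split $\overrightarrow{\eta}^{\varepsilon N}(1)-\alpha=\big(\overrightarrow{\eta}^{\varepsilon N}(1)-\eta(1)\big)+\big(\eta(1)-\alpha\big)$ and treat the two pieces separately. For the second piece, Lemma~\ref{bound} with $x=1$, $y=0$ gives $|\langle \eta(1)-\alpha,f\rangle|\le \tfrac{C}{A_1}I_1^{\alpha}(\sqrt f,\nu^N_{\rho(\cdot)})+CA_1+C|\rho(\tfrac1N)-\alpha|$; since $p(1)=c_\gamma$ is of order one, the boundary part of the Dirichlet form controls $\tfrac{\kappa c_\gamma}{N^\theta}I_1^{\alpha}\le \mc D_N$, so choosing $A_1=\tfrac{4BC}{\kappa c_\gamma}N^{\theta-1}$ lets $\tfrac{C}{A_1}I_1^{\alpha}$ be absorbed by $\tfrac{N}{4B}\mc D_N$, leaving $CA_1\propto N^{\theta-1}\to0$ (as $\theta<1$) and $C|\rho(\tfrac1N)-\alpha|\to0$ by continuity of $\rho$. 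For the first piece I would telescope $\eta(x)-\eta(1)=\sum_{z=1}^{x-1}(\eta(z+1)-\eta(z))$ and reproduce the computation in the proof of Lemma~\ref{Rep-Neumann}: performing the change of variables $\eta\mapsto\eta^{z,z+1}$ (under which $\nu^N_{\rho(\cdot)}$ is invariant up to a factor handled as in \eqref{price}) and applying Young's inequality with parameter $A$ yields, after summation and using Lemma~\ref{lem:densaf},
\begin{equation*}
\frac{1}{\varepsilon N}\sum_{x=1}^{1+\varepsilon N}\big\langle \eta(x)-\eta(1),f\big\rangle_{\nu^N_{\rho(\cdot)}}\le CA\,\mc D_{N,0}(\sqrt f,\nu^N_{\rho(\cdot)})+\frac{C\varepsilon N}{A}.
\end{equation*}
Choosing $A=cN/B$ lets the Dirichlet term be absorbed by $-\tfrac{N}{4B}\mc D_{N,0}\le-\tfrac{N}{4B}\mc D_N$, and leaves the remainder $\tfrac{C\varepsilon N}{A}\le CB\varepsilon$.

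Collecting these estimates, the supremum is bounded by $o_N(1)+CB\varepsilon$, and the whole right-hand side by $\tfrac{C_0+C\sigma^2+C\kappa}{B}+t\,(o_N(1)+CB\varepsilon)$. Taking first $N\to\infty$ kills the $o_N(1)$ terms, then $\varepsilon\to0$ kills $CB\varepsilon$, so that $\lim_{\varepsilon\to0}\lim_{N\to\infty}E_{\bbbp_{\mu_N}}[\,\cdots\,]\le (C_0+C\sigma^2+C\kappa)/B$ for every $B>0$; letting $B\to\infty$ gives the first claim. The second claim, concerning $\overleftarrow{\eta}^{\varepsilon N}(N-1)-\beta$, follows by the symmetric argument using the right reservoir, with $I_{N-1}^{\beta}$ and $p(N-\cdot)$ in place of $I_1^{\alpha}$ and $p(\cdot)$. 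I expect the delicate point to be the bookkeeping of the three free parameters $A_1$, $A$, $B$ and the order of limits: each error must be sorted into one vanishing as $N\to\infty$, one vanishing as $\varepsilon\to0$ at fixed $B$, and one that is $O(1/B)$, and it is exactly the constraint $\theta\in[1-\gamma,1)$ (respectively $\theta<0$ for the model of Section~\ref{chap1}) that keeps the boundary Dirichlet-form error $O(1/B)$ through $N^{1-\gamma-\theta}\le 1$ while $CA_1\propto N^{\theta-1}$ vanishes.
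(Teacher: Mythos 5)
Your proposal is correct and is essentially the paper's own argument: the paper proves this lemma by exactly your decomposition through $\eta_{sN^2}(1)$, organized as a two-step procedure (Lemma \ref{Rep-Dirichlet2} replaces $\eta_{sN^2}(1)$ by $\alpha$, Lemma \ref{repla_novo} replaces $\overrightarrow{\eta}^{\varepsilon N}_{sN^2}(1)$ by $\eta_{sN^2}(1)$), each via the entropy inequality, Feynman--Kac, Lemma \ref{bound}, the telescoping/Young estimate and the Dirichlet-form bounds of Remark \ref{prices_for_DF}, with the same parameter choices $A_1\propto BN^{\theta-1}$ and $A\propto N/B$; merging the two replacements into a single variational problem and using the $\tfrac{1+\gamma}{2}$-H\"older profile of \eqref{dir_est_holder_alt} uniformly over $\theta\in[1-\gamma,1)$ is only a cosmetic streamlining. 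The one imprecision is your appeal to \eqref{price}, which holds only for the homogeneous Bernoulli measure: under $\nu^N_{\rho(\cdot)}$ the exchange $\eta\mapsto\eta^{z,z+1}$ is not measure-preserving, so the symmetric part $\int(\eta(z+1)-\eta(z))\bigl(f(\eta)+f(\eta^{z,z+1})\bigr)\,d\nu^N_{\rho(\cdot)}$ does not vanish but must be bounded, as in the third term of \eqref{eq:term1}, by the Lipschitz continuity of $\rho(\cdot)$, yielding an extra $O(\varepsilon)$ that sits comfortably inside your $CB\varepsilon$ budget, so this is not a gap.
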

To prove last lemma we use a two step procedure. First we replace, when integrated in time, $\eta_{sN^2}(1)$ by $\alpha$ and then we replace $\eta_{sN^2}(1)$ by $\overrightarrow{\eta}_{sN^2}^{\varepsilon N}(1)$. This is the content of the next two lemmas.
\begin{lemma} \label{Rep-Dirichlet2}
For $\gamma>1$, for $1-\gamma\leq \theta<1$ and for $t\in[0,T]$ we have that 
\begin{equation*}
\begin{split}
&\lim _{N\rightarrow \infty}  E_{\bbbp_{\mu_N}}\left[\Big|\int_0^t(\eta_{sN^2}(1)-\alpha)\, ds\Big|\right] =0,\\
& \lim _{N\rightarrow \infty} E_{\bbbp_{\mu_N}}\left[\Big|\int_0^t(\eta_{sN^2}(N-1)-\beta)\, ds\Big|\right] =0.
\end{split}
\end{equation*}
\end{lemma}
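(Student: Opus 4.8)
The plan is to prove the first limit (the second being identical after exchanging the roles of the two boundaries and of $\alpha,\beta$) by the standard entropy--Feynman--Kac scheme, which reduces the space--time average to a static variational problem controlled by a Dirichlet form. Fix $B>0$ and take as reference measure the Bernoulli product measure $\nu^N_{\rho(\cdot)}$ associated to a profile $\rho(\cdot)$ with $\rho(0)=\alpha$, $\rho(1)=\beta$, which I choose to be Lipschitz in the interior and H\"older of parameter $\tfrac{1+\gamma}{2}$ at the two boundaries (assuming $\alpha\neq0$, $\beta\neq1$ so that \eqref{H} applies). Recalling that $\Theta(N)=N^2$ in the whole regime $\theta\ge1-\gamma$ by \eqref{time_scales}, the entropy inequality, Jensen's inequality and the bound \eqref{H}, together with $e^{|x|}\le e^x+e^{-x}$ and \eqref{Log bounded}, bound the expectation in the statement by $C_0/B$ plus
\[
 t\,\sup_f\Big\{\langle \eta(1)-\alpha,f\rangle_{\nu^N_{\rho(\cdot)}}+\tfrac{N}{B}\langle \mc L_N\sqrt f,\sqrt f\rangle_{\nu^N_{\rho(\cdot)}}\Big\},
\]
where the Feynman--Kac formula has been applied and the supremum runs over densities $f$ with respect to $\nu^N_{\rho(\cdot)}$.

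Next I would control the source term $\langle\eta(1)-\alpha,f\rangle$. Applying Lemma \ref{bound} with $x=1$, $y=0$ (so $r(0)=\alpha$) and a free parameter $A_1>0$ gives
\[
\big|\langle\eta(1)-\alpha,f\rangle_{\nu^N_{\rho(\cdot)}}\big|\le \tfrac{C}{A_1}I_1^{\alpha}(\sqrt f,\nu^N_{\rho(\cdot)})+CA_1+C\big|\rho(\tfrac1N)-\alpha\big|.
\]
Since the single summand $\tfrac{\kappa c_\gamma}{N^\theta}I_1^{\alpha}$ is one of the nonnegative terms constituting $\mc D_{N,b}$, we have $I_1^{\alpha}\le \tfrac{N^\theta}{\kappa c_\gamma}\,\mc D_N(\sqrt f,\nu^N_{\rho(\cdot)})$. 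Choosing $A_1$ of order $B\,N^{\theta-1}$ then forces $\tfrac{C}{A_1}I_1^{\alpha}\le \tfrac{N}{8B}\mc D_N$, so that this contribution is absorbed by half of the negative Dirichlet form that arises from \eqref{dir_est} in its packaged form \eqref{dir_est_holder_alt}. With this choice $CA_1\asymp B\,N^{\theta-1}\to0$ as $N\to\infty$ \emph{precisely because $\theta<1$}, while $|\rho(\tfrac1N)-\alpha|\le C N^{-(1+\gamma)/2}\to0$ by the H\"older continuity of $\rho$ at $0$.

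Feeding \eqref{dir_est_holder_alt} into the variational expression, the leftover Dirichlet form $-\tfrac{N}{8B}\mc D_N$ is negative and may be discarded, leaving only the deterministic errors: a bulk error of order $\sigma^2/B$ and a boundary error of order $\kappa N^{-(\gamma+\theta-1)}/B$ (after multiplying the $\tfrac1{BN}$-normalized estimate by $N^2$). Here the hypothesis $\theta\ge1-\gamma$ is exactly what keeps the boundary error bounded: it vanishes when $\theta>1-\gamma$ and is merely $O(1/B)$ at the borderline $\theta=1-\gamma$, which is also why one must use \eqref{dir_est_holder_alt} (H\"older $\tfrac{1+\gamma}{2}$) rather than \eqref{dir_est_holder}, whose extra $\log N$ would diverge at $\theta=1-\gamma$. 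Collecting everything yields
\[
\limsup_{N\to\infty}E_{\bbbp_{\mu_N}}\Big[\Big|\int_0^t(\eta_{sN^2}(1)-\alpha)\,ds\Big|\Big]\le \frac{C_0}{B}+t\,\frac{C(\sigma^2+\kappa)}{B},
\]
and since the left-hand side does not depend on $B$, letting $B\to\infty$ gives $0$.

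The main obstacle is the simultaneous tuning of the reference profile and the parameter $A_1$: one must make the local H\"older regularity of $\rho$ at the boundary strong enough that the boundary Dirichlet error is $O(N^{-(\gamma+\theta-1)})$, keep $\rho$ Lipschitz inside so the bulk error stays $O(\sigma^2/N^2)$, and at the same time pick $A_1\asymp B N^{\theta-1}$ so that the source term $I_1^{\alpha}$ is swallowed by the Dirichlet form without spoiling the bound $CA_1\to0$. The argument is delicate in its reliance on the order of limits (first $N\to\infty$, then $B\to\infty$) and on the borderline case $\theta=1-\gamma$ being rescued by the $1/B$ factor rather than by any decay in $N$; the precise Dirichlet-form estimate from Remark \ref{prices_for_DF} must be selected according to the moment assumptions on $p(\cdot)$ so that both error terms remain under control throughout the relevant range of $\theta$.
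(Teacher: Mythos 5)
Your proposal is correct and follows essentially the same route as the paper's proof: the entropy/Feynman--Kac reduction, Lemma \ref{bound} applied at $x=1$ with $A_1\asymp BN^{\theta-1}$ so that $I_1^{\alpha}$ is absorbed by the negative Dirichlet form (using $p(1)=c_\gamma\neq 0$), the Dirichlet-form estimates of Remark \ref{prices_for_DF}, and the order of limits $N\to\infty$ followed by $B\to\infty$. The only deviation is organizational: the paper splits into the case $\theta>1-\gamma$, treated with the $\tfrac{\gamma}{2}$-H\"older profile and \eqref{dir_est_holder} (where the $\log N$ factor is harmless), and the borderline case $\theta=1-\gamma$, treated with the $\tfrac{1+\gamma}{2}$-H\"older profile and \eqref{dir_est_holder_alt}, whereas you use the stronger profile and \eqref{dir_est_holder_alt} uniformly over the whole range $1-\gamma\leq\theta<1$, which works equally well and is a mild streamlining rather than a different method.
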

\begin{proof} 
We give the proof for the first display, but  we note that for the other one it is similar. Fix a Lipschitz profile $\rho(\cdot)$ such that $\alpha=\rho(0) \le \rho(\cdot) \le \rho(1)=\beta$  and $\rho(\cdot)$ is $\frac \gamma 2$-H\"older at the boundary. From \eqref{dir_est_holder} that  we know that
\begin{equation}\label{eq:repl_imp}
\begin{split}
\frac{N}{B}\langle \mc L_{N}\sqrt{f},\sqrt{f} \rangle_{\nu_{\rho(\cdot)}^N} \leq& -\dfrac{N}{4B}\mc D_{N}(\sqrt{f},\nu_{\rho(\cdot)}^N) + \frac{C}{B} \sigma^2 + \frac{C\kappa\log(N)}{BN^{\gamma+\theta+1}}.
\end{split}
\end{equation}

 By the entropy inequality, for any $B>0$, the previous expectation is bounded from above by
\begin{equation*} \label{use1}
\begin{split}
\dfrac{H(\mu _{N}\vert {\nu_{\rho(\cdot)}^{N}}) }{BN}
+ \dfrac{1}{BN}\log {E}_{\bbbp_{\nu_{\rho(\cdot)}^{N}}}\Big[ e^{BN\Big| \int _{0}^{t}(\eta_{sN^2}(1)-\alpha)\, ds\Big|}\Big].
\end{split}
\end{equation*}
By \eqref{H}, Jensen's inequality and  the Feynman-Kac's formula and noting, as we did in the last proof, that we can remove the absolute value inside the exponential, last display can be estimated from above by 
\begin{equation} 
\label{F-K 333}
\frac {C_0} {B}+ t \sup _{f}\left\{\left\langle \eta(1)-{\alpha}, f\right\rangle_{{\nu_{\rho(\cdot)}^{N}}}   -\dfrac{N}{4B}\mc D_{N}(\sqrt{f},\nu_{\rho(\cdot)}^N) + \frac{C}{B} \sigma^2 + \frac{C\kappa}{BN^{\gamma+\theta-1}}\right\},
\end{equation}
where the supremum is carried over all the densities $f(\cdot)$ with respect to $\nu_{\rho(\cdot)}^N$. By Lemma \ref{bound}, since $\rho(\cdot)$ is $\frac \gamma 2$-H\"older at the boundaries, for any $A>0$, the first term in the supremum in  (\ref{F-K 333}) is bounded from above by 
\begin{equation*}
C \left[ \dfrac{1}{A}  \; I_{1}^{\alpha}(\sqrt{f},\nu_{\rho(\cdot)}^{N})+ {A}+\frac{1}{N^{\gamma/2}} \right]
\end{equation*}
for some constant $C>0$ independent of $f(\cdot)$ and $A$. Moreover from \eqref{dir_est_holder}, since 
$$\mc D_N (\sqrt{f}, \nu_{\rho(\cdot)}^N) \ge \mc D_{N,b} (\sqrt{f}, \nu_{\rho(\cdot)}^N)$$
and $\gamma+\theta -1 > 0$, by  
choosing $A =4C  (p(1))^{-1}B N^{\theta -1}$, we get then that the expression inside the brackets in (\ref{F-K 333}) is bounded from above by 
\begin{eqnarray*}
4 C^2  \dfrac{BN^{\theta-1}}{p(1)}+\cfrac{C}{N^{\gamma/2}}+\dfrac{C}{B}.
\end{eqnarray*}
Now if $p(1)\neq 0$,  then the proof follows by sending first $N\to \infty$  and then $B\to \infty$. 
For $\gamma+\theta-1=0$ the same proof as above holds, the only difference is that we use a Lipschitz profile $\rho(\cdot)$ such that $\alpha=\rho(0) \le \rho(\cdot) \le \rho(1)=\beta$  and $\rho(\cdot)$ is $\tfrac{\gamma+1}{2}$-H\"older at the boundaries. From \eqref{dir_est_holder_alt} that  we know that
\begin{equation}\label{eq:repl_imp_2}
\begin{split}
\frac{N}{B}\langle \mc L_{N}\sqrt{f},\sqrt{f} \rangle_{\nu_{\rho(\cdot)}^N} \leq& -\dfrac{N}{4B}\mc D_{N}(\sqrt{f},\nu_{\rho(\cdot)}^N) + \frac{C}{B} \sigma^2 + \frac{C\kappa}{B},
\end{split}
\end{equation}
and with last bound and the previous argument the proof ends.
\end{proof}

\begin{remark}\label{RL_theta_Dir}
The previous lemma tells us that for the model of Section \ref{chap1}  and 
for $\theta<1$ and $t\in[0,T]$ we have that 
\begin{equation*}
\begin{split}
&\lim _{N\rightarrow \infty}  E_{\bbbp_{\mu_N}}\left[\Big|\int_0^t(\eta_{sN^2}(1)-\alpha)\, ds\Big|\right] =0,\\
& \lim _{N\rightarrow \infty} E_{\bbbp_{\mu_N}}\left[\Big|\int_0^t(\eta_{sN^2}(N-1)-\beta)\, ds\Big|\right] =0.
\end{split}
\end{equation*}
Note that the previous proof follows since we have the bound \eqref{dir_est_lip_ssep} and in this model  $p(1)=\frac 12$.
\end{remark}

\begin{remark}
We note that for the case where $p(1)=0$ above what we have to do is to use the two step procedure with a point  $z$ such that $p(z)\neq 0$, from where we get that: 
\begin{equation*}
\begin{split}
&\lim _{N\rightarrow \infty}  E_{\bbbp_{\mu_N}}\left[\Big|\int_0^t(\eta_{sN^2}(z)-\alpha)\, ds\Big|\right] =0
\end{split}
\end{equation*}
and the same result holds by changing $\alpha$ to $\beta$.
\end{remark}
 Now we prove the second part of the two step procedure. 
\begin{lemma}
\label{repla_novo}
For $1-\gamma\leq \theta<1$ and  $t>0$ we have that 
\begin{equation}
\begin{split}
&\lim_{\varepsilon \to 0} \, \lim _{N\rightarrow \infty}   E_{\bbbp_{\mu_N}}\Big[\Big|\int_0^t\overrightarrow{\eta}^{\varepsilon N}_{sN^2}(1)-\eta_{sN^2}(1)\, ds\Big|\Big] =0,\\
&\lim_{\varepsilon \to 0} \, \lim _{N\rightarrow \infty}   E_{\bbbp_{\mu_N}}\Big[\Big|\int_0^t\overleftarrow{\eta}^{\varepsilon N}_{sN^2}(N-1)-\eta_{sN^2}(N-1)\, ds\Big|\Big] =0.
\end{split}
\end{equation}
\end{lemma}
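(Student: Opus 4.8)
The plan is to run the same entropy/Feynman-Kac scheme as in the proofs of Lemma \ref{Rep-Neumann} and Lemma \ref{Rep-Dirichlet2}, the only change being that the functional to be controlled is now the \emph{bulk} quantity $\overrightarrow{\eta}^{\varepsilon N}(1)-\eta(1)$ rather than a boundary occupation variable. I present the argument for the first display; the second is identical after reflecting the lattice around its centre. Since $1-\gamma\le\theta<1$ we have $\Theta(N)=N^2$, and I would take as reference measure the inhomogeneous Bernoulli product measure $\nu_{\rho(\cdot)}^N$ built from a Lipschitz profile $\rho(\cdot)$ with $\rho(0)=\alpha$, $\rho(1)=\beta$, which is $\tfrac\gamma2$-H\"older at the boundary if $\theta\in(1-\gamma,1)$ and $\tfrac{1+\gamma}2$-H\"older at the boundary if $\theta=1-\gamma$. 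This is exactly the choice for which \eqref{dir_est_holder} (resp.\ \eqref{dir_est_holder_alt}) is available, so the Dirichlet-form error generated by $\mc L_N$ is $\tfrac{C}{B}\sigma^2+\tfrac{C\kappa\log N}{BN^{\gamma+\theta-1}}$ (resp.\ $\tfrac{C}{B}\sigma^2+\tfrac{C\kappa}{BN^{\gamma+\theta-1}}$), and in both cases this vanishes on sending first $N\to\infty$ and then $B\to\infty$, because $\gamma+\theta-1\ge0$ on the whole range considered.

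By the entropy inequality, \eqref{H}, Jensen's inequality and the Feynman-Kac formula --- removing the absolute value inside the exponential exactly as in Lemma \ref{Rep-Neumann} via $e^{\vert x\vert}\le e^x+e^{-x}$ and \eqref{Log bounded} --- I would bound the expectation in the statement, for every $B>0$, by
\begin{equation*}
\frac{C_0}{B}+t\,\sup_f\Big\{\big\langle \overrightarrow{\eta}^{\varepsilon N}(1)-\eta(1),\,f\big\rangle_{\nu_{\rho(\cdot)}^N}-\frac{N}{4B}\,\mc D_N(\sqrt f,\nu_{\rho(\cdot)}^N)+\mc E_N\Big\},
\end{equation*}
where $\mc E_N$ collects the Dirichlet-form errors above and the supremum is over densities $f$ with respect to $\nu_{\rho(\cdot)}^N$. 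It then remains to control the first term in the braces.

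To do so I would write $\overrightarrow{\eta}^{\varepsilon N}(1)-\eta(1)=\tfrac1{\varepsilon N}\sum_{y=1}^{1+\varepsilon N}(\eta(y)-\eta(1))$, telescope $\eta(y)-\eta(1)=\sum_{z=1}^{y-1}(\eta(z+1)-\eta(z))$, and for each nearest-neighbour bond perform the change of variables $\eta\mapsto\eta^{z,z+1}$. As $\nu_{\rho(\cdot)}^N$ is only almost invariant under such an exchange, this produces a Radon-Nikodym factor $\nu_{\rho(\cdot)}^N(\eta^{z,z+1})/\nu_{\rho(\cdot)}^N(\eta)=1+O(1/N)$, the $O(1/N)$ coming from the Lipschitz regularity of $\rho$. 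The main part gives a discrete-gradient term to which I would apply $(a-b)=(\sqrt a-\sqrt b)(\sqrt a+\sqrt b)$ and Young's inequality with a parameter $A$; since each crossed bond $\{z,z+1\}$ sits inside $\mc D_{N,0}$ with weight $p(1)>0$ and at most $\varepsilon N$ of them are summed against the $1/(\varepsilon N)$ normalisation, the resulting gradient contribution is at most $\tfrac{A}{4p(1)}\mc D_{N,0}(\sqrt f,\nu_{\rho(\cdot)}^N)$. Choosing $A=p(1)N/B$ absorbs this into $-\tfrac{N}{4B}\mc D_N$ (using $\mc D_N\ge\mc D_{N,0}$), while the complementary Young term, controlled by Lemma \ref{lem:densaf} and $(\eta(z+1)-\eta(z))^2\le1$, is of order $\tfrac{C\varepsilon N}{A}=O(\varepsilon B)$; the Radon-Nikodym defect contributes, again via Lemma \ref{lem:densaf}, a term of order $\tfrac{1}{\varepsilon N}\cdot\tfrac CN\cdot(\varepsilon N)^2=O(\varepsilon)$. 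Altogether the bound becomes $\tfrac{C_0}{B}+t\,O(\varepsilon B)+t\,O(\varepsilon)+t\,\mc E_N$, and the statement follows by letting $N\to\infty$ (which kills $\mc E_N$), then $\varepsilon\to0$ (which kills the $O(\varepsilon B)$ and $O(\varepsilon)$ remainders), and finally $B\to\infty$.

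The step I expect to be the main obstacle is precisely the combinatorial bookkeeping in the telescoping: one must verify at once that the number $\sim\varepsilon N$ of bonds crossed is exactly compensated by the $1/(\varepsilon N)$ weight of the box average, so that the gradient term is a true multiple of $\mc D_{N,0}$ after the scaling $A=p(1)N/B$, and that the non-invariance of the profile measure produces only the $O(\varepsilon)$ defect and nothing of order one. Both points hinge on $p(1)>0$ and on the boundary H\"older regularity that makes \eqref{dir_est_holder}--\eqref{dir_est_holder_alt} valid throughout $1-\gamma\le\theta<1$; outside this range (e.g.\ the infinite-variance regime $\theta<1-\gamma$) the present reference measure no longer tames the boundary error and a different argument is required.
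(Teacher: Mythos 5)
Your proposal is correct and follows essentially the same route as the paper's proof: the same inhomogeneous Bernoulli reference measure with $\tfrac{\gamma}{2}$- (resp.\ $\tfrac{\gamma+1}{2}$-) H\"older boundary regularity, the entropy/Feynman--Kac reduction, the telescoping of $\eta(y)-\eta(1)$ into nearest-neighbour gradients, Young's inequality with $A\sim N/B$ to absorb the gradient term into $-\tfrac{N}{4B}\mc D_N$, and an $O(\varepsilon)$ remainder from the non-invariance of $\nu_{\rho(\cdot)}^N$ (which the paper organises as a symmetric/antisymmetric splitting plus a Lemma \ref{bound}-type computation, exactly equivalent to your Radon--Nikodym bookkeeping). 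The only cosmetic slip is the claim that $N\to\infty$ "kills $\mc E_N$": the $\tfrac{C}{B}\sigma^2$ part of $\mc E_N$ survives and is only removed in the final limit $B\to\infty$, as your own earlier sentence correctly states.
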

\begin{proof}
We present the proof of the first item, but we note that for the second it is exactly the same. When $\gamma+\theta-1>0$, we fix a Lipcshitz profile $\rho(\cdot)$ such that $\alpha=\rho(0) \le \rho(\cdot) \le \rho(1)=\beta$, and $\rho(\cdot)$ is $\frac \gamma 2$-H\"older at the boundaries, when $\gamma+\theta-1=0$, the Holder regularity at the boundary is $\frac{\gamma+1}{2}$.  Since we imposed the same conditions as in the previous lemma in the profile $\rho(\cdot)$ then in this case  \eqref{eq:repl_imp} and \eqref{eq:repl_imp_2} holds. From now on we suppose that $\gamma+\theta-1>0$, the other case is completely analogous. By the entropy and Jensen's inequalities, for any $B>0$,  the previous expectation is bounded from above by
\begin{equation*}
\dfrac{H(\mu _{N}\vert \nu_{\rho(\cdot)}^{N}) }{BN}
+ \dfrac{1}{BN}\log {E}_{\bbbp_{\nu_{\rho(\cdot)}^{N}}}\Big[ e^{BN\Big|\int_0^t\overrightarrow{\eta}^{\varepsilon N}_{sN^2}(1)-\eta_{sN^2}(1)\, ds\Big|}\Big].
\end{equation*}
By \eqref{H},  the Feynman-Kac's formula, and using the same argument as in the proof of the previous  lemma, the estimate of the previous expression can be reduced to bound
\begin{equation}\label{E1} 
\begin{split}\frac {C_0} {B}+ t \sup _{f}\Big\{\frac{1}{\ell}\sum _{y=2}^{\ell+1} \vert \langle \eta(y)-\eta(1),f\rangle_{\nu_{\rho(\cdot)}^{N}}\vert -\dfrac{N}{4B}\mc D_{N}(\sqrt{f},\nu_{\rho(\cdot)}^N) + \frac{C}{B} \sigma^2 + \frac{C\kappa\log(N)}{BN^{\gamma+\theta-1}}\Big\},\\ 
\end{split}
\end{equation}
where $\ell=\varepsilon N $. As above, the supremum is carried over all the densities $f(\cdot)$ with respect to $\nu_{\rho(\cdot)}^N$. 
Note that since $y \in \Lambda_{N}$ we know that $$\eta(y)-\eta(1) = \sum _{z=1}^{y-1}(\eta(z+1)-\eta(z)).$$
Observe now that 
\begin{equation*}
\begin{split} 
\int(\eta(z+1)-\eta(z)) f(\eta)d\nu_{\rho(\cdot)}^N =& \dfrac{1}{2}\int(\eta(z+1)-\eta(z))(f(\eta)-f( \eta^{z, z+1}))d\nu_{\rho(\cdot)}^{N}\\
+&\dfrac{1}{2}\int(\eta(z+1)-\eta(z))(f(\eta)+f(\eta^{z, z+1}))d\nu_{\rho(\cdot)}^{N}.
\end{split}\end{equation*}
By using the fact that for any $a,b \ge 0$, $(a-b)=(\sqrt a -\sqrt b)(\sqrt a +\sqrt b)$ and Young's inequality, we have, for any positive constant $A$, that 
\begin{equation}\label{eq:term1}
\begin{split}  
\frac{1}{\ell}\sum _{y=2}^{\ell+1} \vert \langle& \eta(y)-\eta(1),f\rangle_{\nu_{\rho(\cdot)}^{N}}\vert  \; \\
& \le \;   \dfrac{1}{2A\ell} \sum _{y=2}^{\ell+1} \sum _{z=1}^{y-1}\int (\eta(z+1)-\eta(z))^2\; \Big(\sqrt{f(\eta)}+\sqrt{f( \eta^{z, z+1})}\Big)^2d\nu_{\rho(\cdot)}^{N}\\
&+\dfrac{A}{2\ell}\sum _{y=2}^{\ell+1}  \sum _{z=1}^{y-1}\int \Big(\sqrt{f(\eta)}-\sqrt{f( \eta^{z, z+1})}\Big)^2 d\nu_{\rho(\cdot)}^{N}\\
&+\dfrac{1}{2\ell} \sum _{y=2}^{\ell+1} \left| \sum _{z=1}^{y-1}\int \big( \eta(z+1)-\eta(z) \big) \; \Big(f(\eta)+f(\eta^{z, z+1})\Big) d\nu_{\rho(\cdot)}^{N} \right|.
\end{split}
\end{equation}
Now, we neglect jumps of size bigger than one as we did below \eqref{est_repla_term_1}, from where we get that the second term on the right hand side of (\ref{eq:term1}) is bounded from above by $  C A \mc D_N (\sqrt f,\nu_{\rho(\cdot)}^N) $
where $C$ is a positive constant independent of $A,\ell,f$. Then, for the choice $A=N(4BC)^{-1}$ and  since $\gamma +\theta -1 \ge 0$, we can bound from above \eqref{E1} by 
\begin{equation}
\label{eq:E1234567890}
\begin{split}
&\dfrac{2BC}{N \ell} \sum _{y=2}^{\ell+1} \sum _{z=1}^{y-1}\int (\eta(z+1)-\eta(z))^2\; \Big(\sqrt{f(\eta)}+\sqrt{f(\eta^{z, z+1} )}\Big)^2d\nu_{\rho(\cdot)}^{N}\\
+&\dfrac{1}{2\ell} \sum _{y=2}^{\ell+1} \left| \sum _{z=1}^{y-1}\int \big( \eta(z+1)-\eta(z) \big) \; \Big(f(\eta)+f(\eta^{z, z+1} )\Big) d\nu_{\rho(\cdot)}^{N} \right| + \frac{C' }{B}\\
\leq& C\Big( \frac{B\ell}{N} + \frac{1}{B} +\dfrac{1}{2\ell} \sum _{y=2}^{\ell+1} \Big| \sum _{z=1}^{y-1}\int \big( \eta(z+1)-\eta(z) \big) \; \Big(f(\eta)+f(\eta^{z, z+1} )\Big) d\nu_{\rho(\cdot)}^{N} \Big|\Big)
\end{split}
\end{equation}
for some constant $C$. For the last inequality we used Lemma \ref{lem:densaf}. Observe that $B\ell/N = B\varepsilon$ vanishes as $\varepsilon \to 0$. It remains to estimate the third term on the right hand side of the last inequality. For that purpose we make a similar computation to the one of Lemma \ref{bound} from where we get that
\begin{equation*}
\begin{split}
&\sum_{z=1}^{y-1}
\left\vert \int(\eta(z+1)-\eta(z))(f(\eta)+f(\eta^{z, z+1} )) d\nu_{\rho(\cdot)}^{N}\right\vert \leq C \; \sum_{z=1}^{y-1}\Big|\rho\Big(\tfrac{z+1}{N}\Big)-\rho\Big(\tfrac{z}{N}\Big)\Big)\Big|.
\end{split}
\end{equation*}
Since $\rho(\cdot)$ is Lipschitz, by (\ref{eq:E1234567890}), this estimate provides an upper bound for  \eqref{E1} which is in the form of a constant times 
$$  \frac{B\ell}{N} + \frac{1}{B} +\frac{1}{N\ell}\sum _{y=2}^{\ell+1} y \; \leq  B\varepsilon + B^{-1} + \varepsilon$$
 which vanishes, as $\varepsilon \to 0$ and then $B \to \infty$. This ends the proof. \qed
 \end{proof}

\paragraph{Acknowledgements}
This project has received funding from the European Research Council (ERC) under  the European Union's Horizon 2020 research and innovative programme (grant agreement   No 715734). \\

The author would like to express her gratitude to the organizers of the trimester "Stochastic dynamics out of equilibrium", namely Ellen Saada, Gabriel Stoltz,  Giambattista Giacomin,  Herbert Spohn, Stefano Olla, for the invitation to taught the mini-course and for the financial support to attend  the trimester. \\

Finally the author would like to thank C\'edric Bernardin, Byron Jim\'enez Oviedo and Adriana Neumann for the discussions around the problems described in these notes. 
%
%


\begin{thebibliography}{6}
%


\bibitem{Adriana} 
Neumann, A., Menezes, M. Baldasso, R. and Souza, R.:
Exclusion process with slow boundary.
 Journal of Statistical Physics, 167, no. 5, 1112--1142 (2017).

\bibitem{BGJO} 
Bernardin, C.,  Gon\c{c}alves, P. and  Jim\'enez Oviedo, B.: Slow to fast infinitely extended reservoirs for the symmetric exclusion process with long jumps.
 Available online at arxiv.org and submitted (2017).

\bibitem{BGJO2} 
Bernardin, C.,  Gon\c{c}alves, P. and Jim\'enez Oviedo, B.: A microscopic model for a one parameter class of fractional laplacians with Dirichlet boundary conditions. In preparation. 

\bibitem{BJ}
Bernardin, C.  and Jim\'enez Oviedo, B.:  Fractional Fick's Law for the Boundary Driven Exclusion Process with Long Jumps.
 ALEA, 14, 473--501 (2017).




\bibitem {de_Paula}
De Paula, R.: Porous
Medium Model in contact with Reservoirs. PUC-Rio Master Thesis in Mathematics  (2017). 

\bibitem{Derrida1} 
Derrida, B.: Non-equilibrium steady states: fluctuations and large
deviations of the density and of the current. Journal of Statistical Mechanics
Theory and Experiment (2007).

\bibitem{Derrida2} 
Derrida, B.,  Evans, M. R.,  Hakim, V.  and Pasquier, V.: Exact
solution of a $1$-d asymmetric exclusion model using a matrix formulation.
Journal of Physics: A Mathematical and General Physics (1993).

\bibitem{flm}
Farfan, J., Landim, C. and  Mourragui, M.: Hydrostatics and dynamical large deviations of boundary driven gradient symmetric exclusion processes. Stochastic Process. Appl., 121, 725--758 (2011). 

\bibitem{FGN_Robin} 
Franco, T., Gon\c{c}alves, P. and Neumann, A.: Non-equilibrium and stationary fluctuations for a slowed boundary symmetric exclusion process. Online at arxiv.org and submitted.

\bibitem{FGN} 
 Franco, T., Gon\c{c}alves, P. and Neumann, A.: Hydrodynamical behavior of symmetric exclusion with slow bonds. Annales de l'Institut Henri Poincar\'e: Probability and Statistics, 49, no. 2, 402--427 (2013).

\bibitem {FGS} 
Franco, T.,   Gon\c calves, P. and   Schutz, G.: Scaling limits for the exclusion process with a slow site. Stochastic Processes and their applications, 126, no. 3, 800--831 (2016).


\bibitem{Jara1} 
Jara, M.: Hydrodynamic limit of particle systems with long jumps. arXiv:0805.1326 (2008).

\bibitem{Jara2} Jara, M.: Nonequilibrium scaling limit for a tagged particle in the simple exclusion process with long jumps. {Comm. Pure Appl. Math.}, 62, no. 2,198--214 (2009).

\bibitem{JO-V} 
Jim\'enez Oviedo, B. and  Vavasseur, A.: Hydrostatic limit and Fick's law for the symmetric exclusion with long jumps. In preparation (2017).

\bibitem{KL} 
Kipnis, C. and Landim, C.: Scaling Limits of Interacting Particle Systems. Springer-Verlag, New York (1999).

\bibitem{LMO}
Landim, C.,  Milan\'es, A.  and Olla, S.: Stationary and nonequilibrium fluctuations in boundary driven exclusion processes. Markov Processes Related Fields, 14, no. 2, 165--184 (2008). 



\bibitem{Li} Liggett, T.: Interacting Particles Systems.Springer, New York (1985).

\bibitem{Liggett2} 
Liggett, T.: Stochastic Interacting Systems:  Contact, Voter and Exclusion Processes. Springer, New York (1999).


\bibitem{Spitzer} Spitzer, F.: Interaction of Markov processes. Advances in Math., 5, no. 2, 246--290 (1970). 

\bibitem{Spohn} Spohn, H.: Large scale Dynamics of Interacting Particles. Springer-Verlag, Berlin (1991).


\end{thebibliography}
\end{document}